\theoremstyle{plain}
\newtheorem{prop}{Proposition}[subsection]
\let\a\alpha
\let\b\beta
\let\g\gamma
\let\d\delta
\let\e\varepsilon
\let\z\zeta
\let\l\lambda
\let\m\mu
\let\s\sigma
\let\f\varphi
\let\L\Lambda
\let\Om\Omega
\def\A{\mathcal A}
\def\B{\mathcal B}
\def\C{\mathbb C}
\def\CC{\mathcal C}
\def\E{\mathcal E}
\def\F{\mathcal F}
\def\G{\mathcal G}
\def\I{\mathcal I}
\def\J{\mathcal J}
\def\K{\mathcal K}
\def\P{\mathbb{P}}
\def\S{\mathcal S}
\def\O{\mathcal O}
\def\U{\mathcal U}
\def\W{\mathbb{W}}
\def\Ker{{\mathcal Ker}}
\def\Coker{{\mathcal Coker}}
\def\Im{{\mathcal Im}}
\def\D{{\scriptscriptstyle \operatorname{D}}}
\def\H{\operatorname{H}}
\def\M{\operatorname{M}_{\P^2}}
\def\N{\operatorname{N}}
\def\T{{\scriptscriptstyle \operatorname{T}}}
\def\Hom{\operatorname{Hom}}
\def\Aut{\operatorname{Aut}}
\def\Ext{\operatorname{Ext}}
\def\GL{\operatorname{GL}}
\def\rank{\operatorname{rank}}
\def\spann{\operatorname{span}}
\def\Grass{\operatorname{Grass}}
\def\tensor{\otimes}
\def\isom{\simeq}
\def\lra{\longrightarrow}
\def\egal{\ar@{=}}
\def\ba{\begin{array}}
\def\ea{\end{array}}
\begin{document}

\subjclass{Primary 14D20, 14D22}

\title[moduli of plane sheaves of dimension one and multiplicity five]
{on the moduli spaces of semi-stable plane sheaves
of dimension one and multiplicity five}

\author{mario maican}

\address{Mario Maican \\
Institute of Mathematics of the Romanian Academy \\
Calea Grivi\c tei 21\\
010702 Bucharest \\
Romania}

\email{mario.maican@imar.ro}

\begin{abstract}
We find locally free resolutions of length one for all semi-stable sheaves
supported on curves of multiplicity five in the complex projective plane.
In some cases we also find geometric descriptions of these sheaves
by means of extensions.
We give natural stratifications for their moduli spaces
and we describe the strata as certain quotients modulo linear algebraic groups.
In most cases we give concrete descriptions of these quotients as fibre bundles.
\end{abstract}

\maketitle

%%%%%%%%%%%%%%%%%%%%%%%%%%%%%%%% section 1

\section{Introduction}

Let $\M(r,\chi)$ denote the moduli space of semi-stable sheaves $\F$ on the
complex projective plane $\P^2$ with support of dimension $1$, multiplicity $r$
and Euler characteristic $\chi$.
The Hilbert polynomial of $\F$ is $P_{\F}(t)=rt+\chi$ and the ratio $p(\F)=\chi/r$
is the slope of $\F$. We recall that $\F$ is semi-stable, respectively stable,
if $\F$ is pure (meaning that there are no proper subsheaves with support of
dimension zero) and any proper subsheaf $\F' \subset \F$ satisfies
$p(\F') \le p(\F)$, respectively $p(\F') < p(\F)$.
The spaces $\M(r,\chi)$ for $r \le 3$ are completely understood from the work
of Le Potier \cite{lepotier}, and others.
In \cite{drezet-maican} Dr\'ezet and the author studied the spaces $\M(4,\chi)$.
This paper is concerned with the geometry of the spaces $\M(5,\chi)$.
In view of the obvious isomorphism $\M(r,\chi) \isom \M(r,\chi+r)$ sending
the stable-equivalence class of a sheaf $\F$ to the stable-equivalence class of the twisted
sheaf $\F \tensor \O(1)$, it is enough to assume that $0 \le \chi \le 4$.
According to \cite{lepotier}, the spaces $\M(5,\chi)$ are projective, irreducible,
locally factorial, of dimension $26$ and smooth at all points given by stable sheaves.
In particular, $\M(5,\chi)$, $1\le \chi \le 4$, are smooth.

In this paper we shall carry out the same program as in \cite{drezet-maican}.
We shall decompose each moduli space into locally closed subvarieties,
called \emph{strata}, by means of cohomological conditions.
Given a stratum $X \subset \M(5,\chi)$, we shall find locally free sheaves $\A$ and $\B$
on $\P^2$ such that each sheaf $\F$ giving a point in $X$ admits a presentation
$$
0 \lra \A \stackrel{\f}{\lra} \B \lra \F \lra 0.
$$
The linear algebraic group $G=(\Aut(\A) \times \Aut(\B))/\C^*$ acts by conjugation on the
finite dimensional vector space $\W=\Hom(\A,\B)$.
Here $\C^*$ is embedded as the subgroup of homotheties.
The set of morphisms $\f$ appearing above is a locally closed subset $W \subset \W$,
which is invariant under the action of $G$.
We shall prove that a good or a categorical quotient of $W$ by $G$ exists and is isomorphic to $X$.
The existence of the good quotient does not follow from the geometric invariant
theory if $G$ is non-reductive, which, most of the time, will be our case.
In some cases we shall describe the sheaves in the strata by means of extensions.

Throughout this paper we keep the notations and conventions from \cite{drezet-maican}.
We work over the complex numbers. We fix a vector space $V$ over $\C$ of dimension $3$
and we identify $\P^2$ with the space $\P(V)$ of lines in $V$.
We fix a basis $\{ X, Y, Z \}$ of $V^*$.
If $\A$ and $\B$ are direct sums of line bundles on $\P^2$,
we identify $\Hom(\A,\B)$ with the space of matrices with entries in appropriate
symmetric powers of $V^*$, i.e. matrices with entries homogeneous polynomials
in $X, Y, Z$. We especially refer to the section of preliminaries in \cite{drezet-maican},
which contains most of the techniques that we shall use.

According to \cite{maican-duality},
there is a duality isomorphism $\M(r,\chi) \isom \M(r,-\chi)$ sending the stable-equivalence class
of a sheaf $\F$ to the stable-equivalence class of the dual sheaf
$\F^{\D}={\mathcal Ext}^1(\F,\omega_{\P^2})$.
This allows us to study the spaces $\M(5,\chi)$ in pairs.
Thus $\M(5,3)$ and $\M(5,2)$ are isomorphic and will be studied in section 2.
The spaces $\M(5,1)$ and $\M(5,4)$ are, likewise, isomorphic and will be treated in section 3.
The last section deals with $\M(5,0)$.
In the remaining part of this introduction we shall make a summary of results.

%%%%%%%%%%%%%%%%%%%%%%%%%%%%%%%%%%% subsection 1.1

\subsection{The moduli spaces $\M(5,3)$ and $\M(5,2)$}

We shall decompose the moduli space $\M(5,3)$ into four strata:
an open stratum $X_0$, two locally closed strata $X_1, X_2$ and a closed stratum $X_3$.
$X_1$ is a proper open subset inside a fibre bundle over $\P^2 \times \N(3,2,3)$ with fibre $\P^{16}$.
Here $\N(3,2,3)$ is the moduli space of semi-stable Kronecker modules
$\tau \colon \C^2 \tensor V \to \C^3$.
$X_2$ is a proper open subset inside a fibre bundle over $\N(3,3,2)$ with fibre $\P^{17}$.
$X_3$ is isomorphic to the Hilbert flag scheme
of quintic curves in $\P^2$ containing zero-dimensional subschemes of length $2$.

A sheaf $\F$ from $\M(5,3)$ gives a point in $X_0$ if and only if the following cohomological conditions are
satisfied:
\[
h^0(\F(-1))=0, \qquad \quad h^1(\F)=0, \qquad \quad h^0(\F \tensor \Om^1(1))=1.
\]
Each semi-stable sheaf whose stable-equivalence class is in $X_0$ has a resolution of the form
\[
0 \lra 2\O(-2) \oplus \O(-1) \stackrel{\f}{\lra} 3\O \lra \F \lra 0.
\]
We consider the vector space $\W=\Hom(2\O(-2) \oplus \O(-1), 3\O)$
and the linear algebraic group
\[
G= (\Aut(2\O(-2) \oplus \O(-1)) \times \Aut(3\O))/\C^*
\]
acting on $\W$ by conjugation.
$\C^*$ is embedded as the subgroup of homotheties here.
The set of morphisms $\f$ occuring above forms an open $G$-invariant subset
$W \subset \W$ given by the following conditions: $\f$ is injective and $\f$ is not in the
orbit of a morphism represented by a matrix of the form
\[
\left[
\begin{array}{ccc}
\star & \star & \star \\
\star & \star & 0 \\
\star & \star & 0
\end{array}
\right] \qquad \text{or} \qquad \left[
\begin{array}{ccc}
\star & \star & \star \\
\star & \star & \star \\
\star & 0 & 0
\end{array}
\right].
\]
$W$ admits a geometric quotient $W/G$ modulo $G$ and $W/G \isom X_0$.
The information about $X_0$ is summarised in the second row of the following table.
The other rows of the table contain the analogous information about the remaining strata of $\M(5,3)$.
The last column gives the codimension of each stratum. For each $W$ there is a geometric
quotient $W/G$ modulo the canonical group $G$ acting by conjugation on the ambient vector space $\W$
of homomorphisms of sheaves and $W/G$ is isomorphic to the corresponding
stratum of $\M(5,3)$.

\begin{table}[ht]{Table 1. Summary for $\M(5,3)$.}
\begin{center}
\begin{tabular}{|c|c|c|c|}
\hline
\!\!\! {\Tiny stratum} \!\!\!
&
{\Tiny cohomological conditions}
&
{\Tiny subset $W \subset \W$ of morphisms $\f$}
&
\!\!\! {\Tiny codim.} \!\!\!
\\
\hline
$X_0$
&
\begin{tabular}{r}
$h^0(\F(-1))=0$ \\
$h^1(\F)=0$\\
$h^0(\F \tensor \Om^1(1))=1$
\end{tabular}
&
\begin{tabular}{c}
$2\O(-2) \oplus \O(-1) \stackrel{\vphantom{I} \f}{\lra} 3\O$ \\
$\f$ is injective \\
$\f$ is not equivalent to \\
${\displaystyle \left[
\ba{ccc}
\star & \star & \star \\
\star & \star & 0 \\
\star & \star & 0
\ea
\right]}$ or
${\displaystyle \left[ \ba{ccc}
\star & \star & \star \\
\star & \star & \star \\
\star & 0 & 0
\ea \right]_{{}_{\vphantom{I}}}}$
\end{tabular}
&
$0$ \\
\hline
$X_1$
&
\begin{tabular}{r}
$h^0(\F(-1))=0$ \\
$h^1(\F)=0$\\
$h^0(\F \tensor \Om^1(1))=2$
\end{tabular}
&
\begin{tabular}{c}
$2\O(-2) \oplus 2\O(-1) \stackrel{\vphantom{I} \f}{\lra} \O(-1) \oplus 3\O$ \\
$\f$ is injective \\
$\f_{12}=0$ \\
$\f_{11}$ has \\
linearly independent entries \\
$\f_{22}$ has linearly independent \\
maximal minors
\end{tabular}
&
$2$ \\
\hline
$X_2$
&
\begin{tabular}{r}
$h^0(\F(-1))=1$ \\
$h^1(\F)=0$\\
$h^0(\F \tensor \Om^1(1))=3$ \\
\end{tabular}
&
\begin{tabular}{c}
$3\O(-2) \stackrel{\vphantom{I} \f}{\lra} 2\O(-1) \oplus \O(1)$ \\
$\f$ is injective \\
$\f_{11}$ has linearly independent \\
maximal minors
\end{tabular}
&
$3$ \\
\hline
$X_{3}$
&
\begin{tabular}{r}
$h^0(\F(-1))=1$ \\
$h^1(\F)=1$\\
$h^0(\F \tensor \Om^1(1))=4$ \\
\end{tabular}
&
\begin{tabular}{c}
$\O(-3) \oplus \O(-1) \stackrel{\vphantom{I} \f}{\lra} \O \oplus \O(1)$ \\
$\f$ is injective \\
$\f_{12} \neq 0$ \\
$\f_{12} \nmid \f_{22}$
\end{tabular}
&
$4$ \\
\hline 
\end{tabular}
\end{center}
\end{table}

Applying to $X_i$ the duality isomorphism $\M(5,3) \to \M(5,2)$ of \cite{maican-duality} defined by
\[
\F \lra \F^{\D}(1) ={\mathcal Ext}^1(\F,\omega_{\P^2}) \tensor \O(1),
\]
we get a dual stratum $X_i^{\D} \subset \M(5,2)$
given by the cohomological conditions derived from Serre duality
(see 2.1.2 \cite{drezet-maican}). For instance, $X_0^{\D}$ consists of those sheaves
$\G$ in $\M(5,2)$ satisfying the conditions
\[
h^1(\G)=0, \qquad \quad h^0(\G(-1))=0, \qquad \quad h^1(\G \tensor \Om^1(1))=1.
\]
According to \cite{maican-duality}, lemma 3, taking the dual of each term in a locally free
resolution of length 1 for $\F$ gives a resolution for $\F^\D$.
Thus every sheaf $\G$ in $X_0^{\D}$ has a resolution of the form
\[
0 \lra 3\O(-2) \stackrel{\psi}{\lra} \O(-1) \oplus 2\O \lra \G \lra 0.
\]
The conditions on $\psi$ are the transposed conditions on the morphism $\f$ from above.
In this fashion we get a ``dual table'' for $\M(5,2)$. We omit the details.

Inside $X_0$ there is an open dense subset of sheaves that have a presentation of the form
\[
0 \lra 2\O(-2) \lra \Om^1(2) \lra \F \lra 0.
\]
The complement in $X_0$ of this subset, denoted $X_{01}$, has codimension $1$.
The generic sheaves giving points in $X_{01}$ have the form
\[
\O_C(1)(P_1+P_2+P_3+P_4-P_5),
\]
where $C \subset \P^2$ is a smooth
quintic curve, $P_1, \ldots, P_5$ are distinct points on $C$ and $P_1, P_2, P_3, P_4$ are in
general linear position.

The sheaves giving points in $X_1$ are precisely the non-split extension sheaves of the form
\[
0 \lra \G \lra \F \lra \C_x \lra 0,
\]
where $\G$ varies in $X_2^{\D}$ and $\C_x$ is the structure sheaf of a closed point
in the support of $\G$.

The sheaves $\G$ in $X_2^{\D}$ are either of the form $\J_Z(2)$, where $\J_Z \subset \O_C$
is the ideal sheaf of a zero-dimensional subscheme of length $3$ contained in a quintic curve $C$,
$Z$ not contained in a line, or they are extension sheaves of the form
\[
0 \lra \O_L(-1) \lra \G \lra \O_D(1) \lra 0,
\]
where $L$ is a line and $D$ is a quartic curve, that are not in the kernel
of the canonical map
\[
\Ext^1(\O_D(1),\O_L(-1)) \lra \Ext^1(\O(1),\O_L(-1)).
\]

The sheaves in $X_3$ are the twisted ideal sheaves $\J_Z(2) \subset \O_C(2)$ of zero-dimensional
subschemes $Z$ of length $2$ contained in quintic curves $C$.

%%%%%%%%%%%%%%% subsection 1.2

\subsection{The moduli spaces $\M(5,1)$ and $\M(5,4)$}

We shall decompose the moduli space $\M(5,1)$ into four strata:
an open stratum $X_0$, two locally closed strata $X_1, X_2$ and a closed stratum $X_3$.
$X_0$ is a proper open subset inside a fibre bundle with base $\N(3,4,3)$ and fibre $\P^{14}$.
$X_1$ is a proper open subset inside a fibre bundle with base $\Grass(2,\C^6)$ and fibre $\P^{16}$.
$X_2$ is a proper open subset inside a fibre bundle with fibre $\P^{17}$ and base $Y \times \P^2$,
where $Y$ is the Hilbert scheme of zero-dimensional subschemes of $\P^2$ of length $2$.
$X_3$ is the universal quintic in $\P^2 \times \P(S^5 V^*)$.

The information about the cohomological conditions defining each stratum in $\M(5,1)$
and resolutions for semi-stable sheaves can be found in table 2 below.
This is organised as table 1, so we refer to the
previous subsection for the meaning of the different items.
Again, each $X_i$ is isomorphic to the corresponding geometric quotient $W/G$.
By duality, from table 2 can be obtained a table for $\M(5,4)$, which we do not include here.

\begin{table}[ht]{Table 2. Summary for $\M(5,1)$.}
\begin{center}
{
\begin{tabular}{|c|c|c|c|}
\hline
&
{\tiny cohomological conditions}
&
{\tiny subset $W \subset \W$ of morphisms $\f$}
&
{} \\
\hline
$X_0$
&
\strut
\begin{tabular}{r}
$h^0(\F(-1)) = 0$ \\
$h^1(\F)=0$\\
$h^0(\F \tensor \Om^1(1))=0$
\end{tabular}
&
\begin{tabular}{c}
$4\O(-2) \stackrel{\f}{\lra} 3\O(-1) \oplus \O$ \\
$\f$ is injective \\
$\f_{11}$ is not equivalent \\
to ${\displaystyle \left[
\ba{cccc}
\star & \star & \star & 0\\
\star & \star & \star & 0 \\
\star & \star & \star & 0
\ea
\right]}$ or \\
${\displaystyle \left[
\ba{cccc}
\star & \star & 0 & 0 \\
\star & \star & 0 & 0 \\
\star & \star & \star & \star
\ea
\right]}$ or
${\displaystyle \left[
\ba{cccc}
\star & 0 & 0 & 0 \\
\star & \star & \star & \star \\
\star & \star & \star & \star
\ea
\right]_{{}_{\vphantom{I}}}}$
\end{tabular}
&
$0$ \\
\hline
$X_1$
&
\begin{tabular}{r}
$h^0(\F(-1)) = 0$ \\
$h^1(\F)=1$\\
$h^0(\F \tensor \Om^1(1))=0$
\end{tabular}
&
\begin{tabular}{c}
$\O(-3) \oplus \O(-2) \stackrel{\vphantom{I} \f}{\lra} 2\O$ \\
$\f$ is injective \\
$\f_{12}$ and $\f_{22}$ are \\
linearly independent two-forms
\end{tabular}
&
$2$ \\
\hline
$X_2$
&
\begin{tabular}{r}
$h^0(\F(-1))=0$ \\
$h^1(\F)=1$\\
$h^0(\F \tensor \Om^1(1))=1$
\end{tabular}
&
\begin{tabular}{c}
$\O(-3) \! \oplus \! \O(-2) \! \oplus \! \O(-1) \stackrel{\f}{\lra} \O(-1) \! \oplus \! 2\O$ \\
$\f$ is injective \\
$\f_{13}=0$ \\
$\f_{12} \neq 0$, $\f_{12} \nmid \f_{11}$ \\
$\f_{23}$ has linearly independent entries
\end{tabular}
&
$3$ \\
\hline
$X_3$
&
\begin{tabular}{r}
$h^0(\F(-1))=1$ \\
$h^1(\F)=2$\\
$h^0(\F \tensor \Om^1(1))=3$
\end{tabular}
&
\begin{tabular}{c}
$2\O(-3) \stackrel{\vphantom{I} \f}{\lra} \O(-2) \oplus \O(1)$ \\
$\f$ is injective \\
$\f_{11}$ has linearly independent entries
\end{tabular}
&
$5$ \\
\hline
\end{tabular}
}
\end{center}
\end{table}

Inside $X_0$ there is an open dense subset consisting of sheaves of the form
$\J_Z(2)^{\D}$, where $Z \subset \P^2$ is a zero-dimensional scheme of length $6$
not contained in a conic curve, contained in a quintic curve $C$, and $\J_Z \subset \O_C$
is its ideal sheaf. The complement in $X_0$ of this open subset is the disjoint
union of two sets $X_{01}$ and $X_{02}$. The sheaves in $X_{01}$ occur as non-split
extensions of one of the following three kinds:
\begin{align*}
0 \lra \G \lra & \F \lra \O_L \lra 0, \\
0 \lra \E \lra & \F \lra \O_Y \lra 0, \\
0 \lra \O_L(-1) \lra & \F \lra \J_Z(1)^{\D} \lra 0.
\end{align*}
Here $L \subset \P^2$ is a line, $\G$ is in the exceptional divisor of $\M(4,0)$,
$\E$ is the twist by $-1$ of a sheaf in the stratum $X_3 \subset \M(5,3)$,
$Y \subset \P^2$ is a zero-dimensional scheme of length $3$ not contained in a line,
contained in the support of $\E$,
$Z \subset \P^2$ is a zero-dimensional scheme of length $3$ not contained in a line,
contained in a quartic curve $C$, and $\J_Z \subset \O_C$ is its ideal sheaf.
Not all of the above extension sheaves are in $X_{01}$, namely there are certain conditions
that must be satisfied for which we refer to subsection 3.3.
For $X_{02}$ we can be more specific.
A sheaf $\F$ gives a point in $X_{02}$ precisely if it is an extension of the form
\[
0 \lra \O_{C'} \lra \F \lra \O_C \lra 0
\]
and satisfies $\H^1(\F)=0$.
Here $C'$ is a cubic curve, $C$ is a conic curve in $\P^2$.

The sheaves $\F$ in $X_1$ are either of the form $\J_Z(2)$, where $Z \subset \P^2$
is the intersection of two conic curves without common component,
$Z$ is contained in a quintic curve $C$ and $\J_Z \subset \O_C$
is its ideal sheaf, or they are extension sheaves of the form
\[
0 \lra \O_L(-1) \lra \F \lra \J_x(1) \lra 0,
\]
satisfying the relation $h^0(\F \tensor \Om^1(1))=0$. Here $L \subset \P^2$ is a line
and $\J_x \subset \O_{C'}$ is the ideal sheaf of a closed point $x$ on a quartic curve $C' \subset \P^2$.

The generic sheaves from $X_2$ are of the form $\O_C(1)(-P_1+P_2+P_3)$, where $C \subset \P^2$
is a smooth quintic curve and $P_1, P_2, P_3$ are distinct points on $C$.

The sheaves giving points in $X_3$ are precisely the non-split extension sheaves of the form
\[
0 \lra \O_C(1) \lra \F \lra \C_x \lra 0.
\]
Here $C \subset \P^2$ is a quintic curve and $\C_x$ is the structure sheaf of a closed point.

%%%%%%%%%%%%%%%% subsection 1.3

\subsection{The moduli space $\M(5,0)$}

This moduli space can be decomposed into four strata: an open stratum $X_0$,
two locally closed strata $X_1, X_2$ and a closed stratum $X_3$.
$X_0$ is a proper open subset inside $\N(3,5,5)$.
$X_2$ is a proper open subset inside a fibre bundle over $\P^2 \times \P^2$ with fibre $\P^{18}$.
$X_3$ consists of sheaves of the form $\O_C(1)$, where $C \subset \P^2$ is a quintic curve,
and is isomorphic to $\P(S^5V^*)$.
All strata are invariant under the duality isomorphism.

The information about the cohomological conditions defining each stratum
and resolutions for semi-stable sheaves can be found in table 3 below,
which is organised as table 1.
$X_0$ is a good quotient, $X_1$ is a categorical quotient and $X_2$ is a geometric quotient
of $W$ by $G$.

\begin{table}[ht]{{Table 3. Summary for $\M$(5,0).}}
\begin{center}
\begin{tabular}{|c|c|c|c|}
\hline
\!\!\! {\Tiny stratum} \!\!\!
&
{\Tiny cohomological conditions}
&
{\Tiny subset $W \subset \W$ of morphism $\f$}
&
\!\!\! {\Tiny codim.} \!\!\! \\
\hline
$X_0$
&
\begin{tabular}{r}
$h^0(\F(-1))=0$ \\
$h^1(\F)=0$\\
$h^0(\F \tensor \Om^1(1))=0$
\end{tabular}
&
\begin{tabular}{c}
$5\O(-2) \stackrel{\f}{\lra} 5\O(-1)$ \\
$\f$ is injective
\end{tabular}
&
$0$ \\
\hline
$X_1$
&
\begin{tabular}{r}
$h^0(\F(-1))=0$ \\
$h^1(\F)=1$\\
$h^0(\F \tensor \Om^1(1))=0$
\end{tabular}
&
\begin{tabular}{c}
$\O(-3) \oplus 2\O(-2) \stackrel{\vphantom{I} \f}{\lra} 2\O(-1) \oplus \O$ \\
$\f$ is injective \\
$\f_{12}$ is injective
\end{tabular}
&
$1$ \\
\hline
$X_2$
&
\begin{tabular}{r}
$h^0(\F(-1))=0$ \\
$h^1(\F)=2$\\
$h^0(\F \tensor \Om^1(1))=1$
\end{tabular}
&
\begin{tabular}{c}
$2\O(-3) \oplus \O(-1) \stackrel{\vphantom{I} \f}{\lra} \O(-2) \oplus 2\O$ \\
$\f$ is injective \\
$\f_{11}$ has linearly independent entries \\
$\f_{22}$ has linearly independent entries
\end{tabular}
&
$4$ \\
\hline
$X_{3}$
&
\begin{tabular}{r}
$h^0(\F(-1))=1$ \\
$h^1(\F)=3$\\
$h^0(\F \tensor \Om^1(1))=3$
\end{tabular}
&
\begin{tabular}{c}
$\O(-4) \stackrel{\f}{\lra} \O(1)$ \\
$\f \neq 0$
\end{tabular}
&
$6$ \\
\hline 
\end{tabular}
\end{center}
\end{table}

The generic sheaves in $X_0$ are of the form $\J_Z(3)$, where $Z \subset \P^2$
is a zero-dimensional scheme of length $10$ not contained in a cubic curve, contained
in a quintic curve $C$, and $\J_Z \subset \O_C$ is its ideal sheaf.

The sheaves giving points in $X_2$ are precisely the non-split extension sheaves of the form
\[
0 \lra \J_x(1) \lra \F \lra \C_z \lra 0,
\]
where $\J_x \subset \O_C$ is the ideal sheaf of a closed point $x$ on a quintic curve $C \subset \P^2$
and $\C_z$ is the structure sheaf of a closed point $z \in C$.
When $x=z$ we exclude the possibility $\F \isom \O_C(1)$.

\section*{Acknowledgements} 

The author was supported
by the BitDefender Scholarship at the Institute of Mathematics of the Romanian Academy
and by the Consiliul Na\c tional al Cercet\u arii \c Stiin\c tifice,
grant PN II--RU 169/2010 PD--219.
The author is grateful to J.-M. Dr\'ezet for many insightful remarks and helpful suggestions.
Important changes were made after the referee report.
The referee suggested improvements to the proofs of \ref{2.1.4} and \ref{2.1.6},
noticed gaps in our original construction of quotients as fibre bundles,
and pointed out the argument used to show that the generic sheaves at \ref{2.3.2} and \ref{3.3.4}
are certain line bundles supported on smooth quintics

%%%%%%%%%%%%%%%%%%%%%%%%%%%%%%%%%%%%%%%

%%%%%%%% section 2

\section{Euler characteristic two or three}

%%%%%%%% subsection 2.1

\subsection{Locally free resolutions for semi-stable sheaves}

\begin{prop}
\label{2.1.1}
There are no sheaves $\F$ giving points in $\M(5,3)$ and satisfying the conditions
$h^0(\F(-1))=0$ and $h^1(\F) \neq 0$.
\end{prop}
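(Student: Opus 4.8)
The plan is to argue by contradiction, using the duality of \cite{maican-duality}. Suppose $\F$ gives a point of $\M(5,3)$, that $h^0(\F(-1))=0$, and that $h^1(\F)\ne 0$. Since a semi-stable sheaf is pure, the local-to-global spectral sequence for ${\mathcal Ext}$ degenerates and gives $\Ext^1(\F,\omega_{\P^2})\isom H^0({\mathcal Ext}^1(\F,\omega_{\P^2}))=H^0(\F^\D)$, while Serre duality identifies $\Ext^1(\F,\omega_{\P^2})$ with $H^1(\F)^*$. Hence $h^0(\F^\D)=h^1(\F)\ne 0$, so there is a nonzero morphism $\O\lra\F^\D$. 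Recall from \cite{maican-duality} that $\F^\D$ is semi-stable of multiplicity $5$ and Euler characteristic $-3$, so of slope $-3/5$.

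First I would analyse this morphism. Being a quotient of $\O$ and a subsheaf of the pure sheaf $\F^\D$, its image is the structure sheaf $\O_D$ of an effective divisor $D\subset\P^2$ of some degree $e$ with $1\le e\le 5$, and $\chi(\O_D)=e(3-e)/2$. If $e\le 4$, then $\O_D$ is a proper subsheaf with slope $\chi(\O_D)/e=(3-e)/2>-3/5$, contradicting semi-stability of $\F^\D$; therefore $e=5$, $D$ is a quintic, and $Q=\F^\D/\O_D$ is zero-dimensional of length $\chi(\F^\D)-\chi(\O_D)=-3-(-5)=2$.

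Next I would dualise the sequence $0\lra\O_D\lra\F^\D\lra Q\lra 0$. Applying ${\mathcal Ext}^\bullet(-,\omega_{\P^2})$ and using that dualising is an involution on pure one-dimensional sheaves (so ${\mathcal Ext}^1(\F^\D,\omega_{\P^2})\isom\F$ and the ${\mathcal Ext}^2$-terms of $\F^\D$ and $\O_D$ vanish), the adjunction isomorphism ${\mathcal Ext}^1(\O_D,\omega_{\P^2})\isom\omega_D\isom\O_D(2)$, and the vanishing ${\mathcal Ext}^i(Q,\omega_{\P^2})=0$ for $i\ne 2$, the long exact sequence collapses to
\[
0\lra\F\lra\O_D(2)\lra Q^\D\lra 0 ,
\]
with $Q^\D$ zero-dimensional of length $2$. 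Twisting by $\O(-1)$ and taking cohomology in $0\lra\F(-1)\lra\O_D(1)\lra Q^\D\lra 0$, and using $h^0(\O_D(1))=3$ and $h^0(Q^\D)=2$, we obtain $h^0(\F(-1))\ge 3-2=1$, contradicting $h^0(\F(-1))=0$.

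I expect the main obstacle to be the dualisation step: one has to make sure the ${\mathcal Ext}$ long exact sequence really collapses to the displayed short exact sequence, with $\F$ appearing as a \emph{sub}sheaf of $\O_D(2)$ and with the correct identification ${\mathcal Ext}^1(\O_D,\omega_{\P^2})=\O_D(2)$, all of which relies on the purity of $\F^\D$ and $\O_D$. A minor point is checking, at the first step, that the image of the section is the structure sheaf of a genuine effective divisor, so that semi-stability can be applied numerically to pin down its degree.
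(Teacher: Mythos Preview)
Your argument is correct. The dualisation step works exactly as you describe: purity of $\F$ and $\F^{\D}$ (both being semi-stable) ensures that $\mathcal{E}xt^2$ of each against $\omega_{\P^2}$ vanishes and that $(\F^{\D})^{\D}\isom\F$, so the long $\mathcal{E}xt$ sequence collapses to $0\to\F\to\O_D(2)\to Q^{\D}\to 0$; and the image of a nonzero section of $\F^{\D}$ is indeed $\O_D$ for a genuine curve $D$, since any quotient of $\O$ is the structure sheaf of a closed subscheme and a subsheaf of a pure one-dimensional sheaf can have no embedded points.

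The paper's own proof is different and much shorter: it simply invokes an already-established result (6.4 of \cite{maican}) stating that no sheaf $\G\in\M(5,2)$ satisfies $h^0(\G(-1))\neq 0$ and $h^1(\G)=0$, and then applies the duality isomorphism $\M(5,3)\isom\M(5,2)$, $\F\mapsto\F^{\D}(1)$, under which the conditions $h^0(\F(-1))=0$, $h^1(\F)\neq 0$ translate precisely to $h^1(\G)=0$, $h^0(\G(-1))\neq 0$. Your approach trades the citation for a direct, self-contained argument: you never pass to $\M(5,2)$ but instead analyse $\F^{\D}\in\M(5,-3)$ explicitly. The advantage of your route is that it does not depend on the external reference; the paper's advantage is brevity, since the cited lemma is presumably proved by an argument of the same general flavour as yours.
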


\begin{proof}
According to 6.4 \cite{maican}, there are no sheaves $\G$ in $\M(5,2)$
satisfying the conditions $h^0(\G(-1))\neq 0$ and $h^1(\G)=0$. The result follows by duality.
\end{proof}

\noindent
From this and from 4.3 \cite{maican} we obtain:

\begin{prop}
\label{2.1.2}
Let $\F$ be a sheaf in $\M(5,3)$ satisfying the condition $h^0(\F(-1))=0$. Then $h^1(\F)=0$
and $h^0(\F \tensor \Om^1(1))=1$ or $2$. The sheaves from the first case are precisely the sheaves
that have a resolution of the form
\begin{align*}
\tag{i}
0 \lra 2\O(-2) \oplus \O(-1) \stackrel{\f}{\lra} 3\O \lra \F \lra 0
\end{align*}
with $\f$ not equivalent, modulo the action of the natural group of automorphisms,
to a morphism represented by a matrix of the form
\[
\left[
\begin{array}{ccc}
\star & \star & \star \\
\star & \star & 0 \\
\star & \star & 0
\end{array}
\right] \qquad \text{or} \qquad \left[
\begin{array}{ccc}
\star & \star & \star \\
\star & \star & \star \\
\star & 0 & 0
\end{array}
\right].
\]
The sheaves in the second case are precisely the sheaves that have a resolution of the form
\begin{align*}
\tag{ii}
0 \lra 2\O(-2) \oplus 2\O(-1) \stackrel{\f}{\lra} \O(-1) \oplus 3\O \lra \F \lra 0,
\end{align*}
\[
\f = \left[
\begin{array}{cc}
\f_{11} & 0 \\
\f_{21} & \f_{22}
\end{array}
\right], \qquad \f_{11}= \left[
\begin{array}{cc}
\ell_1 & \ell_2
\end{array}
\right],
\]
where $\ell_1, \ell_2$ are linearly independent one-forms and the
maximal minors of $\f_{22}$ are linearly independent two-forms.
\end{prop}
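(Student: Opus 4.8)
The plan is to analyze a sheaf $\F$ with $h^0(\F(-1)) = 0$ via the Beilinson spectral sequence (or equivalently via the standard resolution techniques of the preliminaries in \cite{drezet-maican}), reading off the possible terms from the cohomological data. By Proposition \ref{2.1.1} and 4.3 \cite{maican}, already $h^1(\F) = 0$, so the only free parameter is $m = h^0(\F \tensor \Om^1(1))$; the cited results also pin down $m \in \{1, 2\}$. First I would compute the remaining cohomology groups: from $\chi(\F) = 3$, $h^0(\F) = 3$; from $h^0(\F(-1)) = 0$ and Riemann--Roch on the quintic support, $h^1(\F(-1)) = 2$; and the Euler sequence $0 \to \Om^1(1) \to V^* \tensor \O \to \O(1) \to 0$ tensored with $\F$ gives the relation linking $h^0(\F \tensor \Om^1(1))$, $h^0(\F)$, $h^0(\F(1))$, which feeds the Beilinson table. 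The upshot is that the $E_1$ page produces exactly the two display complexes (i) and (ii), with the summand $\O(-1)$ on the left of (ii) and on the right appearing precisely because $m = 2$ contributes an extra $\Om^1(1)$-term that must be split off.

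Next I would establish the two directions of the ``precisely'' claim. For the forward direction: given $\F$ with $h^0(\F(-1))=0$, the spectral sequence yields a complex, and I must check it has no cohomology except $\F$ in degree zero — i.e. that $\f$ is injective with cokernel $\F$ and no sheaf cohomology in between. Injectivity of $\f$ follows from $\F$ being pure of dimension one (a nonzero kernel would be a subsheaf of $2\O(-2) \oplus \O(-1)$ supported in dimension $\le 1$, forcing a destabilizing or torsion contradiction against semi-stability); vanishing of the middle cohomology is where the excluded matrix forms enter. In case (i), if $\f$ were equivalent to one of the two forbidden matrices, a block of zeros would let one factor a nontrivial map $\O \to \F$ through a subsheaf of the wrong slope (or would make $\Om^1(1)$ contribute $2$ rather than $1$ to $h^0(\F \tensor \Om^1(1))$), contradicting $m = 1$; conversely if $m = 2$ the Beilinson term $\Om^1(1)$ appears with multiplicity two, which after the standard simplification (using $0 \to \O(-1) \to 2\O(-1)$ to cancel one copy of $\Om^1$ against part of $V^* \tensor \O$) produces resolution (ii), and the rank count forces $\f_{11}$ to be the $1 \times 2$ block $[\ell_1 \ \ell_2]$. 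For the converse direction: given a resolution of type (i) or (ii), I would compute $h^0(\F(-1))$, $h^1(\F)$ and $h^0(\F \tensor \Om^1(1))$ directly from the long exact sequences in cohomology, twisting by $-1$ and tensoring with $\Om^1(1)$, and check semi-stability of $\F$ by the subsheaf criterion — any destabilizing subsheaf would lift to a sub-display of $\f$ of one of the excluded shapes. The conditions on $\f_{22}$ (linearly independent maximal minors) are exactly what guarantees that the cokernel is pure, supported on a genuine quintic rather than a non-reduced or reducible degeneration, and that no extra sections of $\F(-1)$ appear.

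The main obstacle I anticipate is the bookkeeping that matches the numerical invariant $m = h^0(\F \tensor \Om^1(1))$ to the precise shape of $\f$ — in particular showing the equivalence ``$m = 1$ $\iff$ $\f$ avoids the two forbidden normal forms'' in case (i). This requires understanding how a block-triangular degeneration of $\f$ creates an extra section of $\F \tensor \Om^1(1)$: one must track the induced map on $\Ext$/$\Hom$ groups after applying $\Hom(-, \Om^1(1))$ or equivalently after tensoring the resolution with $\Om^1(1)$ and chasing the resulting complex, which is a somewhat delicate linear-algebra computation on the Kronecker-type pieces. The analysis of case (ii) — deriving the block form $\f = \left[\begin{smallmatrix} \f_{11} & 0 \\ \f_{21} & \f_{22} \end{smallmatrix}\right]$ with $\f_{11} = [\ell_1 \ \ell_2]$ — follows once (i) is understood, since it is essentially the ``next stratum'' obtained by letting one of the excluded configurations occur in a controlled way; here the only new point is verifying that the two one-forms $\ell_1, \ell_2$ must be linearly independent (else $\F$ acquires a section of $\F(-1)$, contradiction) and that $\phi_{22}$'s maximal minors span a $3$-dimensional space of conics. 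Throughout, I would lean on the duality isomorphism $\M(5,3) \isom \M(5,2)$ and the preliminaries of \cite{drezet-maican} to avoid re-deriving standard facts about these Beilinson complexes.
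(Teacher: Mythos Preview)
Your proposal is essentially aligned with the paper's approach, but note that the paper's proof is a one-line citation: the entire content of this proposition is imported from Proposition~2.1.1 together with 4.3 of \cite{maican}. Your sketch of what lies behind that citation (Beilinson spectral sequence producing the display, then a semistability analysis to pin down the conditions on $\f$) is the right picture.

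One correction to your reasoning: the two forbidden matrix shapes in case~(i) are excluded not because they would alter $h^0(\F\otimes\Om^1(1))$, but because each produces a destabilising \emph{quotient} of $\F$. If the third column has the shape $(\star,0,0)^{\T}$, then projecting onto the last two copies of $\O$ gives a surjection $\F \twoheadrightarrow \Coker(2\O(-2)\to 2\O)$, a sheaf of slope $1/2 < 3/5$. If the third row has the shape $(\star,0,0)$, then $\F$ surjects onto $\Coker(\O(-2)\to\O)\isom\O_C$ for a conic $C$, again of slope $1/2$. So the exclusion is a pure semistability constraint, and the equivalence ``$m=1$ $\Leftrightarrow$ $\f$ avoids both shapes'' is not proved by tracking $\Om^1(1)$-sections but rather is part of the content of 4.3~\cite{maican}. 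Likewise in case~(ii): linear independence of $\ell_1,\ell_2$ prevents a destabilising quotient $\O_L(-1)$, and the condition on the maximal minors of $\f_{22}$ prevents destabilising subsheaves; these are stability conditions, not cohomological ones.
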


\begin{prop}
\label{2.1.3}
Let $\F$ be a sheaf giving a point in $\M(5,3)$ and satisfying the conditions
$h^1(\F)=0$ and $h^0(\F(-1))\neq 0$.
Then $h^0(\F(-1)) =1$. These sheaves are precisely the sheaves with resolution of the form
\[
0 \lra 3\O(-2) \stackrel{\f}{\lra} 2\O(-1) \oplus \O(1) \lra \F \lra 0,
\]
where $\f_{11}$ has linearly independent maximal minors.
\end{prop}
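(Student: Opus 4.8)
The plan is to proceed exactly as in the analysis of \ref{2.1.2}, using the Beilinson spectral sequence (or the display of cohomology tables for twists of $\F$) to produce the resolution, and then to translate the stability of $\F$ into the stated genericity condition on $\f_{11}$. First I would record the numerical data: since $\F$ has Hilbert polynomial $5t+3$, purity and semi-stability force $h^0(\F(-2))=0$, and by Riemann--Roch together with $h^1(\F)=0$ we get $h^0(\F)=3$; the hypothesis gives $h^0(\F(-1))\ge 1$. I would first show $h^0(\F(-1))=1$: if $h^0(\F(-1))\ge 2$ one extracts a subsheaf of the form $\O(1)\hookrightarrow\F$ (or two sections spanning a rank-one piece with too-large slope), contradicting $p(\F')\le p(\F)=3/5$; one also needs $h^0(\F(-2))=0$, which is immediate from purity since a section of $\F(-2)$ would give $\O(2)\hookrightarrow\F$ with slope $\ge 0>3/5$ on a subsheaf of multiplicity $\le 5$ — here I would invoke the standard slope estimates from the preliminaries of \cite{drezet-maican}. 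Combined with \ref{2.1.1} and the constraint $h^0(\F\tensor\Om^1(1))$ forced by the exact Koszul sequence $0\to\Om^1(1)\to V\tensor\O\to\O(1)\to 0$, this pins down the cohomology table to the one case not already covered by \ref{2.1.2}, namely $h^0(\F(-1))=1$, $h^1(\F)=0$, $h^0(\F\tensor\Om^1(1))=3$ as in row $X_2$ of Table 1.

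Next I would run Beilinson's theorem on $\F$ (as in 4.3 \cite{maican}, which is cited as the source). The spectral sequence has $E_1$-terms built from $\H^q(\F\tensor\Om^p(p))$; with the cohomology table above, the only surviving terms assemble into a monad whose associated two-term complex is
\[
0\lra 3\O(-2)\stackrel{\f}{\lra} 2\O(-1)\oplus\O(1)\lra\F\lra 0.
\]
One checks the Euler characteristics match ($3\cdot(\tfrac{1}{2}(-2+1)(-2+2))$-type bookkeeping: the alternating sum of Hilbert polynomials of $3\O(-2)$ and $2\O(-1)\oplus\O(1)$ is $(2(t)+ (t+2)(t+1)/1\cdots)$ — more precisely $2\binom{t+1}{2}\cdot$ nothing; I mean $P_{2\O(-1)}+P_{\O(1)}-P_{3\O(-2)}=5t+3$), and that $\f$ must be injective as a sheaf map since $\F$ is pure of dimension one while $\ker\f$ would be a rank-zero or rank-one subsheaf of $3\O(-2)$ contradicting torsion-freeness of $3\O(-2)$ (rank argument: $\rank\F=5\ne 3$ forces $\rank\ker\f=0$, hence $\ker\f=0$). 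The block $\f_{11}\colon 3\O(-2)\to 2\O(-1)$ is a $2\times 3$ matrix of linear forms, and $\f_{21}\colon 3\O(-2)\to\O(1)$ is a row of cubics.

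The heart of the argument is the equivalence with the condition that the three maximal ($2\times 2$) minors of $\f_{11}$ are linearly independent quadratic forms. For the forward direction: if the minors were linearly dependent, after a change of basis in $\Aut(3\O(-2))\times\Aut(2\O(-1))$ one can bring $\f_{11}$ to a degenerate normal form — either $\f_{11}$ has a zero column, or its image lies in a rank-one locus along a line — and in each such case I would exhibit a destabilising subsheaf of $\F$ (typically a twisted structure sheaf $\O_L(k)$ of a line $L$, or a subsheaf with Hilbert polynomial violating $p\le 3/5$), or show $\F$ fails to be pure; this is the routine but slightly tedious case analysis of matrix normal forms, identical in spirit to the one behind the two forbidden shapes in \ref{2.1.2}. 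Conversely, given a $\f$ with $\f_{11}$ having linearly independent maximal minors, I would verify that $\coker\f$ is automatically pure of dimension one, multiplicity five, with $\chi=3$, and semi-stable: semi-stability is checked by the contrapositive, showing any quotient with smaller slope forces a degeneration of $\f_{11}$ exactly ruled out by the minor condition, again via the techniques of the preliminaries in \cite{drezet-maican} (in particular the criterion relating subsheaves of $\coker\f$ to factorisations of $\f$ through intermediate direct sums of line bundles). The main obstacle I anticipate is precisely this last semi-stability verification: bounding all potential destabilising sub- and quotient sheaves and checking that the open condition on minors of $\f_{11}$ is \emph{exactly} — not merely sufficient for — semi-stability, which requires a careful enumeration of the possible Hilbert polynomials of subsheaves of a sheaf supported on a quintic, together with the corresponding factorisation obstructions for $\f$.
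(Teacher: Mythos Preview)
Your plan takes a genuinely different route from the paper. The paper's proof is two lines: it cites 6.6 of \cite{maican} for $h^0(\F(-1))=1$, and then obtains the resolution by \emph{duality}, transporting the already-proved resolution 5.3 \cite{maican} for sheaves $\G$ in $\M(5,2)$ with $h^0(\G(-1))=0$, $h^1(\G)=1$ across the isomorphism $\M(5,3)\simeq\M(5,2)$, $\F\mapsto\F^{\D}(1)$. So the paper never runs Beilinson on $\F$ itself and never does the $\f_{11}$ case analysis directly; both are absorbed into the cited results and the duality. Your direct approach is viable in principle, but it is considerably longer, and your sketch understates the work: with the cohomology table you quote ($h^0(\F(-1))=1$, $h^1(\F(-1))=3$, $h^0(\F\tensor\Om^1(1))=3$, $h^1(\F\tensor\Om^1(1))=2$, $h^0(\F)=3$, $h^1(\F)=0$) the Beilinson free monad for $\F$ is the three-term complex
\[
0\lra\O(-2)\lra 3\O(-2)\oplus 3\O(-1)\lra 2\O(-1)\oplus 3\O\lra 0,
\]
not a two-term resolution; collapsing this to $0\to 3\O(-2)\to 2\O(-1)\oplus\O(1)\to\F\to 0$ requires an argument of the kind carried out in the proof of \ref{2.1.4} or (dually) in \ref{2.2.4}, and you do not supply it.

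A couple of your details are also garbled. Statements like ``$\O(1)\hookrightarrow\F$'' or ``$\O(2)\hookrightarrow\F$'' do not make sense since $\F$ is supported on a curve; what you want is that a section of $\F(-k)$ factors through an injection $\O_C(k)\hookrightarrow\F$ for a curve $C$, and then a slope estimate on $\O_C(k)$. Likewise ``slope $\ge 0>3/5$'' is not what you mean. The correct bound (for $h^0(\F(-2))=0$, say) is that $\O_C(2)\subset\F$ with $C$ of degree $d\le 5$ has slope $(7-d)/2>3/5$, contradicting semi-stability. These are easily fixed, but together with the missing monad reduction they mean your proposal, as written, is a plan rather than a proof; the paper sidesteps all of this via duality.
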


\begin{proof}
The first conclusion follows from 6.6 \cite{maican}.
According to 5.3 \cite{maican}, every sheaf $\G$ in $\M(5,2)$ satisfying
$h^0(\G(-1))=0$ and $h^1(\G)=1$ has a resolution
\[
0 \lra \O(-3) \oplus 2\O(-1) \stackrel{\psi}{\lra} 3\O \lra \G \lra 0
\]
in which $\psi_{12}$ has linearly independent maximal minors.
The second conclusion follows by duality.
\end{proof}

\begin{prop}
\label{2.1.4}
Let $\F$ be a sheaf giving a point in $\M(5,3)$
and satisfying the conditions $h^0(\F(-1))=1$ and $h^1(\F)=1$.
Then $h^0(\F \tensor \Om^1(1))=4$ and $\F$ has a resolution of the form
\[
0 \lra \O(-3) \oplus \O(-1) \stackrel{\f}{\lra} \O \oplus \O(1) \lra \F \lra 0
\]
with $\f_{12} \neq 0$ and $\f_{22}$ not divisible by $\f_{12}$. Conversely,
every $\F$ having such a resolution is semi-stable.
\end{prop}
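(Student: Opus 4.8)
The plan is to follow the usual program: determine the cohomology of $\F$, build the resolution from a distinguished section, deduce $h^0(\F\tensor\Om^1(1))$ from the resolution, extract the open conditions on $\f$ from semi-stability, and settle the converse. From $h^0(\F(-1))=1$ and $h^1(\F)=1$, Riemann--Roch gives $h^0(\F)=4$, and $h^2(\F(t))=0$ for all $t$ because $\Hom(\F,\O(s))=0$ for the torsion sheaf $\F$. A nonzero element of $H^0(\F(-1))=\Hom(\O(1),\F)$ is a nonzero map $s\colon\O(1)\to\F$; since $\F$ is pure, its image $\F'$ is a pure one-dimensional quotient of $\O(1)$, hence of the form $\O_\Delta(1)$ for a plane curve $\Delta$ of some degree $d\le 5$, and $p(\F')=(5-d)/2$. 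Semi-stability forces $(5-d)/2\le 3/5$, so $d\ge 4$, and there are two cases: either $\F'\isom\O_D(1)$ for a quartic $D$, in which case a further slope estimate on the preimages in $\F$ of zero-dimensional subsheaves of $\F/\F'$ shows $\F/\F'\isom\O_L$ for a line $L$; or $\F'\isom\O_C(1)$ for a quintic $C$ and $\F/\F'$ is zero-dimensional of length three.

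In the first case the horseshoe lemma applied to $0\to\O_D(1)\to\F\to\O_L\to 0$, together with the resolutions $0\to\O(-3)\to\O(1)\to\O_D(1)\to 0$ and $0\to\O(-1)\to\O\to\O_L\to 0$, yields at once
\[
0\lra\O(-3)\oplus\O(-1)\stackrel{\f}{\lra}\O\oplus\O(1)\lra\F\lra 0 .
\]
In the second case the horseshoe lemma gives a locally free resolution of length two, which collapses to length one because $\F$, being semi-stable, is pure and therefore of homological dimension one; rank, $c_1$ and $\chi$ then identify the terms and produce the same resolution. With the resolution at hand, tensoring it by $\Om^1(1)$ (which preserves exactness) and passing to cohomology, using that among $\H^\bullet(\Om^1(k))$, $k=-2,-1,1,2$, only $h^2(\Om^1(-2))=3$, $h^1(\Om^1)=1$ and $h^0(\Om^1(2))=3$ are nonzero, leaves the exact sequence $0\to\C^3\to H^0(\F\tensor\Om^1(1))\to\C\to 0$, whence $h^0(\F\tensor\Om^1(1))=4$.

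The two conditions on $\f$ are forced by semi-stability. If $\f_{12}=0$, injectivity of $\f$ forces $\f_{11},\f_{22}\neq 0$ and, restricting the presentation to the summand $\O(1)$ of the target, $\O_Q(1)$ appears as a subsheaf of $\F$ with $Q=\{\f_{22}=0\}$ a conic, of slope $3/2$; if $\f_{12}\mid\f_{22}$, a row operation makes $\f_{22}=0$ without disturbing $\f_{12}\neq 0$, and the same device gives a subsheaf $\O_L\subset\F$ with $L=\{\f_{12}=0\}$ a line, of slope $1$ --- both violate semi-stability, so $\f_{12}\neq 0$ and $\f_{12}\nmid\f_{22}$. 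Conversely, given such a $\f$, set $\ell=\f_{12}$ and $v=\f_{22}$: then $Z=\{\ell=v=0\}$ has length two, the cokernel of the map $\O(-1)\to\O\oplus\O(1)$ with components $\ell,v$ is $\I_Z(2)$, and the other two entries exhibit $\F$ as the quotient of $\I_Z(2)$ by the image of $\O(-3)$ under multiplication by $\det\f=\f_{11}v-\ell\f_{21}$, which lies in $\I_Z=(\ell,v)$; since moreover $\det\f\neq 0$, the quintic $C=\{\det\f=0\}$ contains $Z$ and $\F\isom\J_Z(2)$ with $\J_Z\subset\O_C$ the ideal of $Z$. Such a sheaf is semi-stable by the usual comparison with $\O_C(2)$: a subsheaf $\F'$ of multiplicity five lies in $\O_C(2)$ with $\O_C(2)/\F'$ surjecting onto $\O_C(2)/\J_Z(2)$, a sheaf of length two, so $\chi(\F')\le\chi(\O_C(2))-2=3$ and $p(\F')\le 3/5$; a subsheaf of multiplicity $d<5$ lies in the largest subsheaf of $\O_C(2)$ supported on a degree-$d$ sub-curve, of slope $(d-3)/2<3/5$, and, having the same multiplicity, has at most that slope.

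The step that should take the most care is the case distinction for $\F'$ and $\F/\F'$ --- establishing that they are exactly $\O_D(1)$ and $\O_L$, or $\O_C(1)$ and a zero-dimensional sheaf of length three --- since this is where semi-stability must be applied attentively; everything downstream is routine bookkeeping with Riemann--Roch and the horseshoe lemma.
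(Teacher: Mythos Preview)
Your approach is genuinely different from the paper's: instead of running the Beilinson monad and whittling it down (which is what the paper does, deriving $m=h^0(\F\tensor\Om^1(1))=4$ along the way), you start from the unique section $s\in\H^0(\F(-1))$, identify its image $\F'$, and build the resolution via the horseshoe lemma. The computation of $h^0(\F\tensor\Om^1(1))$ from the finished resolution, the derivation of the open conditions on $\f$, and the converse via $\F\isom\J_Z(2)$ are all fine (your saturation argument for subsheaves of multiplicity $d<5$ is exactly the content of \cite{maican}, lemma~6.7, which the paper invokes). If the $d=5$ case were handled correctly this would be a perfectly good alternative proof, arguably more elementary than the Beilinson machinery.

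The gap is in the $d=5$ case, and it is precisely the step you flag as ``routine''. You write that the length-two horseshoe resolution ``collapses to length one'' and that ``rank, $c_1$ and $\chi$ then identify the terms''. The first clause is true in the weak sense that $E:=\Ker(\O\oplus\O(1)\to\F)$ is locally free, but the second clause is false: the numerical invariants $c_1(E)=-4$, $\chi(E)=1$ do \emph{not} force $E\isom\O(-3)\oplus\O(-1)$; for instance $3\O(-2)\to 2\O(-1)\oplus\O(1)$ has the same Hilbert polynomial on the cokernel. What actually happens is this: write $\O_Z=\F/\O_C(1)$ with $Z$ of length~3. If $Z$ is \emph{not} contained in a line, its minimal resolution is $0\to 2\O(-3)\to 3\O(-2)\to\O\to\O_Z\to 0$; the horseshoe connecting map $2\O(-3)\to\O(-4)$ is forced to vanish (there are no such nonzero maps), so $E\isom\O(-4)\oplus\I_Z$, which is not locally free --- contradicting $\operatorname{pd}\F=1$. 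Hence $Z$ \emph{must} lie on a line. In that case the Koszul resolution $0\to\O(-4)\to\O(-3)\oplus\O(-1)\to\O\to\O_Z\to 0$ is available, the horseshoe connecting map $\O(-4)\to\O(-4)$ is a scalar, and the same locally-free-kernel argument forces it to be nonzero, after which $\O(-4)$ cancels and you get the claimed resolution. So your $d=5$ case is salvageable, but it needs this colinearity argument rather than an appeal to Chern numbers.
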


\begin{proof}
Let $\F$ give a point in $\M(5,3)$ and satisfy the cohomological conditions from the proposition.
Write $m=h^0(\F \tensor \Om^1(1))$.
The Beilinson free monad (2.2.1) \cite{drezet-maican} for $\F$ reads
\[
0 \lra \O(-2) \lra 3\O(-2) \oplus m\O(-1) \lra (m-1)\O(-1) \oplus 4\O \lra \O \lra 0
\]
and gives the resolution
\[
0 \lra \O(-2) \lra 3\O(-2) \oplus m\O(-1) \lra \Om^1 \oplus (m-4)\O(-1) \oplus 4\O \lra \F
\lra 0.
\]
We see from the above that $m \ge 4$. Combining with the Euler sequence
we obtain the resolution
\begin{multline*}
0 \lra \O(-2) \stackrel{\psi}{\lra} \O(-3) \oplus 3\O(-2) \oplus m\O(-1) \stackrel{\f}{\lra} \\
3\O(-2) \oplus (m-4)\O(-1) \oplus 4\O \lra \F \lra 0,
\end{multline*}
\[
\psi = \left[
\begin{array}{c}
0 \\ 0 \\ \psi_{31}
\end{array}
\right], \qquad \f= \left[
\begin{array}{ccc}
\eta & \f_{12} & 0 \\
0 & \f_{22} & 0 \\
0 & \f_{32} & \f_{33}
\end{array}
\right]. \qquad \text{Here} \qquad \eta = \left[
\ba{c}
X \\ Y \\ Z
\ea
\right].
\]
We have a commutative diagram in which the vertical maps are projections onto direct summands:
\[
\xymatrix
{
\O(-3) \oplus 3\O(-2) \oplus m\O(-1) \ar[d] \ar[r]^-{\f} & 3\O(-2) \oplus (m-4)\O(-1) \oplus 4\O \ar[d] \\
\O(-3) \oplus 3\O(-2) \ar[r]^-{\a} & 3\O(-2)
},
\]
\[
\a= \left[ \ba{cc} \eta & \f_{12} \ea \right].
\]
Thus $\F$ maps surjectively to $\Coker(\a)$. If $\rank(\f_{12})=0$,
then $\Coker(\a) \isom \Om^1$. 
If $\rank(\f_{12})=1$, then $\Coker(\a) \isom \I_x(-1)$,
where $\I_x \subset \O$ is the ideal sheaf of a point $x \in \P^2$.
These two cases are unfeasible because $\F$ has support of dimension $1$ so it cannot
map surjectively onto a sheaf supported on the entire plane.
If $\rank(\f_{12})=2$, then $\Coker(\a)$ would be isomorphic to $\O_{L}(-2)$ for a line $L \subset \P^2$,
so it would destabilise $\F$. We conclude that $\rank(\f_{12})=3$.
We may cancel $3\O(-2)$ to get the resolution
\[
0 \lra \O(-2) \stackrel{\psi}{\lra} \O(-3) \oplus m\O(-1) \stackrel{\f}{\lra}
(m-4) \O(-1) \oplus 4\O \lra \F \lra 0,
\]
\[
\psi = \left[
\begin{array}{c}
0 \\ \psi_{21}
\end{array}
\right], \qquad \qquad \f= \left[
\begin{array}{cc}
\f_{11} & 0 \\
\f_{21} & \f_{22}
\end{array}
\right].
\]
Note that $\F$ maps surjectively onto ${\mathcal Coker}(\f_{11})$, so the latter has
rank zero, forcing $m \le 5$. If $m=5$, then $\Coker(\f_{11})$ would be isomorphic to $\O_C(-1)$
for a conic curve $C \subset \P^2$, so it would destabilise $\F$.
We deduce that $m=4$ and we get the resolution
\[
0 \lra \O(-2) \stackrel{\psi}{\lra} \O(-3) \oplus 4\O(-1) \stackrel{\f}{\lra} 4\O
\lra \F \lra 0.
\]
Let $\bar{\psi} \colon V \to \C^4$ be the linear map induced by $\psi_{21}$.
Let $H$ be the image of $\bar{\psi}$ and let $K \subset \C^4$ be a linear subspace
such that $H \oplus K =\C^4$.
We have an exact sequence
\[
0 \lra \O(-2) \stackrel{\psi}{\lra} \O(-3) \oplus (K \tensor \O(-1)) \oplus (H \tensor \O(-1)) \stackrel{\f}{\lra} 4\O
\lra \F \lra 0,
\]
in which $\psi_{11}=0$, $\psi_{21}=0$. If $\dim(H)=1$, then $\psi_{31}$ is generically surjective.
As $\f$ vanishes on $\Im(\psi_{31})$, it must vanish on $H \tensor \O(-1)$,
hence $H \tensor \O(-1)$ is a subsheaf of $\O(-2)$.
This is absurd. If $\dim(H)=2$, then $\Coker(\psi_{31})$ is isomorphic to the ideal sheaf $\I_x$
of a point $x \in \P^2$.
We get a resolution
\[
0 \lra \O(-3) \oplus 2 \O(-1) \oplus \I_x \lra 4\O \lra \F \lra 0.
\]
The image of $\I_x$ is included into a factor $\O$ of $4\O$ because $\Hom(\I_x, \O) \isom \C$.
We obtain a commutative diagram
\[
\xymatrix
{
0 \ar[r] & \I_x \ar[r] \ar[d] & \O \ar[r] \ar[d] & \C_x \ar[r] \ar[d] & 0 \\
0 \ar[r] & \O(-3) \oplus 2 \O(-1) \oplus \I_x \ar[r] & 4\O \ar[r] & \F \ar[r] & 0
}
\]
in which the first two vertical maps are injective.
The induced map $\C_x \to \F$ is zero because $\F$ has no zero-dimensional torsion.
It follows that $\O$ is a subsheaf of $\O(-3) \oplus 2 \O(-1) \oplus \I_x$,
which is absurd. We deduce that $H$ has dimension $3$, so $\Coker(\psi_{31})\isom \Om^1(1)$
and we get the resolution
\[
0 \lra \O(-3) \oplus \O(-1) \oplus \Om^1(1) \stackrel{\f}{\lra} 4\O \lra \F \lra 0.
\]
Consider the canonical morphism $i \colon \Om^1(1) \to \Hom(\Om^1(1),\O)^*\tensor \O \isom 3\O$.
There is a morphism $\b \colon 3\O \to 4\O$ such that $\b \circ i = \f_{13}$.
If $\b$ were not injective, then $\f$ would be equivalent to a morphism represented by a matrix of the form
\[
\left[
\begin{array}{cc}
\g_{11} & 0 \\
\g_{21} & \g_{22}
\end{array}
\right],
\]
where $\g_{11} \in \Hom(\O(-3) \oplus \O(-1), 2\O)$.
But then $\Coker(\g_{11})$ would be a destabilising quotient sheaf of $\F$.
Thus $\b$ is injective, from which we deduce that $\Coker(\f_{13}) \isom \O \oplus \O(1)$.
We obtain the resolution
\[
0 \lra \O(-3) \oplus \O(-1) \stackrel{\f}{\lra} \O \oplus \O(1) \lra \F \lra 0.
\]
If $\f_{12}=0$, then $\F$ would have a destabilising subsheaf of the form $\O_C(1)$,
for a conic curve $C \subset \P^2$.
If $\f_{12}$ divided $\f_{22}$, then $\F$ would have a destabilising subsheaf of the form
$\O_L$ for a line $L \subset \P^2$.

Conversely, assume that $\F$ has a resolution as in the proposition.
Then $\F$ has no zero-dimensional
torsion because it has projective dimension $1$ at every point in its support.
Thus, it is enough to show that $\F$ cannot have a destabilising subsheaf. 
Let $\F' \subset \F$ be a non-zero subsheaf of multiplicity at most 4.
According to \ref{2.3.5}, $\F$ is isomorphic to $\J_Z(2)$, where $\J_Z \subset \O_C$
is the ideal sheaf of a zero-dimensional scheme $Z$ of length $2$ inside a quintic
curve $C$. According to \cite{maican}, lemma 6.7, there is a sheaf $\A \subset \O_C(2)$
containing $\F'$ such that $\A/\F'$ is supported on finitely many points and $\O_C(2)/\A \isom
\O_S(2)$ for a curve $S \subset C$ of degree $d \le 4$.
The slope of $\F'$ can be estimated as follows:
\begin{align*}
P_{\F'}(t) & = P_{\A}(t) - h^0(\A/\F') \\
& = P_{\O_C}(t+2) - P_{\O_S}(t+2) - h^0(\A/\F') \\
& = (5-d)t + \frac{(d-5)(d-2)}{2} - h^0(\A/\F'), \\
p(\F') & = \frac{2-d}{2} - \frac{h^0(\A/\F')}{5-d} \le \frac{1}{2} < \frac{3}{5} = p(\F).
\end{align*}
We conclude that $\F$ is semi-stable.
\end{proof}

\begin{prop}
\label{2.1.5}
Any sheaf $\G$ giving a point in $\M(5,2)$ satisfies the condition $h^0(\G(-1)) \le 1$.
\end{prop}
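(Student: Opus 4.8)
The plan is to argue by contradiction: suppose $\G$ gives a point in $\M(5,2)$ with $h^0(\G(-1)) \ge 2$. First I would use Serre duality together with the duality isomorphism $\M(5,2) \isom \M(5,3)$ to translate this into a statement about the dual sheaf $\F = \G^{\D}(1)$ in $\M(5,3)$. Since $h^0(\G(-1)) = h^1(\G^{\D}(1)) = h^1(\F)$ (up to the standard Serre-duality bookkeeping of 2.1.2 in \cite{drezet-maican}), the hypothesis $h^0(\G(-1)) \ge 2$ becomes $h^1(\F) \ge 2$ for some $\F$ in $\M(5,3)$. So it suffices to show that no semi-stable sheaf on $\P^2$ of multiplicity $5$ and Euler characteristic $3$ can have $h^1 \ge 2$.

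To handle the latter, I would combine Propositions \ref{2.1.1}--\ref{2.1.4}, which together give a complete case analysis for sheaves in $\M(5,3)$. If $h^0(\F(-1)) = 0$, then \ref{2.1.1} forces $h^1(\F) = 0$, contradicting $h^1(\F) \ge 2$. If $h^0(\F(-1)) \ge 1$, then the first conclusions of \ref{2.1.3} and \ref{2.1.4} show $h^0(\F(-1)) = 1$; moreover by \ref{2.1.3} the case $h^1(\F) = 0$ is already accounted for, and by \ref{2.1.4} the case $h^1(\F) = 1$ is accounted for. What remains is to rule out $h^1(\F) \ge 2$ with $h^0(\F(-1)) = 1$. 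Here I would run the Beilinson monad argument exactly as in the proof of \ref{2.1.4}: write $m = h^0(\F \tensor \Om^1(1))$ and $q = h^1(\F)$, so that the monad reads
\[
0 \lra q\O(-2) \lra (q+2)\O(-2) \oplus m\O(-1) \lra (m-1)\O(-1) \oplus (q+3)\O \lra q\O \lra 0,
\]
and then extract a resolution and chase the cokernels of the blocks of $\f$ against the constraint that $\F$ has one-dimensional support and is semi-stable. The inequalities forced by "no destabilising quotient / subsheaf of small degree" should pin $q$ down; I expect they force $q \le 1$, closing the argument.

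The main obstacle is the bookkeeping in the Beilinson monad when $q = h^1(\F)$ is left as a free parameter: one must verify that the numerical shape of the monad is as claimed and that each cancellation step (of the form $\O(-2)$ or $\O(-3)$ summands) is legitimate, then translate every "the cokernel of this block is supported on the whole plane / on a curve of degree $d$" observation into a slope estimate of the form $p(\F') \le c < 3/5$ or a quotient of slope $> 3/5$. The cleanest route may actually be to avoid redoing the monad and instead invoke the results already cited in this section from \cite{maican} — in particular the classification of sheaves in $\M(5,3)$ by the pair $(h^0(\F(-1)), h^1(\F))$ that underlies \ref{2.1.1}--\ref{2.1.4} — so that the proof reduces to the observation that the list of admissible pairs never contains a value of $h^1$ exceeding $1$, whence by duality $h^0(\G(-1)) \le 1$ for all $\G \in \M(5,2)$.
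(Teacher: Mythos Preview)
Your reduction by duality is correct: showing $h^0(\G(-1))\le 1$ for $\G\in\M(5,2)$ is equivalent to showing $h^1(\F)\le 1$ for $\F\in\M(5,3)$. The gap is in the next step. You write that ``the first conclusions of \ref{2.1.3} and \ref{2.1.4} show $h^0(\F(-1))=1$'', but look at their hypotheses: \ref{2.1.3} assumes $h^1(\F)=0$ and \ref{2.1.4} assumes $h^1(\F)=1$. Neither says anything when $h^1(\F)\ge 2$, so you cannot conclude $h^0(\F(-1))=1$ in that regime, and in fact no bound on $h^0(\F(-1))$ is yet available. Your fallback --- that the list of admissible pairs $(h^0(\F(-1)),h^1(\F))$ from \cite{maican} already excludes $h^1\ge 2$ --- is circular: compiling that list for $\M(5,3)$ is precisely what \ref{2.1.5} and \ref{2.1.6} together accomplish; it is not an input. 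Incidentally, your displayed monad has the parameter $q=h^1(\F)$ where $p=h^0(\F(-1))$ should appear in the two leftmost terms (in \ref{2.1.4} both equal $1$, which masks the distinction).

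The paper does not dualize at all. It works directly with $\G\in\M(5,2)$: assuming $h^0(\G(-1))>0$, one gets an injection $\O_C\hookrightarrow\G(-1)$ with $C$ a quintic, hence a length-$2$ zero-dimensional quotient, hence a short exact sequence $0\to\G'\to\G\to\C_x\to 0$ with $\G'\in\M(5,1)$ (when $\G'$ is semi-stable). One then invokes \ref{3.1.5} for $\M(5,1)$ to get $h^0(\G'(-1))\le 1$, and a horseshoe argument rules out the borderline case. When $\G'$ is not semi-stable, a short Jordan--H\"older analysis ($(r,\chi)=(4,1)$ or $(3,1)$) forces $h^0(\G(-1))=0$. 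So the actual proof trades the Beilinson bookkeeping you anticipate for a reduction to the already-understood moduli space $\M(5,1)$.
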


\begin{proof}
Let $\G$ be in $\M(5,2)$ and assume that $h^0(\G(-1)) >0$.
As in the proof of 2.1.3 \cite{drezet-maican}, there is an injective morphism $\O_C \to \G(-1)$
for a curve $C \subset \P^2$.
From the semi-stability of $\G(-1)$ we see that $C$ must be a quintic curve.
The quotient sheaf $\G(-1)/\O_C$ is a sheaf of dimension zero and length $2$;
it maps surjectively onto the structure sheaf $\C_x$ of a point $x$.
Let $\G'$ be the kernel of the composed morphism $\G \to \C_x$.
If $\G'$ is semi-stable, then, from \ref{3.1.5}, we have $h^0(\G'(-1)) \le 1$.
It follows that $h^0(\G(-1)) \le 1$ unless $h^0(\G'(-1)) = 1$ and the morphism $\G(-1) \to \C_x$
is surjective on global sections.
In this case we can apply the horseshoe lemma to the extension
\[
0 \lra \G'(-1) \lra \G(-1) \lra \C_x \lra 0,
\]
to the standard resolution of $\C_x$ and to resolution \ref{3.1.5} for $\G'$ tensored with $\O(-1)$,
which reads:
\[
0 \lra 2\O(-4) \lra \O(-3) \oplus \O \lra \G'(-1) \lra 0.
\]
We get a resolution of the form
\[
0 \lra 2\O(-4) \oplus \I_x \lra \O(-3) \oplus 2\O \lra \G(-1) \lra 0.
\]
We now arrive at a contradiction as in the proof of \ref{2.1.4}.
The image of $\I_x$ is included in a factor $\O$ of $2\O$.
As $\G(-1)$ has no zero-dimensional torsion, this factor $\O$
maps to zero in $\G(-1)$, which is absurd.

Assume now that $\G'$ is not semi-stable and let $\G'' \subset \G'$ be a destabilising subsheaf.
We may assume that $\G''$ itself is semi-stable, say it gives a point in $\M(r,\chi)$.
We have the inequalities
\[
\frac{1}{5} = p(\G') < \frac{\chi}{r} < p(\G) = \frac{2}{5}
\]
leaving only the possibilities $(r,\chi) = (4,1)$ or $(3,1)$.
Denote $\CC=\G/\G''$. If $\G''$ is in $\M(4,1)$, then $P_{\CC}(t)=t+1$.
Moreover, the zero-dimensional torsion of $\CC$ vanishes, otherwise its pull-back in $\G$
would be a destabilising subsheaf. We deduce that $\CC=\O_L$ for a line $L \subset \P^2$.
But $h^0(\O_L(-1))=0$ and, according to 2.1.3 \cite{drezet-maican}, also $h^0(\G''(-1))=0$.
We get $h^0(\G(-1))=0$, contradicting our hypothesis on $\G$.

The last case to examine is when $\G''$ is in $\M(3,1)$.
We have $P_{\CC}(t)=2t+1$. As before, $\CC$ has no zero-dimensional torsion.
Moreover, any quotient sheaf destabilising $\CC$ must also destabilise $\G$.
We conclude that $\CC$ is semi-stable, i.e. $\CC = \O_C$ for a conic curve $C \subset \P^2$.
But $h^0(\O_C(-1))=0$ and, according to 2.1.3 \cite{drezet-maican}, also $h^0(\G''(-1))=0$.
We conclude that $h^0(\G(-1))=0$, contrary to our hypothesis on $\G$.
\end{proof}

\begin{prop}
\label{2.1.6}
There are no sheaves $\G$ giving points in $\M(5,2)$ and satisfying the conditions
$h^0(\G(-1))=1$ and $h^1(\G)\ge 2$.
\end{prop}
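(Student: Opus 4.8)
The plan is to argue by contradiction using the duality isomorphism $\M(5,2) \isom \M(5,3)$ together with the structural results already established for $\M(5,3)$. Suppose $\G$ gives a point in $\M(5,2)$ with $h^0(\G(-1))=1$ and $h^1(\G) \ge 2$. Applying the duality $\G \mapsto \G^{\D}(1)$, one obtains a sheaf $\F$ in $\M(5,3)$, and by Serre duality (as recorded in 2.1.2 \cite{drezet-maican}) the cohomological data transforms into $h^1(\F) = h^0(\G(-1)) = 1$ and $h^0(\F(-1)) = h^1(\G) \ge 2$. So it suffices to show that no sheaf $\F$ in $\M(5,3)$ satisfies $h^0(\F(-1)) \ge 2$ and $h^1(\F) = 1$.

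To handle that, I would mimic the first part of the proof of \ref{2.1.5}: if $h^0(\F(-1)) > 0$, there is an injective morphism $\O_C \lra \F(-1)$ for a curve $C$, and semi-stability of $\F(-1)$ (which has slope $3/5 - 1 = -2/5$) forces $C$ to be a quintic, since $\O_C(-1)$ would otherwise be a destabilising subsheaf. The quotient $\F(-1)/\O_C$ is then zero-dimensional of length $3$. Now $h^0(\F(-1)) \ge 2$ means the map $\Hom(\O,\F(-1)) \to \Hom(\O,\F(-1)/\O_C)$ on global sections picks up at least one extra section, so $\F(-1)$ surjects onto a zero-dimensional sheaf of length at least $1$ beyond $\O_C$; more precisely one can peel off successive structure sheaves of points. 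Taking the kernel $\G'$ of $\F(-1) \to \C_x$ for a suitable point $x$ in the support, I would analyse $\G'$ exactly as in \ref{2.1.5}: either $\G'$ is semi-stable, in which case \ref{3.1.5} (or the relevant estimate there) bounds $h^0(\G'(-1)) \le 1$, forcing $h^0(\F(-1)) \le 2$ and then a horseshoe-lemma argument with the standard resolution of $\C_x$ produces a resolution $0 \lra 2\O(-4) \oplus \I_x \lra \O(-3) \oplus 2\O \lra \F(-1) \lra 0$ type object whose $\I_x$ summand maps into a trivial summand $\O$ that must then die in $\F(-1)$, an absurdity since $\F(-1)$ has no zero-dimensional torsion; or $\G'$ is unstable, and a destabilising semi-stable subsheaf $\G'' \subset \G'$ lies in some $\M(r,\chi)$ with slopes squeezed between that of $\G'$ and that of $\F(-1)$, leaving only finitely many numerical possibilities, each eliminated by checking that the relevant quotient is a structure sheaf $\O_L$ or $\O_C$ of a line or conic with $h^0(\O_L(-1)) = h^0(\O_C(-1)) = 0$ and invoking 2.1.3 \cite{drezet-maican} to kill the $h^0(-1)$ of $\G''$ as well.

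An alternative, possibly cleaner route is purely internal to $\M(5,3)$: by \ref{2.1.4}, any $\F$ in $\M(5,3)$ with $h^0(\F(-1))=1$ and $h^1(\F)=1$ is a twisted ideal sheaf $\J_Z(2) \subset \O_C(2)$ of a length-$2$ subscheme $Z$ of a quintic $C$, and one checks directly from $h^0(\J_Z(2)) = h^0(\O_C(1)) - (\text{corrections})$ that $h^0(\F(-1)) = h^0(\J_Z(1)) \le h^0(\O_C(1)) = 0$ — wait, more carefully, $\F(-1) = \J_Z(1)$, whose global sections inject into $H^0(\O_C(1))$, a space of dimension $3$; one must show the constraint imposed by vanishing at $Z$ together with $h^1 = 1$ cannot leave room for $h^0 \ge 2$. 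The main obstacle in either approach is the bookkeeping in the unstable case of $\G'$: one must enumerate the Harder--Narasimhan possibilities $(r,\chi)$ correctly and verify in each that the destabilising piece and its complementary quotient are forced to be structure sheaves of curves with no sections after the twist by $\O(-1)$, so that the putative extra section of $\F(-1)$ has nowhere to come from. I expect to reuse verbatim the style of the last two paragraphs of the proof of \ref{2.1.5}, adjusting the numerics from length $2$ to length $3$ and from $\chi = 2$ to $\chi = 3$.
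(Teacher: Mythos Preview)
Your approach via duality and the peeling argument of \ref{2.1.5} is genuinely different from the paper's, but it has a real gap in the semi-stable branch.

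After dualising to $\F\in\M(5,3)$ with $h^1(\F)=1$ and $h^0(\F(-1))\ge 2$, you peel off $\C_x$ and obtain $\G'=\ker(\F\to\C_x)$, which (when semi-stable) lies in $\M(5,2)$, not $\M(5,1)$; so the relevant input is not \ref{3.1.5} but the dual of \ref{2.1.4}. That dual, however, only gives you a resolution for $\G'$ under the hypothesis $h^1(\G')=1$. From the long exact sequence of $0\to\G'\to\F\to\C_x\to 0$ you only get $h^1(\G')\in\{1,2\}$, and $h^1(\G')=2$ occurs precisely when $H^0(\F)\to H^0(\C_x)$ is zero. You cannot always avoid this by choosing $x$ cleverly: if $\F/\O_C(1)$ is a length-$3$ scheme concentrated at a single point and the image of $H^0(\F)$ in it lies in the maximal ideal, every choice of $x$ gives $h^1(\G')=2$. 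In that case $\G'$ is a sheaf in $\M(5,2)$ with $h^0(\G'(-1))=1$ and $h^1(\G')\ge 2$ --- exactly an instance of the statement you are trying to prove --- and the horseshoe contradiction cannot be run because you have no resolution for $\G'$. This is a genuine circularity, not just missing bookkeeping. (Two smaller issues: the curve $C$ with $\O_C\hookrightarrow\F(-1)$ could a priori have degree $4$, since $p(\O_C)=-1/2<-2/5$; this case is easily dispatched but must be handled. And in the unstable branch the destabilising $(r,\chi)$ are $(2,1)$ and $(4,2)$, not the pairs from \ref{2.1.5}.)

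The paper avoids this altogether by a deformation argument. It parametrises all $\G\in\M(5,2)$ with $h^0(\G(-1))=1$ (and fixed $h^0(\G\otimes\Om^1)=m$) by an explicit smooth family $M$ of Beilinson monads for $\G(-1)$, and observes that $h^1(\G)$ is governed by the rank of a single block $B_{11}$ of the monad. It then shows, via a direct computation of the differential, that the map $\Phi\colon(A,B)\mapsto B_{11}$ is a submersion on $M$; hence the locus $\{h^1(\G)\ge 2\}=\{\operatorname{rank}B_{11}\le 10\}$ lies in the closure of $\{h^1(\G)=1\}$ and is disjoint from it. But \ref{2.1.4} forces $m=0$ on the latter locus, and \ref{2.2.6} (established independently) shows this locus is already closed; so the bad locus is empty. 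The key idea you are missing is this passage through the monad parametrisation and the submersion argument, which replaces the inductive peeling by a topological fact about rank loci.
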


\begin{proof}
Fix an integer $m \ge 0$ and let $X$ be the set of sheaves $\G$ in $\M(5,2)$
satisfying $h^0(\G(-1))=1$ and $h^0(\G \tensor \Om^1)=m$. Let $Y \subset X$ be the
subset of sheaves satisfying the additional condition $h^1(\G)=1$.
According to 2.1.3 \cite{drezet-maican}, for every sheaf in $X$ we have $\H^0(\G(-2))=0$.
The Beilinson free monad (2.2.1) \cite{drezet-maican} for $\G(-1)$ reads
\[
0 \lra 8\O(-2) \oplus m\O(-1) \lra (m+11)\O(-1) \oplus \O \lra 4\O \lra 0.
\]
Thus $X$ is parametrised by an open subset $M$ inside the space of monads of the form
\[
0 \lra 8\O(-1) \oplus m\O \stackrel{A}{\lra} (m+11)\O \oplus \O(1) \stackrel{B}{\lra} 4\O(1) \lra 0,
\]
where $A_{12}=0$, $B_{12}=0$. Let $\Gamma$ be the space of pairs $(A,B)$ of morphisms
\begin{align*}
A \in & \Hom(8\O(-1) \oplus m\O, (m+11)\O \oplus \O(1)), \\
B \in & \Hom((m+11)\O \oplus \O(1), 4\O(1)),
\end{align*}
such that $A$ is injective, $B$ is surjective, $A_{12}=0$, $B_{12}=0$.
Consider the algebraic map $\g \colon \Gamma \to \Hom(8\O(-1),4\O(1))$ given by
$\g(A,B)=B_{11} \circ A_{11}$.
Note that $M$ is an open subset inside $\g^{-1} (0)$.
We claim that $M$ is smooth. For this it is sufficient to show that $\g$ has surjective
differential at every point of $M$.
The tangent space of $\Gamma$ at an arbitrary point $(A,B)$ is the space of pairs $(\a, \b)$ of morphisms
\begin{align*}
\a \in & \Hom(8\O(-1) \oplus m\O, (m+11)\O \oplus \O(1)), \\
\b \in & \Hom((m+11)\O \oplus \O(1), 4\O(1)),
\end{align*}
such that $\a_{12}=0$, $\b_{12}=0$.
We have $\text{d}\g_{(A,B)}(\a,\b) = B_{11} \circ \a_{11} + \b_{11} \circ A_{11}$.
It is enough to prove that the map
$\a_{11} \to B_{11} \circ \a_{11}$ is surjective at a point $(A,B) \in M$.
For this we apply the long $\Ext(8\O(-1),\_\_)$-sequence to the exact sequence
\[
0 \lra \Ker(B_{11}) \lra (m+11)\O \stackrel{B_{11}\,\,}{\lra} 4\O(1) \lra 0
\]
and we use the vanishing of $\Ext^1(8\O(-1),\Ker(B_{11}))$. This vanishing follows
from the exact sequence
\[
0 \lra 8\O(-1) \oplus m\O \lra \Ker(B_{11}) \oplus \O(1) \lra \G \lra 0
\]
and the vanishing of $\H^1(\G(1))$, which is a consequence of 2.1.3 \cite{drezet-maican}.

Let $\upsilon \colon M \to X$ be the surjective morphism
which sends a monad to the isomorphism class of its cohomology.
The tangent space to $M$ at an arbitrary point $(A,B)$ is
\[
{\mathbb T}_{(A,B)}M= \{ (\a,\b) \mid \a_{12}=0, \b_{12}=0, \b \circ A + B \circ \a =0 \}.
\]
Consider the map $\Phi \colon M \to \Hom((m+11)\O,4\O(1))$, $\Phi(A,B)=B_{11}$.
It has surjective differential at every point. Indeed, $\text{d}\Phi_{(A,B)}(\a,\b) = \b_{11}$,
so we need to show that, given $\b_{11}$, there is $\a$ such that
$\b \circ A + B \circ \a =0$, that is $-\b_{11} \circ A_{11} = B_{11} \circ \a_{11}$.
This follows from the surjectivity of the map $\a_{11} \to B_{11} \circ \a_{11}$,
which we proved above.

We have $h^0(\G)=14 -\rank(\H^0(B_{11}))$.
The subset $N \subset M$ of monads with cohomology $\G$ satisfying $h^1(\G) \ge 2$
is the preimage under $\Phi$ of the set of morphisms of rank at most $10$.
Since any matrix of rank at most $10$ is the limit of a sequence of matrices of rank $11$,
and since the derivative of $\Phi$ is surjective at every point, we deduce that
$N$ is included in $\overline{\upsilon^{-1}(Y)} \setminus \upsilon^{-1}(Y)$.
But, according to \ref{2.1.4}, $Y$ is empty for $m \neq 0$. For $m=0$, we shall
prove at \ref{2.2.6} below that $Y$ is closed. We conclude that $N$ is empty.
\end{proof}

%%%%%%%%%% subsection 2.2

\subsection{Description of the strata as quotients}

In subsection 2.1 we found that the moduli space $\M(5,3)$ can be decomposed into
four strata:
\begin{enumerate}
\item[$-$] an open stratum $X_0$ given by the conditions \\
$h^0(\F(-1))=0$, $h^0(\F \tensor \Om^1(1))=1$;
\item[$-$] a locally closed stratum $X_1$ of codimension $2$ given by the conditions \\
$h^0(\F(-1))=0$, $h^0(\F \tensor \Om^1(1))=2$;
\item[$-$] a locally closed stratum $X_2$ of codimension $3$ given by the conditions \\
$h^0(\F(-1))=1$, $h^1(\F)=0$;
\item[$-$] the stratum $X_3$ of codimension $4$ given by the conditions \\
$h^0(\F(-1))=1$, $h^1(\F)=1$. We shall see below at \ref{2.2.6} that $X_3$ is closed.
\end{enumerate}
In the sequel $X_i$ will be equipped with the canonical induced reduced structure.
Let $W_0$, $W_1$, $W_2$, $W_3$ be the sets of morphisms $\f$
from \ref{2.1.2}(i), \ref{2.1.2}(ii), \ref{2.1.3}, respectively \ref{2.1.4}.
Each sheaf $\F$ giving a point in $X_i$ is the cokernel of a
morphism $\f \in W_i$. Let $\W_i = \Hom(\A_i,\B_i)$ denote the
ambient vector space containing $W_i$. Here $\A_i$, $\B_i$ are locally
free sheaves on $\P^2$, for instance $\A_0 = 2\O(-2) \oplus \O(-1)$, $\B_0 = 3\O$.
The natural group of automorphisms
$G_i = (\Aut(\A_i) \times \Aut(\B_i))/\C^*$ acts on $\W_i$ by conjugation, leaving
$W_i$ invariant (here $\C^*$ is embedded as the subgroup of homotheties).
In this subsection we shall prove that there exist geometric quotients $W_i/G_i$, which
are smooth quasiprojective varieties ($W_3/G_3$ is even projective), such that
$W_i/G_i \isom X_i$. Whenever possible, we shall give concrete descriptions of these
quotients.

\begin{prop}
\label{2.2.1}
There exists a geometric quotient $W_0/G_0$, which is a smooth quasiprojective
variety. $W_0/G_0$ is isomorphic to $X_0$.
\end{prop}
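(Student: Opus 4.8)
The plan is to realize $X_0$ as a quotient by the standard machinery developed in \cite{drezet-maican}: first verify that every sheaf $\F$ in $X_0$ determines its resolution $0 \to \A_0 \to \B_0 \to \F \to 0$ essentially uniquely, so that the map $W_0 \to X_0$ sending $\f$ to $\Coker(\f)$ is a surjection whose fibers are exactly the $G_0$-orbits; then invoke the general results on existence of good/geometric quotients for such representation spaces when the morphism space satisfies the relevant genericity (openness, stability-type) conditions. Concretely, I would argue that $\Hom(\B_0,\F) \isom \H^0(3\O \otimes \F) = \H^0(\F)^{\oplus 3}$ and the identifications $\H^0(\F) \isom \C^3$, $\H^0(\F(-1)) = 0$, $h^1(\F)=0$ pin down $\B_0 = 3\O$ canonically and the kernel of $\B_0 \to \F$ canonically as a sheaf; one then checks this kernel is $\A_0 = 2\O(-2)\oplus\O(-1)$ (using $h^0(\F\otimes\Om^1(1))=1$ to fix the $\O(-1)$ summand via the Euler/Beilinson data already used in the proof of \ref{2.1.2}). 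This gives that two morphisms in $W_0$ have isomorphic cokernel iff they lie in the same $G_0$-orbit.

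Next I would address the quotient existence. Since $G_0 = (\Aut(2\O(-2)\oplus\O(-1)) \times \Aut(3\O))/\C^*$ is non-reductive (the automorphism group of $2\O(-2)\oplus\O(-1)$ is a parabolic-type group, not reductive), GIT does not directly apply, so I would follow the approach of \cite{drezet-maican}: exhibit $W_0$ as sitting inside an ambient space where a reductive group (here essentially $\GL_2 \times \GL_1 \times \GL_3$ modulo scalars acting together with the unipotent radical coming from $\Hom(\O(-1),2\O(-2)) = 0$—in fact here $\Hom(\O(-1),\O(-2))=0$ so the $\Aut$ group of $\A_0$ is already reductive, equal to $\GL_2\times\C^*$, and likewise $\Aut(3\O) = \GL_3$; hence $G_0$ is actually reductive). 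Wait—precisely because there are no nonzero morphisms $\O(-1)\to\O(-2)$ and the summands of $\B_0$ are all $\O$, the group $G_0 = (\GL_2\times\C^*\times\GL_3)/\C^*$ is reductive, so I would simply apply geometric invariant theory: the $G_0$-action on $\W_0$ is the one studied by Drézet, $W_0$ is the open locus of stable points (with respect to a suitable polarization making the injective-and-not-degenerate condition match GIT-stability), and a geometric quotient $W_0/G_0$ exists as a quasiprojective variety by \cite{drezet-maican}.

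The main steps, in order: (1) prove the resolution is canonical, i.e. the natural transformation $W_0 \to X_0$ is a $G_0$-orbit map — this uses the cohomological conditions defining $X_0$ together with Beilinson, exactly as in \ref{2.1.2}; (2) identify the two excluded matrix shapes in the definition of $W_0$ with the unstable (or strictly semistable) locus for the $G_0$-action, so that $W_0$ is precisely the stable locus; (3) apply GIT to get the geometric quotient $W_0/G_0$ as a quasiprojective variety, and transport smoothness of $\M(5,3)$ at stable points (guaranteed by Le Potier, cf. the introduction) or smoothness of the relevant $\Ext$-groups to conclude $W_0/G_0$ is smooth; (4) construct the isomorphism $W_0/G_0 \to X_0$: the orbit map descends to a bijective morphism, and because both sides are smooth (hence normal) and the map is bijective with the universal property, it is an isomorphism — alternatively use that $X_0$ carries a universal family pulling back to the tautological quotient over $W_0$, giving an inverse morphism.

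The hard part will be step (2): showing that the explicitly excluded degeneracy conditions on $\f$ are exactly equivalent to GIT-semistability/stability for the $G_0$-linearized action on $\W_0$. This is the computation of which one-parameter subgroups destabilize which morphisms, matching the Hilbert–Mumford criterion against the combinatorics of the two forbidden block-triangular shapes; the two matrix patterns correspond to the destabilizing flags, and one must check no other patterns arise and that injectivity of $\f$ plus avoidance of these patterns is an open condition cut out by $G_0$-invariant inequalities. Much of the needed input is already packaged in the preliminaries of \cite{drezet-maican}, so in practice I expect to cite the relevant proposition there and verify only that our numerical invariants $(\A_0,\B_0)$ and the slope $3/5$ fall into the range covered.
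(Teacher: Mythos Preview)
Your overall architecture---build the quotient, get a bijective morphism to $X_0$, conclude via Zariski's Main Theorem using smoothness of $X_0$---matches the paper's proof. But there is a genuine error in your reductivity claim. You write that $\Hom(\O(-1),\O(-2))=0$ forces $\Aut(\A_0)\cong \GL_2\times\C^*$; that vanishing is true, but it is the \emph{other} direction that matters: $\Hom(2\O(-2),\O(-1))\cong (V^*)^2\cong\C^6$ is nonzero, so
\[
\Aut(2\O(-2)\oplus\O(-1))=\left\{\begin{pmatrix} g_1 & 0\\ u & g_2\end{pmatrix}: g_1\in\GL_2,\ g_2\in\C^*,\ u\in(V^*)^2\right\}
\]
has a six-dimensional unipotent radical. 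Hence $G_0$ is \emph{not} reductive and you cannot ``simply apply GIT.'' The paper handles this by invoking the non-reductive quotient machinery of Dr\'ezet (polarisations $\L=(\l_1,\l_2,\m_1)$ as in \cite{drezet-trautmann}): for $1/6<\l_1<1/3$ one has $W_0=\{\text{injective }\f\}\cap\W_0^{ss}(\L)$ by 4.3 \cite{maican}, and for $\l_1<1/5$ the result 6.4 \cite{drezet-2000} guarantees that $\W_0^{ss}(\L)/G_0$ exists as a projective geometric quotient. Choosing $1/6<\l_1<1/5$ gives both.

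A second, smaller issue: your step (2) asserts $W_0$ is exactly the stable locus. In fact $W_0$ is only the open subset of \emph{injective} morphisms inside $\W_0^{ss}(\L)$; the paper exhibits an explicit non-injective semi-stable morphism to show the inclusion is proper. So the quotient $W_0/G_0$ is realised as a proper open subset of the projective variety $\W_0^{ss}(\L)/G_0$, which is how quasiprojectivity comes out. Your step (1) (fibers are $G_0$-orbits) and step (4) (bijective plus target smooth implies isomorphism) are fine and are exactly what the paper does.
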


\begin{proof}
Let $\L=(\l_1,\l_2,\m_1)$ be a polarisation for the action of $G_0$ on $\W_0$
satisfying $1/6 < \l_1 < 1/3$ (see \cite{drezet-trautmann} for the notions of polarisation
and of semi-stable morphism). According to 4.3 \cite{maican}, $W_0$ is the open
invariant subset of injective morphisms inside the set
$\W_0^{ss}(\L)$ of semi-stable morphisms with respect to $\L$.
According to 6.4 \cite{drezet-2000}, if $\l_1<1/5$,
then there is a geometric quotient $\W_0^{ss}(\L)/G_0$,
which is a projective variety (see also 7.11 \cite{maican}).
We fix $\L$ satisfying $1/6 < \l_1 < 1/5$.
It is now clear that a geometric quotient $W_0/G_0$
exists and is an open subset of $\W_0^{ss}(\L)/G_0$.

The morphism $W_0 \to X_0$ sending $\f$ to the isomorphism class of $\Coker(\f)$
is surjective and its fibres are $G_0$-orbits, hence it factors through a bijective morphism
$W_0/G_0 \to X_0$. Since $X_0$ is smooth, Zariski's Main Theorem tells us that the
latter is an isomorphism.
\end{proof}

We remark that $W_0$ is a proper subset of $\W^{ss}_0(\L)$,  hence $W_0/G_0$
is a proper open subset of the projective variety $\W_0^{ss}(\L)/G_0$.
Indeed, the morphism $\f_0$ represented by the matrix
\[
\left[
\ba{ccc}
XY & \phantom{-}X^2 & 0 \\
XZ & \phantom{-}0 & X \\
0 & -XZ & Y
\ea
\right]
\]
is not injective but is semi-stable with respect to $\L$.
This follows from King's criterion of semi-stability \cite{king}, which, in our case, says
that a morphism is in $\W_0^{ss}(\L)$ if and only if it is not equivalent to a morphism
having one of the following forms:
\[
\left[
\ba{ccc}
\star & \star & 0 \\
\star & \star & 0 \\
\star & \star & \star
\ea
\right], \qquad \left[
\ba{ccc}
\star & 0 & 0 \\
\star & \star & \star \\
\star & \star & \star
\ea
\right], \qquad \left[
\ba{ccc}
0 & \star & \star \\
0 & \star & \star \\
0 & \star & \star
\ea
\right], \qquad \left[
\ba{ccc}
0 & 0 & \star \\
0 & 0 & \star \\
\star & \star & \star
\ea
\right].
\]
The first case is excluded by the fact that $\f_0$ has two linearly independent
entries on column 3, the second case is excluded by the fact that $\f_0$
has two linearly independent entries on row 1. To exclude the third case assume that
\[
\left[
\ba{ccc}
XY & \phantom{-}X^2 & 0 \\
XZ & \phantom{-}0 & X \\
0 & -XZ & Y
\ea
\right] \left[
\ba{c}
c_1 \\ c_2 \\ \ell
\ea
\right]= \left[
\ba{c}
0 \\ 0 \\ 0
\ea
\right],
\]
for $c_1, c_2 \in \C$ and $\ell \in V^*$.
Then the triple $(c_1 X, c_2 X, \ell)$ is a multiple of $(-X, Y, Z)$, which is absurd.
The last case can also be easily excluded.

We recall from 2.4 \cite{drezet-maican} the moduli spaces $\N(3,m,n)$
of semi-stable Kronecker modules $f \colon \C^m \tensor V \to \C^n$.

\begin{prop}
\label{2.2.2}
There exists a geometric quotient $W_1/G_1$ and it is a proper open subset
inside a fibre bundle over $\P^2 \times \N(3,2,3)$ with fibre $\P^{16}$.
\end{prop}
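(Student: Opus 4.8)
The plan is to follow the same pattern as in the proof of \ref{2.2.1}, but now the relevant moduli space of morphisms is built from two Kronecker-type pieces, so the quotient will be a bundle rather than an open subset of a projective GIT quotient. Recall from \ref{2.1.2}(ii) that a sheaf $\F$ giving a point in $X_1$ has resolution
\[
0 \lra 2\O(-2) \oplus 2\O(-1) \stackrel{\f}{\lra} \O(-1) \oplus 3\O \lra \F \lra 0,
\]
with $\f_{12}=0$, $\f_{11}=[\ell_1\ \ell_2]$ having linearly independent one-forms, and $\f_{22} \in \Hom(2\O(-1),3\O)$ having linearly independent maximal minors. So $\A_1 = 2\O(-2) \oplus 2\O(-1)$, $\B_1 = \O(-1) \oplus 3\O$, and $W_1 \subset \W_1$ consists of injective morphisms of this triangular shape. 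First I would record the block structure of $G_1$: since $\Hom(\O(-1),2\O(-2))=0$ and $\Hom(3\O,\O(-1))=0$, an automorphism of $\A_1$ is block lower-triangular and one of $\B_1$ is block lower-triangular, so $G_1$ is an extension of $\big(\GL_2\times\GL_2\times\GL_1\times\GL_3\big)/\C^*$ by a unipotent radical coming from the off-diagonal homomorphism spaces $\Hom(\O(-2),\O(-1))^{?}$ and $\Hom(\O,\O(-1))^{?}$; in fact the only nonzero off-diagonal spaces are $\Hom(2\O(-1),2\O(-2))=0$ (zero) and $\Hom(3\O,\O(-1))=0$ — wait, these vanish, so $G_1$ is essentially reductive on the nose up to the homotheties, with the subtlety that conjugation mixes $\f_{21}$ with the diagonal blocks.

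The core observation is that the condition $\f_{12}=0$ is preserved, and the two surviving invariants of $\f$ modulo $G_1$ decouple: the pair $(\ell_1,\ell_2)$ up to the action of $\GL_2$ on the source and $\GL_1$ on the target is a point of $\P(V^*)$-data — more precisely, $\f_{11}$ with linearly independent entries, modulo $\GL_2 \times \C^*$, records an unordered "pencil of lines" which is the same as the line $\spann(\ell_1,\ell_2)\subset V^*$, hence a point of $\P^2 = \P(V)$ (the point dual to this pencil). Meanwhile $\f_{22}$, a Kronecker module $\C^2\tensor V \to \C^3$ with linearly independent maximal minors, modulo $\GL_2\times\GL_3/\C^*$ gives a point of $\N(3,2,3)$ once one checks the "linearly independent maximal minors" condition is exactly (an open part of) Kronecker semi-stability for $(3,2,3)$ — this identification is in 2.4 and the preliminaries of \cite{drezet-maican}. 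The remaining data, after fixing the $G_1$-orbit of $(\f_{11},\f_{22})$, is $\f_{21} \in \Hom(2\O(-2),3\O) = 3\cdot 2 \cdot S^2V^* \cong \C^{18}$, on which the stabiliser of $(\f_{11},\f_{22})$ still acts; the residual action by the diagonal torus and by the Kronecker stabiliser cuts $\C^{18}$ down — modulo scalars one expects a $\P^{16}$, matching the statement. So I would set up the map $W_1/G_1 \to \P^2 \times \N(3,2,3)$ by $[\f] \mapsto ([\f_{11}], [\f_{22}])$, prove it is a well-defined morphism with the expected fibres, and then trivialise it locally by choosing local sections of the $\GL$-bundles over $\P^2$ and over $\N(3,2,3)$ (the latter using that the Kronecker moduli space carries, locally, such slices), which presents $W_1/G_1$ as a locally trivial $\P^{16}$-bundle.

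The existence of the geometric quotient itself I would get, as in \ref{2.2.1}, from \cite{drezet-2000} / \cite{drezet-trautmann}: choose a polarisation $\L$ for the $G_1$-action on $\W_1$ in the range where \cite{drezet-2000} guarantees a projective geometric quotient $\W_1^{ss}(\L)/G_1$, check via King's criterion that the conditions defining $W_1$ (triangular shape, $\f_{11}$ independent, $\f_{22}$ independent minors, $\f$ injective) cut out an open $G_1$-invariant subset of $\W_1^{ss}(\L)$, so that $W_1/G_1$ exists as an open subvariety; then as in \ref{2.2.1} the bijective morphism $W_1/G_1 \to X_1$ is an isomorphism because $X_1$ is smooth (Zariski's Main Theorem). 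That $W_1/G_1$ is a \emph{proper} open subset of the bundle I would see by exhibiting one morphism of the prescribed shape with $\f_{11}$ independent and $\f_{22}$ of independent minors but which is \emph{not} injective (or whose cokernel is not semi-stable), exactly analogously to the $\f_0$ example following \ref{2.2.1}.

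The main obstacle I anticipate is twofold. First, pinning down the residual stabiliser action on $\f_{21}\in\C^{18}$ and verifying that the quotient is genuinely $\P^{16}$ and that the bundle is \emph{locally trivial} (not merely a fibration with $\P^{16}$ fibres): this requires producing local slices for the Kronecker quotient $\N(3,2,3)$ and checking the cocycle compatibility — the referee's report (acknowledged above) explicitly flagged "gaps in our original construction of quotients as fibre bundles," so this is the delicate point. Second, the clean decoupling $[\f]\mapsto([\f_{11}],[\f_{22}])$ is only valid because conjugation by the unipotent part of $G_1$ moves $\f_{21}$ without touching $\f_{11},\f_{22}$, and one must confirm no hidden off-diagonal homomorphisms survive (i.e.\ that $\Hom(\O,\O(-1))=0$, $\Hom(2\O(-1),2\O(-2))=0$ indeed kill all the dangerous cross terms), so that the fibration structure is honest and the base really is the \emph{product} $\P^2 \times \N(3,2,3)$ rather than a twisted version.
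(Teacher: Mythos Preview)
Your proposal has a concrete arithmetic error that propagates into a genuine gap in the argument. You write $\f_{21}\in\Hom(2\O(-2),3\O)\cong\C^{18}$, but $\dim S^2V^*=6$, so this space has dimension $36$. More importantly, the passage to $\P^{16}$ is not ``modulo scalars'' of an $18$- or $17$-dimensional space. The point you miss is that $G_1$ is \emph{not} reductive: the off-diagonal pieces $u\in\Hom(2\O(-2),2\O(-1))$ (dimension $12$) and $v\in\Hom(\O(-1),3\O)$ (dimension $9$) are nonzero and generate a unipotent subgroup $G_1'\subset G_1$. Conjugation by $G_1'$ fixes $\f_{11},\f_{22}$ and translates $\f_{21}$ by $\f_{22}u+v\f_{11}$; the image of this linear map $(u,v)\mapsto\f_{22}u+v\f_{11}$ is a rank-$19$ subbundle $\Sigma$ of the trivial $\C^{36}$-bundle over the base $U$ of pairs $(\f_{11},\f_{22})$, and the quotient bundle $Q'$ has rank $17$. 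Only \emph{after} this unipotent quotient does one projectivise (quotient by the residual torus $S\cong\C^*$) to get a $\P^{16}$-bundle $\P(Q')$, and then descend to $\P^2\times\N(3,2,3)$ by twisting the reductive action by the character $\chi(g,h)=\det(g)\det(h)^{-1}$ and applying the descent lemma 4.2.15 of \cite{huybrechts}. Your oscillation (``$G_1$ is essentially reductive on the nose'' versus ``the unipotent part of $G_1$ moves $\f_{21}$'') is exactly where the argument needs to be pinned down.

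The paper does \emph{not} invoke the Dr\'ezet--Trautmann polarisation machinery for this stratum; it builds the quotient directly by stages (unipotent, then torus, then reductive descent). Your alternative route via a polarisation $\Lambda$ and \cite{drezet-2000} might in principle yield existence of \emph{some} geometric quotient, but you would still owe the identification of that quotient with a $\P^{16}$-bundle over $\P^2\times\N(3,2,3)$, and for that you cannot avoid the unipotent-quotient computation above. Finally, your check that the open subset is proper is fine in spirit; the paper exhibits an explicit $\f\in W_1'\setminus\Sigma$ that fails to be injective.
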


\begin{proof}
Let $W_1'$ be the locally closed subset of $\W_1$ given by the conditions that
$\f_{12}=0$, $\f_{11}$ have linearly independent entries and  $\f_{22}$ have linearly independent
maximal minors. The set of morphisms $\f_{11}$ form an open subset
$U_1 \subset \Hom(2\O(-2),\O(-1))$ and the set of morphisms $\f_{22}$ form an open subset
$U_2 \subset \Hom(2\O(-1),3\O)$. We denote $U = U_1 \times U_2$.
$W_1'$ is the trivial vector bundle over $U$ with fibre $\Hom(2\O(-2),3\O)$.
We represent the elements of $G_1$ by pairs of matrices
\[
(g,h) \in \Aut(2\O(-2) \oplus 2\O(-1)) \times \Aut(\O(-1) \oplus 3\O),
\]
\[
g = \left[
\ba{cc}
g_1 & 0 \\
u & g_2
\ea
\right], \quad \qquad h = \left[
\ba{cc}
h_1 & 0 \\
v & h_2
\ea
\right].
\]
Inside $G_1$ we distinguish three subgroups: a unitary subgroup $G_1'$ given by the conditions
that $g_1$, $g_2$, $h_1$, $h_2$ be the identity morphisms,
a reductive subgroup ${G_1}_{\text{red}}$ given by the conditions $u=0$, $v=0$ and
a subgroup $S$ of ${G_1}_{\text{red}}$ isomorphic to $\C^*$ given by the conditions
that $g_1$, $h_1$ be the morphisms of multiplication by a non-zero constant $a$
and that $g_2$, $h_2$ be the morphisms of multiplication by a non-zero constant $b$.
Note that $G_1 = G_1' {G_1}_{\text{red}}$.
Consider the $G_1$-invariant subset $\Sigma \subset W_1'$ given by the condition
\[
\f_{21}=\f_{22} u + v \f_{11}, \quad u \in \Hom(2\O(-2),2\O(-1)), \quad
v \in \Hom(\O(-1),3\O).
\]
Note that $W_1$ is the subset of injective morphisms inside $W_1' \setminus \Sigma$,
so it is open and $G_1$-invariant. Moreover, it is a proper subset as, for instance, the morphism
represented by the matrix
\[
\left[
\ba{cccc}
Y & X\phantom{^2} & \phantom{-} 0 & 0 \\
0 & Y^2 & \phantom{-} X & 0 \\
0 & Y Z & \phantom{-} 0 & X \\
0 & 0\phantom{^2} & -Z & Y
\ea
\right]
\]
is in $W_1' \setminus \Sigma$ but is not injective.
Our aim is to construct a geometric quotient of $W_1'\setminus \Sigma$ modulo $G_1$;
it will follow that $W_1/G_1$ exists and is a proper open subset of $(W_1'\setminus \Sigma)/G_1$.

Firstly, we construct the geometric quotient $W_1'/G_1'$. Because of the conditions on
$\f_{11}$ and $\f_{22}$ it is easy to check that $\Sigma$ is a subbundle of $W_1'$.
The quotient bundle, denoted $Q'$, has rank $17$.
The quotient map $W_1' \to Q'$ is a geometric quotient modulo $G_1'$.
Moreover, the canonical action of ${G_1}_{\text{red}}$ on $U$ is $Q'$-linearised
and the map $W_1' \to Q'$ is ${G_1}_{\text{red}}$-equivariant.
Let $\sigma$ be the zero-section of $Q'$.
The restricted map $W_1' \setminus \Sigma \to Q' \setminus \sigma$
is also a geometric quotient map modulo $G_1'$.

Let $x \in U$ be a point and let $\xi \in Q_x'$ be a non-zero vector lying over $x$.
The stabiliser of $x$ in ${G_1}_{\text{red}}$ is $S$ and $S \xi = \C^* \xi$.
Thus the canonical map $Q' \setminus \sigma \to \P(Q')$ is a geometric quotient modulo $S$.
It remains to construct a geometric quotient of $\P(Q')$ modulo the induced action of
${G_1}_{\text{red}}/S$.

The existence of a geometric quotient of $U$ modulo ${G_1}_{\text{red}}/S$ follows from the classical
geometric invariant theory. We notice that
\begin{multline*}
{G_1}_{\text{red}}/S \isom ((\Aut(2\O(-2)) \times \Aut(\O(-1)))/\C^*) \times \\
((\Aut(2\O(-1)) \times \Aut(3\O))/\C^*).
\end{multline*}
Using King's criterion of semi-stability \cite{king},
we can see that $U_1$ is the set of semi-stable points
for the canonical action by conjugation of
\[
(\Aut(2\O(-2)) \times \Aut(\O(-1)))/\C^* \quad \text{on} \quad \Hom(2\O(-2), \O(-1)).
\]
The resulting geometric quotient is $\N(3,2,1)$ and is clearly isomorphic to $\P^2$.
Analogously, $U_2$ is the set of semi-stable points for the action of
\[
(\Aut(2\O(-1)) \times \Aut(3\O))/\C^* \quad \text{on} \quad \Hom(2\O(-1), 3\O)
\]
and the resulting quotient is $\N(3,2,3)$. According to \cite{drezet-exceptionnels},
this is a smooth projective irreducible variety of dimension $6$.
We obtain:
\[
U/({G_1}_{\text{red}}/S) \isom \N(3,2,1) \times \N(3,2,3) \isom \P^2 \times \N(3,2,3).
\]
It remains to show that $\P(Q')$ descends to a fibre bundle over $U/({G_1}_{\text{red}}/S)$.
We consider the character $\chi$ of ${G_1}_{\text{red}}$ given by
$
\chi(g, h) = \det(g) \det(h)^{-1}.
$
Note that $\chi$ is well-defined because it is trivial on homotheties.
We multiply the action of ${G_1}_{\text{red}}$ on $Q'$ by $\chi$ and we denote the resulting
linearised bundle by $Q'_{\chi}$.
The action of $S$ on $Q'_{\chi}$ is trivial, hence $Q'_{\chi}$ is ${G_1}_{\text{red}}/S$-linearised.
The isotropy subgroup in ${G_1}_{\text{red}}/S$ for any point in $U$ is trivial,
so we can apply \cite{huybrechts}, lemma 4.2.15, to deduce that $Q'_{\chi}$
descends to a vector bundle $Q$ over $U/({G_1}_{\text{red}}/S)$.
The induced map $\P(Q') \to \P(Q)$ is a geometric quotient map modulo ${G_1}_{\text{red}}/S$.
We conclude that the composed map
\[
W_1' \setminus \Sigma \lra Q'\setminus \sigma \lra \P(Q') \lra \P(Q)
\]
is a geometric quotient map modulo $G_1$
and that a geometric quotient $W_1/G_1$ exists and is a proper open subset inside $\P(Q)$.
\end{proof}

\begin{prop}
\label{2.2.3}
The geometric quotient $W_1/G_1$ is isomorphic to $X_1$.
\end{prop}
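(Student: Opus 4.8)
The plan is to construct a bijective morphism $W_1/G_1\to X_1$ and then to upgrade it to an isomorphism by computing the differential of the cokernel map and checking that it is injective on tangent spaces. First, the sheaves $\Coker(\f)$, $\f\in W_1$, form a flat family over $W_1$ (they are torsion sheaves with fixed Hilbert polynomial $5t+3$, the $\f$ being fibrewise injective) consisting of semi-stable sheaves, by \ref{2.1.2}(ii). This family defines a morphism $\rho\colon W_1\to\M(5,3)$, $\rho(\f)=[\Coker(\f)]$, which by \ref{2.1.2}(ii) surjects onto $X_1$. By construction $\rho$ is constant on $G_1$-orbits, and its fibres are exactly the $G_1$-orbits: the bundles $\A_1=2\O(-2)\oplus 2\O(-1)$ and $\B_1=\O(-1)\oplus 3\O$ are recovered from the cohomology of $\F=\Coker(\f)$, and once they are fixed the presentation \ref{2.1.2}(ii) is unique up to the action of $\Aut(\A_1)\times\Aut(\B_1)$ (cf. the preliminaries of \cite{drezet-maican}). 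Hence $\rho$ factors through a bijective morphism $\bar\rho\colon W_1/G_1\to X_1$.

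Since $W_1/G_1$ is a smooth variety by \ref{2.2.2} and $X_1$ carries the reduced structure, it suffices to prove that the composite $W_1/G_1\stackrel{\bar\rho}{\lra}X_1\hookrightarrow\M(5,3)$ is injective on Zariski tangent spaces; it is then a locally closed immersion, and being bijective onto $X_1$ it is an isomorphism (so, a posteriori, $X_1$ is smooth). Fix $\f\in W_1$ and put $\F=\Coker(\f)$. Because $W_1$ is open in the linear subspace $\W_1^{0}=\{\psi\in\W_1\mid\psi_{12}=0\}$ of $\W_1=\Hom(\A_1,\B_1)$, we have ${\mathbb T}_{\f}W_1=\W_1^{0}$; the tangent space to the orbit is ${\mathbb T}_{\f}(G_1\cdot\f)=\{\,b\f-\f a\mid a\in\Hom(\A_1,\A_1),\ b\in\Hom(\B_1,\B_1)\,\}$ (here $\Hom(\A_1,\A_1)$ and $\Hom(\B_1,\B_1)$ coincide with the Lie algebras of $\Aut(\A_1)$, $\Aut(\B_1)$, and one checks that $G_1$ preserves $\W_1^{0}$); and the differential of $\rho$ at $\f$ sends $\alpha$ to $\partial(\pi\circ\alpha)\in\Ext^1(\F,\F)={\mathbb T}_{[\F]}\M(5,3)$, where $\pi\colon\B_1\to\F$ is the quotient and $\partial$ is the connecting map obtained by applying $\Hom(\,\cdot\,,\F)$ to $0\lra\A_1\stackrel{\f}{\lra}\B_1\lra\F\lra 0$.

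The essential inputs are the vanishings available on $X_1$: $h^1(\F)=0$ by \ref{2.1.2}, and $h^1(\F(1))=0$ by twisting the resolution \ref{2.1.2}(ii) by $\O(1)$ and reading off the cohomology; hence $\Ext^1(\B_1,\F)=\H^1(\F(1))\oplus 3\,\H^1(\F)=0$, while the $\Ext^1$ groups among $\A_1$ and $\B_1$ vanish trivially since these are sums of line bundles. These vanishings make $\partial\colon\Hom(\A_1,\F)\to\Ext^1(\F,\F)$ surjective, $\Hom(\A_1,\B_1)\to\Hom(\A_1,\F)$ surjective, and $\Hom(\B_1,\B_1)\to\Hom(\B_1,\F)$ surjective; together they show that $\alpha\mapsto\partial(\pi\circ\alpha)$ maps $\W_1$ onto $\Ext^1(\F,\F)$ with kernel precisely $\{b\f-\f a\}={\mathbb T}_{\f}(G_1\cdot\f)$, hence induces an isomorphism $\W_1/{\mathbb T}_{\f}(G_1\cdot\f)\isom\Ext^1(\F,\F)$. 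The differential of $\bar\rho$ at $[\f]$ is the restriction of this map to $\W_1^{0}/{\mathbb T}_{\f}(G_1\cdot\f)$, i.e. the inclusion of a linear subspace, and so is injective. This yields the desired isomorphism $W_1/G_1\isom X_1$.

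I expect the main obstacle to be the bookkeeping in the last two paragraphs: identifying $\mathrm d\rho$ with $\alpha\mapsto\partial(\pi\circ\alpha)$ and verifying that the kernel of $\W_1\to\Ext^1(\F,\F)$ is exactly the $G_1$-orbit tangent space, the crucial geometric ingredient being $h^1(\F(1))=0$. A more delicate point, which I would settle by appealing to the preliminary results of \cite{drezet-maican}, is that the fibres of $\rho$ are genuinely single $G_1$-orbits, that is, that the presentation \ref{2.1.2}(ii) is essentially unique.
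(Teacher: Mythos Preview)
Your infinitesimal computation is correct: the vanishings $h^1(\F)=0$, $h^1(\F(1))=0$, $\Ext^1(\A_1,\A_1)=0$, $\Ext^1(\B_1,\A_1)=0$ do show that $\W_1\to\Ext^1(\F,\F)$ is surjective with kernel exactly the orbit tangent space, and hence that $\mathrm d\bar\rho$ is injective at every point.

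The gap is the inference ``injective on points and on Zariski tangent spaces $\Rightarrow$ locally closed immersion''. This is false even when source and ambient target are both smooth. Take $f\colon\mathbb{A}^1\setminus\{1\}\to\mathbb{A}^2$, $t\mapsto(t^2-1,\,t^3-t)$: it is injective and everywhere unramified, yet its image is the whole nodal cubic $\{y^2=x^2(x+1)\}$, so $f$ is a bijection onto a singular curve and is not an immersion. In your situation the analogue of the nodal cubic is $X_1$, and nothing you have proved rules out $X_1$ being singular; equivalently, you have an injection $T_{[\f]}(W_1/G_1)\hookrightarrow T_{[\F]}X_1$, but no control on $\dim T_{[\F]}X_1$. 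To close the gap you would need either properness of $\bar\rho$ (a proper monomorphism is a closed immersion) or normality of $X_1$ (then Zariski's Main Theorem applies); neither is available a priori.

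The paper avoids this by building a local inverse. For $\F$ in $X_1$ the Beilinson tableau gives $\Ker(\f_1)\cong\O(-3)$ and $\Coker(\f_1)\cong\C_x$; the horseshoe lemma applied to the resulting extension $0\to\Coker(\f_5)\to\F\to\C_x\to 0$, together with $h^1(\F)=0$ to cancel an $\O(-3)$, recovers resolution \ref{2.1.2}(ii) functorially in flat families. This yields a morphism $X_1\to W_1/G_1$ inverse to $\bar\rho$, and as a by-product shows that $X_1$ is smooth. Your tangent-space argument is not wrong, but by itself it does not reach the conclusion.
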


\begin{proof}
As at \ref{2.2.1}, we have a canonical bijective morphism $W_1/G_1 \to X_1$.
To show that this is an isomorphism we shall use the method of 3.1.6 \cite{drezet-maican}.
Our aim is to construct resolution \ref{2.1.2}(ii) not merely for an individual sheaf
giving a point in $X_1$, but also for a flat family of sheaves giving points in $X_1$.
We achieve this for local flat families by obtaining resolution \ref{2.1.2}(ii)
in a natural manner from the relative Beilinson spectral sequence associated to the family.
Thus, for any sheaf $\F$ giving a point in $X_1$, we need to recover its resolution from 
its Beilinson spectral sequence. Diagram (2.2.3) \cite{drezet-maican} for $\F$ reads:
\[
\xymatrix
{
2\O(-2) \ar[r]^-{\f_1} & \O(-1) & 0 \\
0 & 2\O(-1) \ar[r]^-{\f_4} & 3\O
}.
\]
Since $\F$ is semi-stable and maps surjectively onto $\Coker(\f_1)$, we see that $\Coker(\f_1)$ is the
structure sheaf $\C_x$ of a point $x \in \P^2$ and that $\Ker(\f_1)$ is isomorphic to $\O(-3)$.
The exact sequence (2.2.5) \cite{drezet-maican}
\[
0 \lra \Ker(\f_1) \stackrel{\f_5}{\lra} \Coker(\f_4) \lra \F \lra \Coker(\f_1) \lra 0
\]
gives the extension
\[
0 \lra \Coker(\f_5) \lra \F \lra \Coker(\f_1) \lra 0.
\]
We apply the horseshoe lemma to the above extension and to the resolutions
\[
0 \lra \O(-3) \oplus 2\O(-1) \lra 3\O \lra \Coker(\f_5) \lra 0,
\]
\[
0 \lra \O(-3) \lra 2\O(-2) \lra \O(-1) \lra \Coker(\f_1) \lra 0.
\]
We arrive at the exact sequence
\[
0 \lra \O(-3) \lra \O(-3) \oplus 2\O(-2) \oplus 2\O(-1) \lra \O(-1) \oplus 3\O \lra \F \lra 0.
\]
Since $\H^1(\F)=0$, we see that $\O(-3)$ can be cancelled and we get resolution \ref{2.1.2}(ii), as desired.
\end{proof}

\begin{prop}
\label{2.2.4}
There exists a geometric quotient $W_2/G_2$, which is a proper open subset
inside a fibre bundle over $\N(3,3,2)$ with fibre $\P^{17}$.
Moreover, $W_2/G_2$ is isomophic to $X_2$.
\end{prop}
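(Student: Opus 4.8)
The plan is to follow closely the pattern of \ref{2.2.2} and \ref{2.2.3}: first realise $W_2/G_2$ as a proper open subset of a $\P^{17}$-bundle over $\N(3,3,2)$, and then identify that quotient with $X_2$ by producing resolution \ref{2.1.3} in families. First I set up the groups. A morphism $\f\in\W_2=\Hom(3\O(-2),2\O(-1)\oplus\O(1))$ has components $\f_{11}\in\Hom(3\O(-2),2\O(-1))$ and $\f_{21}\in\Hom(3\O(-2),\O(1))$. In $\Aut(2\O(-1)\oplus\O(1))$ the only non-scalar off-diagonal block lies in $\Hom(2\O(-1),\O(1))=(S^2V^*)^2$, so the unipotent radical of $G_2$ is the abelian group $G_2'\isom(S^2V^*)^2$ acting by $\f_{21}\mapsto\f_{21}+w\f_{11}$, and $G_2=G_2'\,{G_2}_{\text{red}}$ with ${G_2}_{\text{red}}=(\GL_3\times\GL_2\times\GL_1)/\C^*$, the factor $\GL_1$ being $\Aut(\O(1))$. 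Inside ${G_2}_{\text{red}}$ there is a subgroup $S\isom\C^*$, the image of the matrices $(aI_3,aI_2,b)$, which fixes $\f_{11}$ and scales $\f_{21}$ with weight one. Let $U\subset\Hom(3\O(-2),2\O(-1))$ be the open set of morphisms with linearly independent maximal minors, let $W_2'$ be the trivial rank-$30$ bundle over $U$ with fibre $\Hom(3\O(-2),\O(1))$, and let $\Sigma\subset W_2'$ be the $G_2'$-orbit of $\{\f_{21}=0\}$. Since each $\f_{11}\in U$ has a non-zero maximal minor, the map $w\mapsto w\f_{11}$ is injective, $G_2'$ acts freely on $W_2'$, and $\Sigma$ is a subbundle of rank $12$; moreover $W_2$ is an open $G_2$-invariant subset of $W_2'\setminus\Sigma$ which, as at \ref{2.2.1} and \ref{2.2.2}, is a proper subset (one exhibits a non-injective $\f\in W_2'\setminus\Sigma$ with $\f_{11}$ having linearly independent maximal minors).

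Next I construct a geometric quotient of $W_2'\setminus\Sigma$ modulo $G_2$ in three steps, mimicking \ref{2.2.2}. The quotient $W_2'\to Q'$ by $G_2'$ is the quotient of the bundle $W_2'$ by its subbundle $\Sigma$; it is ${G_2}_{\text{red}}$-equivariant, $Q'$ is a ${G_2}_{\text{red}}$-linearised vector bundle of rank $18$ over $U$, and the restriction $W_2'\setminus\Sigma\to Q'\setminus\sigma$, with $\sigma$ the zero section, is a geometric quotient modulo $G_2'$. As $S$ scales the fibres of $Q'$ with a single non-zero weight, $Q'\setminus\sigma\to\P(Q')$ is a geometric quotient modulo $S$, and ${G_2}_{\text{red}}/S$ acts on $\P(Q')$ covering its action on $U$. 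By King's criterion \cite{king}, $U$ is the set of semi-stable (equivalently stable, the type $(3,2)$ being coprime) points for the action of $(\GL_3\times\GL_2)/\C^*\isom{G_2}_{\text{red}}/S$ on $\Hom(3\O(-2),2\O(-1))$, with geometric quotient $\N(3,3,2)$, which by \cite{drezet-exceptionnels} is a smooth projective irreducible variety of dimension $6$; the stabiliser of every point of $U$ in ${G_2}_{\text{red}}/S$ is trivial. Twisting the ${G_2}_{\text{red}}$-linearisation of $Q'$ by a character $\chi$ of ${G_2}_{\text{red}}$ for which $S$ acts trivially on $Q'_\chi$, for instance $\chi(g,h_1,h_2)=\det(g)\det(h_1)^{-1}\det(h_2)^{-1}$, I apply \cite{huybrechts}, lemma 4.2.15, to descend $Q'_\chi$ to a vector bundle $Q$ over $\N(3,3,2)$; then $\P(Q')\to\P(Q)$ is a geometric quotient modulo ${G_2}_{\text{red}}/S$. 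Composing the three steps, $W_2'\setminus\Sigma\to\P(Q)$ is a geometric quotient modulo $G_2$, hence $W_2/G_2$ exists and is a proper open subset of the projective bundle $\P(Q)$, which is a $\P^{17}$-bundle over $\N(3,3,2)$; since $\N(3,3,2)$ is smooth, $W_2/G_2$ is smooth and quasiprojective.

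For the isomorphism $W_2/G_2\isom X_2$ I argue as at \ref{2.2.3}. The surjection $W_2\to X_2$ sending $\f$ to the isomorphism class of $\Coker(\f)$ has $G_2$-orbits as fibres, hence factors through a bijective morphism $W_2/G_2\to X_2$. To invert it I use the method of 3.1.6 \cite{drezet-maican}: for a local flat family of sheaves giving points of $X_2$, the relative Beilinson spectral sequence — whose shape is constant because $h^0(\F(-1))=1$, $h^1(\F)=0$ and $h^0(\F\tensor\Om^1(1))=3$ are constant on $X_2$ — yields a relative monad from which, after performing the same cancellations as in the proof of \ref{2.1.3}, one recovers resolution \ref{2.1.3} for the whole family; this gives the inverse morphism $X_2\to W_2/G_2$.

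The routine part of all this is the three-step quotient construction, which runs parallel to \ref{2.2.2}. The two delicate points are the descent of $\P(Q')$ to $\P(Q)$ over $\N(3,3,2)$ — where one must pin down the correct twisting character and invoke triviality of the stabilisers in ${G_2}_{\text{red}}/S$, which holds because Kronecker modules of type $(3,2)$ are stable as soon as they are semi-stable — and, above all, the relative reconstruction of resolution \ref{2.1.3}, for which one must check that the $\Hom$- and $\Ext$-vanishings used to cancel the redundant summands hold over an arbitrary base, so that the cancellations go through in a flat family. This last step is the main obstacle, and it is handled exactly as in \ref{2.2.3}.
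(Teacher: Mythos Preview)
Your quotient construction is correct and yields the same $\P^{17}$-bundle over $\N(3,3,2)$ as the paper, but the route differs. The paper does not repeat the three-step subgroup argument of \ref{2.2.2}; instead it invokes 9.3 of \cite{drezet-trautmann} directly: it chooses a polarisation $\L$ with $0<\m_2<1/3$, identifies $\W_2^{ss}(\L)$ with the morphisms whose $\f_{11}$ is a semi-stable Kronecker module and whose $\f_{21}$ is not in the $G_2'$-orbit of zero, and then builds the bundle explicitly from the universal Kronecker module $\tau$ on $\N(3,3,2)$ as $\U={p_1}_*(\Coker(\theta^*)\tensor p_2^*\O(1))$, so that $\P(\U)=\W_2^{ss}(\L)/G_2$. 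Your descent-by-hand construction produces the same projective bundle; it is longer but has the merit of not relying on the black box from \cite{drezet-trautmann}.

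The second half, however, has a genuine gap. Your appeal to ``the same cancellations as in the proof of \ref{2.1.3}'' is misplaced: the proof of \ref{2.1.3} is a one-line duality citation and contains no Beilinson computation or cancellation whatsoever. Likewise, ``handled exactly as in \ref{2.2.3}'' is not accurate here. The paper's actual argument does \emph{not} run the Beilinson spectral sequence for $\F$; it passes to the dual sheaf $\G=\F^{\D}(1)\in\M(5,2)$, whose Beilinson tableau is simpler because $h^0(\G(-1))=0$. The tableau for $\G$ reads
\[
\xymatrix{
3\O(-2)\ar[r]^-{\f_1}&3\O(-1)\ar[r]^-{\f_2}&\O\\
0&2\O(-1)\ar[r]^-{\f_4}&3\O
}
\]
and the substantive step is to show that $\Im(\f_1)=\Ker(\f_2)$ and $\Ker(\f_1)\isom\O(-3)$: one factors $\f_1$ through the canonical $\rho\colon 3\O(-2)\to\Om^1$ via some $\a\colon 3\O(-2)\to 3\O(-2)$ and argues by semi-stability of $\G$ that $\rank(\a)=3$. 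Only then does the exact sequence (2.2.5) of \cite{drezet-maican} give $0\to\O(-3)\to\Coker(\f_4)\to\G\to 0$, which is the dual of resolution \ref{2.1.3}. None of this is contained in the references you cite, and for $\F$ itself (where $h^0(\F(-1))=1$) the tableau is of the more complicated two-row type treated in \ref{2.2.6}, not the type in \ref{2.2.3}. You would need either to carry out the dual computation above, or to run the analogue of the \ref{2.2.6} argument for $\F$; as written, the reconstruction step is asserted but not supplied.
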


\begin{proof}
The existence of $W_2/G_2$ follows from the construction
of quotients given at 9.3 \cite{drezet-trautmann}.
Our situation is also analogous to 3.1.2 \cite{drezet-maican}.
We consider a polarisation $\L=(\l_1,\m_1,\m_2)$ as in \cite{drezet-trautmann}
for the action of $G_2$ on $\W_2$ satisfying the condition $0 < \m_2 < 1/3$.
According to op.cit., lemma 9.3.1,
the open subset $\W_2^{ss}(\L) \subset \W_2$ of semi-stable
morphisms with respect to $\L$ is the set of morphisms $\f$ for which of $\f_{11}$ is semi-stable
with respect to the action by conjugation
of $(\GL(3,\C) \times \GL(2,\C))/\C^*$ on $\Hom(3\O(-2),2\O(-1))$
and such that $\f$ is not equivalent to a morphism $\psi$ satisfying $\psi_{21}=0$.
According to King's criterion of semi-stability \cite{king}, the condition on $\f_{11}$
is the same as saying that $\f_{11}$ is not equivalent to a morphism represented
by a matrix having a zero-column or a zero-submatrix of size $1 \times 2$.
Furthermore, this is equivalent to the condition on $\f_{11}$ from \ref{2.1.3}.
We see now that $W_2$ is the open invariant subset of injective morphisms inside $\W_2^{ss}(\L)$.
It is a proper subset because it is easy to construct semi-stable morphisms that are not
injective, for example the morphism represented by the matrix
\[
\left[
\ba{ccc}
0\phantom{^3} & X\phantom{^2} & \phantom{-}Y \\
X\phantom{^3} & 0\phantom{^2} & -Z \\
Y^3 & ZY^2 & \phantom{-}0
\ea
\right].
\]
Adopting the notations of 3.1.2 \cite{drezet-maican}, let $\N(3,3,2)$ be the moduli space of semi-stable
Kronecker modules $f \colon 3\O(-2) \to 2\O(-1)$, let $\tau \colon E \tensor V \to F$
be the universal morphism on $\N(3,3,2)$, let $p_1$, $p_2$ be the projections of
$\N(3,3,2) \times \P^2$ onto its factors and let
\[
\theta \colon p_1^*(E) \tensor p_2^*(\O(-2)) \lra p_1^*(F) \tensor p_2^*(\O(-1))
\]
be the morphism induced by $\tau$. The sheaf $\U = {p_1}_*(\Coker(\theta^*) \tensor p_2^*\O(1))$
is locally free on $\N(3,3,2)$ of rank $18$. 
According to 9.3 \cite{drezet-trautmann}, $\P(\U)$ is a geometric
quotient of $\W_2^{ss}(\L)$ modulo $G_2$.
Thus $W_2/G_2$ exists and is a proper open subset of $\P(\U)$.

We shall now prove that the natural bijective morphism $W_2/G_2 \to X_2$ is an isomorphism.
Given $\F$ in $X_2$, we need to construct resolution \ref{2.1.3} starting from the Beilinson
spectral sequence of $\F$. It is easier to work, instead, with the dual sheaf $\G = \F^{\D}(1)$,
which gives a point in $\M(5,2)$. The Beilinson tableau (2.2.3) \cite{drezet-maican} for $\G$
takes the form
\[
\xymatrix
{
3\O(-2) \ar[r]^-{\f_1} & 3\O(-1) \ar[r]^-{\f_2} & \O \\
0 & 2\O(-1) \ar[r]^-{\f_4} & 3\O
}.
\]
According to 2.2 \cite{drezet-maican}, $\f_2$ is surjective while $\f_4$ is injective.
Thus $\Ker(\f_2) \isom \Om^1$. Consider the canonical morphism
\[
\rho \colon 3\O(-2) \isom \O(-2) \tensor \Hom(\O(-2), \Om^1) \lra \Om^1.
\]
There is a morphism $\a \colon 3\O(-2) \to 3\O(-2)$ such that $\rho \circ \a = \f_1$.
Since $\G$ maps surjectively onto $\Ker(\f_2)/\Im(\f_1)$, this sheaf has rank zero,
i.e. $\Im(\f_1)$ has rank $2$.
This excludes the possibility $\rank(\a)=1$, because in this case $\Im(\f_1)$
would be isomorphic to $\O(-2)$. If $\rank(\a)=2$, then $\Im(\f_1)$ would be isomorphic to $2\O(-2)$.
In this case $\Ker(\f_2)/\Im(\f_1)$ would have slope $-1$, hence it would destabilise $\G$.
We deduce that $\rank(\a)=3$, hence $\Im(\f_1)=\Ker(\f_2)$ and $\Ker(\f_1) \isom \O(-3)$.
The exact sequence (2.2.5) \cite{drezet-maican} takes the form
\[
0 \lra \O(-3) \stackrel{\f_5}{\lra} \Coker(\f_4) \lra \G \lra 0.
\]
This easily yields the dual to resolution \ref{2.1.3}.
\end{proof}

\begin{prop}
\label{2.2.5}
There exists a geometric quotient $W_3/G_3$ and it is a smooth projective
variety. $W_3/G_3$ is isomorphic to the Hilbert flag scheme
of quintic curves in $\P^2$ containing zero-dimensional subschemes of length $2$.
\end{prop}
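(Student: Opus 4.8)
The plan is to realise $W_3$ as the whole semi‑stable locus of the $G_3$‑action for a well‑chosen polarisation, deduce that a projective geometric quotient exists, and then identify it with the flag scheme via an explicit bijective morphism and Zariski's Main Theorem, along the lines of \ref{2.2.1} and \ref{2.2.4}. Write $\A_3 = \O(-3) \oplus \O(-1)$ and $\B_3 = \O \oplus \O(1)$, so that a morphism $\f \in \W_3 = \Hom(\A_3,\B_3)$ is given by forms $\f_{11}, \f_{12}, \f_{21}, \f_{22}$ of degrees $3, 1, 4, 2$. The group $G_3$ is solvable: $\Aut(\A_3)$ and $\Aut(\B_3)$ are lower triangular, with unipotent radicals $\Hom(\O(-3),\O(-1)) \isom S^2V^*$ and $\Hom(\O,\O(1)) \isom V^*$. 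First I would check that $G_3$ acts \emph{freely} on $W_3$: under $\f \mapsto h\f g^{-1}$ the entry $\f_{12}$ is only rescaled, so a pair $(g,h)$ fixing $\f$ acts by a single scalar on the two summands of $\B_3$; substituting into the equation for $\f_{22}$ and using that $\f_{12}$ does not divide $\f_{22}$ forces the off‑diagonal part of $h$ to vanish, and then the equations for $\f_{11}$ and $\f_{21}$ leave only the trivial element of $G_3$, or else impose $\f_{11} = \f_{12}c$ and $\f_{21} = c\,\f_{22}$ for some conic $c$, whence $\det\f = 0$, against injectivity. Consequently every semi‑stable morphism in $W_3$ will be stable, so the quotient will be geometric, and, $W_3$ being open in the affine space $\W_3$, it will also be smooth.

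For the existence I would fix a polarisation $\L = (\l_1,\l_2,\m_1,\m_2)$ for the action of $G_3$ on $\W_3$ in the sense of \cite{drezet-trautmann}, such that, by King's criterion \cite{king}, $\W_3^{ss}(\L)$ consists of the morphisms not equivalent to one of the shapes
\[
\left[ \ba{cc} \star & 0 \\ \star & \star \ea \right], \qquad
\left[ \ba{cc} \star & \star \\ \star & 0 \ea \right], \qquad
\left[ \ba{cc} 0 & \star \\ 0 & \star \ea \right].
\]
Being equivalent to the first shape means $\f_{12} = 0$; to the second, after the row operation $\f_{22} \mapsto \f_{22} + r\f_{12}$ with $r \in V^*$, that $\f_{12}$ divides $\f_{22}$; and to the third, that the first column can be annihilated by a column operation, which for $\f_{12} \neq 0$ happens precisely when $\det\f = 0$, i.e.\ when $\f$ is not injective (if $\det\f = 0$ and $\f_{12} \neq 0$, then $\Ker(\f)$ is, up to $\Aut(\A_3)$, the summand $\O(-3)$). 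Hence $\W_3^{ss}(\L) = W_3$, and by 9.3 \cite{drezet-trautmann} there is a geometric quotient $W_3/G_3$, a projective variety, smooth by the previous paragraph.

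It remains to identify $W_3/G_3$ with the Hilbert flag scheme $Y$ of pairs $Z \subset C$, where $Z \subset \P^2$ has length $2$ and $C$ is a quintic curve. Now $Y$ is the projectivisation of the rank‑$19$ bundle over $\operatorname{Hilb}^2(\P^2)$ with fibre $\H^0(\J_Z(5))$ over $Z$; since $\operatorname{Hilb}^2(\P^2)$ is smooth of dimension $4$, $Y$ is smooth projective of dimension $22 = \dim\W_3 - \dim G_3$. For $\f \in W_3$ the non‑zero linear form $\f_{12}$ does not divide the conic $\f_{22}$, so $Z_\f := V(\f_{12},\f_{22})$ has length $2$, and as $\det\f = \f_{11}\f_{22} - \f_{12}\f_{21}$ lies in the ideal $(\f_{12},\f_{22})$, the quintic $C_\f := V(\det\f)$ contains $Z_\f$. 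The ideal $(\f_{12},\f_{22})$ and the line spanned by $\det\f$ in $S^5V^*$ are $G_3$‑invariant, so $\f \mapsto (Z_\f,C_\f)$ descends to a morphism $W_3/G_3 \to Y$. It is surjective: given $(Z,C)$, the ideal of $Z$ is $(\ell,q)$ with $\ell$ linear, $q$ a conic, $\ell \nmid q$; the quintic defining $C$ is $\ell g_4 + q h_3$; and
\[
\f = \left[ \ba{cc} -h_3 & \ell \\ g_4 & q \ea \right]
\]
lies in $W_3$ with $Z_\f = Z$, $C_\f = C$. It is injective because, by \ref{2.1.4}, $(Z_\f,C_\f)$ determines $\Coker(\f) \isom \J_{Z_\f}(2)$ up to isomorphism, and two minimal locally free resolutions of one sheaf with the same terms differ by an isomorphism of complexes, hence lie in a single $G_3$‑orbit; one can also produce the inverse morphism $Y \to W_3/G_3$ from the relative Beilinson spectral sequence of the universal sheaf $\J_{\mathcal Z}(2)$ on $Y \times \P^2$, as at \ref{2.2.3} and \ref{2.2.4}. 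Finally, a bijective morphism of irreducible varieties is birational, so, $Y$ being normal, Zariski's Main Theorem yields $W_3/G_3 \isom Y$.

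The crux is the second paragraph: one must show a polarisation $\L$ exists for which King's criterion forbids \emph{exactly} the three displayed matrix shapes — no fewer (else $W_3 \subsetneq \W_3^{ss}(\L)$ and the quotient loses projectivity) and no more (else some $\f \in W_3$ would be unstable). Everything afterwards is the routine pairing of a determinantal morphism with Zariski's Main Theorem used repeatedly above.
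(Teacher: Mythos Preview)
Your outline is correct and follows the paper's own strategy: realise $W_3$ as the full semi\-stable locus for a suitable polarisation, invoke \cite{drezet-trautmann} for a projective geometric quotient, then identify it with the flag scheme via $\f \mapsto (Z_\f, C_\f)$. Two points of comparison are worth recording. First, the ``crux'' you flag is exactly what the paper supplies: one needs $\l_1 < \m_1$ and $\l_1 < \m_2$ for King's criterion to give $\W_3^{ss}(\L)=\W_3^{s}(\L)=W_3$, \emph{and} simultaneously $\l_2 > 6\l_1$, $\m_1 > 3\m_2$ for the Dr\'ezet--Trautmann machinery to produce a projective good quotient; the paper checks that $0<\l_1<1/7$, $\l_1<\m_2<1/4$ meets all four constraints. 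Second, for the identification with the flag scheme the paper does not appeal to Zariski's Main Theorem but instead writes down explicit local sections of the map $W_3 \to F$ (given $(\langle f\rangle, \langle g \bmod f\rangle, \langle h\rangle)$ with $f$ linear, divide $h$ by $f$ and then the remainder by $g$ to recover a matrix in $W_3$), thereby exhibiting $W_3 \to F$ directly as a geometric quotient; this also yields, en route, a description of $W_3/G_3$ as a $\P^{18}$\nobreakdash-bundle over a $\P^2$\nobreakdash-bundle over $\P^2$, which your ZMT shortcut bypasses. Your freeness argument is a pleasant addition the paper leaves implicit in the equality $\W_3^{s}(\L)=\W_3^{ss}(\L)$.
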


\begin{proof}
Before constructing the quotient we notice that its existence already follows
from \cite{drezet-trautmann}.
Let $\L=(\l_1,\l_2,\m_1,\m_2)$ be a polarisation for the action of
$G_3$ on $\W_3$, as in loc.cit.
Using King's criterion of semi-stability \cite{king} we can verify
that for polarisations satisfying $\l_1 < \m_1$ and $\l_1 < \m_2$
the set of stable points $\W_3^{s}(\L)$ coincides with the set of semi-stable points
$\W_3^{ss}(\L)$ and is equal to $W_3$.
According to \cite{drezet-trautmann}, for polarisations satisfying
$\l_2 > 6 \l_1$ and $\m_1 > 3 \m_2$ there is a good and projective
quotient $\W_3^{ss}(\L)/\!/G_3$ containing the smooth geometric quotient $\W_3^s(\L)/G_3$
as an open subset.
We now choose a polarisation satisfying all the above conditions, i.e. satisfying
$0 < \l_1 < 1/7$ and $\l_1 < \m_2 < 1/4$.
We conclude that there is a smooth geometric quotient $W_3/G_3$, which is a projective variety.

Next we give two constructions of $W_3/G_3$, firstly as a bundle and secondly
as a Hilbert flag scheme. The first construction uses the method of \ref{2.2.2},
which consisted of finding successively quotients modulo subgroups. 
Let $W_3'$ be the open subset of $\W_3$ given by the conditions
that $\f_{12} \neq 0$ and that $\f_{22}$ be non-divisible by $\f_{12}$.
The pairs of morphisms $(\f_{12},\f_{22})$ form an open subset
$U \subset \Hom(\O(-1),\O \oplus \O(1))$ and $W_3'$ is the trivial vector bundle over $U$
with fibre $\Hom(\O(-3),\O \oplus \O(1))$.
We represent the elements of $G_3$ by pairs of matrices
\[
(g,h) \in \Aut(\O(-3) \oplus \O(-1)) \times \Aut(\O \oplus \O(1)),
\]
\[
g = \left[
\ba{cc}
g_1 & 0 \\
u & g_2
\ea
\right], \quad \quad h = \left[
\ba{cc}
h_1 & 0 \\
v & h_2
\ea
\right].
\]
Inside $G_3$ we distinguish two subgroups: a unitary subgroup $G_3'$ given by the conditions
that $h$ be the identity morphism, $g_1=1$, $g_2=1$ and a subgroup $G_3''$ given by the
condition that $g$ be the identity morphism.
Consider the $G_3$-invariant subset $\Sigma \subset W_3'$ given by the conditions
\[
\f_{11}= \f_{12} u, \quad \f_{21}= \f_{22} u, \quad u \in \Hom(\O(-3),\O(-1)).
\]
Note that $W_3=W_3' \setminus \Sigma$.
Clearly $\Sigma$ is a subbundle of $W_3'$.
The quotient bundle $E'$ has rank $19$.
The quotient map $W_3' \to E'$ is a geometric quotient modulo $G_3'$.
Moreover, the canonical action of $G_3''$ on $U$ is $E'$-linearised
and the map $W_3' \to E'$ is $G_3''$-equivariant.
Let $\s'$ be the zero-section of $E'$. The restricted map $W_3 \to E' \setminus \s'$
is also a geometric quotient modulo $G_3'$.

We now construct a geometric quotient of $E'$ modulo $G_3''$.
The quotient for the base $U$ can be described explicitly as follows.
On $\P(V^*)$ we consider the trivial vector bundle with fibre $S^2V^*$ and the subbundle
with fibre $vV^*$ at any point $\langle v \rangle \in \P(V^*)$. Let $Q$ be the quotient bundle.
Clearly $U/G_3''$ is isomorphic to $\P(Q)$. Moreover, $U$ is a principal $G_3''$-bundle over $\P(Q)$.
According to 4.2.14 \cite{huybrechts}, $E'$ descends to a vector bundle $E$ on $\P(Q)$.
$E$ is the geometric quotient $E'/G_3''$. Let $\s$ be the zero-section of $E$.
The composed map $W_3 \to E' \setminus \s' \to E \setminus \s$ is a geometric
quotient modulo $G_3'G_3''$. It is now clear that the fibre bundle $\P(E)$
is the geometric quotient $W_3/G_3$.
Thus $W_3/G_3$ is a fibre bundle with fibre $\P^{18}$ and base
a fibre bundle $\P(Q)$ with base $\P^2$ and fibre $\P^2$.

It is clear that $\P(Q)$ is isomorphic to the Hilbert scheme of zero-dimensional subschemes of $\P^2$
of length $2$. Let $F$ be the Hilbert flag scheme from the proposition viewed as a subscheme
of $\P(Q) \times \P(S^5V^*)$.
Consider the map $W_3 \to F$ defined by
\[
\f \lra (\langle \f_{12} \rangle, \langle \f_{22} \mod \f_{12} \rangle, \langle \det(\f) \rangle).
\]
The fibres of this map are obviously $G_3$-orbits. To show that
this map is a geometric quotient we shall construct local sections.
We choose a point $x=(\langle f \rangle, \langle g \mod f \rangle, \langle h \rangle)$ in $F$.
To fix notations we write $f=X$ and
we may assume that $g$ is a quadratic form in $Y$ and $Z$.
There are unique forms $h_1(Y,Z)$ and $h_2(X,Y,Z)$ such that
$h=h_1 +Xh_2$. By hypothesis $h_1$ is divisible by $g$. We put
\[
\s (x)= \left[
\begin{array}{lc}
\phantom{-} h_1/g & f \\
-h_2 & g
\end{array}
\right].
\]
Note that $\s$ extends to a local section in a neighbourhood of $x$ because
$h_2$ and $h_1$, hence also $h_1/g$, depend algebraically on $x$.
\end{proof}

\begin{prop}
\label{2.2.6}
The geometric quotient $W_3/G_3$ is isomorphic to $X_3$.
In particular, $X_3$ is a smooth closed subvariety of $\M(5,3)$.
\end{prop}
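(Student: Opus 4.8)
The plan is to upgrade the bijective morphism $W_3/G_3 \to X_3$ of the canonical type (constructed as in \ref{2.2.1}) to an isomorphism, and to extract from this the claim that $X_3$ is smooth and closed. Since $W_3/G_3$ is a smooth projective variety by \ref{2.2.5}, and we already have a canonical morphism $W_3/G_3 \to \M(5,3)$ (sending $\f$ to the class of $\Coker(\f)$) whose image is $X_3$ with fibres equal to $G_3$-orbits, the issue is exactly as at \ref{2.2.3} and \ref{2.2.4}: one must show the bijective morphism $W_3/G_3 \to X_3$ is an isomorphism. If this is shown, then $X_3$ inherits smoothness and, being the image of a projective variety under a morphism to $\M(5,3)$, is closed in $\M(5,3)$ — giving the ``In particular'' clause for free.

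First I would verify that for every sheaf $\F$ giving a point in $X_3$, resolution \ref{2.1.4} can be recovered functorially from the Beilinson spectral sequence of $\F$, so that it propagates to flat families. Concretely, one traces through diagram (2.2.3) \cite{drezet-maican} for a sheaf $\F$ with $h^0(\F(-1))=1$, $h^1(\F)=1$, $h^0(\F\tensor\Om^1(1))=4$, and mimics the normalisation steps already performed in the proof of \ref{2.1.4}: cancel the redundant $3\O(-2)$, identify the cokernel of the map out of $\Om^1(1)$ using the canonical morphism $i\colon\Om^1(1)\to 3\O$, and arrive canonically at $0\to\O(-3)\oplus\O(-1)\stackrel{\f}{\to}\O\oplus\O(1)\to\F\to 0$ with $\f_{12}\neq 0$ and $\f_{12}\nmid\f_{22}$. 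Carrying out the same operations relatively over a base $T$ parametrising a flat family of sheaves in $X_3$ produces, locally on $T$, a family of morphisms $\f$ in $W_3$; this yields a local morphism from $T$ to $W_3$ lifting the classifying morphism $T\to X_3$, hence a local morphism $X_3\to W_3/G_3$ inverting the bijection. By the method of 3.1.6 \cite{drezet-maican} these local inverses glue to a morphism $X_3\to W_3/G_3$, showing that the canonical bijective morphism is an isomorphism.

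The main obstacle will be the same technical point as in \ref{2.2.3} and \ref{2.2.4}: checking that the normalisation of the relative Beilinson monad goes through \emph{in families}, i.e. that the ranks of the various induced maps (the map analogous to $\b$ built from $i\colon\Om^1(1)\to 3\O$, and the map $\psi_{21}$) are locally constant over $X_3$, so that the cancellations and the identification $\Coker\cong\Om^1(1)$ hold uniformly. This is guaranteed because the cohomology dimensions $h^0(\F(-1))$, $h^1(\F)$, $h^0(\F\tensor\Om^1(1))$ are constant along $X_3$ by definition of the stratum (combined with semicontinuity and the already-established fact from \ref{2.1.4} that these values are forced), so every step in the argument of \ref{2.1.4} is carried out with fixed numerical data; there is no jumping. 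The only remaining check is the one already isolated in \ref{2.1.4}: that $\b$ (equivalently, the relative version) is injective, which is forced by semi-stability of every member of the family, and that the destabilising sub- and quotient sheaves excluded there are excluded uniformly. Once this is in place, Zariski's Main Theorem applied as at \ref{2.2.1} finishes the proof, and the closedness and smoothness of $X_3$ follow as indicated.
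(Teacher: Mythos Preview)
Your proposal is correct and follows essentially the same approach as the paper: recover resolution \ref{2.1.4} naturally from the Beilinson spectral sequence of $\F$ (the paper does this via tableau (2.2.3) and the exact sequences (2.2.4), (2.2.5) of \cite{drezet-maican}, then invokes the cancellations already carried out in \ref{2.1.4}), and use this to construct local inverses to the bijection $W_3/G_3\to X_3$ as in 3.1.6 \cite{drezet-maican}. One small remark: your final appeal to Zariski's Main Theorem ``as at \ref{2.2.1}'' is superfluous and slightly misplaced --- at \ref{2.2.1} ZMT was usable because $X_0$ was already known to be smooth, whereas here smoothness of $X_3$ is part of the conclusion; but this does no harm, since your construction of the local inverse already gives the isomorphism outright, and the smoothness and closedness of $X_3$ then follow from \ref{2.2.5} exactly as you say.
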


\begin{proof}
As above, in order to show that the bijective morphism
$W_3/G_3 \to X_3$ is an isomorphism,
we need to construct resolution \ref{2.1.4} starting from the Beilinson
tableau (2.2.3) \cite{drezet-maican} for $\F$, which takes the form:
\[
\xymatrix
{
3\O(-2) \ar[r]^-{\f_1} & 3\O(-1) \ar[r]^-{\f_2} & \O \\
\O(-2) \ar[r]^-{\f_3} & 4\O(-1) \ar[r]^-{\f_4} & 4\O
}.
\]
As at \ref{2.2.4}, $\Ker(\f_2)$ is equal to $\Im(\f_1)$ and $\Ker(\f_1)$ is isomorphic to $\O(-3)$.
The exact sequence (2.2.5) \cite{drezet-maican} gives the resolution
\[
0 \lra \O(-3) \stackrel{\f_5}{\lra} \Coker(\f_4) \lra \F \lra 0.
\]
We combine this sequence with the exact sequence (2.2.4) \cite{drezet-maican} that reads as follows:
\[
0 \lra \O(-2) \stackrel{\f_3}{\lra} 4\O(-1) \stackrel{\f_4}{\lra} 4\O \lra \Coker(\f_4) \lra 0.
\]
Indeed, $\f_5$ lifts to a map $\O(-3) \to 4\O$ because $\Ext^1(\O(-3),\Coker(\f_3))=0$.
We arrive at the resolution
\[
0 \lra \O(-2) \lra \O(-3) \oplus 4\O(-1) \lra 4\O \lra \F \lra 0.
\]
We have already seen at \ref{2.1.4} how to derive the desired resolution of $\F$
from the above exact sequence.
\end{proof}

%%%%%%%%%%%%%%%%%%%%%%%%%%%%% subsection 2.3

\subsection{Geometric description of the strata}

We recall that the stratum $X_0$ of $\M(5,3)$ consists of isomorphism classes
of cokernels of morphisms $\f=(\f_{11},\f_{12})$ as at \ref{2.1.2}(i).
We distinguish a subset $X_{01} \subset X_0$ given by the condition
$\Coker(\f_{12}) \isom \I_x(1) \oplus \O$,
where $\I_x \subset \O$ is the ideal sheaf of a point $x \in \P^2$.
Clearly $X_{01}$ is closed in $X_0$ and has codimension $1$.

\begin{prop}
\label{2.3.1}
The sheaves $\F$ giving points in $X_0 \setminus X_{01}$
are precisely the sheaves admitting a resolution of the form
\[
0 \lra 2\O(-2) \lra \Om^1(2) \lra \F \lra 0.
\]
\end{prop}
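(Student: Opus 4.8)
The plan is to characterise $X_0 \setminus X_{01}$ by the behaviour of $\f_{12} \in \Hom(\O(-1), 3\O)$, the one-column part of the presentation from \ref{2.1.2}(i). Since $\f=(\f_{11},\f_{12})$ is injective and its cokernel is $\F$, and since $\f$ is not in either of the two excluded orbit-types, the morphism $\f_{12}$ is itself nonzero; its cokernel is the cokernel of a map $\O(-1)\to 3\O$, so $\Coker(\f_{12})$ is either $\Om^1(2)$ (when the three entries of $\f_{12}$ are linearly independent linear forms), or $\I_x(1)\oplus\O$ (when the entries span a $2$-dimensional space, $x$ the common zero after a change of basis), or has a free summand $2\O$ (when they span a line), or $3\O$ (when $\f_{12}=0$). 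The last two possibilities force $\F$ to surject onto a sheaf with a free summand, contradicting that $\F$ has one-dimensional support; so on all of $X_0$ we have $\Coker(\f_{12})\isom\Om^1(2)$ or $\I_x(1)\oplus\O$, and $X_{01}$ is exactly the locus of the second type, by definition. Hence $X_0\setminus X_{01}$ is precisely the locus where $\Coker(\f_{12})\isom\Om^1(2)$.

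Next I would run the standard snake/cokernel argument. Given $\F$ in $X_0\setminus X_{01}$, decompose $\f$ via the projection onto the $\O(-1)$-summand: from the resolution $0\to 2\O(-2)\oplus\O(-1)\stackrel{\f}{\to}3\O\to\F\to0$, the subcomplex $\O(-1)\stackrel{\f_{12}}{\to}3\O$ has cokernel $\Om^1(2)$, and passing to the induced sequence on cokernels gives
\[
0 \lra 2\O(-2) \lra \Om^1(2) \lra \F \lra 0,
\]
where the left map is induced by $\f_{11}$ composed with $3\O\to\Om^1(2)$; injectivity of this induced map follows because $\f$ itself is injective and $\f_{12}$ is injective on the relevant summand (a short diagram chase, using that $\O(-1)$ maps into $3\O$ with torsion-free cokernel). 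This establishes that every $\F$ in $X_0\setminus X_{01}$ has the desired resolution.

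For the converse, suppose $\F$ admits a resolution $0\to2\O(-2)\to\Om^1(2)\to\F\to0$. Composing the Euler sequence $0\to\O(-1)\to 3\O\to\Om^1(2)\to0$ with the presentation, I would apply the horseshoe lemma to build a resolution $0\to 2\O(-2)\oplus\O(-1)\stackrel{\f}{\to}3\O\to\F\to0$ in which $\f_{12}$ is (up to automorphism) the Euler map, so $\Coker(\f_{12})\isom\Om^1(2)$; one then checks $\F$ satisfies the cohomological conditions defining $X_0$ — namely $h^0(\F(-1))=0$, $h^1(\F)=0$, $h^0(\F\tensor\Om^1(1))=1$ — which can be read off from the resolution $0\to2\O(-2)\to\Om^1(2)\to\F\to0$ by twisting and taking cohomology (using $H^\bullet$ of $\O(-2)$, $\O(-3)$, $\O(-4)$ and known cohomology of $\Om^1(k)$), together with the fact that $\f$ lies outside the two excluded orbit types because $\Coker(\f_{12})\isom\Om^1(2)$ is incompatible with the block-of-zeros shapes (those forced $\Coker(\f_{12})$ to acquire a free summand). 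Since $\Coker(\f_{12})\isom\Om^1(2)\not\isom\I_x(1)\oplus\O$, the point lies in $X_0\setminus X_{01}$.

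The main obstacle I anticipate is the converse direction: verifying that an arbitrary $\F$ with the two-term resolution by $\Om^1(2)$ is actually semi-stable and gives a point of $\M(5,3)$ (not merely that it has the right Hilbert polynomial), and that the lifted $\f$ genuinely avoids both excluded orbit types rather than just one. Semi-stability should follow by the same kind of slope estimate as in the proof of \ref{2.1.4} — bounding $p(\F')$ for a destabilising subsheaf $\F'$ via an intermediate sheaf in $\Om^1(2)$ or in $\O_C(1)$ — but one must be careful that $\Om^1(2)$ itself is stable of the right slope so that sub- and quotient-sheaves are controlled; I would also need $h^0(\F(-1))=0$, which amounts to $H^0$ of $0\to2\O(-3)\to\Om^1(1)\to\F(-1)\to0$ vanishing, i.e. that the map $2\O(-3)\to\Om^1(1)$ induces an iso on $H^0$, a condition on the $2\times3$ matrix $\f_{11}$ that is automatic once $\f$ is injective with torsion-free cokernel.
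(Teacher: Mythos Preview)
Your approach is the paper's approach, and the forward direction is essentially identical. Two points deserve tightening.

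First, your justification for excluding the case where the entries of $\f_{12}$ span a one-dimensional space is not quite right. You claim $\F$ would then surject onto a sheaf with a free summand, but this need not happen: if $\f_{12}\sim[\ell,0,0]^{\T}$ and the induced map $2\O(-2)\to 2\O$ (the last two rows of $\f_{11}$) has nonzero determinant, the resulting quotient is torsion. The correct argument is simply that a one-dimensional span forces $\f$ into the first excluded shape of \ref{2.1.2}(i), so this case does not occur on $X_0$. (For the zero-dimensional span your rank argument is fine.)

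Second, and more importantly, you are working far too hard on the converse. Proposition \ref{2.1.2}(i) is a biconditional: it says that a sheaf is semi-stable in $\M(5,3)$ with the cohomological conditions of $X_0$ \emph{precisely when} it admits a resolution $0\to 2\O(-2)\oplus\O(-1)\stackrel{\f}{\to}3\O\to\F\to 0$ with $\f$ outside the two excluded orbits. So once you lift the $\Om^1(2)$-resolution through the Euler sequence to obtain such a $\f$ with $\f_{12}$ the Euler map, you only need to check that $\f$ avoids both excluded shapes. This follows immediately because the entries of $\f_{12}$ are linearly independent: under the group action the third column changes only by $g\in\GL_3$ and a scalar, so no entry of $g\f_{12}$ can vanish, ruling out both block-of-zeros patterns. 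There is no need to re-verify semi-stability by slope estimates, nor to compute $h^0(\F(-1))$ or $h^0(\F\tensor\Om^1(1))$ directly; \ref{2.1.2}(i) hands you all of that. This is exactly what the paper does in two lines.
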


\begin{proof}
Assume that $\F$ gives a point in $X_0 \setminus X_{01}$.
From \ref{2.1.2}(i) we have the exact sequence
\[
0 \lra 2\O(-2) \lra \Coker(\f_{12}) \lra \F \lra 0.
\]
By hypothesis $\Coker(\f_{12})$ is isomorphic to $\Om^1(2)$.

Assume now that $\F$ has a resolution as in the proposition.
Combining with the Euler sequence we find an injective morphism
$\f \colon 2\O(-2) \oplus \O(-1) \to 3\O$ such that $\F \isom \Coker(\f)$.
The fact that $\f_{12}$ has linearly independent entries ensures that $\f$
satisfies the conditions from \ref{2.1.2}(i).
\end{proof}

\begin{prop}
\label{2.3.2}
The generic sheaves $\F$ from $X_{01}$ are precisely
the non-split extension sheaves of the form
\[
0 \lra \J_x(1) \lra \F \lra \O_Z \lra 0,
\]
such that there is a global section
of $\F$ taking the value 1 at every point of $Z$. Here $\J_x \subset \O_C$ is the ideal sheaf
of a point $x$ on a quintic curve $C \subset \P^2$ and $Z \subset C$ is a union of four distinct
points, also distinct from $x$, no three of which are colinear.

There is an open subset inside $X_{01}$ consisting of the isomorphism classes
of all sheaves of the form $\O_C(1)(P_1+P_2+P_3+P_4-P_5)$, where $C \subset \P^2$ is a smooth
quintic curve, $P_1, \ldots, P_5$ are distinct points on $C$ and $P_1, P_2, P_3, P_4$ are in
general linear position.
\end{prop}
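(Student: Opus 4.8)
The plan is to extract the subsheaf $\J_x(1)$ and the quotient $\O_Z$ directly from the presentation defining $X_{01}$, to exhibit the required global section inside that presentation, and then to reverse the construction. So let $\F$ give a point of $X_{01}$. By the definition of $X_{01}$ together with \ref{2.1.2}(i) there is an exact sequence
\[
0 \lra 2\O(-2) \stackrel{\psi}{\lra} \I_x(1) \oplus \O \lra \F \lra 0,
\]
with $\psi=(\psi_1,\psi_2)$, where $\psi_1 \in \Hom(2\O(-2),\I_x(1))$ is given by a pair of cubic forms $c_1,c_2$ vanishing at $x$ and $\psi_2 \in \Hom(2\O(-2),\O)$ is given by a pair of conics $q_1,q_2$. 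Composing $\psi$ with the inclusion $\I_x(1) \hookrightarrow \O(1)$ and applying the snake lemma to the induced morphism from the sequence above to $0 \lra 2\O(-2) \lra \O(1) \oplus \O \lra \mathcal{Q} \lra 0$, where $\mathcal{Q}=\Coker(2\O(-2)\to\O(1)\oplus\O)$ is the cokernel of the $2\times2$ matrix with rows $(c_1,c_2)$ and $(q_1,q_2)$, I obtain a short exact sequence $0 \lra \F \lra \mathcal{Q} \lra \C_x \lra 0$ in which the quotient map is evaluation at $x$ on the $\O(1)$-component. Its determinant $g=c_1q_2-c_2q_1$ is a quintic form, and it vanishes at $x$ because $c_1,c_2$ do; thus $x$ lies on the quintic $C=\{g=0\}$.

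The crux is to check that, for $\F$ generic, the quintic $C$ is smooth, the conics $q_1,q_2$ are coprime, and $Z:=\{q_1=q_2=0\}$ consists of four distinct points in general linear position with $x \notin Z$. The last three requirements are open, and the no-three-colinear condition is automatic, since four points form a length-four complete intersection of two conics exactly when they lie in general linear position; moreover $X_{01}$ is irreducible, being a quotient of the locus in \ref{2.1.2}(i) where the coefficient matrix of $\f_{12}$ has rank two, an irreducible determinantal condition. Hence it suffices to realise the requirements for a single sheaf of $X_{01}$, equivalently to show that the generic sheaf of $X_{01}$ is a line bundle on a smooth quintic. For this I would use that the degree-eight line bundles on smooth plane quintics form an irreducible $26$-dimensional family carrying a dominant morphism to $\M(5,3)$, and then identify, inside this family, the $25$-dimensional subfamily that falls into $X_{01}$ as the closure of the locus of line bundles $\O_C(1)(P_1+\dots+P_4-P_5)$ with $C$ smooth; this both supplies the required generic example and, anticipating what follows, proves the second assertion. (Alternatively one writes down an explicit $\psi$ whose determinant is a smooth quintic.)

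Granting the genericity, projection onto the second summand $\O(1)\oplus\O\to\O$ induces a surjection $\mathcal{Q}\to\O/(q_1,q_2)\isom\O_Z$ whose kernel is the image of $\O(1)\oplus 0$ in $\mathcal{Q}$, which, $q_1$ and $q_2$ being coprime, equals $\O(1)/g\,\O(-4)\isom\O_C(1)$; thus $0 \lra \O_C(1) \lra \mathcal{Q} \lra \O_Z \lra 0$. Intersecting with $\F\subset\mathcal{Q}$ and using that $x\in C\setminus Z$ and the shape of $\mathcal{Q}/\F$ found above, I get $\F\cap\O_C(1)=\J_x(1)$, the kernel of evaluation at $x$ on $\O_C(1)$, and a length comparison then gives $\F/\J_x(1)\isom\O_Z$. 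The extension $0 \lra \J_x(1) \lra \F \lra \O_Z \lra 0$ is non-split because $\F$ is pure of dimension one whereas $\O_Z$ has dimension zero. As for the section, the constant section $1$ of the $\O$-summand of $\O(1)\oplus\O$ maps to a section $\s$ of $\mathcal{Q}$ with vanishing $\O(1)$-component, so $\s$ lies in $\F$, and its image in $\O_Z$ is the unit section; this is a global section of $\F$ taking the value $1$ at every point of $Z$. Finally, when $C$ is smooth $\mathcal{Q}$ is a line bundle on $C$, so the last two sequences identify it with $\O_C(1)(P_1+\dots+P_4)$ and $\F$ with $\O_C(1)(P_1+\dots+P_4-P_5)$, where $\{P_1,\dots,P_4\}=Z$ and $P_5=x$; the five points are distinct and $P_1,\dots,P_4$ lie in general linear position by the genericity above, which is the second assertion.

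For the converse, let $\F$ be a non-split extension $0 \lra \J_x(1) \lra \F \lra \O_Z \lra 0$ of the stated type admitting a global section mapping to the unit section of $\O_Z$. Since $\O_Z$ is a cyclic $\O$-module, that section generates a copy of $\O_C$ inside $\F$ with $\O_C+\J_x(1)=\F$ and $\O_C\cap\J_x(1)$ equal to the ideal of $Z$ in $\O_C$; hence $\F$ is the pushout of $\O_C$ and $\J_x(1)$ over $\J_Z\subset\O_C$. Feeding the standard resolutions of $\O_C$, of $\J_x(1)\subset\O_C(1)$ and of $\J_Z$ into the horseshoe lemma and carrying out the cancellations forced by $\H^1(\F)=0$ and by the section condition, one arrives at resolution \ref{2.1.2}(i) with $\Coker(\f_{12})\isom\I_x(1)\oplus\O$; in particular $\F$ has projective dimension one, hence is pure. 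Semi-stability is verified as at the end of \ref{2.1.4}: by \cite{maican}, lemma 6.7, any nonzero subsheaf $\F'\subset\F$ of multiplicity at most four lies in a twist of the structure sheaf of a subcurve of $C$, and the slope estimate there yields $p(\F')\le 1/2<3/5=p(\F)$. Therefore $\F$ gives a point of $X_{01}$. The main obstacle throughout is the smoothness assertion above --- that the quintic $C$ cut out by $g$ is smooth for the generic member of $X_{01}$ --- for which the dimension count on the family of line bundles on smooth quintics seems the cleanest route.
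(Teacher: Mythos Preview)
Your forward direction is correct and takes a pleasant alternative route: instead of the paper's four--term sequence $0\to\O(-4)\to\I_x(1)\to\F\to\O_Z\to 0$ obtained from one snake--lemma diagram, you embed $\F$ into the larger sheaf $\mathcal Q=\Coker(2\O(-2)\to\O(1)\oplus\O)$, analyse $\mathcal Q$ via the filtration $\O_C(1)\subset\mathcal Q$ with quotient $\O_Z$, and intersect with $\F$. This is equivalent and arguably more transparent; the identification of the distinguished section as the image of $1\in\O$ is especially clean.

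There are, however, two genuine gaps.

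\textbf{The section condition for the line bundles is never verified.} Your converse paragraph assumes the section hypothesis and deduces membership in $X_{01}$. But for the second assertion you must show that \emph{every} line bundle $\O_C(1)(P_1+\cdots+P_4-P_5)$ with $C$ smooth and $P_1,\dots,P_4$ in general position actually satisfies that hypothesis, and your dimension count does not give this: it only shows that the generic member of $X_{01}$ is such a line bundle, not that all such line bundles lie in $X_{01}$. The paper handles this by a direct Serre--duality computation: the connecting map $\d\colon\H^0(\O_Z)\to\H^1(\J_x(1))$ has dual the restriction $\H^0(\O_C(1))\to\H^0(\O_C(1)|_Z)$, and one checks that no evaluation form $\e_i$ lies in its image precisely because no three of $P_1,\dots,P_4$ are colinear. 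This is the substantive content of the second assertion and is missing from your argument.

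\textbf{The converse is too sketchy and the semi-stability citation does not apply.} The paper's horseshoe argument hinges on showing that the connecting morphism $\d\colon\O(-4)\to\O(-4)$ is nonzero, using $\Ext^1(\O_Z,\I_x(1))=0$ and the non-splitness of the extension; without this the cancellation fails. Your pushout description is correct but you do not carry out the analogous step. Separately, the slope estimate at the end of \ref{2.1.4} is for sheaves of the form $\J_Z(2)\subset\O_C(2)$; your $\F$ is not a priori a subsheaf of any $\O_C(d)$, so lemma 6.7 of \cite{maican} does not apply as stated. (In fact the paper does not re-prove semi-stability in the converse here: it relies on the equivalence stated at the start of the proof between membership in $X_{01}$ and the specific resolution shape with $q_1,q_2$ and $\ell_1,\ell_2$ linearly independent.)
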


\begin{proof}
We begin by noting that the sheaves
giving points in $X_{01}$ are precisely the sheaves $\F$ admitting a resolution
\[
0 \lra 2\O(-2) \oplus \O(-1) \stackrel{\f}{\lra} 3\O \lra \F \lra 0,
\]
\[
\f = \left[
\ba{ccc}
q_1 & q_2 & 0 \\
\star & \star & \ell_1 \\
\star & \star & \ell_2
\ea
\right],
\]
where $q_1, q_2$ are linearly independent two-forms
and $\ell_1,\ell_2$ are linearly independent one-forms.
For generic $\F$, $q_1$ and $q_2$ have no common linear factor
and the conic curves they define intersect in the union $Z$ of four distinct points,
no three of which are colinear and also distinct from the common zero of $\ell_1$ and $\ell_2$.
We apply the snake lemma to the exact diagram:
\[
\xymatrix
{
& & 0 \ar[d] & 0 \ar[d] \\
& 0 \ar[r] & \O(-1) \ar[r]^-{\scriptsize \left[ \begin{array}{c} \ell_1 \\ \ell_2 \end{array} \right]} \ar[d]
& 2\O \ar[r] \ar[d] & \I_x(1) \ar[r] & 0 \\
& 0 \ar[r] &  2\O(-2) \oplus \O(-1) \ar[r]^-{\f} \ar[d]
& \O \oplus 2\O \ar[r] \ar[d] & \F \ar[r] & 0 \\
0 \ar[r] & \O(-4) \ar[r]^-{\scriptsize \left[ \! \begin{array}{c} -q_2 \\ \phantom{-} q_1 \end{array} \! \right]}
& 2\O(-2) \ar[d] \ar[r]^-{\scriptsize \left[ \begin{array}{cc} q_1 &  q_2 \end{array} \right]}
& \O \ar[r] \ar[d] & \O_Z \ar[r] & 0 \\
& & 0 & 0
}
\]
The vertical maps are injections into the second factors, respectively projections onto the first factors.
We get the exact sequence
\[
0 \lra \O(-4) \lra \I_x(1) \lra \F \lra \O_Z \lra 0,
\]
from which the conclusion follows.
For the converse we apply the horseshoe lemma to the diagram below.
\begin{table}[ht]
\[
\xymatrix
{
& & & 0 \ar[d] \\
& 0 \ar[d] & & \O(-4) \ar[dll]_-{\d} \ar[d] \\
& \O(-4) \ar[d] & & 2\O(-2) \ar[dll]_-{\g} \ar[ddll]_-{\b} \ar[d] \\
& \I_x(1) \ar[d]_-{\nu} & & \O \ar[dl]_-{\a} \ar[d]^-{\pi} \\
0 \ar[r] & \J_x(1) \ar[r]^-{\xi} \ar[d] & \F \ar[r]^-{\z} & \O_Z \ar[r] \ar[d] & 0 \\
& 0 & & 0
}
\]
\end{table}
By hypothesis, the morphism $\pi \colon \O \to \O_Z$ lifts to a morphism $\a \colon \O \to \F$.
Then $\b$, $\g$, $\d$ are defined in the usual way and we claim that $\d \neq 0$.
If $\d$ were zero, then $\g$ would factor through a morphism $\Ker(\pi) \to \I_x(1)$.
Since $\Ext^1(\O_Z, \I_x(1))=0$, this morphism would lift to a map $\eta \colon \O \to \I_x(1)$.
The composite map
$
2\O(-2) \to \O \stackrel{\a}{\to} \F
$
would then coincide with the composition
\[
2\O(-2) \lra \O \stackrel{-\eta}{\lra} \I_x(1) \stackrel{\nu}{\lra} \J_x(1) \stackrel{\xi}{\lra} \F,
\]
hence $\a + \xi \circ \nu \circ \eta$ would factor through a morphism $\s \colon \O_Z \to \F$.
We would have
$
\pi = \z \circ \a = \z \circ \a + \z \circ \xi \circ \nu \circ \eta = \z \circ \s \circ \pi,
$
hence $\z \circ \s$ would be the identity morphism.
The extension would split, contradicting our hypothesis on $\F$.
Combining the resolutions for $\J_x(1)$ and $\O_Z$ and cancelling $\O(-4)$ we obtain the resolution
\[
0 \lra 2\O(-2) \lra \O \oplus \I_x(1) \lra \F \lra 0.
\]
From this we easily get a resolution for $\F$ as at the beginning of this proof.

Assume now that $C$ is smooth and write $Z=\{ P_1, P_2, P_3, P_4 \}$, $x=P_5$.
Clearly, the only non-trivial extension sheaf of $\O_Z$ by $\J_x(1)$ is isomorphic to 
$\F=\O_C(1)(P_1+P_2+P_3+P_4-P_5)$.
To finish the proof of the proposition we must show that $\F$ has a global section
that does not vanish at any point of $Z$. For $1 \le i \le 4$, let $\e_i \colon \H^0(\O_Z) \to \C$
be the linear form of evaluation at $P_i$.
Let $\d \colon \H^0(\O_Z) \to \H^1(\J_x(1))$ be the connecting homomorphism
in the long exact cohomology sequence associated to the short exact sequence
\[
0 \lra \O_C(1)(-x)  \lra \F \lra \O_Z \lra 0.
\]
We must show that each $\e_i$ is not orthogonal to $\operatorname{Ker}(\d)$.
This is equivalent to saying that $\e_i$ is not in the image of the dual map $\d^*$.
By Serre duality $\d^*$ is the restriction morphism
\[
\xymatrix
{
\H^0(\O_C(-1)(x) \tensor \omega_C) \egal[d] \ar[r]
& \H^0((\O_C(-1)(x) \tensor \omega_C)|_Z) \egal[d] \\
\H^0(\O_C(1)(x)) \egal[d] & \H^0(\O_C(1)(x)|_Z) \egal[d] \\
\H^0(\O_C(1)) & \H^0(\O_C(1)|_Z)
}.
\]
The identity $\H^0(\O_C(1)(x)) \isom \H^0(\O_C(1)) \isom V^*$ follows from the fact that the
connecting homomorphism in the long exact cohomology sequence associated to the short
exact sequence
\[
0 \lra \O_C(1) \lra \O_C(1)(x) \lra \C_x \lra 0
\]
is non-zero. Indeed, its dual is the restriction map $\H^0(\O_C(1)) \to \H^0(\O_C(1)|_x)$.
This map is clearly non-zero.
Now $\d^*(u)$ is a multiple of $\e_i$ if and only if the linear form $u$ vanishes at $P_j$ for all $j \neq i$.
By hypothesis the points $P_j$, $j\neq i$, are non-colinear, so there is no such form $u$ and we conclude that
$\e_i$ is not in the image of $\d^*$.
\end{proof}

\begin{prop}
\label{2.3.3}
The sheaves $\F$ in $X_1$ are precisely the non-split extension sheaves of the form
\[
0 \lra \E^{\D}(1) \lra \F \lra \C_x \lra 0,
\]
where $\C_x$ is the structure sheaf of a point $x \in \P^2$ and $\E$ is in $X_2$.
Here $\E^{\D}={\mathcal Ext}^1(\E,\omega_{\P^2})$
signifies the dual sheaf of $\E$. Taking into account the duality isomorphism 
\cite{maican-duality}, the sheaves $\E^{\D}(1)$ are precisely the sheaves
$\G$ in the dual stratum $X_2^{\D} \subset \M(5,2)$ defined by the relations
\[
h^0(\G(-1))=0, \qquad h^1(\G)=1, \qquad h^1(\G \tensor \Om^1(1))=3.
\]
The generic sheaves in $X_1$ are of the form
$\O_C(2)(-P_1-P_2-P_3+P_4)$, where $C \subset \P^2$ is a smooth quintic curve,
$P_i$ are four distinct points on $C$ and $P_1, P_2, P_3$ are non-colinear.
In particular, $X_1$ lies in the closure of $X_{01}$.
\end{prop}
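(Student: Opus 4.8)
The plan is to produce the extension for a sheaf of $X_1$ from its Beilinson spectral sequence, to identify the sub-sheaf with a sheaf of $X_2^{\D}$ by matching resolutions under the duality of \cite{maican-duality}, to establish the converse by a cohomological argument resting on \ref{2.1.2}, and then to extract the geometric consequences.

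\emph{From $X_1$ to extensions.} First I would take $\F$ giving a point in $X_1$ and treat its Beilinson tableau exactly as in the proof of \ref{2.2.3}: this yields an exact sequence $0 \lra \G \lra \F \lra \C_x \lra 0$ in which $\C_x = \Coker(\f_1)$ is a skyscraper --- here one uses that $\F$ is semi-stable and maps onto $\Coker(\f_1)$ to see that the latter has length one and that $\Ker(\f_1) \isom \O(-3)$ --- and in which $x$ lies on the support of $\G$ since $\F$ and $\G$ have the same support. Setting $\G = \Coker(\f_5)$, one obtains the resolution
\[
0 \lra \O(-3) \oplus 2\O(-1) \stackrel{\psi}{\lra} 3\O \lra \G \lra 0 ,
\]
whose $2\O(-1) \to 3\O$ block is the Kronecker piece $\f_4$ of the tableau, hence semi-stable. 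The extension is non-split because $\F$, being semi-stable, is pure, whereas $\G \oplus \C_x$ is not. A direct cohomology computation from the resolution gives $h^0(\G(-1))=0$, $h^1(\G)=1$ and $h^1(\G \tensor \Om^1(1))=3$; together with the semi-stability of $\G$ --- which comes from $\psi$ being a semi-stable morphism, as in the characterisation underlying \ref{2.1.3} --- this puts $\G$ in $X_2^{\D}$, i.e. $\G \isom \E^{\D}(1)$ for a unique $\E$ in $X_2$; dualising the resolution of $\G$ by \cite{maican-duality}, lemma 3, recovers resolution \ref{2.1.3} for $\E$.

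\emph{The converse.} Conversely, let $\E$ be in $X_2$, put $\G = \E^{\D}(1) \in X_2^{\D}$, and let $0 \lra \G \lra \F \lra \C_x \lra 0$ be a non-split extension with $\F$ semi-stable (only such extensions can give points of the moduli space; non-splitness already forces $x \in \operatorname{supp}(\G)$). Since $\G$, and hence $\G(-1)$, is pure and $\I_x$ is two-dimensional, every morphism $\I_x \to \G(-1)$ is zero; via the structure sequence $0 \to \I_x \to \O \to \C_x \to 0$ this makes the connecting map $\H^0(\C_x) \to \H^1(\G(-1))$ injective, so tensoring the extension by $\O(-1)$ gives $h^0(\F(-1))=0$. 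Now \ref{2.1.2} yields $h^1(\F)=0$ and $h^0(\F \tensor \Om^1(1)) \in \{1,2\}$, while $\H^0(\G \tensor \Om^1(1)) \hookrightarrow \H^0(\F \tensor \Om^1(1))$ and a further computation from the resolution of $\G$ gives $h^0(\G \tensor \Om^1(1)) = 2$; hence $h^0(\F \tensor \Om^1(1)) = 2$ and $\F$ gives a point in $X_1$. The cohomological description of $X_2^{\D}$ stated in the proposition is then just Serre duality applied to the defining conditions of $X_2$.

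\emph{The main obstacle} is the semi-stability of the extension sheaves: of $\G$ in the first half (handled by the theory of semi-stable morphisms) and, in the second half, the question of which non-split extensions $\F$ are semi-stable. Purity of $\F$ is automatic from non-splitness; given a non-zero $\F' \subset \F$ of multiplicity $\le 4$, one sets $\G' = \F' \cap \G$, uses $\F'/\G' \hookrightarrow \C_x$ together with the stability of $\G$ (legitimate since $\gcd(5,2)=1$) to bound $\chi(\G')$ from above, and a short slope estimate gives $p(\F') \le 3/5 = p(\F)$ in every case except $\F' \isom \O_L$ for a line $L$. That case is excluded because $\O_L \to \F \to \C_x$ must be non-zero --- otherwise $\O_L \subset \G$ would destabilise $\G$ --- which forces $\O_L(-1) \subset \G$ and the extension class to lie in the image of $\Ext^1(\C_x,\O_L(-1))$, and one rules this out using the explicit form of the sheaves in $X_2^{\D}$. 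When the support is an integral curve, as in the generic case, $\F$ has no sub-sheaves of smaller multiplicity at all and is automatically stable.

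\emph{Geometric consequences.} For generic $\G$ in $X_2^{\D}$ one has $\G \isom \J_Z(2)$ with $Z = \{P_1,P_2,P_3\}$ three non-colinear points on a smooth quintic $C$, so $\G \isom \O_C(2)(-P_1-P_2-P_3)$ is a line bundle; as $\Ext^1(\C_{P_4},\G)$ is one-dimensional for a fourth point, the unique non-split extension is $\O_C(2)(-P_1-P_2-P_3+P_4)$, which is semi-stable by the previous paragraph --- this gives the first half of the generic description. For the inclusion $X_1 \subset \overline{X_{01}}$ I would take a line $M$ through $P_2$ and $P_3$, meeting $C$ residually in $S_1,S_2,S_3$, and use $\O_C(1) \isom \O_C(P_2+P_3+S_1+S_2+S_3)$ to rewrite
\[
\O_C(2)(-P_1-P_2-P_3+P_4) \isom \O_C(1)(S_1+S_2+S_3+P_4-P_1) .
\]
Here $S_1,S_2,S_3$ are colinear, so $S_1,S_2,S_3,P_4$ are not in general linear position; but for four points in general linear position the corresponding sheaf $\O_C(1)(A_1+A_2+A_3+A_4-A_5)$ lies in $X_{01}$ by \ref{2.3.2}, and these form a family whose closure contains our $\F$. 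Hence the generic point of $X_1$ lies in $\overline{X_{01}}$; since $X_1$ is irreducible by \ref{2.2.2} and $X_1 \cap \overline{X_{01}}$ is closed in $X_1$ and contains a dense open subset, we conclude $X_1 \subset \overline{X_{01}}$.
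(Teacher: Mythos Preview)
Your forward direction and the geometric consequences (generic sheaves, $X_1 \subset \overline{X_{01}}$) are essentially the paper's arguments; you reach the extension via the Beilinson tableau of \ref{2.2.3} where the paper applies the snake lemma directly to resolution \ref{2.1.2}(ii), but these routes are equivalent and both identify $\G$ with a sheaf in $X_2^{\D}$ via \ref{2.1.3}.

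For the converse the paper takes a different and much shorter route than yours: it applies the horseshoe lemma to the resolutions of $\G$ and of $\C_x$, obtaining
\[
0 \lra \O(-3) \lra \O(-3) \oplus 2\O(-2) \oplus 2\O(-1) \lra \O(-1) \oplus 3\O \lra \F \lra 0,
\]
argues via the mechanism of \ref{2.3.2} (using $\Ext^1(\C_x, 3\O)=0$) that non-splitness forces the $\O(-3) \to \O(-3)$ component to be non-zero, cancels, and recovers resolution \ref{2.1.2}(ii). Semi-stability and the cohomological conditions then come for free from \ref{2.1.2}. This bypasses both your direct slope estimate and your direct computation of $h^0(\F(-1))$.

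Your route has a genuine gap at exactly that computation. The assertion that $\Hom(\I_x,\G(-1))=0$ ``because $\G(-1)$ is pure and $\I_x$ is two-dimensional'' is not a valid principle --- the composite $\I_x \hookrightarrow \O \to \O_C$ is a non-zero morphism from a rank-one sheaf to a pure one-dimensional sheaf for any curve $C$ --- and in fact the claim is false in the case at hand. Using the resolution $0 \to \O(-2) \to 2\O(-1) \to \I_x \to 0$ one finds $\Hom(\I_x,\G(-1)) = \ker\bigl(H^0(\G)^2 \to H^0(\G(1))\bigr)$; for $\G = \J_Z(2)$ with $Z=\{P_1,P_2,P_3\}$ on a smooth quintic $C$ and $x=P_1\in Z$, the pair $(\ell_1 m,\ell_2 m)$, with $\ell_1,\ell_2$ cutting out $x$ and $m$ the line $P_2P_3$, is a non-zero element of this kernel. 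For such $x$ the unique non-split extension of $\C_x$ by $\G$ is $\O_C(2)(-P_2-P_3)=\J_{\{P_2,P_3\}}(2)$, which has $h^0(\F(-1))=1$ and so lies in $X_3$, not in $X_1$. Thus the step you rely on is not merely unjustified but actually wrong, and the ``precisely'' needs care when $x$ lies in the scheme $Z$ attached to $\G$ --- a subtlety that the paper's horseshoe argument also passes over quickly. Your semi-stability sketch has a second, smaller gap: the crude bound does not exclude a subsheaf of multiplicity $3$ and Euler characteristic $2$, so the case analysis is incomplete.
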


\begin{proof}
Let $\F$ be in $X_1$. As in the proof of \ref{2.3.2}, the snake lemma gives
an exact sequence
\[
0 \lra \Ker(\f_{11}) \stackrel{\a}{\lra} \Coker(\f_{22}) \lra \F \lra \Coker(\f_{11}) \lra 0.
\]
Because of the form of $\f_{11}$ given at \ref{2.1.2}(ii),
we have the isomorphisms $\Ker(\f_{11})\isom \O(-3)$ and
$\Coker(\f_{11})\isom \C_x$ for a point $x \in \P^2$.
Denoting $\G= \Coker(\a)$, we have an extension
\[
0 \lra \G \lra \F \lra \C_x \lra 0.
\]
Again from \ref{2.1.2}(ii), we know that $\f_{22}$ is injective, hence $\G$ has a resolution of the form
\[
0 \lra \O(-3) \oplus 2\O(-1) \stackrel{\psi}{\lra} 3\O \lra \G \lra 0,
\]
with $\psi_{12}=\f_{22}$.
According to the proof of \ref{2.1.3},  $\G$ is in the dual stratum $X_2^{\D}$.

Conversely, assume that $\F$ is an extension as in the proposition.
Using the horseshoe lemma we combine the resolutions
\[
0 \lra \O(-3) \oplus 2\O(-1) \stackrel{\psi}{\lra} 3\O \lra \G \lra 0
\]
and
\[
0 \lra \O(-3) \lra 2\O(-2) \lra \O(-1) \lra \C_x \lra 0
\]
to obtain a resolution
\[
0 \lra \O(-3) \lra \O(-3) \oplus 2\O(-2) \oplus 2\O(-1) \lra \O(-1) \oplus 3\O \lra \F \lra 0.
\]
Note that $\Ext^1(\C_x, 3\O)=0$, so we can use the arguments at \ref{2.3.2}
to show that the extension would split if the morphism $\O(-3) \to \O(-3)$
in the above complex were zero.
We deduce that this morphism is non-zero,
so we may cancel $\O(-3)$ to get a resolution as in \ref{2.1.2}(ii).

The part of the proposition concerning generic sheaves follows from the corresponding
part of proposition \ref{2.3.4} below.

To see that $X_1$ is included in $\overline{X}_{01}$ we choose a point in $X_1$ represented by
$\O_C(2)(-P_1-P_2-P_3+P_4)$. We may assume that the line through $P_1$ and $P_2$
intersects $C$ at five distinct points $P_1, P_2, Q_1, Q_2, Q_3$, which are also distinct
from $P_3$ and $P_4$. Then
\[
\O_C(2)(-P_1-P_2-P_3+P_4) \isom \O_C(1)(Q_1+Q_2+Q_3-P_3+P_4).
\]
Clearly, we can find points $R_1, R_2, R_3$ on $C$,
converging to $Q_1, Q_2, Q_3$ respectively, which are distinct from $P_3$ and such that
$R_1, R_2, R_3, P_4$ are in general linear position. Then $\O_C(1)(R_1+R_2+R_3+P_4-P_3)$
represents a point in $X_{01}$ converging to the chosen point in $X_1$.
\end{proof}

We recall from the proof of \ref{2.1.3} that the sheaves $\G$ giving points in the dual stratum
$X_2^\D \subset \M(5,2)$ are precisely the sheaves that admit a resolution of the form
\[
0 \lra \O(-3) \oplus 2\O(-1) \stackrel{\psi}{\lra} 3\O \lra \G \lra 0,
\]
where $\psi_{12}$ has linearly independent maximal minors.
We consider the open subset $X_{20}^{\D}$ of $X_2^{\D}$ given by the condition that the
maximal minors of $\psi_{12}$ have no common linear factor and we denote
$X_{21}^{\D}= X_2^{\D} \setminus X_{20}^{\D}$.

\begin{prop}
\label{2.3.4}
\textup{(i)} The sheaves $\G$ from $X_{20}^{\D}$
are precisely the twisted ideal sheaves $\J_Z(2)$,
where $Z \subset \P^2$ is a zero-dimensional scheme of length $3$ not contained in a line,
contained in a quintic curve $C \subset \P^2$, and $\J_Z \subset \O_C$ is its ideal sheaf.

The generic sheaves in $X_2^{\D}$
are of the form $\O_C(2)(-P_1-P_2-P_3)$,
where $C$ is a smooth quintic curve and $P_1, P_2, P_3$ are non-colinear points on $C$.

By duality, the generic sheaves in $X_2$ are of the form $\O_C(1)(P_1+P_2+P_3)$.
In particular, $X_2$ lies in the closure of $X_1$.

\medskip

\noindent
\textup{(ii)} The sheaves $\G$ from $X_{21}^{\D}$ are precisely the extension sheaves of the form
\[
0 \lra \O_L(-1) \lra \G \lra \O_C(1) \lra 0
\]
where $L \subset \P^2$ is a line, $C \subset \P^2$ is a quartic curve and such that the image
of $\G$ under the canonical map
\[
\Ext^1(\O_C(1),\O_L(-1)) \lra \Ext^1(\O(1),\O_L(-1))
\]
is non-zero.
\end{prop}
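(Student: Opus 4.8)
The plan is to run both parts off the resolution $0\lra\O(-3)\oplus 2\O(-1)\stackrel{[\psi_{11}\;\psi_{12}]}{\lra}3\O\lra\G\lra0$ furnished by the proof of \ref{2.1.3}, together with the auxiliary sheaf $Q:=\Coker(\psi_{12})$. Since the morphism is injective, $\psi_{11}$ induces an injection $\O(-3)\hookrightarrow Q$ with cokernel $\G$. The maximal minors $\Delta_1,\Delta_2,\Delta_3$ of $\psi_{12}$ are linearly independent, and by definition $\G$ lies in $X_{20}^{\D}$ exactly when they have no common linear factor, in $X_{21}^{\D}$ exactly when they do; in the first case $Q$ is torsion free, in the second $Q$ acquires a one-dimensional torsion subsheaf, and this is the whole dichotomy. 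For part (i), in the torsion-free case the Hilbert--Burch complex $0\to2\O(-1)\stackrel{\psi_{12}}{\to}3\O\stackrel{(\Delta_1,-\Delta_2,\Delta_3)}{\to}\O(2)$ is exact in the middle, so $Q\isom\I_Z(2)$ with $Z=\{\Delta_1=\Delta_2=\Delta_3=0\}$ zero-dimensional of length $3$ (from $\chi$), and $Z$ lies on no line precisely because the $\Delta_i$ span $\H^0(\I_Z(2))$ without a common linear factor. Twisting $0\to\O(-3)\to\I_Z(2)\to\G\to0$ by $\O(-2)$, the map $\O(-5)\to\I_Z$ is multiplication by a quintic $f$ vanishing on $Z$; comparing with $0\to\O(-5)\stackrel{f}{\to}\O\to\O_C\to0$, $C=\{f=0\}$, by the snake lemma identifies $\G(-2)$ with $\J_Z=\I_Z\cdot\O_C$, so $\G\isom\J_Z(2)$. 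Conversely, given $Z\subset C$, splice the Hilbert--Burch resolution $0\to2\O(-3)\to3\O(-2)\to\I_Z\to0$ with the quintic sequence --- lifting $\O(-5)\to\I_Z$ to $3\O(-2)$, which is possible since $\Ext^1(\O(-5),2\O(-3))=0$ --- and twist by $\O(2)$ to get a resolution as in \ref{2.1.3} whose $\psi_{12}$ is the twisted Hilbert--Burch matrix, with minors generating $\I_Z$ and so without common factor; hence $\G\in X_{20}^{\D}$.

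The remaining assertions of (i) are geometric: a general $Z$ is three general, hence non-colinear, points, a general quintic $C$ through it is smooth with $Z$ a reduced divisor, so $\J_Z(2)=\O_C(2)(-P_1-P_2-P_3)$; since ${\mathcal Ext}^1(\O_C(d),\omega_{\P^2})\isom\O_C(2-d)$ on a smooth quintic, the corresponding sheaf of $X_2$ is $\O_C(1)(P_1+P_2+P_3)$; and $X_2\subset\overline{X}_1$ because letting the three non-colinear points in the generic sheaf $\O_C(2)(-P_1-P_2-P_3+P_4)$ of $X_1$ (proof of \ref{2.3.3}) become colinear on a line meeting $C$ at $P_1,P_2,P_3,S_1,S_2$ turns it into $\O_C(1)(S_1+S_2+P_4)$, a generic sheaf of $X_2$.

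For the forward direction of part (ii), in the common-factor case write $\Delta_i=\ell\,m_i$, with $\ell$ the common factor and $m_1,m_2,m_3$ linearly independent (as the $\Delta_i$ are). Dividing the syzygy $(\Delta_1,-\Delta_2,\Delta_3)\psi_{12}=0$ by $\ell$ gives $(m_1,-m_2,m_3)\psi_{12}=0$, so $\Im(\psi_{12})\subset\Ker(3\O\lra\O(1))\isom\Om^1(1)$, the map being $(m_1,-m_2,m_3)$. The quotient $T:=\Om^1(1)/\Im(\psi_{12})$ has $c_1=[L]$, $\chi=0$, and is pure of dimension one (a finite subsheaf of $T$ would be a finite subsheaf of $Q$, hence of $\G$, contradicting purity), so $T\isom\O_L(-1)$ with $L=\{\ell=0\}$; thus $0\to\O_L(-1)\to Q\to\O(1)\to0$. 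Composing $\O(-3)\hookrightarrow Q$ with $Q\to\O(1)$ produces a quartic $f_C$, nonzero because $\O(-3)$ cannot map into the one-dimensional sheaf $\O_L(-1)$, and a short diagram chase (the image of $\O(-3)$ and the torsion $\O_L(-1)$ meet trivially in $Q$) gives $0\to\O_L(-1)\to\G\to\O_C(1)\to0$, $C=\{f_C=0\}$, together with $Q\isom\G\times_{\O_C(1)}\O(1)$. So the class of $\G$ maps to that of $Q$ under $\Ext^1(\O_C(1),\O_L(-1))\to\Ext^1(\O(1),\O_L(-1))$; and this is nonzero, because $Q$ is the pushout of the non-split Euler sequence $0\to\Om^1(1)\to3\O\to\O(1)\to0$ along $\Om^1(1)\twoheadrightarrow\O_L(-1)=T$, and the corresponding map $\Ext^1(\O(1),\Om^1(1))\to\Ext^1(\O(1),\O_L(-1))$ is an isomorphism since $\Ext^i(\O(1),\Im(\psi_{12}))=\H^i(2\O(-2))=0$ for $i=1,2$, while $\Ext^1(\O(1),\Om^1(1))\isom\H^1(\Om^1)\isom\C$ carries the nonzero Euler class.

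For the converse of part (ii) --- the main obstacle --- let $0\to\O_L(-1)\to\G\to\O_C(1)\to0$ ($L$ a line, $C$ a quartic) have class of nonzero image in $\Ext^1(\O(1),\O_L(-1))$. Then $\G$ is pure (a finite subsheaf maps to zero in the pure sheaf $\O_C(1)$, hence lies in $\O_L(-1)$), and from the long exact sequences of this extension and of $0\to\O(-4)\stackrel{f_C}{\to}\O(1)\to\O_C(1)\to0$ one computes $h^0(\G(-1))=0$, $h^1(\G)=1$, $h^1(\G\tensor\Om^1(1))=3$ --- the conditions defining $X_2^{\D}$ --- the first and third using the hypothesis on the class. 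It remains to show $\G$ semi-stable: if $\G'\subsetneq\G$ has $p(\G')\ge 2/5$, write $\N=\G'\cap\O_L(-1)$ and $\G''=\Im(\G'\to\O_C(1))$; as $\N\subset\O_L(-1)\isom\O_{\P^1}(-1)$ one has $\chi(\N)\le0$ and $\operatorname{mult}(\N)\le1$, so $\G''\ne0$ and a slope count forces $\G''=\O_C(1)$, hence $\G'=\G$ or $\O_C(1)$ lifts to a subsheaf of $\G$; the latter would lift $\O(1)$ along $\O(1)\twoheadrightarrow\O_C(1)$, contradicting the hypothesis. (For integral $C$ the only remaining option, $\G''$ a finite subsheaf of the torsion-free $\O_C(1)$, is impossible; for reducible or non-reduced $C$ one argues similarly after splitting off subcurves of $C$.) Thus $\G\in X_2^{\D}$, and $\G\notin X_{20}^{\D}$ since $\J_Z(2)$ with $Z$ on no line has no subsheaf isomorphic to $\O_L(-1)$ --- clear when the quintic support is smooth, and otherwise $\Hom(\O_L(-1),\J_Z(2))=0$ checked directly, the key being that evaluating cubic forms at the points of $L\cap C$ is injective once $Z\not\subset L$. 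Hence $\G\in X_{21}^{\D}$. The hard step is exactly this semi-stability argument: describing all subsheaves of $\O_C(1)$ for an arbitrary quartic $C$ and using the extension-class hypothesis to kill every destabilising subsheaf.
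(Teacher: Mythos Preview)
Your proof is correct. For part~(i) and the forward half of~(ii) it runs parallel to the paper, supplying the Hilbert--Burch and torsion--subsheaf arguments where the paper simply cites \cite{modules-alternatives} and 3.3.3~\cite{drezet-maican} for the structure of $\Coker(\psi_{12})$. The converse of~(ii), however, you handle by a genuinely different route. The paper's argument is brief: pulling the given extension back along $\O(1)\twoheadrightarrow\O_C(1)$ produces a nonzero class in $\Ext^1(\O(1),\O_L(-1))\isom\C$, so the pulled-back middle term is $\E_L$; hence $\G=\Coker(\O(-3)\hookrightarrow\E_L)$ inherits a resolution of the shape in \ref{2.1.3} with $\psi_{12}$ having a common linear factor in its minors, and the two-way characterisation already recorded there (via 5.3~\cite{maican}) delivers semi-stability, the cohomological conditions, and membership in $X_{21}^{\D}$ all at once. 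You instead verify each defining property of $X_2^{\D}$ directly and then exclude $X_{20}^{\D}$ by a $\Hom$-vanishing. This buys self-containment---you do not lean on the converse implication hidden in \ref{2.1.3}---at the cost of a semi-stability argument that is only sketched for non-integral $C$; that step goes through cleanly once one invokes lemma~6.7~\cite{maican} to control subsheaves of $\O_C(1)$, and in fact the only candidate destabiliser is $\O_C(1)$ itself, which your extension hypothesis excludes. Your final step is also simpler than you indicate: $\Hom(\O_L(-1),\O_{C'}(2))=0$ for \emph{any} quintic $C'$ and line $L$, since $\Ext^1(\O_L(-1),\O(-3))=0$, so no condition on $Z$ is needed there.
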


\begin{proof}
(i) According to 4.5 and 4.6 \cite{modules-alternatives}, $\Coker(\psi_{12}) \isom \I_Z(2)$,
where $Z \subset \P^2$ is a zero-dimensional scheme of length $3$ not contained in a line
and $\I_Z \subset \O$ is its ideal sheaf. Conversely, every $\I_Z(2)$ is the cokernel of some
morphism $\psi_{12} \colon 2\O(-1) \to 3\O$ whose maximal minors are linearly independent and
have no common linear factor. Thus, the sheaves $\G \in X_{20}^{\D}$ are precisely the
cokernels of injective morphisms $\O(-3) \to \I_Z(2)$. If $C$ is the quintic curve defined by the
inclusion $\O(-3) \subset \I_Z(2) \subset \O(2)$, then it is easy to see that $\G \isom \J_Z(2)$.

To see that $X_2$ is included in $\overline{X}_1$ we choose a generic sheaf in $X_2$ of the form
$\O_C(1)(P_1+P_2+P_3)$. We may assume that the line through $P_1$ and $P_2$
intersects $C$ at five distinct points $P_1, P_2, Q_1, Q_2, Q_3$.
For non-colinear points $R_1, R_2, R_3$ on $C$, converging to $Q_1, Q_2, Q_3$ respectively,
the sheaf
\[
\O_C(2)(-R_1-R_2-R_3+P_3) \isom \O_C(1)(P_1+P_2+P_3+Q_1+Q_2+Q_3-R_1-R_2-R_3)
\]
represents a point in $X_1$ converging to the point given by $\O_C(1)(P_1+P_2+P_3)$.

\medskip

\noindent
(ii) Let $\ell$ be a common linear factor of the maximal minors of $\psi_{12}$.
Consider the line $L$ with equation $\ell=0$.
According to 3.3.3 \cite{drezet-maican}, $\Coker(\psi_{12}) \isom \E_L$, where $\E_L$
is the unique non-split extension
\[
0 \lra \O_L(-1) \lra \E_L \lra \O(1) \lra 0.
\]
Conversely, every $\E_L$ is the cokernel of some morphism $\psi_{12} \colon 2\O(-1) \to 3\O$
with linearly independent maximal minors which have a common linear factor.
Thus, the sheaves $\G$ giving points in $X_{21}^{\D}$ are precisely the cokernels of the injective morphisms
$\O(-3) \to \E_L$. Let $C \subset \P^2$ be the quartic curve defined by the composition
$\O(-3) \to \E_L \to \O(1)$. We apply the snake lemma to the diagram with exact rows
\[
\xymatrix
{
0 \ar[r] & \O(-3) \ar[r] \ar@{=}[d] & \E_L \ar[d]^\a \ar[r] & \G \ar[r] & 0 \\
0 \ar[r] & \O(-3) \ar[r] &\O(1) \ar[r] & \O_C(1) \ar[r] & 0
}.
\]
As $\Ker(\a) \isom \O_L(-1)$, we obtain an extension
\[
0 \lra \O_L(-1) \lra \G \lra \O_C(1) \lra 0
\]
which maps to the class of $\E_L$ in $\P(\Ext^1(\O(1),\O_L(-1)))$.
The converse is clear, in view of the fact that $\Ext^1(\O(1),\O_L(-1))\isom \C$.
\end{proof}

\begin{prop}
\label{2.3.5}
The sheaves $\F$ giving points in $X_3$ are precisely the twisted ideal sheaves
$\J_Z(2)$, where $Z \subset \P^2$ is a zero-dimensional scheme of length $2$ contained
in a quintic curve $C$ and $\J_Z \subset \O_C$ is its ideal sheaf.

The generic sheaves in $X_3$ are of the form $\O_C(1)(P_1+P_2+P_3)$, where $C \subset \P^2$
is a smooth quintic curve and $P_1, P_2, P_3$ are distinct colinear points on $C$.
In particular, $X_3$ lies in the closure of $X_2$.
\end{prop}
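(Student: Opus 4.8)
The plan is to derive the first assertion from resolution \ref{2.1.4} by a snake-lemma argument of the type used at \ref{2.3.2} and \ref{2.3.3}, and to deduce the second assertion from the first by specialising line bundles on a fixed smooth quintic. The step I expect to carry the real content is the identification below of a certain cokernel with a twisted ideal sheaf $\I_Z(2)$ of a length-two scheme $Z$, which hinges on the line $V(\f_{12})$ and the conic $V(\f_{22})$ meeting properly --- precisely the force of the condition $\f_{12} \nmid \f_{22}$ from \ref{2.1.4}.

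Assume $\F$ gives a point in $X_3$ and fix a resolution
\[
0 \lra \O(-3) \oplus \O(-1) \stackrel{\f}{\lra} \O \oplus \O(1) \lra \F \lra 0
\]
as in \ref{2.1.4}. Let $\iota \colon \O(-1) \to \O(-3) \oplus \O(-1)$ be the inclusion of the second summand and put $\G = \Coker(\f \circ \iota)$. There is a commutative diagram with exact rows
\[
\xymatrix{
0 \ar[r] & \O(-1) \ar[r]^-{\f \circ \iota} \ar[d]_-{\iota} & \O \oplus \O(1) \ar@{=}[d] \ar[r] & \G \ar[d] \ar[r] & 0 \\
0 \ar[r] & \O(-3) \oplus \O(-1) \ar[r]^-{\f} & \O \oplus \O(1) \ar[r] & \F \ar[r] & 0
}
\]
in which $\Coker(\iota) \isom \O(-3)$, so the snake lemma gives an exact sequence $0 \to \O(-3) \to \G \to \F \to 0$. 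The components of $\f \circ \iota$ are the linear form $\f_{12}$ and the two-form $\f_{22}$, which by \ref{2.1.4} have no common factor; hence, as in 4.5 and 4.6 \cite{modules-alternatives}, the complex $\O(-1) \to \O \oplus \O(1) \to \O(2)$ with first arrow $\f \circ \iota$ and second arrow $(-\f_{22}, \f_{12})$ is exact and identifies $\G$ with $\I_Z(2)$, where $\I_Z \subset \O$ is the ideal sheaf of the length-two scheme $Z = V(\f_{12}, \f_{22})$. The composite $\O(-3) \to \G = \I_Z(2) \hookrightarrow \O(2)$ is multiplication by a quintic $F'$, so $Z \subset V(F')$, and $\F \isom \I_Z(2)/F'\O(-3)$ embeds into $\O(2)/F'\O(-3) \isom \O_{V(F')}(2)$ with cokernel $\O_Z$; since $\Coker(\f)$ is supported on $V(\det \f)$, we must have $F' = \det(\f)$ up to a scalar. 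Writing $C = V(\det \f)$ we obtain $Z \subset C$ and $\F \isom \J_Z(2)$ with $\J_Z \subset \O_C$, as wanted.

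For the converse, let $Z$ be a length-two scheme in a quintic curve $C = V(F)$, let $L = V(\ell)$ be the unique line through $Z$, and choose a conic $q$ with $Z = V(\ell, q)$ and $\ell \nmid q$; since $\I_Z = (\ell, q)$ and $Z \subset C$, we may write $F = \b q + \a \ell$ with $\b$ a cubic and $\a$ a quartic. For
\[
\f = \left[ \ba{cc} \b & \ell \\ -\a & q \ea \right] \colon \O(-3) \oplus \O(-1) \lra \O \oplus \O(1)
\]
we have $\det(\f) = F \neq 0$, so $\f$ is injective, $\f_{12} = \ell \neq 0$ and $\f_{12} \nmid \f_{22} = q$; hence $\Coker(\f)$ gives a point of $X_3$ by \ref{2.1.4}, and $\Coker(\f) \isom \J_Z(2)$ by the implication just proved. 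For the second assertion, on the dense open subset of $X_3$ where $C$ is smooth, $Z = \{A, B\}$ is reduced and the line $L$ through $A$ and $B$ meets $C$ in five distinct points $A, B, P_1, P_2, P_3$, the sheaf $\J_Z \subset \O_C$ equals the invertible sheaf $\O_C(-A-B)$, and the identity $\O_C(1) \isom \O_C(A + B + P_1 + P_2 + P_3)$ gives $\F \isom \O_C(2)(-A-B) \isom \O_C(1)(P_1 + P_2 + P_3)$ with $P_1, P_2, P_3$ distinct colinear points on $C$. Deforming $P_1, P_2, P_3$ along $C$ to points $P_1', P_2', P_3'$ in general position, we have $h^0(\O_C(P_1' + P_2' + P_3')) = 1$ (a smooth plane quintic has no $g^1_3$) and $h^0(\O_C(1)(-P_1' - P_2' - P_3')) = 0$ (no line passes through three non-colinear points), so by Serre duality $\O_C(1)(P_1' + P_2' + P_3')$ satisfies the cohomological conditions defining $X_2$ (compare \ref{2.3.4}) and specialises to $\F$; as the sheaves above are dense in $X_3$, this yields $X_3 \subseteq \overline{X}_2$.
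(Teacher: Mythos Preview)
Your proof is correct and follows essentially the same approach as the paper: identify $\Coker(\f|_{\O(-1)})$ with $\I_Z(2)$, obtain $\F$ as the cokernel of an injective map $\O(-3)\to\I_Z(2)$ and hence as $\J_Z(2)$, and for the closure statement deform the colinear triple to a non-colinear one so as to land in $X_2$. The paper is terser---it simply cites \ref{2.3.4} for $X_2$-membership of $\O_C(1)(Q_1+Q_2+Q_3)$ rather than verifying the cohomological conditions directly---but the substance is identical.
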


\begin{proof}
Adopting the notations of \ref{2.1.4}, we notice that the restriction of $\f$ to $\O(-1)$
has cokernel $\I_Z(2)$, where $Z$ is the
intersection of the line with equation $\f_{12}=0$ and the conic with equation $\f_{22}=0$.
Thus the sheaves $\F$ in $X_3$ are precisely the cokernels of injective morphisms
$\O(-3) \to \I_Z(2)$.
Let $C$ be the quintic curve defined by the inclusion $\O(-3) \subset \I_Z(2)
\subset \O(2)$. Clearly $\F \isom \J_Z(2)$.

To see that $X_3 \subset \overline{X}_2$ choose a generic sheaf $\O_C(1)(P_1+P_2+P_3)$
in $X_3$. Clearly, we can find non-colinear points $Q_1, Q_2, Q_3$ on $C$ converging to $P_1, P_2, P_3$
respectively. Then $\O_C(1)(Q_1+Q_2+Q_3)$ represents a point in $X_2$ converging to the chosen
point in $X_3$.
\end{proof}

\noindent
From what was said above we can summarise:

\begin{prop}
\label{2.3.6}
$\{ X_0 \setminus X_{01}, X_{01}, X_1, X_2, X_3 \}$ represents a stratification
of $\M(5,3)$ by locally closed irreducible subvarieties of codimensions $0, 1, 2, 3, 4$.
\end{prop}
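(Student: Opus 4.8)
The plan is to assemble the results of Subsections 2.1--2.3; almost all the substance is already there, and Proposition \ref{2.3.6} is a bookkeeping statement that packages it. The decomposition $\M(5,3) = X_0 \sqcup X_1 \sqcup X_2 \sqcup X_3$ according to the four sets of cohomological conditions of Table 1 was already obtained in Subsection 2.1 (Propositions \ref{2.1.1}--\ref{2.1.3}, together with the duals of \ref{2.1.5} and \ref{2.1.6} transported by \cite{maican-duality}, which bound $h^0(\F(-1))\le 1$ for $\F$ in $\M(5,3)$), and $X_0$ splits into $X_0 \setminus X_{01}$ and $X_{01}$ by the very definition of $X_{01}$. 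So the five sets partition $\M(5,3)$, and what remains is to check that each is a locally closed irreducible subvariety of the stated codimension, and that the frontier condition holds.

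First I would check local closedness by the standard open-in-closed chain. By semicontinuity of cohomology and \ref{2.1.2}, $X_0$ is open in $\M(5,3)$: it is the locus $h^0(\F(-1))=0$, $h^0(\F\tensor\Om^1(1))=1$, both open conditions there (for the second, \ref{2.1.2} says $h^0(\F\tensor\Om^1(1))\in\{1,2\}$ once $h^0(\F(-1))=0$). Hence $X_1 \cup X_2 \cup X_3$ is closed, $X_1$ is open in it (it is the locus $h^0(\F(-1))=0$), then $X_2 \cup X_3$ is closed with $X_3$ closed in $\M(5,3)$ by \ref{2.2.6}, so $X_2$ is open in $X_2 \cup X_3$. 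Finally $X_{01}$ is closed in $X_0$, so $X_0 \setminus X_{01}$ is open in $\M(5,3)$ and $X_{01}$ is locally closed in $\M(5,3)$.

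For irreducibility and codimension I would invoke the quotient descriptions. By \ref{2.2.1}, \ref{2.2.3}, \ref{2.2.4}, \ref{2.2.6} each $X_i \isom W_i/G_i$, and each $W_i$ is irreducible: $W_0$ and $W_2$ are nonempty open subsets of the affine spaces $\W_0$, $\W_2$, while $W_1$ and $W_3$ are nonempty open subsets of the vector bundles $W_1'$, $W_3'$ over the irreducible bases $U_1\times U_2$ of \ref{2.2.2} and $U$ of \ref{2.2.5}. As images of irreducible varieties under the quotient morphisms, the $X_i$ are irreducible; $X_0\setminus X_{01}$ is irreducible, being nonempty and open in the irreducible $X_0$; and $X_{01}$ is irreducible because its preimage in $W_0$ is a nonempty open subset of the (irreducible) rank-at-most-$2$ determinantal locus of the column $\f_{12}$ of linear forms, times the space of the entries $\f_{11}$. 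The codimensions $0,2,3,4$ of $X_0,X_1,X_2,X_3$ follow from the explicit descriptions of the quotients in \ref{2.2.1}, \ref{2.2.2}, \ref{2.2.4}, \ref{2.2.5} together with $\dim\M(5,3)=26$ (they are recorded in Table 1); $X_0\setminus X_{01}$, being dense in $\M(5,3)$, has codimension $0$, and $X_{01}$ has codimension $1$ in $X_0$ as noted before \ref{2.3.1}.

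Finally, for the frontier condition: Propositions \ref{2.3.3}, \ref{2.3.4}(i) and \ref{2.3.5} give $X_1 \subset \overline{X}_{01}$, $X_2 \subset \overline{X}_1$, $X_3 \subset \overline{X}_2$, while $X_{01} \subset \overline{X_0\setminus X_{01}}$ and $\overline{X_0\setminus X_{01}} = \M(5,3)$ because $X_{01}$ is closed of positive codimension in the irreducible $X_0$; combining these with the closedness of $X_1\cup X_2\cup X_3$, of $X_2\cup X_3$ and of $X_3$ shows that the closure of each of the five strata is the union of that stratum with all strata of strictly larger codimension. I do not expect any genuine obstacle here: the delicate points have all been handled earlier, and the two that require a touch of care in assembling the proof are the completeness of the partition (which rests on the semistability bounds for $h^0(\F(-1))$ from \cite{maican} and Subsection 2.1) and the irreducibility of the single ``hand-cut'' piece $X_{01}$, which the argument above reduces to irreducibility of a determinantal locus inside $W_0$.
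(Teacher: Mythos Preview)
Your proposal is correct and is exactly the assembly of prior results that the paper intends: the paper's own proof is the single line ``From what was said above we can summarise'', and what you have written makes that summary explicit, drawing on the same ingredients (the quotient descriptions of \ref{2.2.1}--\ref{2.2.6} for irreducibility and dimensions, semicontinuity for local closedness, and the closure inclusions of \ref{2.3.3}, \ref{2.3.4}(i), \ref{2.3.5} for the frontier condition). One small remark: in your irreducibility argument for $X_{01}$, note that the conditions defining $W_0$ already force $\f_{12}$ to have rank at least $2$, so within $W_0$ the rank-at-most-$2$ locus coincides with the rank-exactly-$2$ locus, which is what you need.
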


%%%%%%%%%%%%%%%%%%%%%%%%%%%%%%%%%%%%%%%%

%%%%%%%%%%%%%%%%%%%%% section 3

\section{Euler characteristic one or four}

%%%%%%%%%%%%%%% subsection 3.1

\subsection{Locally free resolutions for semi-stable sheaves}

\begin{prop}
\label{3.1.1}
Every sheaf  $\F$ giving a point in $\M(5,1)$ and satisfying the condition
$h^1(\F)=0$ also satisfies the condition $h^0(\F(-1))=0$.
These sheaves are precisely the sheaves with resolution
\[
0 \lra 4\O(-2) \stackrel{\f}{\lra} 3\O(-1) \oplus \O \lra \F \lra 0,
\]
where $\f_{11}$ is not equivalent to a morphism represented by a matrix of the form
\[
\left[
\begin{array}{cc}
\psi & 0 \\
\star & \star
\end{array}
\right], \quad \text{with} \quad \psi \colon m\O(-2) \lra m\O(-1), \quad m=1,2,3.
\]
\end{prop}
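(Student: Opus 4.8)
The plan is to prove the two assertions in turn, the second one along the lines of \ref{2.1.2} and \ref{2.1.4}: extract a resolution from the Beilinson free monad, then cut it down using semi-stability. For the first assertion I would argue the contrapositive. Suppose $\F$ is semi-stable in $\M(5,1)$ with $h^0(\F(-1))\neq 0$, so there is a nonzero morphism $\O(1)\to\F$; let $\F'\subseteq\F$ be its image, of some multiplicity $d\le 5$. The kernel of $\O(1)\to\F'$ is a rank-one torsion-free subsheaf of $\O(1)$, hence of the form $\I_W(1-d)$ for a zero-dimensional scheme $W$ of length $e\ge 0$, and a Riemann--Roch computation gives $p(\F')=(5-d)/2+e/d$. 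Since $p(\F')\le p(\F)=1/5$, this forces $d=5$ and $e\le 1$; and $e=1$ is impossible, because then $\O(1)/\I_W(-4)$ would have zero-dimensional torsion, contradicting the purity of $\F$. Hence $\F'\isom\O_C(1)$ with $C$ the quintic supporting $\F$ and $\F/\F'\isom\C_x$ for a point $x$. Using $h^0(\O_C(1))=h^1(\O_C(1))=3$ and the long exact sequence of $0\to\O_C(1)\to\F\to\C_x\to 0$ one gets $h^1(\F)=3-\rank(\delta)\ge 2$, where $\delta\colon\H^0(\C_x)\to\H^1(\O_C(1))$ is the connecting map; so $h^1(\F)\neq 0$. (This also follows, as in \ref{2.1.1}, from the duality of \cite{maican-duality} and the nonexistence of sheaves in $\M(5,4)$ with $h^0(\G(-1))=0$, $h^1(\G)\neq 0$, a fact established in \cite{maican}.)

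Assume now $h^1(\F)=0$, hence $h^0(\F(-1))=0$ by the above, and set $m=h^0(\F\tensor\Om^1(1))$. From $\chi(\F)=1$ we get $h^0(\F)=1$ and $h^1(\F(-1))=4$; restricting $\F(1)$ to a general line (on which the line's equation is a non-zero-divisor on $\F$) gives $h^1(\F(1))=0$, so $h^0(\F(1))=6$; and the Euler sequence $0\to\F\tensor\Om^1(1)\to 3\F\to\F(1)\to 0$ then gives $h^2(\F\tensor\Om^1(1))=0$ and $h^1(\F\tensor\Om^1(1))=m+3$. With these values the outer terms of the Beilinson free monad (2.2.1) \cite{drezet-maican} for $\F$ vanish, so it reduces to a short exact sequence
\[
0\lra 4\O(-2)\oplus m\O(-1)\stackrel{\f}{\lra}(m+3)\O(-1)\oplus\O\lra\F\lra 0
\]
with $\f$ injective.

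Next I would examine the constant block $\beta\colon m\O(-1)\to(m+3)\O(-1)$ of $\f$. If $\rank(\beta)<m$, then after acting by automorphisms some copy of $\O(-1)$ in the source maps to zero in $(m+3)\O(-1)$, hence to $\O$ by a linear form $\ell$; injectivity of $\f$ allows only one such copy, and then splitting off the rest and passing to the subcomplex $[\,\O(-1)\stackrel{\ell}{\lra}\O\,]$ realizes $\O_L$ ($L=\{\ell=0\}$) as a quotient of $\F$ into which a torsion-free sheaf of positive rank embeds, which is absurd. Hence $\rank(\beta)=m$; the summand $m\O(-1)$ cancels, and recomputing $h^0(\F\tensor\Om^1(1))$ from the two-term resolution obtained gives $0$, so $m=0$ and $\F$ has a resolution $0\to 4\O(-2)\stackrel{\f}{\lra}3\O(-1)\oplus\O\to\F\to 0$. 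If $\f_{11}$ were equivalent to a matrix $\left[\begin{smallmatrix}\psi&0\\ \star&\star\end{smallmatrix}\right]$ with $\psi\colon m\O(-2)\to m\O(-1)$, $m\in\{1,2,3\}$, then the last $4-m$ columns of $\f$ would span a subcomplex whose quotient complex is $\psi$, so $\F$ would surject onto $\Coker(\psi)$; since $\F$ is torsion this forces $\psi$ to be generically injective, so $\Coker(\psi)$ has slope $0$ and multiplicity $m$, and the kernel of $\F\to\Coker(\psi)$ has slope $1/(5-m)>1/5$, contradicting semi-stability. Thus $\f_{11}$ avoids the listed forms.

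For the converse, any $\F$ with such a resolution has projective dimension one along its support, hence is pure; semi-stability then amounts to the absence of a semi-stable destabilising quotient $\G$, necessarily of multiplicity at most $4$ with $\chi(\G)\le 0$. Since $\Hom(\F,\G)$ is the kernel of the map $\Hom(3\O(-1)\oplus\O,\G)\to\Hom(4\O(-2),\G)$ induced by $\f$, the existence of such a $\G$ --- running through $\O_L(-1)$, $\O_C(-1)$ for a conic $C$, and the multiplicity-three possibilities --- translates, after automorphisms, into precisely the excluded block shapes of $\f_{11}$, with $m=1,2,3$ respectively. I expect this last step --- enumerating all semi-stable destabilising quotients of $\F$ and checking that they correspond bijectively to the three forbidden forms --- to be the main obstacle; it is where the argument lies closest to the converse part of \ref{2.1.4} and to the Kronecker-module bookkeeping borrowed from \cite{maican}.
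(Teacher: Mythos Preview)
Your approach differs substantially from the paper's. The paper gives a three-line proof: by 4.2 of \cite{maican}, every $\G\in\M(5,4)$ with $h^0(\G(-1))=0$ has $h^1(\G)=0$ and admits the dual resolution with the dual condition on $\f_{12}$; the duality $\F\mapsto\F^{\D}(1)$ then transfers both assertions to $\M(5,1)$ at once, including the converse. You instead argue everything directly, which is more laborious but self-contained. Your contrapositive argument for the first assertion is correct (and you even flag the duality shortcut yourself), and your derivation of the Beilinson resolution and of the necessity of the condition on $\f_{11}$ are fine.

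There is, however, a genuine error in the step showing the constant block $\beta$ has full rank. You say that the subcomplex $[\O(-1)\stackrel{\ell}{\to}\O]$ ``realizes $\O_L$ as a quotient of $\F$ into which a torsion-free sheaf of positive rank embeds''. This is backwards. Applying the snake lemma to the short exact sequence of two-term complexes
\[
0\lra[\O(-1)\stackrel{\ell}{\lra}\O]\lra[4\O(-2)\oplus m\O(-1)\stackrel{\f}{\lra}(m+3)\O(-1)\oplus\O]\lra[4\O(-2)\oplus(m-1)\O(-1)\stackrel{\bar\f}{\lra}(m+3)\O(-1)]\lra 0
\]
gives $0\to\Ker(\bar\f)\to\O_L\to\F\to\Coker(\bar\f)\to 0$, so $\O_L$ maps \emph{to} $\F$, not from it. The correct dichotomy is: either $\bar\f$ fails to be generically injective, so $\Ker(\bar\f)$ is torsion-free of positive rank inside the torsion sheaf $\O_L$ (absurd); or $\Ker(\bar\f)=0$, whence $\O_L\hookrightarrow\F$ is a destabilising subsheaf since $p(\O_L)=1>1/5$. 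Either way you get a contradiction, so your conclusion $\rank(\beta)=m$ stands, but not for the reason you wrote. (An alternative is to note, as the paper does in its parallel arguments at \ref{2.1.4} and \ref{3.1.2}, that by the structure of the Beilinson free monad this constant block is in fact zero, which immediately forces $m\le 1$ by injectivity; the case $m=1$ is then ruled out by the same $\O_L$ subsheaf.)

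Your converse direction is, as you acknowledge, only a sketch. The paper does not work it out either: it is inherited from the cited result in \cite{maican} via duality. If you insist on a direct proof, the case analysis of possible destabilising quotients is indeed where the work lies; but the cleaner route is simply to invoke duality as the paper does, since the $\M(5,4)$ statement is already in the literature.
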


\begin{proof}
According to 4.2 \cite{maican}, every sheaf $\G$ giving a point in $\M(5,4)$
and satisfying the condition $h^0(\G(-1))=0$ also satisfies the condition $h^1(\G)=0$ and has a resolution
\[
0 \lra \O(-2) \oplus 3\O(-1) \stackrel{\f}{\lra} 4\O \lra \G \lra 0,
\]
where $\f_{12}$ is not equivalent to a morphism represented by a matrix of the form
\[
\left[
\begin{array}{cc}
\star & \psi \\
\star & 0
\end{array}
\right], \quad \text{with} \quad \psi \colon m\O(-1) \lra m\O, \quad m=1,2,3.
\]
The result follows by duality.
\end{proof}

\begin{prop}
\label{3.1.2}
Let $\F$ be a sheaf giving a point in $\M(5,1)$ satisfying the conditions $h^1(\F)=1$
and $h^0(\F(-1))=0$.
Then $h^0(\F \tensor \Om^1(1)) = 0$ or $1$. The sheaves in the first case are precisely the
sheaves that have a resolution of the form
\[
\tag{i}
0 \lra \O(-3) \oplus \O(-2) \stackrel{\f}{\lra} 2\O \lra \F \lra 0,
\]
where $\f_{12}$ and $\f_{22}$ are linearly independent two-forms.
The sheaves from the second case are precisely the sheaves with resolution
\[
\tag{ii}
0 \lra \O(-3) \oplus \O(-2) \oplus \O(-1) \stackrel{\f}{\lra} \O(-1) \oplus 2\O
\lra \F \lra 0,
\]
\[
\f= \left[
\begin{array}{ccc}
q & \ell & 0 \\
\f_{21} & \f_{22} & \ell_1 \\
\f_{31} & \f_{32} & \ell_2
\end{array}
\right],
\]
where $\ell$ is non-zero, $q$ is non-divisible by $\ell$ and $\ell_1, \ell_2$ are linearly
independent one-forms.
\end{prop}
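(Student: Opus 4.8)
The plan is to read off the resolution of $\F$ from its Beilinson spectral sequence, exactly as in the proof of \ref{2.1.4}, and then to reduce the resulting presentation to the two normal forms (i) and (ii) by a sequence of semi-stability arguments.

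First I would record the cohomology forced by the hypotheses. For $\F$ in $\M(5,1)$ with $h^1(\F)=1$ and $h^0(\F(-1))=0$ one gets $h^0(\F)=2$, $h^1(\F(-1))=4$, $h^0(\F(-2))=0$, and, from the Euler sequence tensored by $\F$, $h^1(\F\tensor\Om^1(1))=m+3$, where $m:=h^0(\F\tensor\Om^1(1))$. The Beilinson free monad (2.2.1) \cite{drezet-maican} for $\F$ then reads (the term $\H^0(\F(-1))\tensor\O(-2)$ vanishing)
\[
0 \lra 4\O(-2)\oplus m\O(-1) \lra (m+3)\O(-1)\oplus 2\O \lra \O \lra 0,
\]
with cohomology $\F$ in the middle term. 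As in \ref{2.1.4}, the component of the last map on the summand $2\O=\H^0(\F)\tensor\O$ vanishes, the relevant differential on the $E_1$-page having zero target; hence the last map is the Euler contraction on three of the $\O(-1)$ summands and zero on the rest. Splitting off $\Om^1$ from the kernel of the last map, resolving $\Om^1$ by the Koszul sequence $0\lra\O(-3)\lra 3\O(-2)\lra\Om^1\lra 0$, and using the vanishing of $\Ext^1$ between line bundles on $\P^2$ to split the resulting extensions, one obtains a resolution
\[
0 \lra \O(-3)\oplus 4\O(-2)\oplus m\O(-1) \lra 3\O(-2)\oplus m\O(-1)\oplus 2\O \lra \F \lra 0.
\]

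From here the argument mirrors the reduction in \ref{2.1.4}. One inspects sub-blocks of the presentation matrix: projecting the target onto a sum of line bundles, the induced map presents a quotient sheaf of $\F$, which by semi-stability must have slope $\ge p(\F)=1/5$. For instance, projecting onto $3\O(-2)$ shows that the constant block $4\O(-2)\to 3\O(-2)$ must have rank $3$, for otherwise $\F$ would surject onto a sheaf of the form $\O_L(-2)$ (slope $-1$) or onto a sheaf of positive rank, both impossible; one then cancels $3\O(-2)$. Continuing in this way with the $\O(-1)$-blocks forces $m\le 1$ and brings the resolution to the form $0\to\O(-3)\oplus\O(-2)\to 2\O\to\F\to 0$ when $m=0$, and to $0\to\O(-3)\oplus\O(-2)\oplus\O(-1)\to\O(-1)\oplus 2\O\to\F\to 0$ when $m=1$; in the latter case the unique constant entry $\f_{13}$ must vanish, since otherwise it could be cancelled and $\F$ would acquire a resolution of the first shape, contradicting $m=1$. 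The genericity conditions stated in the proposition ($\f_{12},\f_{22}$ linearly independent; $\ell\neq 0$, $\ell\nmid q$, $\ell_1,\ell_2$ independent) are precisely the open conditions excluding, in each case, a quotient of $\F$ of multiplicity $2$, $3$ or $4$ and slope $\le 1/5$.

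For the converse, a sheaf $\F$ with a resolution (i) or (ii) has projective dimension $1$ at every point of its support, hence no zero-dimensional torsion; and a destabilising subsheaf $\F'\subset\F$ can be excluded by embedding it in a subsheaf $\A$ of $\O_C(\cdot)$ (with $\O_C(\cdot)/\A$ supported on a proper subcurve of the quintic support $C$, via \cite{maican}, lemma 6.7) and estimating $p(\F')\le 1/5$, exactly as in the converse part of the proof of \ref{2.1.4}. I expect the main obstacle to be the case analysis of the preceding paragraph: some degenerations of the presentation matrix do not produce a destabilising quotient by the crudest slope bound — a careless projection can yield a conic $\O_C$, of slope $1/2>1/5$, which does not destabilise — so one must track the induced maps between the various line-bundle summands with care. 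This is the same bookkeeping that appears in \ref{2.1.4} and in \cite{maican}. Alternatively, since $\M(5,1)\isom\M(5,4)$ by the duality of \cite{maican-duality}, the whole statement follows by Serre duality from the classification of sheaves $\G$ in $\M(5,4)$ with $h^0(\G(-1))=1$ and $h^1(\G)=0$ (under which $h^0(\F\tensor\Om^1(1))$ corresponds to $h^1(\G\tensor\Om^1(1))$), complementing \ref{3.1.1} — just as \ref{2.1.2} is deduced from \cite{maican}.
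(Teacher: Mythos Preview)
Your forward direction is the paper's: Beilinson monad, splitting off $\Om^1$, cancelling $3\O(-2)$ via the argument of \ref{2.1.4}, and then bounding $m$. The paper gets $m\le 2$ from injectivity of $\f$ (since $\f_{13}=0$ forces the $m\O(-1)$ summand of the source to map into $2\O$) and excludes $m=2$ via the destabilising subsheaf $\Coker(\f_{23})$; your quotient argument works equally well. The vanishing $\f_{13}=0$ in the $m=1$ case is structural, inherited from the Beilinson differentials, so your cancellation argument for it is unnecessary but harmless.

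The converse is where your sketch parts from the paper. For (ii) the paper does exactly what you propose: the snake lemma produces an extension $0\to\J_x(1)\to\F\to\O_Z\to 0$ with $|Z|=2$, and then for any subsheaf $\F'$ one embeds $\K=\F'\cap\J_x(1)$ into $\O_C(1)$ via lemma~6.7 of \cite{maican} and computes $p(\F')\le -d/2+2/(5-d)<1/5$ for $1\le d\le 4$. For (i), however, this template does not close. The natural ambient twist is $\O_C(2)$ (when the conics $\f_{12},\f_{22}$ have no common factor, $\F\simeq\J_Z(2)$ with $|Z|=4$), and the crude estimate gives only $p(\F')\le(2-d)/2$; for $d=1$ this is $1/2$, which is not below $p(\F)=1/5$. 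When the two conics share a linear factor, the quintic support acquires that line as a component and $\F$ is no longer of ideal-sheaf type, so even setting up the estimate is problematic. The paper therefore handles the converse of (i) by a different argument: a semi-stable destabiliser $\E$ must satisfy $h^0(\E)=1$, $h^1(\E)=0$, $h^0(\E\otimes\Om^1(1))=0$ (the first because $\F$ is globally generated with $h^0(\F)=2$, the last inherited from $\F$), hence $\E\in\M(r,1)$ with $1\le r\le 4$. The case $r=1$ is killed by the $\Om^1(1)$ condition; $r=2$ (the conic $\O_C$ you flag) by lifting the inclusion $\O_C\hookrightarrow\F$ to resolutions, which forces $\f_{12},\f_{22}$ to be proportional; and $r=3,4$ by mapping the Beilinson resolution of $\E$ into that of $\F$ and observing that $\O(-3)$ would then inject into a sheaf supported on a line, respectively a point. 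So the obstacle you anticipate is real, but it lives in the converse of (i) rather than in the forward reduction, and it is resolved by this case-by-case diagram chase rather than by sharpening the slope bound.
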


\begin{proof}
Let $\F$ give a point in $\M(5,1)$ and satisfy the conditions $h^1(\F)=1$ and $h^0(\F(-1))=0$.
Put $m=h^0(\F \tensor \Om^1(1))$. The Beilinson free monad (2.2.1) \cite{drezet-maican} for $\F$ reads
\[
0 \lra 4\O(-2) \oplus m\O(-1) \lra (m+3)\O(-1) \oplus 2\O \lra \O \lra 0
\]
and gives the resolution
\[
0 \lra 4\O(-2) \oplus m\O(-1) \lra \Om^1 \oplus m\O(-1) \oplus 2\O \lra \F \lra 0.
\]
Combining this with the standard resolution for $\Om^1$ we obtain the following exact
sequence:
\[
0 \lra \O(-3) \oplus 4\O(-2) \oplus m\O(-1) \stackrel{\f}{\lra} 3\O(-2) \oplus m\O(-1)
\oplus 2\O \lra \F \lra 0,
\]
with $\f_{13}=0$, $\f_{23}=0$.
As in the proof of \ref{2.1.4}, we have $\rank(\f_{12})=3$.
Canceling $3\O(-2)$ we get the resolution
\[
0 \lra \O(-3) \oplus \O(-2) \oplus m\O(-1) \stackrel{\f}{\lra} m\O(-1) \oplus 2\O \lra \F \lra 0,
\]
with $\f_{13}=0$. From the injectivity of $\f$ we must have $m \le 2$.
If $m=2$, then $\Coker(\f_{23})$ is a destabilising subsheaf of $\F$.
We conclude that $m=0$ or 1.

Assume that $h^0(\F \tensor \Om^1(1))=0$. We arrive at resolution (i).
If $\f_{12}$ and $\f_{22}$ were linearly dependent, then $\F$
would have a destabilising subsheaf of the form $\O_C$, for a conic curve $C \subset \P^2$.
Conversely, we assume that $\F$ has resolution (i)
and we must show that $\F$ cannot have a destabilising subsheaf $\E$.
We may restrict our attention to semi-stable sheaves $\E$.
As $\F$ is generated by global sections, we must have $h^0(\E) < h^0(\F)=2$.
Thus $\E$ is in $\M(r,1)$ for some $1 \le r \le 4$ and we have $h^1(\E)=0$.
Moreover, $\H^0(\E \tensor \Om^1(1))$ vanishes because the
corresponding cohomology group for $\F$ vanishes.
This excludes the possibility $r=1$. In the case $r=2$, $\E$ is the structure
sheaf of a conic curve, but this, by virtue of our hypothesis on $\f_{12}$ and
$\f_{22}$, is not allowed.
If $\E$ is in $\M(3,1)$, then, according to \cite{lepotier}, $\E$ has resolution
\[
0 \lra 2\O(-2) \lra \O(-1) \oplus \O \lra \E \lra 0.
\]
If $\E$ is in $\M(4,1)$, then, from the description of this moduli space found in
\cite{drezet-maican}, we see that $\E$ has resolution
\[
0 \lra 3\O(-2) \lra 2\O(-1) \oplus \O \lra \E \lra 0.
\]
It is easy to see that the first exact sequence must fit into a commutative diagram
\[
\xymatrix
{
0 \ar[r] & 2\O(-2) \ar[r]^-{\psi} \ar[d]^-{\b} & \O(-1) \oplus \O \ar[r] \ar[d]^-{\a}
& \E \ar[r] \ar[d] & 0 \\
0 \ar[r] & \O(-3) \oplus \O(-2) \ar[r]^-{\f} & 2\O \ar[r] & \F \ar[r] & 0
}.
\]
From the fact that $\a$ and $\a(1)$ are injective on global
sections we see that $\Coker(\a)$ is supported on a line.
This is impossible because $\O(-3)$ maps injectively to $\Coker(\b)$
which maps injectively to $\Coker(\a)$. The same argument applies to the
second exact sequence as well,
except that $\Coker(\a)$ this time would be supported on a point.

Assume now that $h^0(\F \tensor \Om^1(1))=1$. We arrive at resolution (ii).
If $\ell_1, \ell_2$ were linearly dependent, then
$\F$ would have a destabilising subsheaf of the form $\O_L$, for a line $L \subset \P^2$.
If $\ell=0$, then $\F$ would have a destabilising quotient sheaf of the form $\O_C(-1)$,
for a conic curve $C \subset \P^2$.
If $\ell$ divided $q$, then $\F$ would have a destabilising quotient sheaf of the form $\O_L(-1)$.
Conversely, we assume that $\F$ has resolution (ii) and we must
show that there is no destabilising subsheaf.
Let $x$ be the point with equations $\ell_1=0$, $\ell_2=0$ and let $Z \subset \P^2$
be the zero-dimensional subscheme of length $2$ given by the equations $\ell=0$, $q=0$.
We apply the snake lemma to the exact diagram:
\[
\xymatrix
{
& & 0 \ar[d] & 0 \ar[d] \\
& 0 \ar[r] & \O(-1) \ar[r]^-{\f_{23}} \ar[d] & 2\O \ar[r] \ar[d] & \I_x(1) \ar[r] & 0 \\
& 0 \ar[r] & \O(-3) \oplus \O(-2) \oplus \O(-1) \ar[r]^-{\f} \ar[d]
& \O(-1) \oplus 2\O \ar[r] \ar[d]  & \F \ar[r] & 0\\
0 \ar[r] & \O(-4) \ar[r]^-{\scriptsize \left[ \! \ba{r} -\ell \\ q \ea \! \right]} & \O(-3) \oplus \O(-2) \ar[d]
\ar[r]^-{\scriptsize \left[ \begin{array}{cc} q & \ell \end{array} \right]}
& \O(-1) \ar[r] \ar[d] & \O_Z \ar[r] & 0\\
& & 0 & 0
}
\]
We get the exact sequence
\[
0 \lra \O(-4) \lra \I_x(1) \lra \F \lra \O_Z \lra 0.
\]
Let $C$ be the quintic curve defined by the inclusion
$\O(-4) \subset \I_x(1) \subset \O(1)$. We obtain an exact sequence:
\[
0 \lra \J_x(1) \lra \F \lra \O_Z \lra 0,
\]
where $\J_x \subset \O_C$ is the ideal sheaf of $x$ on $C$.
Let $\F' \subset \F$ be a non-zero subsheaf of multiplicity at most 4.
Denote by $\CC'$ its image in $\O_Z$ and put $\K = \F' \cap \J_x(1)$.
By \cite{maican}, lemma 6.7, there is a sheaf
$\A \subset \O_C(1)$ containing $\K$ such that $\A/\K$ is supported on finitely many points
and $\O_C(1)/\A \isom \O_S(1)$ for a curve $S \subset \P^2$ of degree $d \le 4$.
The slope of $\F'$ can be estimated as follows:
\begin{align*}
P_{\F'}(t) & = P_{\K}(t) + h^0(\CC') \\
& = P_{\A}(t) - h^0(\A/\K) + h^0(\CC') \\
& = P_{\O_C}(t+1) - P_{\O_S}(t+1) - h^0(\A/\K) + h^0(\CC') \\
& = (5-d)t + \frac{d^2-5d}{2} - h^0(\A/\K) + h^0(\CC'), \\
p(\F') & = -\frac{d}{2} + \frac{h^0(\CC') - h^0(\A/\K)}{5-d} \le -\frac{d}{2} + \frac{2}{5-d} < \frac{1}{5} = p(\F).
\end{align*}
We conclude that $\F$ is semi-stable.
\end{proof}

\begin{prop}
\label{3.1.3}
There are no sheaves $\F$ giving points in $\M(5,1)$
and satisfying the conditions $h^0(\F(-1))=0$ and $h^1(\F)=2$.
\end{prop}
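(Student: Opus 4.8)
The plan is to argue by contradiction via the Beilinson free monad, in close parallel with the proof of \ref{3.1.2}. Suppose $\F$ gives a point in $\M(5,1)$ satisfying $h^0(\F(-1))=0$ and $h^1(\F)=2$. Since $\chi(\F)=1$ we get $h^0(\F)=3$; since $\F$ is pure and $h^0(\F(-1))=0$ we also have $h^0(\F(-2))=0$; moreover $h^1(\F(-1))=4$, and putting $m=h^0(\F\tensor\Om^1(1))$ the identity $\chi(\F\tensor\Om^1(1))=-3$ gives $h^1(\F\tensor\Om^1(1))=m+3$.

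First I would write down the Beilinson free monad (2.2.1) \cite{drezet-maican} for $\F$. With the cohomology above it has the form
\[
0 \lra 4\O(-2) \oplus m\O(-1) \stackrel{A}{\lra} (m+3)\O(-1) \oplus 3\O \stackrel{B}{\lra} 2\O \lra 0,
\]
where $A$ is injective, $B$ is surjective and the cohomology in the middle is $\F$; here the summand $3\O$ is $\H^0(\F)\tensor\O$ and the summand $2\O$ is $\H^1(\F)\tensor\O$. As in the proof of \ref{3.1.2} (compare also \ref{2.1.4}), the component of $B$ on the $3\O$-summand vanishes, so $B$ restricts to a surjective morphism $(m+3)\O(-1)\to 2\O$, whose kernel is $\O(-3)\oplus m\O(-1)$. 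Thus $\Ker(B)\isom\O(-3)\oplus m\O(-1)\oplus 3\O$, and the monad yields the resolution
\[
0 \lra 4\O(-2) \oplus m\O(-1) \stackrel{A}{\lra} \O(-3) \oplus m\O(-1) \oplus 3\O \lra \F \lra 0.
\]

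The contradiction is then immediate. Since $\Hom(\O(-2),\O(-3))=0=\Hom(\O(-1),\O(-3))$, the morphism $A$ has zero component onto the summand $\O(-3)$. On the one hand $\O(-3)$ would then be a direct summand of $\F=\Coker(A)$, which is absurd because $\F$ is pure of dimension one. On the other hand $A$ would factor through $m\O(-1)\oplus 3\O$, which has rank $m+3$, while the source has rank $m+4$; so $A$ could not be injective. In either case we reach a contradiction, so no such $\F$ exists.

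The delicate step, as usual in these monad arguments, is extracting the exact form of the resolution from (2.2.1) — specifically, that $B$ kills the $\H^0(\F)\tensor\O$ summand and that its block on $(m+3)\O(-1)$ is minimal enough that the kernel of the induced surjection $(m+3)\O(-1)\to 2\O$ is $\O(-3)\oplus m\O(-1)$ rather than some indecomposable bundle. I expect this to be handled exactly as in the passage from monad to resolution in \ref{3.1.2}, by combining with the appropriate standard exact sequences and cancelling common free summands; the only difference from \ref{3.1.2} is that the kernel of a surjection $3\O(-1)\to\H^1(\F)\tensor\O$ is now the line bundle $\O(-3)$ instead of $\Om^1$. (One could also first bound $m$, as one does in \ref{3.1.2} where $m\le 1$ once $h^1(\F)=1$, in order to force the relevant rank to equal $3$.) As a consistency check, via the duality $\F\mapsto\F^{\D}(1)$ the statement is equivalent to the non-existence of a sheaf $\G$ in $\M(5,4)$ with $h^1(\G)=0$ and $h^0(\G(-1))=2$, which could alternatively be derived from the description of $\M(5,4)$ used in \ref{3.1.1}.
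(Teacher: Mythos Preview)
There is a genuine gap. Your claim that the kernel of the Beilinson surjection $(m+3)\O(-1)\to 2\O$ is $\O(-3)\oplus m\O(-1)$ is not correct, and this is precisely the point on which your intended contradiction rests. The differential $E_1^{-1,1}\to E_1^{0,1}$ factors as
\[
\H^1(\F\tensor\Om^1(1))\tensor\O(-1)\lra V\tensor\H^1(\F)\tensor\O(-1)\lra\H^1(\F)\tensor\O,
\]
the second arrow being $h^1(\F)$ copies of the Euler map. Since $\H^1(\F(1))=0$ for every sheaf in $\M(5,1)$ (this vanishing is used in the proof of \ref{3.1.4}), the long exact sequence of the Euler sequence tensored with $\F$ shows that the linear map $\H^1(\F\tensor\Om^1(1))\to V\tensor\H^1(\F)$ is surjective; with $h^1(\F)=2$ this forces $m\ge 3$, and the kernel of $(m+3)\O(-1)\to 2\O$ is then $2\Om^1\oplus(m-3)\O(-1)$. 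This is exactly what appears in the proof of \ref{4.1.4}, which handles the analogous $h^1=2$ situation in $\M(5,0)$ and obtains $2\Om^1$ in the resolution. Your analogy with \ref{3.1.2} misfires: there the map to a single copy of $\O$ factors through one Euler surjection $3\O(-1)\to\O$ with kernel $\Om^1$; here the map to $2\O$ factors through $6\O(-1)\to 2\O$ with kernel $2\Om^1$, not through some $3\O(-1)\to 2\O$.

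With the correct kernel the resolution becomes
\[
0\lra 4\O(-2)\oplus m\O(-1)\lra 2\Om^1\oplus(m-3)\O(-1)\oplus 3\O\lra\F\lra 0,
\]
and since $\Hom(\O(-2),\Om^1)\isom\C^3\neq 0$ there is no forced zero block and no immediate rank obstruction. The paper accordingly does not attempt a shortcut. It dualises to $\G=\F^{\D}(1)\in\M(5,4)$ with $h^0(\G(-1))=2$, $h^1(\G)=0$, uses the Beilinson monad for $\G$ to get a resolution $0\to 2\O(-2)\to 3\O(-2)\oplus(m+3)\O(-1)\to m\O(-1)\oplus 4\O\to\G\to 0$, bounds $m\le 2$ from the surjection of $\G$ onto $\Coker(\f_{11})$, and then eliminates $m=0,1,2$ by a case analysis of the canonical forms of the matrix $\psi\colon 2\O(-2)\to(m+3)\O(-1)$, invoking semi\-stability of $\G$ at each step. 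Your suggested consistency check via \ref{3.1.1} does not help either, since \ref{3.1.1} only treats sheaves with $h^0(\G(-1))=0$.
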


\begin{proof}
By duality, we need to show that there are no sheaves $\G$ in $\M(5,4)$
satisfying the conditions $h^0(\G(-1))=2$ and $h^1(\G)=0$. Assume that there is such a sheaf $\G$.
Write $m=h^1(\G \tensor \Om^1(1))$. The Beilinson monad gives a resolution
\[
0 \lra 2\O(-2) \stackrel{\eta}{\lra} 3\O(-2) \oplus (m+3)\O(-1)
\stackrel{\f}{\lra} m\O(-1) \oplus 4\O \lra \G \lra 0,
\]
\[
\eta = \left[
\ba{c}
0 \\ \psi
\ea
\right].
\]
Here $\f_{12}=0$. As $\G$ maps surjectively onto $\Coker(\f_{11})$,
the latter has rank zero, forcing $m \le 3$.
In the case $m=3$, $\Coker(\f_{11})$ has Hilbert polynomial $P(t)=3t$, so the semi-stability of $\G$
gets contradicted. Thus $m \le 2$.

We claim that any matrix representing a morphism equivalent to $\psi$
has three linearly independent entries on each column.
The argument uses the fact that $\G$ has no zero-dimensional torsion and is analogous
to the proof that the vector space $H$ from \ref{2.1.4} has dimension $3$.
Thus we may assume that one of the columns of $\psi$ is
\[
\left[ \begin{array}{cccccc}
0 & \cdots & 0 & X & Y & Z
\end{array} \right]^\T.
\]
Let $\f_0$ be the matrix made of the last three columns of $\f_{22}$.
The rows of $\f_0$ are linear combinations of the rows of the matrix
\[
\left[
\begin{array}{ccc}
-Y & \phantom{-}X & 0 \\
-Z & \phantom{-}0 & X \\
\phantom{-}0 & -Z & Y
\end{array}
\right].
\]
It is easy to see that the elements on any row of $\f_0$ are linearly dependent.
The rows of $\f_0$ cannot span a vector space of dimension $1$,
otherwise $\f_{22}$ would be equivalent to a morphism
represented by a matrix having a zero-column, hence
$\O(-1) \subset \Ker(\f)$, which is absurd.
$\Ker(\f_0)$ is isomorphic to $\O(-2)$ because $\f_0$ has at least two linearly independent rows.
This excludes the case $m=0$ because in that case $\f_0= \f_{22}$
and $\Ker(\f_{22}) \isom 2\O(-2)$.
In the remaining two cases we shall prove that the rows of $\f_0$ cannot span a vector space of
dimension $2$. We argue by contradiction. Assume that $m=2$ and that $\f_0$ is equivalent
to a matrix of the form
\[
\left[
\ba{c}
0 \\
\xi
\ea
\right],
\]
where $\xi$ is a $2 \times 3$-matrix with linearly independent rows.
Then $\Ker(\xi) \isom \O(-2)$ and $\Coker(\xi) \isom \O_L(1)$ for a line $L \subset \P^2$.
The first isomorphism is obvious and tells us that the maximal minors of $\xi$ are linearly
independent and have a common linear factor, say $\ell$.
Let $L \subset \P^2$ be the line with equation $\ell=0$.
$\Coker(\xi)$ is supported on $L$ and has Hilbert polynomial $P(t)=t+2$.
Moreover, it is easy to see that $\xi$ has rank $1$ at every point of $L$,
hence $\Coker(\xi)$ has no zero-dimensional torsion. This proves the second isomorphism.
We now use the argument from the proof of \ref{2.1.4}. There is a commutative diagram
\[
\xymatrix
{
3\O(-1) \ar[d] \ar[r]^-{\xi} & 2\O \ar[r] \ar[d] & \O_L(1) \ar[r] \ar[d] & 0 \\
3\O(-2) \oplus 5\O(-1) \ar[r]^-{\f} & 2\O(-1) \oplus 4\O \ar[r] & \G \ar[r] & 0
}
\]
in which the first two vertical maps are injective.
The induced morphism $\O_L(1) \to \G$ is zero because both sheaves are stable and
$p(\O_L(1)) > p(\G)$.
Thus the map $4\O \to \G$ is not injective on global sections. On the other hand,
$\H^0(\Coker(\eta))$ vanishes, hence the map $4\O \to \G$ is injective
on global sections. We have arrived at a contradiction. We conclude that the rows of $\f_0$
span a vector space of dimension $3$.

Modulo elementary operations on rows and columns, $\psi$ is equivalent to a morphism represented
by a matrix having one of the following forms:
\[
\left[
\ba{cc}
0 & 0 \\
0 & 0 \\
X & R \\
Y & S \\
Z & T
\ea
\right] \qquad \text{or} \qquad \left[
\ba{cc}
0 & 0 \\
X & 0 \\
Y & R \\
Z & S \\
0 & T
\ea
\right] \qquad \text{or} \qquad \left[
\ba{cc}
X & 0 \\
Y & 0 \\
Z & R \\
0 & S \\
0 & T
\ea
\right].
\]
Here $R, S, T$ form a basis of $V^*$. In the first case the triple $(R,S,T)$ is a multiple of
$(X,Y,Z)$, because, as we saw above, $\Ker(\f_0) \isom \O(-2)$. Thus $\psi$ is represented
by a matrix with a zero-column. This is absurd.
In the second case we can perform elementary row operations on the matrix
\[
\left[
\begin{array}{cc}
\phantom{-}X & 0 \\
\phantom{-}0 & X \\
-Z & Y
\end{array}
\right] \quad \text{to get the matrix} \quad \left[
\begin{array}{cc}
-S & \phantom{-}R \\
-T & \phantom{-}0 \\
\phantom{-}0 & -T \\
\end{array}
\right].
\]
It follows that
\[
\spann\{ X \} = \spann\{ X,Z \} \cap \spann\{ X,Y \}= \spann\{ S,T \} \cap \spann\{R,T\}
=\spann\{T\}
\]
and $(-S,R)=a(-Z,Y)+(bX,cX)$ for some $a, b , c \in \C$.
Thus $\psi$ is equivalent to the morphism represented by the matrix
\[
\left[
\begin{array}{cccccc}
0 & X & Y & Z & 0 \\
0 & 0 & 0 & 0 & X
\end{array}
\right]^{\T}.
\]
This, as we saw above, is not possible. In the third case we can perform elementary row
operations on the matrix
\[
\left[
\begin{array}{c}
0 \\ X \\ Y
\end{array}
\right] \quad \text{to get the matrix} \quad \left[
\begin{array}{c}
S \\ T \\ 0
\end{array}
\right].
\]
Thus, we may assume that
$S=X$, $T=Y$, $R=Z$. Performing elementary row and column operations on
$\psi$ we can get a matrix with three zeros on a column.
This, as we saw above, is not possible.
Thus far we have eliminated the case when $m=2$.
The case when $m=1$ can be eliminated in an analogous fashion.
We conclude that there are no sheaves $\G$ as above.
\end{proof}

\begin{prop}
\label{3.1.4}
There are no sheaves $\F$ giving points in $\M(5,1)$
and satisfying the conditions $h^0(\F(-1))=0$ and $h^1(\F) \ge 2$.
\end{prop}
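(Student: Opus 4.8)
The plan is to combine \ref{3.1.3}, which already excludes the case $h^1(\F)=2$, with a deformation argument over a smooth space of Beilinson monads, in the spirit of the proof of \ref{2.1.6}; thus it is enough to show that no sheaf $\F$ in $\M(5,1)$ with $h^0(\F(-1))=0$ satisfies $h^1(\F)\ge 3$. First I would observe that $h^0(\F(-1))=0$ forces $h^0(\F(-2))=0$ and $h^0(\F\tensor\Om^1)=0$ (the latter because $\F\tensor\Om^1$ embeds into $3\F(-1)$ by the Euler sequence), while the Hilbert polynomial gives $h^1(\F(-1))=4$, $h^1(\F(-2))=9$ and $h^1(\F\tensor\Om^1)=13$. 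Consequently the Beilinson free monad (2.2.1) \cite{drezet-maican} for $\F(-1)$ has the shape
\[
0 \lra 9\O(-2) \stackrel{A}{\lra} 13\O(-1) \stackrel{B}{\lra} 4\O \lra 0,
\]
with $A$ injective, $B$ surjective and cohomology $\F(-1)$ in the middle; there are no block-vanishing conditions here, since the $\H^0$-row of the Beilinson tableau vanishes identically and all three terms come from the $\H^1$-row. In particular this shape depends neither on $h^1(\F)$ nor on $h^0(\F\tensor\Om^1(1))$.

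Conversely, the cohomology of any monad of this shape, twisted by $\O(1)$, is a sheaf $\F$ with $h^0(\F(-1))=0$, and splitting the twisted monad as
\[
0 \lra \Ker(B(1)) \lra 13\O \stackrel{B(1)}{\lra} 4\O(1) \lra 0, \qquad 0 \lra 9\O(-1) \lra \Ker(B(1)) \lra \F \lra 0,
\]
one reads off $h^0(\F)=13-\rank(\H^0(B(1)))$, hence $h^1(\F)=12-\rank(\H^0(B(1)))$. Let $\Gamma$ be the space of pairs $(A,B)$ with $A$ injective and $B$ surjective, and let $M\subset\Gamma$ be the open subset of monads (that is, $B\circ A=0$) whose twisted cohomology is semi-stable. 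I would show $M$ is smooth by checking that $\g\colon\Gamma\to\Hom(9\O(-2),4\O)$, $\g(A,B)=B\circ A$, has surjective differential along $M$: since $\text{d}\g_{(A,B)}(\a,\b)=B\circ\a+\b\circ A$, it suffices that $\a\mapsto B\circ\a$ be surjective, which follows from the vanishing of $\Ext^1(9\O(-2),\Ker(B))$; by the second exact sequence above (twisted by $\O(2)$) this reduces to $\H^1(\F(1))=0$, and the latter holds for every $\F$ in $\M(5,1)$ because, by Serre duality, it equals $h^0(\F^{\D}(-1))$, while a non-zero section of $\F^{\D}(-1)$ would yield a subsheaf $\O_C$ of the semi-stable sheaf $\F^{\D}(-1)$ of slope $-6/5$, with $C$ a plane curve of degree $c\le 5$; but then $\chi(\O_C)/c=(1-\binom{2-c}{2})/c>-6/5$ for all $c$ with $1\le c\le 5$, a contradiction. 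Now $\upsilon\colon M\to X$, sending a monad to its twisted cohomology, is surjective onto $X=\{\F\in\M(5,1):h^0(\F(-1))=0\}$, and $\Phi\colon M\to\Hom(13\O,4\O(1))$, $\Phi(A,B)=B(1)$, has surjective differential everywhere by the same computation, hence is a smooth morphism.

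Finally, set $Y=\{\F\in X:h^1(\F)=2\}$ and let $N\subset M$ be the locus where the twisted cohomology has $h^1\ge 3$. By the rank formula $N=\Phi^{-1}(R)$, where $R=\{\psi:\rank(\H^0(\psi))\le 9\}$, and $\upsilon^{-1}(Y)=\Phi^{-1}(R')$ with $R'=\{\psi:\rank(\H^0(\psi))=10\}$; since $R$ is contained in $\overline{R'}$ (the determinantal locus of rank $\le 10$) and $\Phi$ is smooth, hence open, we get $N\subset\overline{\upsilon^{-1}(Y)}\setminus\upsilon^{-1}(Y)$. But $Y=\emptyset$ by \ref{3.1.3}, so $\upsilon^{-1}(Y)=\emptyset$ and hence $N=\emptyset$. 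Since $\upsilon$ is surjective onto $X$, no $\F$ in $\M(5,1)$ with $h^0(\F(-1))=0$ has $h^1(\F)\ge 3$, and together with \ref{3.1.3} this proves the proposition. The step requiring the most care is the smoothness of $M$---that is, the surjectivity of $\text{d}\g$, equivalently the vanishing of $\Ext^1(9\O(-2),\Ker(B))$---together with the check that the monad above carries no hidden block constraints; once these are in place the rank-degeneration argument is routine and runs parallel to \ref{2.1.6}.
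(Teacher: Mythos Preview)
Your proof is correct and follows exactly the approach the paper takes: the paper's proof of \ref{3.1.4} is a terse reference back to \ref{2.1.6}, setting up the Beilinson monad $0 \to 9\O(-1) \to 13\O \to 4\O(1) \to 0$ for $\F$, using the vanishing of $\H^1(\F(1))$ to get surjectivity of $d\Phi$, and then invoking the rank-degeneration argument together with \ref{3.1.3}. You have simply filled in all the details the paper leaves implicit, including a self-contained proof of $\H^1(\F(1))=0$ via duality and slope comparison (the paper relies on 2.1.3 of \cite{drezet-maican} for such vanishings), and the observation that openness of $\Phi$ already yields $\Phi^{-1}(\overline{R'})=\overline{\Phi^{-1}(R')}$.
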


\begin{proof}
The argument is the same as at proposition \ref{2.1.6} or at 3.2.3 \cite{drezet-maican}.
Using the Beilinson monad for $\F(-1)$ we see that the open subset of $\M(5,1)$ given by the condition
$h^0(\F(-1))=0$ is parametrised by an open subset $M$ inside the space of monads of the form
\[
0 \lra 9\O(-1) \stackrel{A}{\lra} 13 \O \stackrel{B}{\lra} 4\O(1) \lra 0.
\]
The map $\Phi \colon M \to \Hom(13\O, 4\O(1))$ is defined by
$\Phi(A,B)=B$. Using the vanishing of $\H^1(\F(1))$ for an arbitrary sheaf in $\M(5,1)$,
we prove that $\Phi$ has surjective differential at every point of $M$.
This further leads to the conclusion that the set of monads in $M$ whose cohomology
sheaf $\F$ satisfies $h^1(\F) \ge 2$ is included in the closure
of the set of monads for which $h^1(\F)=2$.
According to \ref{3.1.3}, the latter set is empty, hence the former set is empty, too.
\end{proof}

\begin{prop}
\label{3.1.5}
The sheaves $\F$ giving points in $\M(5,1)$
and satisfying the condition $h^0(\F(-1))>0$
are precisely the sheaves with resolution of the form
\[
0 \lra 2\O(-3) \stackrel{\f}{\lra} \O(-2) \oplus \O(1) \lra \F \lra 0,
\]
\[
\f= \left[
\ba{cc}
\ell_1 & \ell_2 \\
f_1 & f_2
\ea
\right],
\]
where $\ell_1, \ell_2$ are linearly independent one-forms.
For these sheaves we have $h^0(\F(-1))=1$ and $h^1(\F)=2$.
These sheaves are precisely the non-split extension sheaves of the form
\[
0 \lra \O_C(1) \lra \F \lra \C_x \lra 0,
\]
where $C \subset \P^2$ is a quintic curve and $\C_x$ is the structure sheaf of a point.
\end{prop}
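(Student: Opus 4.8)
The plan is to prove that the three families of sheaves coincide by running around a loop: a sheaf with $h^0(\F(-1))>0$ is a non-split extension of $\C_x$ by $\O_C(1)$; such an extension admits the stated resolution; and a sheaf with such a resolution is semi-stable, lies in $\M(5,1)$, and satisfies $h^0(\F(-1))=1$, $h^1(\F)=2$.

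\emph{The extension.} First suppose $\F$ gives a point in $\M(5,1)$ with $h^0(\F(-1))>0$. Arguing as in the proof of 2.1.3 \cite{drezet-maican} (compare \ref{2.1.5}), a non-zero section of $\F(-1)$ produces an injective morphism $\O_C \to \F(-1)$ for a curve $C\subset\P^2$; since $p(\F(-1))=-4/5$ while $p(\O_C)=(3-\deg C)/2$, semi-stability of $\F(-1)$ forces $\deg C = 5$. The quotient $\F(-1)/\O_C$ is zero-dimensional of length $\chi(\F(-1))-\chi(\O_C)=-4+5=1$, hence equals $\C_x$ for a point $x$, and $x\in C$ because $\F$ is pure of dimension one. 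Twisting by $\O(1)$ yields a short exact sequence $0 \to \O_C(1) \to \F \to \C_x \to 0$ which does not split, otherwise $\F$ would carry zero-dimensional torsion; in particular $h^0(\F(-1)) \ge h^0(\O_C)=1$.

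\emph{The resolution.} From this extension I would extract the claimed resolution by applying the horseshoe lemma to it together with the standard resolutions $0\to\O(-4)\to\O(1)\to\O_C(1)\to 0$ and $0\to\O(-2)\to 2\O(-1)\to\O\to\C_x\to 0$, followed by cancellation of a redundant summand, in the manner of the end of the proof of \ref{2.1.4}; alternatively one runs the Beilinson free monad for $\F$ with parameter $m=h^0(\F\tensor\Om^1(1))$, combines it with the Euler sequence, and cancels as in \ref{3.1.2}. Either way one arrives at a resolution $0\to 2\O(-3) \stackrel{\f}{\lra} \O(-2)\oplus\O(1) \to \F \to 0$, and one must check that the two one-forms forming the upper row of $\f$ are linearly independent: if they were proportional, the cokernel would fail to be pure of dimension one or would acquire a destabilising subsheaf $\O_L(1)$ for a line $L$.

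\emph{Converse, invariants, and semi-stability.} Conversely, assume $\F=\Coker(\f)$ with $\f=\left[\ba{cc}\ell_1&\ell_2\\ f_1&f_2\ea\right]$, where $\ell_1,\ell_2$ are independent and $\f$ is injective (equivalently $\det\f=\ell_1f_2-\ell_2f_1\neq 0$). A Hilbert-polynomial count gives $P_\F(t)=5t+1$; $\F$ has projective dimension at most one at every point, hence no zero-dimensional torsion; and the long exact cohomology sequences of the resolution, using $\H^1(\O(n))=0$ for all $n$, $\H^2(\O(-3))=\C$ and $\H^2(\O(n))=0$ for $n>-3$, yield $h^0(\F(-1))=1$ and $h^1(\F)=2$. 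For semi-stability, apply the snake lemma to the inclusion of the subcomplex $2\O(-3)\stackrel{(\ell_1,\ell_2)}{\lra}\O(-2)$ into the resolution (as at \ref{2.3.2} and \ref{3.1.2}): this exhibits $\F$ again as an extension $0\to\O_C(1)\to\F\to\C_x\to 0$, with $C=\{\det\f=0\}$ a quintic and $x$ the common zero of $\ell_1,\ell_2$. Given any non-zero subsheaf $\F'\subset\F$ of multiplicity at most $4$, put $\K=\F'\cap\O_C(1)$ and let $\CC'$ be the image of $\F'$ in $\C_x$; by \cite{maican}, lemma 6.7 there is $\A$ with $\K\subset\A\subset\O_C(1)$, $\A/\K$ zero-dimensional and $\O_C(1)/\A\isom\O_S(1)$ for a curve $S$ of degree $d\le 4$. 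Since $P_{\O_C(1)}(t)=5t$,
\[
p(\F')=-\frac{d}{2}+\frac{h^0(\CC')-h^0(\A/\K)}{5-d}\le-\frac{d}{2}+\frac{1}{5-d}<\frac{1}{5}=p(\F),
\]
the value $1/5$ being attainable only for $d=0$, $\K=\O_C(1)$ and $\CC'=\C_x$, i.e. $\F'=\F$. Hence $\F$ is stable, it gives a point in $\M(5,1)$, and $h^0(\F(-1))=1>0$. This closes the loop and identifies these sheaves with the asserted non-split extensions.

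\emph{Main obstacle.} The delicate step is the second one: producing the resolution in \emph{exactly} the stated shape, with the single condition that $\ell_1,\ell_2$ be linearly independent. The horseshoe construction produces a length-two complex, and one must perform the cancellation carefully and exclude every degenerate form of $\f$ that would destabilise the cokernel. The exact equality $h^0(\F(-1))=1$ (rather than merely $\ge 1$) is then a by-product of the resolution, not something established beforehand.
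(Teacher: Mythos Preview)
Your overall strategy coincides with the paper's: extract the non-split extension from a section of $\F(-1)$, pass between extension and resolution, and verify (semi-)stability. Two points need correction or comment.

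\emph{The horseshoe step.} With the Koszul resolution $0\to\O(-2)\to 2\O(-1)\to\O\to\C_x\to 0$ that you wrote, the combined complex reads
\[
0\lra\O(-2)\lra\O(-4)\oplus 2\O(-1)\lra\O\oplus\O(1)\lra\F\lra 0,
\]
and no cancellation turns this into $2\O(-3)\to\O(-2)\oplus\O(1)$. The paper uses instead the twisted Koszul resolution
\[
0\lra\O(-4)\lra 2\O(-3)\lra\O(-2)\lra\C_x\lra 0;
\]
the horseshoe then yields
\[
0\lra\O(-4)\lra\O(-4)\oplus 2\O(-3)\lra\O(-2)\oplus\O(1)\lra\F\lra 0,
\]
and the key point---which you gesture at but do not justify---is that the component $\O(-4)\to\O(-4)$ is nonzero: if it vanished, then, since $\Ext^1(\C_x,\O(1))=0$, one argues exactly as in \ref{2.3.2} that the chosen lift of $\O(-2)\to\C_x$ to $\F$ could be corrected so as to split the extension. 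This is what legitimises the cancellation. Your argument that $\ell_1,\ell_2$ must be independent is right in spirit, but the destabilising subsheaf one actually produces when they are proportional (and nonzero) is $\O_{C'}(1)$ for the \emph{quartic} $C'=\{f_2'=0\}$ (slope $1/2$), not $\O_L(1)$ for a line.

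\emph{Semi-stability.} Your slope estimate via \cite{maican}, lemma 6.7, is correct and in the style of \ref{2.1.4}, \ref{3.1.2}(ii) and \ref{4.1.4}. The paper takes a shortcut here: once the non-split extension $0\to\O_C(1)\to\F\to\C_x\to 0$ is in hand, stability of $\F$ follows in one line from the stability of $\O_C$, since for any proper subsheaf $\F'$ of multiplicity at most $4$ one has $\chi(\F'\cap\O_C(1))\le -1$ and hence $p(\F')\le 0<1/5$. Both arguments work; the paper's is shorter.
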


\begin{proof}
Assume that $\F$ gives a point in $\M(5,1)$ and satisfies $h^0(\F(-1))>0$.
As in the proof of 2.1.3 \cite{drezet-maican}, there is an injective morphism
$\O_C \to \F(-1)$ for some quintic curve $C \subset \P^2$.
We obtain a non-split extension
\[
0 \lra \O_C(1) \lra \F \lra \C_x \lra 0.
\]
Conversely, using the fact that $\O_C$ is stable, it is easy to see that any non-split
extension sheaf as above gives a point in $\M(5,1)$.

Assume now that $\F$ has a resolution as in the proposition.
Let $x$ be the point given by the equations $\ell_1 = 0$, $\ell_2 = 0$ and let $\I_x \subset \O$ be its ideal sheaf.
Let $f= \ell_1 f_2 - \ell_2 f_1$ and let $C$ be the quintic curve with equation $f=0$.
We apply the snake lemma to the commutative diagram with exact rows
\[
\xymatrix
{
0 \ar[r] & \O(-4) \ar[d]^-{f} \ar[r]^-{\scriptsize \left[ \! \ba{r} -\ell_2 \\ \ell_1 \ea \! \right]} & 2\O(-3) \ar[r] \ar[d]^-{\f}
& \I_x(-2) \ar[r] \ar[d] & 0 \\
0 \ar[r] & \O(1) \ar[r]^-{i} & \O(-2) \oplus \O(1) \ar[r]^-{p} & \O(-2) \ar[r] & 0
}.
\]
Here $i$ is the inclusion into the second factor and $p$ is the projection onto the first factor.
We deduce that $\F$ is an extension of $\C_x$ by $\O_C(1)$.
As $h^0(\F)=3$, the extension does not split.

Conversely, assume that $\F$ is a non-split extension of $\C_x$ by $\O_C(1)$.
We construct a resolution of $\F$ from the standard resolution of $\O_C(1)$ and from the resolution
\[
0 \lra \O(-4) \lra 2\O(-3) \lra \O(-2) \lra \C_x \lra 0,
\]
using the horseshoe lemma. We obtain a resolution of the form
\[
0 \lra \O(-4) \lra \O(-4) \oplus 2\O(-3) \stackrel{\f}{\lra} \O(-2) \oplus \O(1) \lra \F \lra 0.
\]
If the map $\O(-4) \to \O(-4)$ in the above resolution were zero,
then, as in the proof of \ref{2.3.2}, the extension would split.
This would be contrary to our hypothesis.
We conclude that $\O(-4)$ can be cancelled in the above exact sequence
and we arrive at the resolution from the proposition.
\end{proof}

%%%%%%%%%%%%% subsection 3.2

\subsection{Description of the strata as quotients}

In subsection 3.1 we found that the moduli space $\M(5,1)$ can be decomposed into
four strata:
\begin{enumerate}
\item[$-$] an open stratum $X_0$ given by the condition $h^1(\F)=0$;
\item[$-$] a locally closed stratum $X_1$ of codimension $2$ given by the conditions \\
$h^0(\F(-1))=0$, $h^1(\F)=1$, $h^0(\F \tensor \Om^1(1))=0$;
\item[$-$] a locally closed stratum $X_2$ of codimension $3$ given by the conditions \\
$h^0(\F(-1))=0$, $h^1(\F)=1$, $h^0(\F \tensor \Om^1(1))=1$;
\item[$-$] the stratum $X_3$ of codimension $5$ given by the conditions \\
$h^0(\F(-1))=1$, $h^1(\F)=2$. We shall see below at \ref{3.2.5} that $X_3$ is closed.
\end{enumerate}
In the sequel $X_i$ will be equipped with the canonical reduced structure induced from
$\M(5,1)$. Let $W_0$, $W_1$, $W_2$, $W_3$ be the sets of morphisms $\f$ from \ref{3.1.1}, \ref{3.1.2}(i),
\ref{3.1.2}(ii), respectively \ref{3.1.5}. Each sheaf $\F$ giving a point in $X_i$ is the cokernel of a morphism $\f \in W_i$.
Let $\W_i$ be the ambient vector spaces of morphisms of sheaves containing $W_i$, e.g.
$
\W_0 = \Hom(4\O(-2),3\O(-1) \oplus \O).
$
Let $G_i$ be the natural groups of automorphisms acting by conjugation on $\W_i$.
In this subsection we shall prove that there exist geometric quotients $W_i/G_i$,
which are smooth quasiprojective varieties, such that $W_i/G_i \isom X_i$.
We shall also give concrete descriptions of these quotients.

\begin{prop}
\label{3.2.1}
There exists a geometric quotient $W_0/G_0$, which is a proper open subset
inside a fibre bundle over $\N(3,4,3)$ with fibre $\P^{14}$.
Moreover, $W_0/G_0$ is isomorphic to $X_0$.
\end{prop}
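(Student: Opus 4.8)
The plan is to follow the same two-stage pattern used for the strata of $\M(5,3)$ in subsection 2.2, in particular the construction of $W_1/G_1$ at \ref{2.2.2} and its identification with $X_1$ at \ref{2.2.3}, adapted to the resolution from \ref{3.1.1}. Concretely, a sheaf in $X_0$ is the cokernel of an injective morphism
\[
\f\colon 4\O(-2)\lra 3\O(-1)\oplus\O,\qquad \f=\begin{bmatrix}\f_{11}\\ \f_{21}\end{bmatrix},
\]
with $\f_{11}\in\Hom(4\O(-2),3\O(-1))$ subject to the King-type non-degeneracy condition of \ref{3.1.1}, and $\f_{21}\in\Hom(4\O(-2),\O)$ arbitrary. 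First I would choose a polarisation $\L=(\l_1,\m_1,\m_2)$ for the action of $G_0$ on $\W_0$ in the sense of \cite{drezet-trautmann}, with $\m_2$ small, so that, by lemma 9.3.1 of loc.\ cit., $\W_0^{ss}(\L)$ is exactly the set of $\f$ for which $\f_{11}$ is semi-stable as a Kronecker module under $(\GL(4,\C)\times\GL(3,\C))/\C^*$ acting on $\Hom(4\O(-2),3\O(-1))$ and $\f$ is not equivalent to a morphism with $\f_{21}=0$. By King's criterion the condition on $\f_{11}$ is precisely the exclusion in \ref{3.1.1}, so $W_0$ is the open $G_0$-invariant subset of injective morphisms inside $\W_0^{ss}(\L)$; as in \ref{2.2.4} it is a \emph{proper} open subset (one writes down an explicit non-injective semi-stable morphism). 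The existence of the geometric quotient $\W_0^{ss}(\L)/G_0$, and hence of $W_0/G_0$ as an open subvariety of it, then follows from the general construction of 9.3 \cite{drezet-trautmann}, exactly as at \ref{2.2.4}.

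Next I would give the fibre-bundle description. Let $\N(3,4,3)$ be the moduli space of semi-stable Kronecker modules $4\O(-2)\to 3\O(-1)$; by \cite{drezet-exceptionnels} this is a smooth projective irreducible variety (of dimension $16-9+1-1$… more precisely one reads off the dimension from op.\ cit., but the key point is smoothness and the existence of a universal bundle up to twist). Let $\tau\colon E\tensor V\to F$ be the universal Kronecker module on $\N(3,4,3)$, with $\rank E=4$, $\rank F=3$, let $p_1,p_2$ be the projections of $\N(3,4,3)\times\P^2$, and let $\theta\colon p_1^*E\tensor p_2^*\O(-2)\to p_1^*F\tensor p_2^*\O(-1)$ be the induced morphism. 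Following the recipe of 9.3 \cite{drezet-trautmann} (and the model case \ref{2.2.4}), the sheaf $\U=(p_1)_*\big(\Coker(\theta^{*})\big)$ — i.e.\ the pushforward encoding the ``extra row'' data $\f_{21}$ modulo the action absorbed by $\Aut$ — is locally free on $\N(3,4,3)$ of rank $15$, and $\P(\U)$ is a geometric quotient of $\W_0^{ss}(\L)$ by $G_0$. Hence $W_0/G_0$ is a proper open subset of the fibre bundle $\P(\U)$ over $\N(3,4,3)$ with fibre $\P^{14}$. (One should double-check the rank count: $\Hom(4\O(-2),\O)=S^2V^*{}^{\oplus 4}$ has dimension $24$, and the relevant automorphisms subtract $9$, giving $15$, so the projectivised fibre is $\P^{14}$; this bookkeeping is the kind of routine verification I would carry out but not reproduce in detail here.)

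Finally, to prove $W_0/G_0\isom X_0$ I would argue as at \ref{2.2.1} and \ref{2.2.3}. The map $W_0\to X_0$ sending $\f$ to the isomorphism class of $\Coker(\f)$ is surjective with fibres equal to $G_0$-orbits, so it factors through a bijective morphism $W_0/G_0\to X_0$. Since $\M(5,1)$ is smooth along its stable locus and $X_0$ is open, $X_0$ is smooth, so by Zariski's Main Theorem it suffices to show this bijection is a morphism with reduced target — equivalently, following the method of 3.1.6 \cite{drezet-maican} used at \ref{2.2.3}, that resolution \ref{3.1.1} can be reconstructed functorially from the (relative) Beilinson spectral sequence of a flat family of sheaves in $X_0$. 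For an individual $\F$ in $X_0$ one has $h^0(\F(-1))=0$, $h^1(\F)=0$, $h^0(\F\tensor\Om^1(1))=0$, so the Beilinson tableau (2.2.3) \cite{drezet-maican} degenerates to a single map $4\O(-2)\to 3\O(-1)\oplus\O$ whose cokernel is $\F$; the relative version produces exactly the family of resolutions, and the non-degeneracy of $\f_{11}$ is automatic from semi-stability of $\F$ (this is where \ref{3.1.1} is invoked). Thus the natural bijection is an isomorphism.

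\medskip

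\noindent\emph{Main obstacle.} The delicate point is the bundle construction: verifying that the data left over after taking the Kronecker-module quotient of the $\f_{11}$-part organises into a \emph{locally free} sheaf $\U$ of the asserted rank over $\N(3,4,3)$, and that $\P(\U)$ is genuinely a geometric quotient of $\W_0^{ss}(\L)$ rather than merely a categorical one. This requires the cohomological vanishing that makes $(p_1)_*\Coker(\theta^*)$ commute with base change — concretely a vanishing of the form $h^1$ of an appropriate twist vanishes uniformly over $\N(3,4,3)$ — together with the triviality of the relevant isotropy, so that Huybrechts' descent lemma 4.2.15 \cite{huybrechts} (or the construction in 9.3 \cite{drezet-trautmann}) applies. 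The referee's report, acknowledged above, specifically flagged gaps in such fibre-bundle constructions, so this is exactly the step that needs care.
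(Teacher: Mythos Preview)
Your proposal is correct and follows essentially the same route as the paper: polarise with $0<\m_2<1/4$, invoke 9.3 \cite{drezet-trautmann} to identify $\W_0^{ss}(\L)/G_0$ with $\P(\U)$ for $\U={p_1}_*(\Coker(\theta^*))$ of rank $15$ over $\N(3,4,3)$, and then take the open subset of injective morphisms. The only difference is that the paper's proof of $W_0/G_0\isom X_0$ is shorter than yours: since $X_0$ is open in the smooth variety $\M(5,1)$, the paper simply invokes Zariski's Main Theorem on the bijective morphism $W_0/G_0\to X_0$ (exactly as at \ref{2.2.1}), without appealing to the relative Beilinson construction; your Beilinson argument is not needed here, though it does no harm.
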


\begin{proof}
The situation is analogous to \ref{2.2.4}.
Let $\L=(\l_1,\m_1,\m_2)$ be a polarisation for the action of $G_0$ on $\W_0$ satisfying
$0 < \m_2 < 1/4$. $W_0$ is the proper open invariant subset of injective morphisms
inside $\W_0^{ss}(\L)$. Let $\N(3,4,3)$ be the moduli space of semi-stable Kronecker
modules $f \colon 4\O(-2) \to 3\O(-1)$ and let
\[
\theta \colon p_1^*(E) \tensor p_2^*(\O(-2)) \lra p_1^*(F) \tensor p_2^*(\O(-1))
\]
be the morphism of sheaves on $\N(3,4,3) \times \P^2$
induced from the universal morphism $\tau$.
Then $\U={p_1}_{*}(\Coker(\theta^*))$
is a vector bundle of rank $15$ on $\N(3,4,3)$ and $\P(\U)$ is the
geometric quotient $\W_0^{ss}(\L)/G_0$. Thus $W_0/G_0$ exists and is a proper open
subset of $\P(\U)$.

The canonical morphism $W_0/G_0 \to X_0$ is bijective and,
since $X_0$ is smooth, it is an isomorphism.
\end{proof}

\begin{prop}
\label{3.2.2}
There exists a geometric quotient $W_1/G_1$ and it is a proper open subset
inside a fibre bundle with fibre $\P^{16}$ and base the
Grassmann variety $\Grass(2,S^2V^*)$. Moreover, $W_1/G_1$ is isomorphic to $X_1$.
\end{prop}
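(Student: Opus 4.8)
The plan is to argue exactly as in \ref{2.2.2} for the bundle description and as in \ref{2.2.3} for the identification with $X_1$, so below I only highlight the points where the present situation differs. Write $\W_1 = \Hom(\O(-3)\oplus\O(-2), 2\O)$ and represent $\f \in \W_1$ by a $2 \times 2$ matrix $(\f_{ij})$ in which $\f_{11}, \f_{21}$ are cubic forms and $\f_{12}, \f_{22}$ are quadratic forms. Let $W_1' \subset \W_1$ be the open subset on which $\f_{12}, \f_{22}$ are linearly independent; the pairs $(\f_{12},\f_{22})$ form an open subset $U \subset \Hom(\O(-2),2\O)$ and $W_1'$ is the trivial vector bundle over $U$ whose fibre is the space $\Hom(\O(-3),2\O)$ of first columns of $\f$. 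Since a morphism whose first column lies in the $V^*$-span of its second column has vanishing determinant, the subset $W_1 = W_1' \cap \{\f \text{ injective}\}$ is disjoint from the rank-$3$ subbundle $\Sigma \subset W_1'$ with fibre $\{(v\f_{12}, v\f_{22}) : v \in V^*\}$ over $(\f_{12},\f_{22})$; thus $W_1 \subset W_1' \setminus \Sigma$. Represent $G_1$ by pairs $(g,h)$ with $g = \left[ \ba{cc} a & 0 \\ v & d \ea \right]$ and $h \in \GL(2,\C)$, and distinguish inside $G_1$ the unipotent subgroup $G_1'$ given by $a = d = 1$, $h = I$, the reductive subgroup ${G_1}_{\text{red}}$ given by $v = 0$, and the subgroup $S \isom \C^*$ of ${G_1}_{\text{red}}$ given by $h = dI$ modulo homotheties; one has $G_1 = G_1'{G_1}_{\text{red}}$.

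Next I would run through the chain of subgroup quotients of \ref{2.2.2}. Because $\f_{12}\neq 0$, the subgroup $G_1'$ acts on $W_1'$ freely by translation along $\Sigma$, so the quotient bundle $Q'$ has rank $17$ and the projection $W_1' \to Q'$, as well as its restriction $W_1' \setminus \Sigma \to Q' \setminus \sigma$ to the complement of the zero-section, are geometric quotients modulo $G_1'$ and are equivariant for the $Q'$-linearised action of ${G_1}_{\text{red}}$ on $U$. The stabiliser in ${G_1}_{\text{red}}$ of a point of $U$ is $S$, which acts on the fibres of $Q'$ by scaling, so $Q' \setminus \sigma \to \P(Q')$ is a geometric quotient modulo $S$. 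The quotient $U/({G_1}_{\text{red}}/S)$ is the Grassmann variety $\Grass(2, S^2V^*)$ — the classical quotient of the full-rank locus in $\Hom(\C^2, S^2V^*)$ by $\GL(2,\C)$ — over which $U$ is a principal $({G_1}_{\text{red}}/S)$-bundle with trivial isotropy. Twisting the ${G_1}_{\text{red}}$-action on $Q'$ by the character $\chi(g,h) = \det(g)\det(h)^{-1}$, which is trivial on homotheties and restricts on $S$ to the inverse of the scaling character, makes $Q'$ into a $({G_1}_{\text{red}}/S)$-linearised bundle; by \cite{huybrechts}, lemma 4.2.15, it descends to a rank-$17$ bundle $Q$ on $\Grass(2,S^2V^*)$ and $\P(Q') \to \P(Q)$ is a geometric quotient modulo ${G_1}_{\text{red}}/S$. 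Composing,
\[
W_1' \setminus \Sigma \lra Q' \setminus \sigma \lra \P(Q') \lra \P(Q)
\]
is a geometric quotient modulo $G_1$, so $W_1/G_1$ exists and is an open subset of the $\P^{16}$-bundle $\P(Q)$ over $\Grass(2, S^2V^*)$. It is a proper open subset because, for instance, the morphism represented by the matrix
\[
\left[ \ba{cc} XY^2 & X^2 \\ Y^3 & XY \ea \right]
\]
lies in $W_1' \setminus \Sigma$ but is not injective.

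For the isomorphism $W_1/G_1 \isom X_1$ I would follow \ref{2.2.3}. As there, the surjective morphism $W_1 \to X_1$ sending $\f$ to the isomorphism class of $\Coker(\f)$ has $G_1$-orbits for fibres and so factors through a bijective morphism $W_1/G_1 \to X_1$; to prove this is an isomorphism it suffices, by the method of 3.1.6 \cite{drezet-maican}, to recover resolution \ref{3.1.2}(i) from the relative Beilinson spectral sequence of a flat family of sheaves giving points in $X_1$. This is precisely the computation already carried out in the proof of \ref{3.1.2}: the Beilinson free monad of such an $\F$ reads $0 \lra 4\O(-2) \lra 3\O(-1) \oplus 2\O \lra \O \lra 0$ and gives $0 \lra 4\O(-2) \lra \Om^1 \oplus 2\O \lra \F \lra 0$; combining with the standard resolution of $\Om^1$ produces a resolution $0 \lra \O(-3) \oplus 4\O(-2) \stackrel{\f}{\lra} 3\O(-2) \oplus 2\O \lra \F \lra 0$ with $\f_{13} = \f_{23} = 0$ in which the block $4\O(-2) \to 3\O(-2)$ is surjective, so that $3\O(-2)$ can be cancelled to yield \ref{3.1.2}(i). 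Each step is functorial, hence extends to flat families; the delicate point — and the main obstacle of the proof — is to make this passage uniform over a family, in particular to split off the surjective block $4\O(-2) \to 3\O(-2)$ compatibly with the rest of the morphism, which is exactly what the technique of 3.1.6 \cite{drezet-maican} supplies. Everything else, notably the verification of the equivariances in the chain of subgroup quotients, is routine as in \ref{2.2.2}.
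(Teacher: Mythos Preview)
Your proof is correct, but it takes a different route from the paper on both halves of the statement.

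For the construction of the quotient, the paper does not run the subgroup-by-subgroup argument of \ref{2.2.2}; instead it invokes 9.3 of \cite{drezet-trautmann}. Concretely, it chooses a polarisation $\L=(\l_1,\l_2,\m_1)$ with $0<\l_1<1/2$, identifies $W_1$ as the open set of injective morphisms inside $\W_1^{ss}(\L)$, and realises $\W_1^{ss}(\L)/G_1$ as $\P(\U)$ for the rank-$17$ bundle $\U={p_1}_{*}(\Coker(\theta)\tensor p_2^*\O(3))$ built from the universal Kronecker module on $\N(6,1,2)\isom\Grass(2,S^2V^*)$. Your approach has the advantage of being self-contained and parallel to \ref{2.2.2} and \ref{3.2.3}; the paper's is shorter once the machinery of \cite{drezet-trautmann} is granted, and it also exhibits the non-injective semi-stable morphisms explicitly as the matrices $\left[\begin{smallmatrix} q\ell_1 & \ell\ell_1\\ q\ell_2 & \ell\ell_2\end{smallmatrix}\right]$.

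For the isomorphism $W_1/G_1\isom X_1$, you replay the free-monad computation of \ref{3.1.2} and then cancel the surjective block $4\O(-2)\to 3\O(-2)$. The paper instead works directly with the $\operatorname{E}^1$-tableau (2.2.3) of \cite{drezet-maican}: with the bottom row trivial, one has maps $4\O(-2)\stackrel{\f_1}{\to}3\O(-1)\stackrel{\f_2}{\to}\O$; since $\f_2$ is surjective and factoring $\f_1$ through the canonical $\rho:3\O(-2)\to\Om^1$ forces $\rank(\a)=3$, one gets $\Im(\f_1)=\Ker(\f_2)$ and $\Ker(\f_1)\isom\O(-3)\oplus\O(-2)$, and then the exact sequence (2.2.5) \emph{is} resolution \ref{3.1.2}(i). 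This sidesteps the splitting-off-a-direct-summand issue you flag as the delicate point: the kernel $\Ker(\f_1)$ is canonical, and only its identification with $\O(-3)\oplus\O(-2)$ needs to be made locally on the base. Both arguments are valid and ultimately rely on the same method of 3.1.6 in \cite{drezet-maican}.
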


\begin{proof}
The existence of $W_1/G_1$ follows from 9.3 \cite{drezet-trautmann}.
Let $\L=(\l_1,\l_2,\m_1)$ be a polarisation for the action of $G_1$ on $\W_1$ satisfying
$0 < \l_1 < 1/2$. Then $\W_1^{ss}(\L)$
is given by the conditions that $\f_{12}$, $\f_{22}$ be linearly independent 
two-forms and that the first column of $\f$ be not a multiple of the second column.
$W_1$ is the proper open invariant subset of injective morphisms inside $\W_1^{ss}(\L)$.
The semi-stable morphisms that are not injective are represented by matrices of the form
\[
\left[
\begin{array}{ll}
q\ell_1 & \ell \ell_1 \\
q \ell_2 & \ell \ell_2
\end{array}
\right]
\]
with $\ell \in V^*$ non-zero, $q \in S^2 V^*$ non-divisible by $\ell$ and $\ell_1, \ell_2 \in V^*$ linearly
independent.
The moduli space $\N(6,1,2)$ of semi-stable Kronecker modules $f \colon \O(-2) \to 2\O$
is isomorphic to $\Grass(2,S^2V^*)$. Let
\[
\theta \colon p_1^*(E) \tensor p_2^*(\O(-2)) \lra p_1^*(F)
\]
be the morphism of sheaves on $\N(6,1,2) \times \P^2$
induced from the universal morphism $\tau$.
Then $\U={p_1}_{*}(\Coker(\theta) \tensor p_2^*(\O(3)))$
is a vector bundle of rank $17$ over $\N(6,1,2)$
and $\P(\U)$ is the geometric quotient $\W_1^{ss}(\L)/G_1$.
Thus $W_1/G_1$ exists and is a proper open subset of the projective variety $\P(\U)$.

To show that the canonical bijective morphism $W_1/G_1 \to X_1$ is an isomorphism
we shall construct resolution \ref{3.1.2}(i) for a sheaf $\F$ giving a point in $X_1$ in a natural manner 
from the Beilinson diagram (2.2.3) \cite{drezet-maican} for $\F$, which has the form
\[
\xymatrix
{
4\O(-2) \ar[r]^-{\f_1} & 3\O(-1) \ar[r]^-{\f_2} & \O \\
0 & 0 & 2\O
}.
\]
According to 2.2 \cite{drezet-maican}, $\f_2$ is surjective, so $\Ker(\f_2) \isom \Om^1$.
Recall the morphism $\rho$ introduced in the proof of \ref{2.2.4}.
There is a morphism $\a \colon 4\O(-2) \to 3\O(-2)$ such that $\rho \circ \a = \f_1$.
As at \ref{2.2.4}, we have $\rank(\a)=3$, forcing
\[
\Ker(\f_2)= \Im(\f_1) \quad \text{and} \quad \Ker(\f_1) \isom \O(-3) \oplus \O(-2).
\]
The exact sequence (2.2.5) \cite{drezet-maican} takes the form
\[
0 \lra \Ker(\f_1) \stackrel{\f_5}{\lra} 2\O \lra \F \lra 0
\]
and gives us resolution \ref{3.1.2}(i). In this fashion we construct a local inverse to the
morphism $W_1/G_1 \to X_1$. We conclude that this is an isomorphism.
\end{proof}

\begin{prop}
\label{3.2.3}
There exists a geometric quotient $W_2/G_2$ and it is a proper open subset
inside a fibre bundle with fibre $\P^{17}$ and base $Y \times \P^2$,
where $Y$ is the Hilbert scheme of zero-dimensional subschemes of $\P^2$ of length $2$.
\end{prop}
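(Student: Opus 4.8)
The plan is to follow the pattern of \ref{2.2.2} and \ref{2.2.5}: first obtain the existence of the geometric quotient from \cite{drezet-trautmann}, then construct it by hand as a fibre bundle, peeling off in turn the unipotent radical of $G_2$, a one-parameter torus, and a residual group whose quotient produces the base $Y\times\P^2$.

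First I would choose a polarisation $\L$ for the action of $G_2$ on $\W_2$ in the sense of \cite{drezet-trautmann} and apply King's criterion of semi-stability \cite{king} to identify $\W_2^{ss}(\L)$ with the set of morphisms satisfying $\f_{13}=0$, $\f_{12}\neq 0$, $\f_{12}\nmid\f_{11}$, $\f_{23}$ having linearly independent entries, and not equivalent to a certain degenerate block form; $W_2$ is then the open invariant subset of injective morphisms inside $\W_2^{ss}(\L)$, and 9.3 \cite{drezet-trautmann} yields a good, quasiprojective, geometric quotient $\W_2^{ss}(\L)/G_2$, so that $W_2/G_2$ exists as an open subset of it. For the concrete description I would work with the locally closed $G_2$-invariant set $W_2'\subset\W_2$ defined by $\f_{13}=0$, $\f_{12}\neq 0$, $\f_{12}\nmid\f_{11}$, $\f_{23}$ linearly independent; it is the trivial vector bundle over $U=U_1\times U_2$, where $U_1$ is the space of pairs $(\f_{11},\f_{12})$ with $\f_{12}\neq 0$, $\f_{12}\nmid\f_{11}$, $U_2$ the space of columns $\f_{23}$ with linearly independent entries, and the fibre is $\Hom(\O(-3)\oplus\O(-2),2\O)$, i.e.\ the block $(\f_{21},\f_{22})$.

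Representing $G_2$ by pairs $(g,h)$ of block-lower-triangular matrices, I distinguish the unipotent subgroup $G_2''$ given by the off-diagonal entries $g_{31},g_{32}$ of $g$ and the block $h_{21}$ of $h$; it fixes $U$ and acts on the fibre by affine translations. The key point is that this translation action has an everywhere one-dimensional stabiliser: a morphism $\f$ as in \ref{3.1.2}(ii) is fixed by the unipotent element with $g_{31}=\f_{11}$, $g_{32}=\f_{12}$ and $h_{21}=\f_{23}$. Hence all orbits have dimension $14$, the orbit of the zero section is a subbundle $\Sigma$ of rank $14$, one has $W_2\subset W_2'\setminus\Sigma$, and $W_2'\to Q':=W_2'/G_2''$ is a geometric quotient with $Q'$ a vector bundle of rank $18$ over $U$; restricting gives the geometric quotient $W_2\to\tilde Q\subset Q'\setminus\sigma$ modulo $G_2''$, $\sigma$ the zero section. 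Inside the reductive part of $G_2$ there is a one-parameter subgroup $S\cong\C^*$ fixing $U$ pointwise and scaling the fibres of $Q'$, so $Q'\setminus\sigma\to\P(Q')$ is a geometric quotient modulo $S$. Finally the residual group $G_2/(G_2''S)$ acts on $\P(Q')$ covering the action on $U=U_1\times U_2$ which is the action of $(\Aut(\O(-3)\oplus\O(-2))\times\Aut(\O(-1)))/\C^*$ on $U_1$ — with geometric quotient the Hilbert scheme $Y=\mathrm{Hilb}^2(\P^2)$, via $(\f_{11},\f_{12})\mapsto V(\f_{11},\f_{12})$, a length-$2$ subscheme exactly because $\f_{12}\nmid\f_{11}$ — together with the action of $\GL(2,\C)$ on $U_2$, with geometric quotient $\Grass(2,V^*)\cong\P^2$, via $\f_{23}\mapsto$ the point where the entries of $\f_{23}$ vanish. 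Since this residual group acts on $U$ with trivial stabiliser, $U/(G_2/(G_2''S))\isom Y\times\P^2$, and by \cite{huybrechts}, lemma 4.2.15 — after twisting the linearisation by a suitable character, as in the proof of \ref{2.2.2} — $\P(Q')$ descends to a fibre bundle $\P(Q)$ over $Y\times\P^2$ with fibre $\P^{17}$. The composite $W_2\to\tilde Q\to\P(Q')\to\P(Q)$ is then a geometric quotient modulo $G_2$, so $W_2/G_2$ is an open subset of $\P(Q)$.

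I expect two parts of this to be the main obstacles. First, the non-freeness of the $G_2''$-action: one must check that the one-dimensional stabiliser identified above is exactly the kernel of the translation map at every point of $W_2'$, so that $Q'$ is genuinely a vector bundle and not merely a categorical quotient; this is also what makes the final dimension $\dim(W_2/G_2)=23=6+17$ come out right rather than one less. Second, the quotient $U_1/\bigl((\Aut(\O(-3)\oplus\O(-2))\times\Aut(\O(-1)))/\C^*\bigr)$ is taken modulo a non-reductive group and so must be produced directly, by constructing the classifying map to $Y$ and explicit local sections normalising the pair $(\f_{11},\f_{12})$, exactly as the analogous non-reductive quotients are handled in \ref{2.2.2} and \ref{2.2.5}. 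Finally, to see that $W_2/G_2$ is a \emph{proper} open subset of $\P(Q)$ one exhibits a non-injective morphism lying in $W_2'\setminus\Sigma$ — such morphisms occur when the column $\f_{23}$ is in special position with respect to the scheme $V(\f_{11},\f_{12})$ — so that $W_2\subsetneq W_2'\setminus\Sigma$.
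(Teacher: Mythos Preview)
Your concrete construction matches the paper's proof almost exactly: the same $W_2'$, the same subbundle $\Sigma$ of rank $14$ giving a rank-$18$ quotient, the same torus $S$ scaling the fibres, and the same descent via a character twist and \cite{huybrechts}, lemma 4.2.15. The one organisational difference is that the paper splits the unipotent radical into \emph{two} pieces rather than one: your $G_2''$ (the translations in the fibre direction, parameters $u_{31},u_{32},v_{21},v_{31}$) is the paper's $G_2'$, and the remaining unipotent parameter $u_{21}\in\Hom(\O(-3),\O(-2))$, which acts on $U_1$ by $q\mapsto q-\ell\,u_{21}$, is isolated as a separate subgroup $G_2''$ in the paper. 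The paper quotients by this $G_2''$ \emph{before} the reductive step, obtaining $U_1/G_2''$ as an explicit rank-$3$ bundle over $V^*\setminus\{0\}$; this is precisely the hands-on resolution of what you flag as your ``second obstacle'', and it makes the final reductive quotient a genuine GIT step with trivial stabilisers.

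One point does not go through as written: your opening appeal to \cite{drezet-trautmann} cannot identify $\W_2^{ss}(\L)$ with a set containing the condition $\f_{13}=0$, since semi-stable loci are open in $\W_2$ while $\{\f_{13}=0\}$ is a closed hyperplane. The paper accordingly does \emph{not} invoke \cite{drezet-trautmann} here at all and proceeds directly with the explicit construction; you should simply drop that preamble. The rest of your outline, including the identification of the one-dimensional stabiliser $\{(tq,t\ell,t\ell_1,t\ell_2):t\in\C\}$ and the need for an explicit non-injective morphism in $W_2'\setminus\Sigma$, is correct.
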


\begin{proof}
To obtain $W_2/G_2$ we shall construct successively quotients modulo subgroups
of $G_2$, as at \ref{2.2.2} and \ref{2.2.5}.
Let $W_2' \subset \W_2$ be the locally closed subset of morphisms $\f$ satisfying
the conditions from proposition \ref{3.1.2}(ii), except injectivity.
The pairs of morphisms $(\f_{11},\f_{12})$ form an open subset
$U_1 \subset \Hom(\O(-3) \oplus \O(-2),\O(-1))$
and the morphisms $\f_{23}$ form an open subset $U_2$ inside $\Hom(\O(-1),2\O)$.
We denote $U = U_1 \times U_2$. $W_2'$ is the trivial vector bundle on $U$
with fibre $\Hom(\O(-3) \oplus \O(-2), 2\O)$. 
We represent the elements of $G_2$ by pairs of matrices
\[
(g,h) \in \Aut(\O(-3) \oplus \O(-2) \oplus \O(-1)) \times \Aut(\O(-1) \oplus 2\O),
\]
\[
g = \left[
\ba{ccc}
g_{11} & 0 & 0 \\
u_{21} & g_{22} & 0 \\
u_{31} & u_{32} & g_{33}
\ea
\right], \qquad \quad h = \left[
\ba{ccc}
h_{11} & 0 & 0 \\
v_{21} & h_{22} & h_{23} \\
v_{31} & h_{32} & h_{33}
\ea
\right].
\]
Inside $G_2$ we distinguish four subgroups: a reductive subgroup ${G_2}_{\text{red}}$
given by the conditions $u_{ij}=0$, $v_{ij}=0$, the subgroup $S$ of pairs $(g,h)$ of the form
\[
g = \left[
\ba{ccc}
a & 0 & 0 \\
0 & a & 0 \\
0 & 0 & b
\ea
\right], \qquad \quad h= \left[
\ba{ccc}
a & 0 & 0 \\
0 & b & 0 \\
0 & 0 & b
\ea
\right],
\]
with $a, b \in \C^*$, and two unitary subgroups $G_2'$ and $G_2''$.
Here $G_2'$ consists of pairs $(g,h)$ of morphisms of the form
\[
g= \left[
\ba{ccc}
1 & 0 & 0 \\
0 & 1 & 0 \\
u_{31} & u_{32} & 1
\ea
\right], \qquad \quad h = \left[
\ba{ccc}
1 & 0 & 0 \\
v_{21} & 1 & 0 \\
v_{31} & 0 & 1
\ea
\right],
\]
while $G_2''$ is given by pairs $(g,h)$, where
\[
g= \left[
\ba{ccc}
1 & 0 & 0 \\
u_{21} & 1 & 0 \\
0 & 0 & 1
\ea
\right], \qquad \quad h = \left[
\ba{ccc}
1 & 0 & 0 \\
0 & 1 & 0 \\
0 & 0 & 1
\ea
\right].
\]
Note that $G_2=G_2' G_2'' {G_2}_{\text{red}}$.
Consider the $G_2$-invariant subset $\Sigma$ of $W_2'$ of morphisms of the form
\[
\left[
\ba{ccc}
q & \ell & 0 \\
v_{21}q+\ell_1u_{31} & v_{21}\ell + \ell_1u_{32} & \ell_1 \\
v_{31}q+\ell_2u_{31} & v_{31}\ell + \ell_2u_{32} & \ell_2
\ea
\right].
\]
Note that $W_2$ is the subset of injective morphisms inside $W_2' \setminus \Sigma$,
so it is open and $G_2$-invariant. Moreover, it is a proper subset as, for instance,
the morphism represented by the matrix
\[
\left[
\ba{ccc}
X^2 -Y^2 & X\phantom{^2} & 0 \\
X Z^2 & Z^2 & Y \\
Y Z^2 & 0\phantom{^2} & X
\ea
\right]
\]
is in $W_2' \setminus \Sigma$ but is not injective.
Our aim is to construct a geometric quotient of $W_2' \setminus \Sigma$
modulo $G_2$; it will follow that $W_2/G_2$ exists and is a proper open subset of
$(W_2' \setminus \Sigma)/G_2$.

Firstly, we construct the geometric quotient $W_2'/G_2'$.
Because of the conditions on $q, \ell, \ell_1, \ell_2$, it is easy to see that
$\Sigma$ is a subbundle of $W_2'$ of rank $14$.
The quotient bundle, denoted $E'$, has rank $18$.
The quotient map $W_2' \to E'$ is a geometric quotient modulo $G_2'$.
Moreover, the canonical action of $G_2'' {G_2}_{\text{red}}$ on $U$ is $E'$-linearised
and the map $W_2' \to E'$ is $G_2'' {G_2}_{\text{red}}$-equivariant.
Let $\s'$ be the zero-section of $E'$.
The restricted map $W_2' \setminus \Sigma \to E' \setminus \s'$
is also a geometric quotient modulo $G_2'$.

Secondly, we construct a geometric quotient of $E'$ modulo $G_2''$.
The quotient for the base $U$ can be described explicitly as follows.
On $V^*$ we consider the trivial bundle with fibre $S^2 V^*$ and the subbundle
with fibre $vV^*$ at any point $v \in V^*$. The quotient bundle $Q'$ is the geometric
quotient $U_1/G_2''$ and $U/G_2'' \isom (U_1/G_2'') \times U_2$.
Clearly $U$ is a principal $G_2''$-bundle over $U/G_2''$.
According to 4.2.14 \cite{huybrechts}, $E'$ descends to a vector bundle $E$
over $U/G_2''$. The canonical map $E' \to E$ is a geometric quotient modulo $G_2''$.
The composed map $W_2' \to E' \to E$ is a geometric quotient modulo $G_2' G_2''$.
Moreover, the canonical action of ${G_2}_{\text{red}}$ on $U/G_2''$
is linearised with respect to $E$ and the map $W_2' \to E$ is ${G_2}_{\text{red}}$-equivariant.
Let $\s$ be the zero-section of $E$. The restricted map
$W_2' \setminus \Sigma \to E' \setminus \s' \to E \setminus \s$
is also a geometric quotient modulo $G_2' G_2''$.

Let $x \in U/G_2''$ be a point and let $\xi \in E_x$ be a non-zero vector lying over $x$.
The stabiliser of $x$ in ${G_2}_{\text{red}}$ is $S$ and $S \xi = \C^* \xi$.
Thus the canonical map $E \setminus \sigma \to \P(E)$ is a geometric quotient modulo $S$.
It remains to construct a geometric quotient of $\P(E)$ modulo the induced action of
${G_2}_{\text{red}}/S$.
Clearly, $(U/G_2'')/({G_2}_{\text{red}}/S)$ exists and is isomorphic to $\P(Q) \times \P^2$,
where $Q$ is the bundle on $\P(V^*)$ to which $Q'$ descends.
As noted in the proof of \ref{2.2.5}, $\P(Q)$ is the Hilbert scheme of zero-dimensional
subschemes of $\P^2$ of length $2$.
It remains to show that $\P(E)$ descends to a fibre bundle on $\P(Q) \times \P^2$.
We consider the character $\chi$ of ${G_2}_{\text{red}}$ given by
$
\chi(g,h) = \det(g) \det(h)^{-1}.
$
Note that $\chi$ is well-defined because it is trivial on homotheties.
We multiply the action of ${G_2}_{\text{red}}$ on $E$ by $\chi$ and we denote the resulting
linearised bundle by $E_{\chi}$.
The action of $S$ on $E_{\chi}$ is trivial, hence $E_{\chi}$ is ${G_2}_{\text{red}}/S$-linearised.
The isotropy subgroup in ${G_2}_{\text{red}}/S$ for any point in $U/G_2''$ is trivial,
so we can apply \cite{huybrechts}, lemma 4.2.15, to deduce that $E_{\chi}$
descends to a vector bundle $F$ over $\P(Q) \times \P^2$.
The induced map $\P(E) \to \P(F)$ is a geometric quotient map modulo ${G_2}_{\text{red}}/S$.
We conclude that the composed map
\[
W_2' \setminus \Sigma \lra E' \setminus \sigma' \lra E \setminus \sigma \lra \P(E) \lra \P(F)
\]
is a geometric quotient map modulo $G_2$
and that a geometric quotient $W_2/G_2$ exists and is a proper open subset inside $\P(F)$.
\end{proof}

\begin{prop}
\label{3.2.4}
The geometric quotient $W_2/G_2$ is isomorphic to $X_2$.
\end{prop}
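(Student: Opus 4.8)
The plan is to follow the pattern of \ref{2.2.3}, \ref{2.2.4} and \ref{3.2.2}. The map $W_2 \to X_2$ sending $\f$ to the isomorphism class of $\Coker(\f)$ is surjective by \ref{3.1.2}, and its fibres are precisely the $G_2$-orbits, so it factors through a bijective morphism $W_2/G_2 \to X_2$. To show that this is an isomorphism I would construct a morphism in the opposite direction, by recovering resolution \ref{3.1.2}(ii) from the Beilinson spectral sequence of $\F$ and carrying this out for flat families of sheaves giving points of $X_2$ (the universal family over $X_2$ exists since $\gcd(5,1)=1$). This is the method of 3.1.6 \cite{drezet-maican} already used at \ref{2.2.3}, \ref{2.2.4} and \ref{3.2.2}; what is needed is a canonical, relativisable reconstruction of \ref{3.1.2}(ii) from the Beilinson data of a single sheaf.

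So let $\F$ give a point of $X_2$. From $h^0(\F(-1))=0$, $h^1(\F)=1$, $h^0(\F \tensor \Om^1(1))=1$ and the Riemann--Roch values $\chi(\F(-1))=-4$, $\chi(\F)=1$, $\chi(\F \tensor \Om^1(1))=-3$ one gets $h^1(\F(-1))=4$ and $h^1(\F \tensor \Om^1(1))=4$, so the Beilinson tableau (2.2.3) \cite{drezet-maican} for $\F$ reads
\[
\xymatrix
{
4\O(-2) \ar[r]^-{\f_1} & 4\O(-1) \ar[r]^-{\f_2} & \O \\
0 & \O(-1) \ar[r]^-{\f_4} & 2\O
}.
\]
By 2.2 \cite{drezet-maican}, $\f_2$ is surjective and $\f_4$ is injective, so $\Ker(\f_2)$ is a locally free sheaf of rank $3$; using the vanishing of $\Ext^1(\Om^1,\O(-1))$ on $\P^2$ one finds $\Ker(\f_2) \isom \Om^1 \oplus \O(-1)$, and the Beilinson monad yields the resolution
\[
0 \lra 4\O(-2) \oplus \O(-1) \lra \Om^1 \oplus \O(-1) \oplus 2\O \lra \F \lra 0,
\]
exactly as in the proof of \ref{3.1.2}. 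Replacing $\Om^1$ by its standard resolution $0 \to \O(-3) \to 3\O(-2) \to \Om^1 \to 0$ and applying the horseshoe lemma gives
\[
0 \lra \O(-3) \oplus 4\O(-2) \oplus \O(-1) \stackrel{\f}{\lra} 3\O(-2) \oplus \O(-1) \oplus 2\O \lra \F \lra 0
\]
with $\f_{13}=0$, $\f_{23}=0$. Now the argument of \ref{2.1.4} applies unchanged: projecting onto the summand $3\O(-2)$ presents a quotient sheaf of $\F$ which, since $\F$ is one-dimensional and semi-stable, forces $\rank(\f_{12})=3$; cancelling $3\O(-2)$ leaves
\[
0 \lra \O(-3) \oplus \O(-2) \oplus \O(-1) \stackrel{\f}{\lra} \O(-1) \oplus 2\O \lra \F \lra 0
\]
with $\f_{13}=0$, and the destabilisation arguments from the proof of \ref{3.1.2} show $\f_{12} \neq 0$, $\f_{12} \nmid \f_{11}$ and $\f_{23}$ has linearly independent entries. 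This is resolution \ref{3.1.2}(ii), so $\f \in W_2$ and $\Coker(\f) \isom \F$.

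It remains to perform this reconstruction over an arbitrary base $S$ carrying a flat family of sheaves from $X_2$. The relative Beilinson spectral sequence produces the monad over $S$; the subbundle $\Ker(\f_2)$, the splitting $\Ker(\f_2) \isom \Om^1 \oplus \O(-1)$ (which holds fibrewise because $\Ext^1(\Om^1,\O(-1))=0$, hence globally up to twisting by a line bundle on $S$ absorbed into $G_2$), the horseshoe step and the cancellation of $3\O(-2)$ all relativise, while the conditions cutting out $W_2$ inside $\W_2$ are open. One thus obtains a family of morphisms landing in $W_2$ and well defined up to the $G_2$-action, hence a morphism $S \to W_2/G_2$ compatible with base change; applying this to the universal family over $X_2$ gives a morphism $X_2 \to W_2/G_2$, which by construction is inverse to the canonical map $W_2/G_2 \to X_2$. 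I expect the one genuinely delicate point to be this relativisation — namely checking that the splitting of $\Ker(\f_2)$ and the two cancellations can be made compatibly with base change, so that the recipe really defines a morphism of schemes — which is precisely the content of the argument of 3.1.6 \cite{drezet-maican} invoked for the earlier propositions.
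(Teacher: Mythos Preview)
Your approach is correct and takes a genuinely different route from the paper. You work directly with $\F$ and its Beilinson data, essentially observing that the derivation of resolution \ref{3.1.2}(ii) already carried out in the proof of \ref{3.1.2} (via the free monad, the identification $\Ker(\f_2)\isom \Om^1\oplus\O(-1)$, and the cancellation of $3\O(-2)$) is natural enough to relativise. The paper instead passes to the dual sheaf $\G=\F^{\D}(1)\in\M(5,4)$ and analyses its Beilinson tableau; this requires a rather detailed study of the morphisms $\f_3$ and $\f_4$ (determining their canonical forms, showing $\ell\neq 0$, producing an intermediate resolution of $\Coker(\f_4)$, and then applying the horseshoe lemma to the extension $0\to\CC\to\G\to\C_x\to 0$) before one finally cancels $\O(-3)$. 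Your direct argument is considerably shorter; the paper's dual approach is in keeping with its treatment of the analogous stratum at \ref{2.2.4}, where working on the dual side happens to simplify the tableau. One small remark: your justification of $\Ker(\f_2)\isom\Om^1\oplus\O(-1)$ via the vanishing of $\Ext^1(\Om^1,\O(-1))$ is slightly oblique, since one first needs to know that $\Ker(\f_2)$ sits in an extension of $\Om^1$ by $\O(-1)$; the cleanest argument is that the four linear forms defining the surjection $4\O(-1)\to\O$ must span $V^*$, so after a change of basis one of them is zero and the kernel visibly splits. This version also relativises transparently over the base.
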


\begin{proof}
We must construct resolution \ref{3.1.2}(ii) starting from the
Beilinson spectral sequence for $\F$.
We prefer to work, instead, with the sheaf $\G=\F^{\D}(1)$, which gives a point in
$\M(5,4)$. Diagram (2.2.3) \cite{drezet-maican} for $\G$ takes the form
\[
\xymatrix
{
2\O(-2) \ar[r]^-{\f_1} & \O(-1) & 0 \\
\O(-2) \ar[r]^-{\f_3} & 4\O(-1) \ar[r]^-{\f_4} & 4\O
}.
\]
Since $\G$ maps surjectively onto $\Coker(\f_1)$ and is semi-stable,
$\f_1$ cannot be zero and $\Coker(\f_1)$ cannot be isomorphic to $\O_L(-1)$ for a line $L \subset \P^2$.
Thus $\Coker(\f_1)$ is the structure sheaf of a point $x \in \P^2$ and $\Ker(\f_1) \isom \O(-3)$.
The exact sequence (2.2.5) \cite{drezet-maican} reads:
\[
0 \lra \O(-3) \stackrel{\f_5}{\lra} \Coker(\f_4) \lra \G \lra \C_x \lra 0.
\]
We see from this that $\Coker(\f_4)$ has no zero-dimensional torsion.
The exact sequence (2.2.4) \cite{drezet-maican} takes the form
\[
0 \lra \O(-2) \stackrel{\f_3}{\lra} 4\O(-1) \stackrel{\f_4}{\lra} 4\O \lra \Coker(\f_4) \lra 0.
\]
We claim that $\f_3$ is equivalent to the morphism represented by the matrix
\[
\left[
\ba{cccc}
X & Y & Z & 0
\ea
\right]^{\T}.
\]
The argument uses the fact that $\Coker(\f_4)$ has no zero-dimensional torsion
and is analogous to the proof that the vector space $H$ from \ref{2.1.4} has dimension $3$.
Now we can describe $\f_4$. We claim that $\f_4$ is equivalent to a morphism represented
by a matrix of the form
\[
\left[
\begin{array}{cccc}
-Y & \phantom{-}X & 0 & \star \\
-Z & \phantom{-}0 & X & \star \\
\phantom{-}0 & -Z & Y & \star \\
\phantom{-}0 & \phantom{-}0 & 0 & \ell
\end{array}
\right]
\]
with $\ell \in V^*$. The argument, we recall from the proof of \ref{3.1.3}, uses the fact that the map
$4\O \to \Coker(\f_4)$ is injective on global sections
and the fact that the only morphism $\O_L(1) \to \Coker(\f_4)$
for any line $L \subset \P^2$ is the zero-morphism. Indeed, such a morphism must factor
through $\f_5$ because the composed map $\O_L(1) \to \Coker(\f_4) \to \G$ is zero.
This follows from the fact that both $\O_L(1)$ and $\G$ are semi-stable and $p(\O_L(1)) > p(\G)$.

If $\ell =0$, then $\Coker(\f_4)$ would have a direct summand with Hilbert polynomial $P(t)=2t+3$.
Such a sheaf must map injectively to $\G$, because its intersection with $\O(-3)$
could only be the zero-sheaf. This contradicts the semi-stability of $\G$. Thus $\ell \neq 0$.
Let $L$ be the line with equation $\ell = 0$.
We obtain the extension
\[
0 \lra \O(1) \lra \Coker(\f_4) \lra \O_L \lra 0,
\]
which yields the resolution
\[
0 \lra \O(-1) \lra \O \oplus \O(1) \lra \Coker(\f_4) \lra 0.
\]
Write $\CC= \Coker(\f_5)$. Since $\Ext^1(\O(-3),\O(-1))=0$,
the morphism $\f_5$ lifts to a morphism $\O(-3) \to \O \oplus \O(1)$.
We obtain the resolution
\[
0 \lra \O(-3) \oplus \O(-1) \lra \O \oplus \O(1) \lra \CC \lra 0.
\]
We now apply the horseshoe lemma to the extension $\G$ of $\C_x$ by $\CC$,
to the above resolution of $\CC$ and to the standard resolution of $\C_x$ tensored with $\O(-1)$.
We obtain the resolution
\[
0 \lra \O(-3) \lra \O(-3) \oplus 2\O(-2) \oplus \O(-1) \lra \O(-1) \oplus \O \oplus \O(1) \lra \G \lra 0.
\]
The morphism $\O(-3) \to \O(-3)$ is non-zero because $\H^1(\G)$ vanishes.
We may cancel $\O(-3)$ we get the dual of resolution \ref{3.1.2}(ii).
\end{proof}

\begin{prop}
\label{3.2.5}
There exists a geometric quotient $W_3/G_3$, which is isomorphic
to the universal quintic inside $\P^2 \times \P(S^5V^*)$. Moreover, $W_3/G_3$ is
isomorphic to $X_3$, so this is a smooth closed subvariety of $\M(5,1)$.
\end{prop}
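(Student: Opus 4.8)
The plan is to establish the three assertions in turn: the existence of the geometric quotient $W_3/G_3$, its identification with the universal quintic, and its identification with $X_3$.

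For existence I would argue as at \ref{2.2.5}, via \cite{drezet-trautmann}. One chooses a polarisation $\L=(\l_1,\l_2,\m_1,\m_2)$ for the action of $G_3$ on $\W_3$ with $\l_1$ small compared with $\m_1$ and $\m_2$, and checks with King's criterion of semi-stability \cite{king} that the conditions on $\f$ from \ref{3.1.5} — that the one-forms $\ell_1,\ell_2$ there be linearly independent and that $\f$ be injective, equivalently $\det(\f)=\ell_1f_2-\ell_2f_1\ne 0$ — are precisely the semi-stability conditions, with $\W_3^{s}(\L)=\W_3^{ss}(\L)=W_3$. Then \cite{drezet-trautmann} furnishes a smooth projective geometric quotient, which is $W_3/G_3$. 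I would then re-derive this quotient explicitly, which also yields the second assertion.

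For the explicit description, to $\f\in W_3$ written as in \ref{3.1.5} I attach the common zero $x_{\f}\in\P^2$ of $\ell_1,\ell_2$ and the quintic form $g_{\f}=\det(\f)\in S^5V^*$; since $\f$ is injective $g_{\f}\ne 0$, and since $\ell_1,\ell_2$ vanish at $x_{\f}$ so does $g_{\f}$. Hence $\f\mapsto(x_{\f},\langle g_{\f}\rangle)$ defines a $G_3$-invariant morphism $\rho$ from $W_3$ to the universal quintic $U\subset\P^2\times\P(S^5V^*)$. I would then check: (a) $G_3$ acts freely on $W_3$, the stabiliser of $\f$ in $\Aut(2\O(-3))\times\Aut(\O(-2)\oplus\O(1))$ reducing to the homotheties because $\ell_1,\ell_2$ are independent and $\det(\f)\ne 0$; (b) the fibres of $\rho$ are $G_3$-orbits — two morphisms with the same $x_{\f}$ have their first rows spanning the same two-plane of linear forms vanishing at that point, so differ by an element of $\Aut(2\O(-3))$, and the remaining discrepancy $f_i-f_i'=t\ell_i$ in the second row, $t$ a cubic, is absorbed by a unipotent automorphism of $\O(-2)\oplus\O(1)$ after rescaling; (c) $\rho$ is surjective, a quintic vanishing at $\langle v\rangle$ lying in the ideal $(\ell_1,\ell_2)$ generated by a basis of linear forms vanishing at $\langle v\rangle$, hence expressible as $\ell_1f_2-\ell_2f_1$; (d) $\rho$ admits local sections — over each standard chart of $\P^2$ the rank-two bundle whose fibre at a point is the space of linear forms vanishing there is trivial, giving $\ell_1,\ell_2$ algebraically, and $(f_1,f_2)\mapsto\ell_1f_2-\ell_2f_1$ then realises the constant bundle with fibre $S^4V^*\oplus S^4V^*$ as a surjection of vector bundles onto the subbundle of $S^5V^*$ of forms in the ideal of the moving point, which one splits locally. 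Assembling (a)--(d), $\rho$ is a geometric quotient map and $W_3/G_3\isom U$; note that $U$ is a $\P^{19}$-bundle over $\P^2$, of dimension $21$, matching the codimension $5$ of $X_3$ in $\M(5,1)$.

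Finally, to see that the canonical bijective morphism $W_3/G_3\to X_3$ is an isomorphism I would use the method of 3.1.6 \cite{drezet-maican}, as at \ref{2.2.6} and \ref{3.2.4}: for $\F$ giving a point in $X_3$, recover resolution \ref{3.1.5} from the Beilinson tableau (2.2.3) \cite{drezet-maican} attached to the data $h^0(\F(-1))=1$, $h^1(\F)=2$, $h^0(\F\tensor\Om^1(1))=3$, determining the ranks of the connecting maps by ruling out destabilising sub- and quotient sheaves of $\F$ exactly as in \ref{2.1.4} (using the extension $0\to\O_C(1)\to\F\to\C_x\to 0$ of \ref{3.1.5}), and cancelling the superfluous summands. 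Since this is natural in $\F$ it applies to flat families and provides a local inverse, so $X_3\isom U$. Closedness of $X_3$ in $\M(5,1)$ is then immediate from upper semi-continuity, the locus $h^0(\F(-1))>0$ being closed and, by \ref{3.1.5}, equal to $X_3$; and $X_3$ is smooth because $U$ is. I expect the main obstacle to be this last step — doing the Beilinson bookkeeping and the destabilisation estimates carefully enough that the resolution is obtained in families rather than only pointwise.
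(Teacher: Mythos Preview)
Your outline is correct and tracks the paper's structure closely in the first two parts: the paper simply cites 3.2 \cite{drezet-maican} for the existence of the geometric quotient and its identification with the universal quintic, writing down the same map $\f\mapsto(x_\f,\langle\ell_1f_2-\ell_2f_1\rangle)$ that you analyse; your items (a)--(d) spell out what that reference contains.

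The one genuine divergence is in the third step. You propose to run the Beilinson tableau directly on $\F$, with the cohomological data $h^0(\F(-1))=1$, $h^1(\F)=2$, $h^0(\F\otimes\Om^1(1))=3$. The paper instead passes to the dual $\G=\F^{\D}(1)$ and runs the Beilinson argument there; the tableau for $\G$ has smaller ranks (top row $3,3,1$, bottom row $2,6,5$), and after showing $\Ker(\f_2)=\Im(\f_1)$ as at \ref{2.2.4} one is left with analysing a $6\times 2$ matrix $\f_3$ of linear forms. The paper then shows, by the elimination of canonical forms used at \ref{3.1.3} and \ref{3.2.5}, that $\f_3$ must be equivalent to the block matrix with two copies of $[X\ Y\ Z]^{\T}$, so $\Coker(\eta^{\T})\isom 2\Om^1$; dualising back yields $0\to 5\O(-2)\to 2\Om^1\oplus\O(1)\to\F\to 0$, from which the rank condition $\rank(\f_{12})=5$ (forced by semi-stability) cancels $5\O(-2)$ and recovers resolution \ref{3.1.5}. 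Your direct approach would face a tableau with entries of size $9,6,5,\ldots$ and correspondingly heavier bookkeeping; it should still go through, but the duality trick is what makes the argument short enough to carry out in families without pain. Since you flag exactly this step as the expected obstacle, be aware that dualising first is the cleanest way past it.
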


\begin{proof}
For the first part of the proposition we refer to 3.2 \cite{drezet-maican}.
Succinctly, the map of $W_3$ to the universal quintic given by
\[
\left[
\ba{cc}
\ell_1 & \ell_2 \\
f_1 & f_2
\ea
\right] \lra (x, \langle \ell_1 f_2 - \ell_2 f_1 \rangle), \quad
\text{where $x$ is the zero-set of $\ell_1$ and $\ell_2$},
\]
is a geometric quotient map. Clearly, the natural morphism $W_3/G_3 \to X_3$ is bijective.
In order to show that it is an isomorphism we need to derive resolution \ref{3.1.5}
starting from the Beilinson spectral sequence of $\F$ and performing algebraic operations
(compare 3.1.6 \cite{drezet-maican}).
By duality, we may also start with the Beilinson spectral sequence for the sheaf
$\G = \F^{\D}(1)$. Table (2.2.3) \cite{drezet-maican} for $\operatorname{E}^1(\G)$ takes the form
\[
\xymatrix
{
3\O(-2) \ar[r]^-{\f_1} & 3\O(-1) \ar[r]^-{\f_2} & \O \\
2\O(-2) \ar[r]^-{\f_3} & 6\O(-1) \ar[r]^-{\f_4} & 5\O
}.
\]
As in the proof of \ref{2.2.4}, we have $\Ker(\f_2)=\Im(\f_1)$ and
$\Ker(\f_1) \isom \O(-3)$. The exact sequence (2.2.5) \cite{drezet-maican}
\[
0 \lra \O(-3) \stackrel{\f_5}{\lra} \Coker(\f_4) \lra \G \lra 0
\]
yields the resolution
\[
0 \lra 2\O(-2) \stackrel{\eta}{\lra} \O(-3) \oplus 6\O(-1) \lra 5\O \lra \G \lra 0,
\]
\[
\eta = \left[
\ba{c}
0 \\ \f_3
\ea
\right].
\]
As in the proof of \ref{3.1.3}, we can show that any matrix equivalent to the matrix representing $\f_3$
has three linearly independent entries on each column.
It follows that, modulo elementary operations on rows and columns, $\f_3$ is represented by a matrix
having one of the following forms:
\[
\left[
\ba{cc}
0 & 0 \\
0 & 0 \\
0 & 0 \\
X & R \\
Y & S \\
Z & T
\ea
\right] \quad \text{or} \quad \left[
\ba{cc}
0 & 0 \\
0 & 0 \\
X & 0 \\
Y & R \\
Z & S \\
0 & T
\ea
\right] \quad \text{or} \quad \left[
\ba{cc}
0 & 0 \\
X & 0 \\
Y & 0 \\
Z & R \\
0 & S \\
0 & T
\ea
\right] \quad \text{or} \quad \left[
\ba{cc}
X & 0 \\
Y & 0 \\
Z & 0 \\
0 & X \\
0 & Y \\
0 & Z
\ea
\right].
\]
Here $R, S, T$ form a basis of $V^*$. As in the proof of \ref{3.1.3}, it can be shown
that the first three matrices are unfeasible. We are left with the last possibility.

By virtue of \cite{maican-duality}, lemma 3, taking duals of the locally free sheaves occuring
in the above resolution of $\G$ yields a monad with middle cohomology $\F$ of the form
\[
0 \lra 5\O(-2) \lra 6\O(-1) \oplus \O(1) \stackrel{\eta^{\T}}{\lra} 2\O \lra 0.
\]
From this we get the resolution
\[
0 \lra 5\O(-2) \lra 2\Omega^1 \oplus \O(1) \lra \F \lra 0.
\]
Combining with the standard resolution of $\Omega^1$ yields the exact sequence
\[
0 \lra 2\O(-3) \oplus 5\O(-2) \stackrel{\f}{\lra} 6\O(-2) \oplus \O(1) \lra \F \lra 0.
\]
From the semi-stability of $\F$ we see that $\rank(\f_{12})=5$, so we may cancel
$5\O(-2)$ to get the desired resolution for $\F$.
\end{proof}

%%%%%%%%%%%%%%%%%%%%%%%%%%%%%%%

%%%%%%%%%%       subsection 3.3

\subsection{Geometric description of the strata}

Let $\F= \Coker(\f)$ be a sheaf in $X_0$ with $\f$ as in \ref{3.1.1}.
We recall that $\f_{11}$ is semi-stable as a Kronecker $V$-module.
We shall decompose $X_0$ into locally closed subsets according to the kernel of $\f_{11}$.
We have an exact sequence
\[
0 \lra \O(-d) \stackrel{\eta}{\lra} 4\O(-2) \stackrel{\f_{11} \,}{\lra} 3\O(-1) \lra \Coker(\f_{11}) \lra 0,
\]
\[
\eta= \left[ \ba{cccc} \eta_1 & \eta_2 & \eta_3 & \eta_4 \ea \right]^{\T}, \qquad \eta_i = (-1)^i \f_i/g,
\]
where $\f_i$ is the maximal minor of $\f_{11}$ obtained by deleting the $i$-th
column and $g = \gcd(\f_1,\f_2,\f_3,\f_4)$.
The maximal minors of a generic morphism $\f_{11}$ have no common factor,
i.e. $\Ker(\f_{11}) \isom \O(-5)$.
We denote by $X_{01}$ and $X_{02}$ the subsets of $X_0$ for which $\Ker(\f_{11})$
is isomorphic to $\O(-4)$, respectively to $\O(-3)$.
The case $\deg(g)=3$ is not feasible, because in this case
$\f_{11}$ is equivalent to a morphism represented by a matrix with a zero-column,
contrary to semi-stability.
As before, the superscript ${}^\D$ applied to a subset of $\M(5,1)$ will signify the corresponding
subset of $\M(5,4)$ obtained by duality.

\begin{prop}
\label{3.3.1}
The sheaves $\G$ from
$(X_0 \setminus (X_{01} \cup X_{02}))^{\D} \subset \M(5,4)$
have the form $\J_Z(3)$, where $Z \subset \P^2$ is a zero-dimensional scheme of length $6$
not contained in a conic curve, contained in a quintic curve $C$, and $\J_Z \subset \O_C$ is its
ideal sheaf.

The generic sheaves $\G$ in $X_0^{\D}$
have the form $\O_C(3)(-P_1-\ldots - P_6)$, where $C \subset \P^2$ is a smooth quintic
curve and $P_i$, $1 \le i \le 6$, are distinct points on $C$ not contained in a conic curve.
By duality, the generic sheaves $\F$ in $X_0$ have the form $\O_C(P_1+ \ldots + P_6)$.
\end{prop}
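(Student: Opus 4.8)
The plan is to argue exactly as in the proof of \ref{2.3.4}(i), describing $\G$ directly and passing to $\F$ only at the end through the duality $\F\mapsto\G=\F^{\D}(1)$ of \cite{maican-duality}. Since $X_0\subset\M(5,1)$ is the stratum $h^1(\F)=0$, its dual $X_0^{\D}$ consists of the sheaves $\G$ in $\M(5,4)$ with $h^0(\G(-1))=0$, so by \ref{3.1.1} (applied to $\M(5,4)$) every such $\G$ fits into an exact sequence
\[
0 \lra \O(-2) \oplus 3\O(-1) \stackrel{(\a,\b)}{\lra} 4\O \lra \G \lra 0,
\]
with $\a\colon\O(-2)\to4\O$ a column of conics and $\b\colon3\O(-1)\to4\O$ not equivalent to one of the block forms forbidden there. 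Keeping the notation of the discussion preceding this proposition, the condition $\G\in(X_0\setminus(X_{01}\cup X_{02}))^{\D}$ amounts, through the duality, to requiring that the four cubic maximal minors of $\b$ have no common factor.

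I would then analyse $\Coker(\b)$ by Hilbert--Burch (compare 4.5, 4.6 \cite{modules-alternatives}). When the minors have no common factor, $\b$ is injective as a morphism of sheaves, its degeneracy locus $Z$ is $0$-dimensional of length $6$, one has $\Coker(\b)\isom\I_Z(3)$, and tensoring $0\to3\O(-1)\to4\O\to\I_Z(3)\to0$ by $\O(-1)$ gives $h^0(\I_Z(2))=0$, i.e.\ $Z$ is not on a conic. Conversely a $0$-dimensional $Z$ of length $6$ with $h^0(\I_Z(2))=0$ has $h^0(\I_Z(3))=4$, hence minimal free resolution $0\to3\O(-4)\to4\O(-3)\to\I_Z\to0$, which after twisting by $\O(3)$ is a presentation of $\I_Z(3)$ of the above shape; its syzygy matrix cannot become one of the forbidden block forms because $\I_Z(3)$ is stable (torsion-free, rank one, indecomposable). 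The induced morphism $\bar\a\colon\O(-2)\to\I_Z(3)$ is nonzero — otherwise $(\a,\b)$ would be equivalent to $(0,\b)$, which is not injective — hence injective; letting $C$ be the quintic curve defined by $\O(-2)\subset\I_Z(3)\subset\O(3)$ we get $\G\isom\I_Z(3)/\O(-2)\isom\J_Z(3)$ with $\J_Z\subset\O_C$ the ideal sheaf of $Z$ on $C$. For the converse one starts from $Z$ of length $6$ not on a conic inside a quintic $C$, uses the quintic section of $\I_Z(5)$ as $\bar\a$, reassembles $(\a,\b)$ by the horseshoe lemma, and checks that $\G\isom\J_Z(3)$ is pure and semi-stable with $h^0(\G(-1))=h^0(\J_Z(2))=0$ — either from \ref{3.1.1}, or by the destabilisation estimate used at the end of the proof of \ref{3.1.2} via \cite{maican}, lemma~6.7 — so it gives a point of $(X_0\setminus(X_{01}\cup X_{02}))^{\D}$, the subscheme $Z$ and the curve $C$ being recovered from $\G$. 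This settles the first assertion.

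For the generic statement it remains to note that for a general $0$-dimensional $Z$ of length $6$ not on a conic and a general quintic $C$ through $Z$, the scheme $Z$ is reduced and $C$ is smooth: six general points impose independent conditions on quintics and on sextics, so $|\I_Z(5)|$ has base locus exactly $Z$ and a general member is smooth off $Z$ by Bertini, while imposing a double point at a $P_i\in Z$ is three further conditions, so the general member is smooth at $Z$ as well. For such $Z\subset C$ one has $\J_Z\isom\O_C(-P_1-\cdots-P_6)$, hence $\G\isom\O_C(3)(-P_1-\cdots-P_6)$; since $\omega_C\isom\O_C(2)$, the duality $\F=\G^{\D}(1)$ gives $\F\isom\O_C(P_1+\cdots+P_6)$. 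I expect the main obstacle to be the Hilbert--Burch description of $\Coker(\b)$ together with the precise identification of the open stratum $(X_0\setminus(X_{01}\cup X_{02}))^{\D}$ with the condition that $Z$ have length $6$ and not lie on a conic; the Bertini argument and the semi-stability check are routine.
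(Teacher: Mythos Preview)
Your proposal is correct and follows essentially the same route as the paper: both pass to the dual resolution $0\to\O(-2)\oplus 3\O(-1)\to 4\O\to\G\to 0$, identify the open condition as ``maximal minors of the $3\O(-1)\to 4\O$ block have no common factor'', invoke 4.5--4.6 of \cite{modules-alternatives} to get $\Coker(\b)\isom\I_Z(3)$ with $Z$ of length $6$ not on a conic, and conclude $\G\isom\J_Z(3)$ as in \ref{2.3.4}(i). You supply more detail than the paper does in the converse direction and in the Bertini argument for the generic statement, but the argument is the same.
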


\begin{proof}
The sheaves $\G$ from $(X_0 \setminus (X_{01} \cup X_{02}))^{\D}$
are precisely the sheaves with resolution
\[
0 \lra \O(-2) \oplus 3\O(-1) \stackrel{\psi}{\lra} 4\O \lra \G \lra 0,
\]
where $\psi_{12}$ is semi-stable as a Kronecker $V$-module and its maximal minors
have no common factor. According to 4.5 and 4.6 \cite{modules-alternatives},
$\Coker(\psi_{12}) \isom \I_Z(3)$, where $Z \subset \P^2$ is a zero-dimensional scheme
of length $6$ not contained in a conic curve. Conversely, any $\I_Z(3)$ is the cokernel of some
$\psi_{12}$ with the above properties. The conclusion now follows as at \ref{2.3.4}(i).
\end{proof}

\begin{prop}
\label{3.3.2}
The sheaves $\F$ giving points in $X_{02}$ are precisely the extension sheaves
\[
0 \lra \O_{C'} \lra \F \lra \O_C \lra 0,
\]
satisfying $\H^1(\F)= 0$. Here $C'$ and $C$ are arbitrary cubic, respectively conic curves in $\P^2$.
\end{prop}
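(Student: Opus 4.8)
The plan is to deduce both implications from the resolution of \ref{3.1.1}, the bridge being that a sheaf in $X_0$ has a one‑dimensional space of global sections whose generator cuts out exactly the cubic subsheaf. For the forward implication, let $\F$ give a point in $X_{02}$, so by \ref{3.1.1} it has a resolution $0 \to 4\O(-2) \stackrel{\f}{\to} 3\O(-1) \oplus \O \to \F \to 0$, and by the definition of $X_{02}$ the Kronecker module $\f_{11}$ satisfies $\Ker(\f_{11}) \isom \O(-3)$. First I would apply the snake lemma to the projection $3\O(-1) \oplus \O \to 3\O(-1)$, as in the proof of \ref{3.1.2}, obtaining an exact sequence $0 \to \O(-3) \stackrel{\partial}{\to} \O \to \F \to \Coker(\f_{11}) \to 0$; here $\partial$ is nonzero, since otherwise $\O$ would be a subsheaf of the one‑dimensional sheaf $\F$, so $\partial$ is multiplication by a nonzero cubic form, and if $C'$ denotes the cubic it defines we get an extension $0 \to \O_{C'} \to \F \to \Coker(\f_{11}) \to 0$. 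Next I would identify $\Coker(\f_{11})$ with the structure sheaf of a conic: it has Hilbert polynomial $2t+1$, and it is pure of dimension one, because a zero‑dimensional subsheaf of length $\ell$ would pull back to a subsheaf of $\F$ of multiplicity $3$ and slope $\ell/3$, forcing $\ell=0$ by semi‑stability; combining this with the indecomposability of $\f_{11}$ as a semi‑stable Kronecker module and the fact that the Fitting support of $\Coker(\f_{11})$ is the conic $\{g=0\}$ of the discussion preceding \ref{3.3.1}, one gets $\Coker(\f_{11}) \isom \O_C$ (this step relies on the structure theory of cokernels of semi‑stable Kronecker modules, cf. \cite{modules-alternatives}). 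Finally $\H^1(\F)=0$ since $X_{02}\subset X_0$.

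For the converse, start from $0 \to \O_{C'} \to \F \to \O_C \to 0$ with $C'$ a cubic, $C$ a conic and $\H^1(\F)=0$; the extension is then automatically non‑split. One has $P_\F = 3t+(2t+1)=5t+1$, and $\F$ is pure, since a zero‑dimensional subsheaf maps to zero in $\O_C$, hence lies in $\O_{C'}$, hence vanishes. The substantial step is semi‑stability: for a nonzero proper subsheaf $\F'\subset\F$ of multiplicity at most $4$, I would put $\K=\F'\cap\O_{C'}$ and let $\CC'$ be the image of $\F'$ in $\O_C$, then apply \cite{maican}, lemma 6.7, to $\K\subset\O_{C'}$ and to $\CC'\subset\O_C$, exactly as in \ref{3.1.2}, writing $\O_{C'}/\A\isom\O_S$ and $\O_C/\B\isom\O_T$ with $\A/\K$, $\B/\CC'$ finite and $\deg S=a\le 3$, $\deg T=b\le 2$; adding Hilbert polynomials gives $p(\F')\le\bigl(1-\tfrac{a(3-a)}{2}-\tfrac{b(3-b)}{2}\bigr)/(5-a-b)$, and a short check over $a\in\{0,1,2,3\}$, $b\in\{0,1,2\}$ shows this is $<1/5=p(\F)$, save for $(a,b)=(0,0)$, where $\F'$ has finite colength in $\F$ so $p(\F')\le 0$, and for $a=3$, i.e.\ $\K=0$, i.e.\ $\F'\subset\O_C$, where $p(\F')<1/5$ follows from the stability of $\O_C$ together with the observation that $\O_C$ is not a subsheaf of $\F$ — embedding it would force a splitting map $\O_C\to\O_{C'}$, impossible since $\O_{C'}$ is indecomposable. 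Hence $\F$ is semi‑stable, and $\H^1(\F)=0$ then places $\F$ in $X_0$.

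It remains to see that $\F\in X_{02}$, i.e.\ that $\Ker(\f_{11})\isom\O(-3)$ in the resolution \ref{3.1.1} of $\F$. Since $\H^1(\F)=0$ we have $h^0(\F)=1$, and since a cubic curve is connected $h^0(\O_{C'})=1$, so the injection $\H^0(\O_{C'})\hookrightarrow\H^0(\F)$ is an isomorphism; hence the generating section of $\F$ factors through the generating section of $\O_{C'}$, and consequently the image of the summand $\O$ of $3\O(-1)\oplus\O$ in $\F$ is precisely $\O_{C'}$. The snake‑lemma identification from the first part shows this image equals $\Ker(\F\to\Coker(\f_{11}))$, so $\Coker(\f_{11})\isom\F/\O_{C'}=\O_C$ has multiplicity $2$; as a Hilbert polynomial computation shows $\Coker(\f_{11})$ has multiplicity $5-d$ when $\Ker(\f_{11})\isom\O(-d)$, this forces $d=3$, that is $\F\in X_{02}$. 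I expect the semi‑stability estimate of the preceding paragraph to be the main obstacle, particularly the boundary cases $a=0$ and $a=3$; a secondary point is that the forward identification $\Coker(\f_{11})\isom\O_C$ leans on the classification of cokernels of semi‑stable Kronecker modules rather than on a self‑contained argument.
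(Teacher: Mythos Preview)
Your proof is correct, and it diverges from the paper's in both directions in an interesting way.

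For the forward implication, both arguments reach the extension via the snake lemma, but they identify $\Coker(\f_{11})$ with $\O_C$ differently. The paper argues directly from the semi-stability of $\f_{11}$: the entries of the kernel vector $\eta$ must span $V^*$ (else $\f_{11}$ acquires one of the forbidden block forms of \ref{3.1.1}), so after a change of basis $\f_{11}=[\rho\mid *]$ with $\rho$ the Koszul matrix, whence $\Coker(\f_{11})=\Coker(\O(-2)\to\Coker(\rho))=\Coker(\O(-2)\to\O)=\O_C$. Your route --- purity of $\Coker(\f_{11})$ via the semi-stability of $\F$, plus an appeal to structure theory --- reaches the same conclusion but is less self-contained; note that purity together with Hilbert polynomial $2t+1$ and Fitting support a conic does \emph{not} by itself determine the sheaf (e.g.\ $\O_L\oplus\O_L(-1)$ on a double line), so the extra input from the Kronecker-module side is genuinely doing work.

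For the converse the two arguments are quite different. The paper never checks semi-stability of $\F$ directly: it first builds a resolution
\[
0 \lra \O(-3) \lra 4\O(-2) \stackrel{\psi}{\lra} 3\O(-1) \lra \O_C \lra 0
\]
and verifies that $\psi$ is a semi-stable Kronecker module (this is the substantive step), then applies the horseshoe lemma to the extension and cancels $\O(-3)$ using $\H^1(\F)=0$ to land exactly in the situation of \ref{3.1.1} with $\f_{11}=\psi$; semi-stability of $\F$ and the identification $\Ker(\f_{11})\isom\O(-3)$ then come for free. Your route --- a direct slope estimate in the style of \ref{2.1.4} and \ref{3.1.2}, applying \cite{maican}, lemma 6.7, separately to the two layers $\O_{C'}$ and $\O_C$, followed by a global-section argument to pin down $\Ker(\f_{11})$ --- is equally valid and makes the source of stability more explicit, at the cost of a longer case analysis.

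Two small corrections. In the boundary case $a=3$ your reason for excluding $\O_C\subset\F$ should be $\Hom(\O_C,\O_{C'})=0$ (the equation of $C$ acts injectively on $\H^0(\O_{C'})$ since $C'\not\subset C$), not the indecomposability of $\O_{C'}$; once the composite $\O_C\to\F\to\O_C$ is an isomorphism the extension splits, contradicting $\H^1(\F)=0$. And the case $(a,b)=(0,0)$ does not actually arise under your hypothesis that $\F'$ has multiplicity at most $4$.
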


\begin{proof}
Assume that $\F$ is in $X_{02}$, i.e. $\Ker(\f_{11}) \isom \O(-3)$.
The entries of $\eta$ span $V^*$, otherwise the semi-stability of $\f_{11}$, as a Kronecker
$V$-module, would get contradicted. For instance, if
\[
\eta \sim \left[
\ba{c}
X \\ Y \\ 0 \\ 0
\ea
\right], \qquad \text{then} \qquad \f_{11} \sim \left[
\ba{cccc}
-Y & X & \star & \star \\
\phantom{-}0 & 0 & \star & \star \\
\phantom{-}0 & 0 & \star & \star
\ea
\right].
\]
\[
\text{Thus} \ \ \eta \sim \left[
\ba{c}
X \\ Y \\ Z \\ 0
\ea
\right], \quad \text{forcing} \quad \f_{11} \sim \left[
\ba{cccc}
-Y & \phantom{-}X & 0 & \star \\
-Z & \phantom{-}0 & X & \star \\
\phantom{-}0 & -Z & Y & \star
\ea
\right] = \left[
\ba{cc}
\rho & \ba{c} \star \\ \star \\ \star \ea
\ea \right].
\]
We have an exact sequence
\[
0 \lra \O(-2) \lra \Coker(\rho) \lra \Coker(\f_{11}) \lra 0
\]
hence, since $\Coker(\rho) \isom \O$, we have an isomorphism $\Coker(\f_{11}) \isom \O_C$
for a conic curve $C \subset \P^2$.
Applying the snake lemma to the exact diagram
\[
\xymatrix
{
& & & 0 \ar[d] \\
& & & \O \ar[d] \\
& 0 \ar[r] & 4\O(-2) \ar[r]^-{\f} \ar@{=}[d] & 3\O(-1) \oplus \O \ar[r] \ar[d] & \F \ar[r] & 0 \\
0 \ar[r] & \O(-3) \ar[r] & 4\O(-2) \ar[r]^-{\f_{11}} & 3\O(-1) \ar[r] \ar[d] & \O_C \ar[r] & 0 \\
& & & 0
}
\]
we get an extension as in the proposition. Conversely, assume we are given an extension
\[
0 \lra \O_{C'} \lra \F \lra \O_C \lra 0
\]
satisfying $\H^1(\F)=0$.
We shall first show that there is a resolution for $\O_C$ as in the diagram above.
Combining the exact sequences
\[
0 \lra \O(-3) \lra 3\O(-2) \stackrel{\rho}{\lra} 3\O(-1) \lra \O \lra 0
\]
and
\[
0 \lra \O(-2) \lra \O \lra \O_C \lra 0
\]
we obtain the resolution
\[
0 \lra \O(-3) \stackrel{\eta}{\lra} 4\O(-2) \stackrel{\psi}{\lra} 3\O(-1) \lra \O_C \lra 0.
\]
We need to prove that $\psi$ is semi-stable as a Kronecker $V$-module.
Since $\eta$ has three linearly independent entries, $\psi$ must have three linearly independent
maximal minors, and this rules out the cases when $\psi$ could be equivalent to a matrix having
a zero-column or a zero-submatrix of size $2 \times 2$. It remains to rule out the case
\[
\psi = \left[
\ba{cccc}
-Y & X & 0 & R \\
-Z & 0 & X & S \\
\phantom{-}0 & 0 & 0 & T
\ea
\right]. \qquad \text{Denote} \quad \xi = \left[
\ba{ccc}
-Y & X & 0 \\
-Z & 0 & X
\ea
\right]
\]
and let $L_1$, $L_2$ be the lines with equations $X=0$, respectively $T=0$.
The snake lemma applied to the exact diagram 
\[
\xymatrix
{
& & 0 \ar[d] & 0 \ar[d] \\
0 \ar[r] & \O(-3) \ar[r] \ar@{=}[d] & 3\O(-2) \ar[r]^-{\xi} \ar[d] & 2\O(-1) \ar[r] \ar[d] & \O_{L_1} \ar[r]
& 0 \\
0 \ar[r] & \O(-3) \ar[r] & 4\O(-2) \ar[r]^-{\psi} \ar[d] & 3\O(-1) \ar[r] \ar[d] & \O_C \ar[r] & 0 \\
& 0 \ar[r] & \O(-2) \ar[d] \ar[r]^-{\scriptsize T} & \O(-1) \ar[r] \ar[d] & \O_{L_2}(-1) \ar[r] & 0 \\
& & 0 & 0
}
\]
yields an extension
\[
0 \lra \O_{L_1} \lra \O_C \lra \O_{L_2}(-1) \lra 0.
\]
This gives $h^0(\O_C \tensor \Omega^1(1))=1$, which is absurd, namely
$\H^0(\O_C \tensor \Omega^1(1))$ vanishes. Thus $\psi$ is semi-stable.
We now apply the horseshoe lemma to the extension
\[
0 \lra \O_{C'} \lra \F \lra \O_C \lra 0,
\]
to the standard resolution of $\O_{C'}$ and to the resolution of $\O_C$ from above.
We obtain the exact sequence
\[
0 \lra \O(-3) \lra \O(-3) \oplus 4\O(-2) \lra 3\O(-1) \oplus \O \lra \F \lra 0.
\]
By hypothesis $\H^1(\F)$ vanishes, hence the map $\O(-3) \to \O(-3)$ is non-zero.
Cancelling $\O(-3)$ we obtain a resolution as in \ref{3.1.1}
in which $\f_{11}=\psi$ is a semi-stable Kronecker $V$-module.
We conclude that $\F$ gives a point in $X_{02}$.
\end{proof}

Let $X_{10} \subset X_1$ be the open subset given by the condition that $\f_{12}$ and $\f_{22}$
have no common linear term. We denote by $X_{11}=X_1 \setminus X_{10}$ the complement.

\begin{prop}
\label{3.3.3}
\textup{(i)} The sheaves $\F$ giving points in $X_{10}$ are precisely the sheaves $\J_Z(2)$,
where $Z \subset \P^2$ is the intersection of two conic curves without common component,
$Z$ is contained in a quintic curve $C \subset \P^2$ and
$\J_Z \subset \O_C$ is its ideal sheaf.

The generic sheaves in $X_1$ are of the form $\O_C(2)(-P_1-P_2-P_3-P_4)$, where $C\subset \P^2$
is a smooth quintic curve and $P_i$, $1 \le i \le 4$, are distinct points on $C$ in general linear
position.

\medskip

\noindent
\textup{(ii)} The sheaves $\F$ giving points in $X_{11}$ are precisely the extension sheaves
\[
0 \lra \O_L(-1) \lra \F \lra \J_x(1) \lra 0
\]
satisfying $\H^0(\F \tensor \Om^1(1))=0$. Here $L \subset \P^2$ is a line and
$\J_x \subset \O_C$ is the ideal sheaf of a point $x$ on a quartic curve $C \subset \P^2$.
\end{prop}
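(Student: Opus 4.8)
The plan is to run the machinery of \ref{2.3.4} and \ref{2.3.5}: start from resolution \ref{3.1.2}(i),
\[
0 \lra \O(-3) \oplus \O(-2) \stackrel{\f}{\lra} 2\O \lra \F \lra 0 ,
\]
read off the geometry from the restriction of $\f$ to the summand $\O(-2)$, and identify the quintic carrying $\F$ as the zero locus of $\det(\f)=\f_{11}\f_{22}-\f_{21}\f_{12}$. In part (i) the absence of a common linear factor of $\f_{12},\f_{22}$ makes $\Coker(\f|_{\O(-2)})$ a twisted ideal of a length-$4$ complete intersection, while in part (ii) such a common factor produces the line $L$ and the point $x$.

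\smallskip
\noindent\emph{Part (i).} If $\F$ gives a point in $X_{10}$ then $\f_{12},\f_{22}$ are conic forms without common factor, so by 4.5--4.6 \cite{modules-alternatives} the cokernel of $\O(-2)\to 2\O$ given by $\f_{12},\f_{22}$ is $\I_Z(2)$ with $Z=\{\f_{12}=\f_{22}=0\}$ of length $4$; composing with $2\O\to\O(2)$ exhibits $\I_Z(2)\subset\O(2)$ and shows $\F$ is the cokernel of an injective $\O(-3)\to\I_Z(2)$ which, as a map into $\O(2)$, is multiplication by $\det(\f)$. Hence $\F\isom\J_Z(2)$ with $C=\{\det(\f)=0\}$ and $\J_Z\subset\O_C$. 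Conversely, from the Koszul resolution of $\I_Z$ (for $Z$ a complete intersection of two conics without common component) twisted by $2$, and from the injective $\O(-3)\to\I_Z(2)$ corresponding to $Z\subset C$ (lifted along $2\O\to\I_Z(2)$, using $\Ext^1(\O(-3),\O(-2))=0$), one builds a resolution of shape \ref{3.1.2}(i) whose $\f_{12},\f_{22}$ are the two conic equations, so $\F$ gives a point of $X_{10}$. For the generic statement: $X_{10}$ is dense in $X_1$ (a common linear factor is a proper closed condition on the pencil $\langle\f_{12},\f_{22}\rangle$); generically $C$ is smooth and $Z$ consists of four distinct points, which are then in general linear position since no three collinear points lie on two conics without common component; on a smooth $C$ this gives $\F\isom\O_C(2)(-P_1-P_2-P_3-P_4)$.

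\smallskip
\noindent\emph{Part (ii), forward.} If $\F$ gives a point in $X_{11}$, write $\f_{12}=\ell m_1$, $\f_{22}=\ell m_2$ with $m_1,m_2$ linearly independent (forced by independence of $\f_{12},\f_{22}$). Factoring $\f|_{\O(-2)}$ as $\O(-2)\stackrel{\ell}{\to}\O(-1)\to 2\O$ and taking cokernels gives $0\to\O_L(-1)\to\Coker(\f|_{\O(-2)})\to\I_x(1)\to 0$ with $L=\{\ell=0\}$, $x=\{m_1=m_2=0\}$. Since $\F=\Coker(\O(-3)\to\Coker(\f|_{\O(-2)}))$ and the composite $\O(-3)\to\Coker(\f|_{\O(-2)})\to\I_x(1)$ is non-zero (a line bundle cannot map injectively to the torsion sheaf $\O_L(-1)$), it is injective with cokernel $\J_x(1)$, where $C=\{\f_{11}m_2-\f_{21}m_1=0\}$ is a genuine quartic (as $\det(\f)=\ell(\f_{11}m_2-\f_{21}m_1)\neq 0$) and $x\in C$. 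Comparing $0\to\O(-3)\to\Coker(\f|_{\O(-2)})\to\F\to 0$ with $0\to\O(-3)\to\I_x(1)\to\J_x(1)\to 0$ by the snake lemma yields $0\to\O_L(-1)\to\F\to\J_x(1)\to 0$, and $\H^0(\F\tensor\Om^1(1))=0$ holds because $\F$ lies in $X_1$.

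\smallskip
\noindent\emph{Part (ii), converse, and the main difficulty.} Given an extension $0\to\O_L(-1)\to\F\to\J_x(1)\to 0$ with $\H^0(\F\tensor\Om^1(1))=0$, one checks $P_\F(t)=5t+1$ and that $\F$ is pure (a torsion subsheaf would embed in $\O_L(-1)$); the cohomology of $\O_L(-1)$, $\O_C(1)$, $\C_x$ gives $h^0(\F(-1))=0$, $h^1(\F)=1$. The delicate step is semi-stability: for $\F'\subset\F$ of multiplicity $\le 4$ one intersects with $\O_L(-1)$, pushes the quotient into $\J_x(1)\subset\O_C(1)$, and bounds $p(\F')<1/5$ exactly as in \ref{3.1.2} via \cite{maican}, lemma 6.7; the only subsheaf that could destabilise is $\J_x(1)$ itself, which is excluded because $\H^0(\F\tensor\Om^1(1))=0$ forces the extension to be non-split (a split $\F$ would have $h^0(\F\tensor\Om^1(1))\ge h^0(\J_x(1)\tensor\Om^1(1))>0$). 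Thus $\F$ gives a point of $X_1$, so by \ref{3.1.2} it has a resolution \ref{3.1.2}(i). To place it in $X_{11}$, build such a resolution by the horseshoe lemma from $0\to\O(-2)\stackrel{\ell}{\to}\O(-1)\to\O_L(-1)\to 0$ and a resolution $0\to\O(-3)\oplus\O(-1)\to 2\O\to\J_x(1)\to 0$ in which the $\O(-1)$-component of the differential is given by the linear forms defining $x$: this produces $0\to\O(-3)\oplus\O(-2)\oplus\O(-1)\to\O(-1)\oplus 2\O\to\F\to 0$ whose only possibly-constant entry is the connecting scalar $c\colon\O(-1)\to\O(-1)$. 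If $c=0$ the complex is a minimal resolution of shape \ref{3.1.2}(ii), forcing $h^0(\F\tensor\Om^1(1))=1$, a contradiction; hence $c\neq 0$, and after cancelling $\O(-1)$ the remaining $\f_{12},\f_{22}$ are non-zero scalar multiples of $\ell m_1,\ell m_2$, in particular linearly independent and sharing the factor $\ell$, so (this property being invariant under the automorphism group) $\F$ gives a point of $X_{11}$. I expect this last cancellation step — verifying that the hypothesis $\H^0(\F\tensor\Om^1(1))=0$ is exactly what excludes resolution \ref{3.1.2}(ii) — together with the semi-stability estimate, to be the main obstacle.
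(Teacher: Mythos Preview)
Your argument is correct and follows the same strategy as the paper. Part (i) and the forward direction of (ii) match the paper almost exactly; in (ii) the paper sets up the snake lemma slightly differently, comparing $\f$ directly with the morphism $\psi=\left[\begin{smallmatrix}\f_{11}&m_1\\\f_{21}&m_2\end{smallmatrix}\right]\colon\O(-3)\oplus\O(-1)\to 2\O$ via the map $\left[\begin{smallmatrix}1&0\\0&\ell\end{smallmatrix}\right]$, but this is equivalent to your factorisation.

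For the converse in (ii) your proof is more elaborate than necessary. The paper skips the direct semi-stability estimate entirely: it goes straight to the horseshoe resolution $0\to\O(-3)\oplus\O(-2)\oplus\O(-1)\to\O(-1)\oplus 2\O\to\F\to 0$, observes (as you do) that $\H^0(\F\otimes\Om^1(1))=0$ forces the scalar $\O(-1)\to\O(-1)$ to be non-zero, cancels, and lands on a resolution of type \ref{3.1.2}(i) with $\f_{12}=\ell m_1$, $\f_{22}=\ell m_2$. At that point semi-stability is automatic from the converse part of Proposition \ref{3.1.2}, so your separate slope estimate for subsheaves via \cite{maican}, lemma 6.7, while correct, is redundant. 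What you flagged as the ``main obstacle'' --- that the cohomological hypothesis is exactly what makes the cancellation go through --- is indeed the only real content of the converse, and you handled it correctly.
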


\begin{proof}
(i) Adopting the notations of \ref{3.1.2}(i), we notice that the restriction of $\f$ to $\O(-2)$
has cokernel $\I_Z(2)$, where $Z$ is the subscheme of length $4$ in $\P^2$
given by the equations $\f_{12}=0$, $\f_{22}=0$.
The sheaves in $X_{10}$ are precisely the cokernels of injective morphisms
$\O(-3) \to \I_Z(2)$. Let $C$ be the quintic curve defined by the inclusion
$\O(-3) \subset \I_Z(2) \subset \O(2)$. We have $\F \isom \J_Z(2)$.

\medskip

\noindent
(ii) Let us write $\f_{12} = \ell \psi_{12}$, $\f_{22} = \ell \psi_{22}$, with $\ell, \psi_{12}, \psi_{22}$
non-zero one-forms, $\psi_{12}$ and $\psi_{22}$ linearly independent.
Consider the morphism
\[
\psi \colon \O(-3) \oplus \O(-1) \lra 2\O, \qquad \psi = \left[
\ba{cc}
\f_{11} & \psi_{12} \\
\f_{21} & \psi_{22}
\ea
\right].
\]
$\Coker(\psi)$ is isomorphic to a sheaf of the form $\J_x(1)$ as in the proposition.
Conversely, any sheaf $\J_x(1)$ is the cokernel of some injective morphism $\psi$
with linearly independent entries $\psi_{12}$ and $\psi_{22}$.
Let $L$ be the line with equation $\ell = 0$. We apply the snake lemma to the diagram
with exact rows
\[
\xymatrix
{
0 \ar[r] & \O(-3) \oplus \O(-2) \ar[r]^-{\f} \ar[d]^\a & 2\O \ar[r] \ar@{=}[d] & \F \ar[r] & 0 \\
0 \ar[r] & \O(-3) \oplus \O(-1) \ar[r]^-{\psi} & 2\O \ar[r] & \J_x(1) \ar[r] & 0
},
\]
\[
\a= \left[
\ba{cc}
1 & 0 \\
0 & \ell
\ea
\right].
\]
As $\Coker(\a) \isom \O_L(-1)$, we get the extension
\[
0 \lra \O_L(-1) \lra \F \lra \J_x(1) \lra 0.
\]
Conversely, assume that $\F$ is an extension of $\J_x(1)$ by $\O_L(-1)$
satisfying the condition $\H^0(\F \tensor \Om^1(1))=0$.
Combining the resolutions for these two sheaves we get the exact sequence
\[
0 \lra \O(-3) \oplus \O(-2) \oplus \O(-1) \lra \O(-1) \oplus 2\O \lra \F \lra 0.
\]
Our cohomological condition in the hypothesis ensures that $\O(-1)$ may be cancelled,
hence we obtain a resolution as in \ref{3.1.2}(i) with $\f_{12} = \ell \psi_{12}$ and $\f_{22}= \ell \psi_{22}$.
Thus $\F$ gives a point in $X_{11}$.
\end{proof}

\begin{prop}
\label{3.3.4}
The generic sheaves from $X_2$ are precisely the non-split extension sheaves
\[
0 \lra \J_x(1) \lra \F \lra \O_Z \lra 0
\]
for which there is a global section of $\F(1)$ taking the value 1 at every point of $Z$.
Here $\J_x \subset \O_C$ is the ideal sheaf of a point $x$ on a quintic curve $C \subset \P^2$
and $Z \subset C$ is the union of two distinct points, also distinct from $x$.

There is an open subset of $X_2$ consisting of the isomorphism classes
of all sheaves of the form $\O_C(1)(-P_1+P_2+P_3)$,
where $C \subset \P^2$ is a smooth quintic curve and $P_1, P_2, P_3$ are distinct points on $C$.
In particular, $X_2$ lies in the closure of $X_1$ and $X_3$ lies in the closure of $X_2$.
\end{prop}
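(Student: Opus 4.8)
The plan is to follow the proof of \ref{2.3.2} closely, the one simplification being that here $Z$ has length $2$ rather than $4$, which makes the analogue of the non-colinearity hypothesis vacuous. For the forward direction, recall from the proof of \ref{3.1.2}(ii) the exact sequence
\[
0 \lra \J_x(1) \lra \F \lra \O_Z \lra 0
\]
obtained there for every $\F$ giving a point in $X_2$: $C$ is the quintic curve $\det(\f)=0$, $x$ is the point cut out by $\ell_1,\ell_2$, and $Z$ is the zero-dimensional scheme cut out by $q,\ell$, which for generic $\f$ is a pair of distinct points, both distinct from $x$, lying on a smooth quintic $C$. The extension is automatically non-split: $\F$, being semi-stable, is pure and so has no zero-dimensional torsion, whereas $\J_x(1)\oplus\O_Z$ does. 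The existence of the asserted section of $\F(1)$ is deferred to the discussion of smooth $C$ below, which covers the generic case.

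For the converse, let $\F$ be a non-split extension of $\O_Z$ by $\J_x(1)$ carrying a global section of $\F(1)$ that does not vanish at either point of $Z$. After composing the surjection $\F(1)\to\O_Z$ with a suitable diagonal automorphism of $\O_Z$, this section lifts the ``constant $1$'' morphism $\O\to\O_Z$ to a morphism $\O\to\F(1)$. Applying the horseshoe lemma to the twisted extension $0\to\J_x(2)\to\F(1)\to\O_Z\to 0$, to the Koszul resolution $0\to\O(-3)\to\O(-2)\oplus\O(-1)\to\O\to\O_Z\to 0$ and to a resolution $0\to\O(-3)\oplus\O\to 2\O(1)\to\J_x(2)\to 0$ produces a resolution of $\F(1)$ carrying a redundant $\O(-3)$-summand in both outer terms. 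Using $\Ext^1(\O_Z,\O)=\Ext^1(\O_Z,\O(1))=0$ and the argument of \ref{2.3.2} — the self-map of the redundant summand would have to vanish if the extension were split — one cancels this summand, and twisting back by $\O(-1)$ puts the resolution into the shape of \ref{3.1.2}(ii). By \ref{3.1.2}(ii), $\F$ is then semi-stable and gives a point of $X_2$.

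Now suppose $C$ is a smooth quintic. Then $\J_x(1)=\O_C(1)(-x)$ is a line bundle, $\Ext^1(\C_P,L)\isom\C$ for every line bundle $L$ on $C$ and every point $P$, and the non-split extension of $\O_{P_2}\oplus\O_{P_3}$ by $L$ with both components non-split is $L(P_2+P_3)$; hence $\F\isom\O_C(1)(-P_1+P_2+P_3)$ with $x=P_1$, $Z=\{P_2,P_3\}$. To produce the section of $\F(1)$, let $\d\colon\H^0(\O_Z)\to\H^1(\O_C(2)(-P_1))$ be the connecting map of $0\to\O_C(2)(-P_1)\to\F(1)\to\O_Z\to 0$; the image of $\H^0(\F(1))$ in $\H^0(\O_Z)$ is $\Ker(\d)$, so it suffices to show that neither evaluation functional $\e_{P_2},\e_{P_3}$ lies in $\Im(\d^*)$. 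Since $C$ is a plane quintic, $\omega_C\isom\O_C(2)$, and Serre duality identifies $\d^*$ with the restriction map $\H^0(\O_C(P_1))\to\H^0(\O_C(P_1)|_Z)$; because $\O_C(2)$ is globally generated one gets $\H^0(\O_C(P_1))\isom\H^0(\O_C)\isom\C$, so $\d^*(u)$ is a multiple of $\e_{P_i}$ only if the constant $u$ vanishes at the other point of $Z$, which is impossible for $u\neq 0$. (This is precisely where the length-$2$ case dispenses with the non-colinearity input of \ref{2.3.2}.) Finally, the sheaves $\O_C(1)(-P_1+P_2+P_3)$ with $C$ smooth and $P_1,P_2,P_3$ distinct are exactly the points of $X_2$ at which the open conditions ``$\det(\f)$ smooth, $Z$ reduced, $x\notin Z$'' hold, so they form a nonempty open subset of $X_2$.

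For the closure assertions, start from a generic $\F\isom\O_C(1)(-P_1+P_2+P_3)$ in $X_2$ and, as in \ref{2.3.3} and \ref{2.3.4}, choose the line through $P_2,P_3$ so that it meets $C$ in five distinct points $P_2,P_3,R_1,R_2,R_3$, all distinct from $P_1$; then $\O_C(1)\isom\O_C(P_2+P_3+R_1+R_2+R_3)$ gives $\F\isom\O_C(2)(-P_1-R_1-R_2-R_3)$, and the sheaves $\O_C(2)(-P_1-R_1'-R_2'-R_3')$ with $P_1,R_1',R_2',R_3'$ in general linear position lie in $X_1$ by \ref{3.3.3}(i) and specialize to $\F$ as $R_i'\to R_i$, whence $X_2\subset\overline{X}_1$. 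Letting instead $P_2\to P_1$ makes $\O_C(1)(-P_1+P_2+P_3)$ specialize to $\O_C(1)(P_3)$, the non-split extension of $\C_{P_3}$ by $\O_C(1)$, which is a point of $X_3$ by \ref{3.1.5}; as such sheaves are dense in $X_3$ this gives $X_3\subset\overline{X}_2$. The steps I expect to be most delicate are the cancellation bookkeeping in the converse direction and making the Serre-duality identification of $\d^*$ with a restriction map precise; both, however, are carried out exactly as in \ref{2.3.2}.
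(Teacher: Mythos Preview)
Your proof is correct and follows essentially the same approach as the paper --- the extension from \ref{3.1.2}(ii), the horseshoe-and-cancel argument of \ref{2.3.2} (your twist by $\O(1)$ and use of a locally free resolution of $\J_x(2)$ are purely cosmetic), the Serre-duality computation on smooth $C$, and the same deformation arguments for the two closure claims. One small wording slip: in the cancellation step you state the implication backwards (what is needed, and what \ref{2.3.2} proves, is that if the $\O(-3)$ self-map vanishes then the extension splits, not the converse), but since you explicitly invoke \ref{2.3.2} this is harmless.
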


\begin{proof}
One direction was proven at \ref{3.1.2}(ii). Given $\F$ in $X_2$, there is an extension
as in the proposition with $x$ given by the equations $\ell_1=0$, $\ell_2=0$,
$Z$ given by the equations $q=0$, $\ell=0$ and $C$ given by the equation $\det(\f)=0$.

For the converse we apply the horseshoe lemma to the resolutions
\[
0 \lra \O(-4) \lra \I_x(1) \lra \J_x(1) \lra 0
\]
and
\[
0 \lra \O(-4) \stackrel{\zeta}{\lra} \O(-3) \oplus \O(-2) \stackrel{\xi}{\lra}
\O(-1) \stackrel{\pi}{\lra} \O_Z \lra 0,
\]
\[
\zeta = \left[
\ba{c}
-\ell \\
\phantom{-}q
\ea
\right], \qquad \quad \xi = \left[
\ba{cc}
q & \ell
\ea
\right].
\]
By hypothesis, $\pi$ lifts to a morphism $\a \colon \O(-1) \to \F$.
We define morphisms $\b, \g, \d$ as at \ref{2.3.2}.
By the reason given there, $\d$ is non-zero, namely, if $\d$ were zero, then the extension
for $\F$ would split. We arrive at the resolution
\[
0 \lra \O(-3) \oplus \O(-2) \lra \O(-1) \oplus \I_x(1) \lra \F \lra 0,
\]
which, further, yields resolution \ref{3.1.2}(ii).

Assume now that $C$ is smooth and write $x=P_1$, $Z= \{ P_2, P_3 \}$.
The only non-trivial extension sheaf of $\O_Z$ by $\J_x(1)$
is isomorphic to the sheaf $\F= \O_C(1)(-P_1+P_2+P_3)$.
We must show that $\F(1)$ has a global section
that does not vanish at $P_2$ and $P_3$.
We argue as at \ref{2.3.2}. Let $\e_2, \e_3 \colon \H^0(\O_Z) \to \C$
be the be the linear forms of evaluation at $P_2, P_3$.
Let $\d \colon \H^0(\O_Z) \to \H^1(\J_x(2))$ be the connecting homomorphism
in the long exact cohomology sequence associated to the short exact sequence
\[
0 \lra \O_C(2)(-x)  \lra \F(1) \lra \O_Z \lra 0.
\]
We must show that neither $\e_2$ nor $\e_3$ is orthogonal to $\operatorname{Ker}(\d)$.
This is equivalent to saying that neither $\e_2$ nor $\e_3$ are in the image of the dual map $\d^*$.
By Serre duality $\d^*$ is the restriction morphism
\[
\xymatrix
{
\H^0(\O_C(-2)(x) \tensor \omega_C) \egal[d] \ar[r]
& \H^0((\O_C(-2)(x) \tensor \omega_C)|_Z) \egal[d] \\
\H^0(\O_C(x)) \egal[d] & \H^0(\O_C(x)|_Z) \egal[d] \\
\H^0(\O_C) \isom \C \ar[r]^-{\scriptsize \left[ \ba{c} 1 \\ 1 \ea \right]} & \C^2 \isom \H^0(\O_C|_Z)
}.
\]
The linear forms $\e_2$ and $\e_3$ correspond to the vectors $(1,0)$ and $(0,1)$ in $\C^2$,
so they are clearly not in the image of $\d^*$.
The identity $\H^0(\O_C(x)) = \H^0(\O_C)$ follows from the fact that there is no rational
function on $C$ that has exactly one pole of multiplicity 1.
If this were the case, $C$ would have genus 0.

To see that $X_2 \subset \overline{X}_1$ choose a point in $X_2$ represented by the sheaf
$\O_C(1)(-P_1+P_2+P_3)$. We may assume that $P_1, P_2, P_3$ are non-colinear and that
the line through $P_2$ and $P_3$ intersects $C$ at five distinct points denoted $P_2, P_3, Q_1, Q_2, Q_3$.
Then $\O_C(1)(-P_1+P_2+P_3)$ is isomorphic to $\O_C(2)(-P_1-Q_1-Q_2-Q_3)$.
Clearly, we can find points $R_1, R_2, R_3$ on $C$, converging to $Q_1, Q_2, Q_3$ respectively,
such that $P_1, R_1, R_2, R_3$ are in general linear position. Thus $\O_C(2)(-P_1-R_1-R_2-R_3)$
gives a point in $X_1$ converging to the chosen point in $X_2$.

According to \ref{3.1.5}, the generic sheaves in $X_3$ have the form $\O_C(1)(P)$,
where $C \subset \P^2$ is a smooth quintic curve and $P$ is a point on $C$.
Choose distinct points $P_1, P_2$ on $C$, which are
also distinct from $P$, such that $P_2$ converges to $P_1$.
The stable-equivalence class of $\O_C(1)(-P_1+P_2+P)$
is in $X_2$ and converges to the stable-equivalence class of $\O_C(1)(P)$.
We conclude that $X_3 \subset \overline{X}_2$.
\end{proof}

The following result will be helpful in the discussion about sheaves from $X_{01}$,
which we have left for the end.

\begin{prop}
\label{3.3.5}
Let $\psi \colon 4\O(-2) \to 3\O(-1)$ be a Kronecker $V$-module.
Let $\psi_i$, $1 \le i \le 4$, denote the maximal minor of $\psi$
obtained by deleting the $i$-th column.
Assume that the minors $\psi_i$ have a common linear factor.
Then $\Ker(\psi) \isom \O(-4)$ and $\psi$ is semi-stable if and only if
$\psi_i$, $1 \le i \le 4$, are linearly independent three-forms.
\end{prop}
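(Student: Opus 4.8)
The plan is to prove the two assertions of \ref{3.3.5} separately. The isomorphism $\Ker(\psi)\isom\O(-4)$ is essentially formal: by the description of the kernel of a Kronecker $V$-module recalled at the beginning of this subsection, $\Ker(\psi)$ is the line bundle $\O(-d)$ whose inclusion into $4\O(-2)$ has $i$-th entry $(-1)^{i}\psi_i/g$, where $g=\gcd(\psi_1,\dots,\psi_4)$. Since the common factor is a linear form, say $g=\ell$, and $\psi_i=\ell\,\eta_i$ for two-forms $\eta_i$ with $\gcd(\eta_1,\dots,\eta_4)=1$, the entries of that inclusion have degree $d-2=2$, whence $d=4$. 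I would also record here that the $\psi_i$ are linearly independent three-forms if and only if the $\eta_i$ are linearly independent in $S^2V^*$, so the second assertion becomes a statement about the $\eta_i$.

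For the equivalence I would invoke King's criterion of semi-stability in the same form already used in the proofs of \ref{2.2.2}, \ref{2.2.4} and \ref{3.3.2}: $\psi$ is semi-stable if and only if it is not equivalent to a matrix having a zero column, a zero submatrix of size $2\times 2$, or a zero submatrix of size $1\times 3$ (a row with three zero entries). The implication $(\Leftarrow)$ is then immediate, and does not even use the hypothesis on the minors: a zero column forces three of the maximal minors to vanish, a $2\times 2$ zero block forces two of them to vanish, and a $1\times 3$ zero block forces one of them to vanish; hence if the $\psi_i$ are linearly independent none of these configurations can occur and $\psi$ is semi-stable.

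The converse $(\Rightarrow)$ is where the actual work lies. Suppose $\psi$ is semi-stable but the $\psi_i$, equivalently the $\eta_i$, are linearly dependent. The group $\GL(4,\C)$ acting on the columns of $\psi$ transforms the vector of minors, hence $\eta=(\eta_i)$, by $\GL(4,\C)$ on the index $i$, and preserves the linear span of the $\eta_i$, in particular the property $\gcd(\eta_i)=1$. If the $\eta_i$ span a subspace of dimension $\le 2$, I reduce to $\eta_3=\eta_4=0$; then $\eta_1\psi^{(1)}+\eta_2\psi^{(2)}=0$ (with $\psi^{(j)}$ the $j$-th column) together with $\gcd(\eta_1,\eta_2)=1$ and degree reasons forces $\psi^{(1)}=0$, a zero column — a contradiction. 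Otherwise the $\eta_i$ span a $3$-dimensional space and I reduce to $\eta_4=0$ with $\eta_1,\eta_2,\eta_3$ linearly independent; then $\psi_4=0$, the $3\times 3$ block $\bar\psi$ formed by the first three columns has identically zero determinant, and $\bar\psi\cdot(\eta_1,\eta_2,\eta_3)^{\T}=0$, so the three rows of $\bar\psi$ are linear syzygies of $(\eta_1,\eta_2,\eta_3)$. If these three quadratic forms had no common zero in $\P^2$ they would form a regular sequence, whose only syzygies are the Koszul ones, in degree two, and this would force $\bar\psi=0$, hence a $3\times 3$ zero block in $\psi$, contradicting semi-stability. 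Therefore $Z=V(\eta_1,\eta_2,\eta_3)$ is a non-empty zero-dimensional scheme, and by the Hilbert--Burch theorem $\I_Z$ has a resolution $0\to 2\O(-3)\to 3\O(-2)\to\I_Z\to 0$; consequently the space of linear syzygies of $(\eta_1,\eta_2,\eta_3)$ is two-dimensional. The three rows of $\bar\psi$ thus lie in a two-dimensional space, so after row operations $\bar\psi$ acquires a zero row and $\psi$ becomes equivalent to a matrix with a $1\times 3$ zero submatrix, once again contradicting semi-stability. This finishes $(\Rightarrow)$.

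I expect the delicate point to be precisely the syzygy step in the last paragraph: ruling out that the three linear rows of $\bar\psi$ can be independent by controlling the space of linear syzygies of three quadratic forms with no common factor (via Hilbert--Burch when they have a common zero, and via the collapse $\bar\psi=0$ when they do not). The kernel computation, the column reduction to $\psi_4=0$, and the implication $(\Leftarrow)$ are routine.
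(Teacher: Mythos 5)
There is a genuine gap, and it starts with how you have parsed the statement. The proposition asserts the biconditional ``$\Ker(\psi)\isom\O(-4)$ \emph{and} $\psi$ is semi-stable $\iff$ the $\psi_i$ are linearly independent'' (this is how the paper proves it: the forward direction \emph{assumes} both the kernel condition and semi-stability), whereas you have split it into an unconditional claim about the kernel plus ``semi-stable $\iff$ independent''. Neither half of your splitting holds. The kernel computation is not formal: writing $\psi_i=\ell\,\eta_i$ with $\gcd(\eta_1,\dots,\eta_4)=1$ silently assumes that $\gcd(\psi_1,\dots,\psi_4)$ is \emph{exactly} linear, while the hypothesis only gives a common linear factor; if the gcd has degree $2$ then $\Ker(\psi)\isom\O(-3)$. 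Worse, the implication ``semi-stable $\Rightarrow$ independent'' is false. Take
\[
\psi=\left[\ba{cccc} -Y & X & 0 & Y \\ -Z & 0 & X & X \\ \phantom{-}0 & -Z & Y & 0 \ea\right].
\]
Its maximal minors are $Y(Z-X)\cdot(X,-Y,Z,0)$ up to sign: they have the common linear factor $Y$ and span only a $3$-dimensional space, yet one checks directly that $\psi$ is not equivalent to a matrix with a zero column, a zero $2\times 2$ block or a zero $1\times 3$ block, so $\psi$ is semi-stable. Here $\Ker(\psi)\isom\O(-3)$, generated by $(X,Y,Z,0)^{\T}$, so the example is consistent with the proposition as intended but refutes your version of the forward implication.

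The missing hypothesis $\Ker(\psi)\isom\O(-4)$, equivalently $\gcd(\eta_1,\dots,\eta_4)=1$, is exactly what your forward argument uses without justification: in Case 1 you invoke $\gcd(\eta_1,\eta_2)=1$, and in Case 2 you assert that $V(\eta_1,\eta_2,\eta_3)$ is zero-dimensional before applying Hilbert--Burch. Both claims fail in the example above, where the $\eta_i=\psi_i/Y$ share the factor $Z-X$ and the rows of the first $3\times 3$ block are three \emph{independent} linear syzygies of $(Z-X)\cdot(X,Y,Z)$, so no zero row can be produced. If you restore the kernel hypothesis, your syzygy argument can be completed (with some extra care in Case 2 when $\deg V(\eta_1,\eta_2,\eta_3)\le 2$, where Hilbert--Burch for $\I_Z$ does not immediately control the syzygies of the three chosen quadrics), and it would then be a genuinely different route from the paper's: the paper performs column operations to achieve $\psi_4=0$, notes that the remaining $3\times 3$ block is a semi-stable Kronecker module with vanishing determinant, hence equivalent to the standard matrix with kernel generated by $(X,Y,Z)^{\T}$, and contradicts $\Ker(\psi)\isom\O(-4)$ directly. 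Your $(\Leftarrow)$ direction is correct, and there linear independence does force the gcd to be exactly linear, which is what yields $\Ker(\psi)\isom\O(-4)$ in that direction.
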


\begin{proof}
Assume that $\Ker(\psi) \isom \O(-4)$ and that $\psi$ is semi-stable.
We argue by contradiction. If the maximal minors of $\psi$ were linearly dependent, then,
performing possibly column operations on $\psi$,
we could assume that one of them is zero, say $\psi_4=0$.
Let $\psi'$ be the matrix obtained from $\psi$ by deleting the fourth column.
It is easy to see that $\psi'$ is semi-stable as a Kronecker $V$-module.
It follows that $\psi'$ is equivalent to the morphism represented by the matrix
\[
\left[
\ba{ccc}
-Y & \phantom{-}X & 0 \\
-Z & \phantom{-}0 & X \\
\phantom{-}0 & -Z & Y
\ea
\right]. \qquad \text{Thus the vector} \qquad \left[
\ba{c}
X \\ Y \\ Z \\ 0
\ea
\right]
\]
is in the kernel of $\psi$. This contradicts our hypothesis that $\Ker(\psi)$ be isomorphic
to $\O(-4)$.

Conversely, assume that $\psi_i$, $1 \le i \le 4$, are linearly independent.
Then they cannot have a common factor of degree 2, that is,
in view of the comments at the beginning of this subsection, we have $\Ker(\psi) \isom \O(-4)$.
The semi-stability of $\psi$ is also clear: if $\psi$ were equivalent to a matrix having a zero-column,
then the $\psi_i$ would span a vector space of dimension at most $1$. If $\psi$ were equivalent to a matrix
having a zero-submatrix of size $2 \times 2$, then the $\psi_i$ would span a vector space of dimension
at most two. If $\psi$ were equivalent to a matrix having a zero-submatrix of size $1 \times 3$, then
the $\psi_i$ would span a vector space of dimension at most $3$.
\end{proof}

\begin{prop}
\label{3.3.6}
The sheaves $\F$ giving points in $X_{01}$ occur as non-split extension sheaves
of one of the following three kinds:
\begin{align*}
\tag{i}
0 \lra \G \lra \F \lra \O_L \lra 0,
\end{align*}
where $\H^1(\F) = 0$.
Here $L \subset \P^2$ is a line and $\G$ is in the exceptional divisor of $\M(4,0)$.
For fixed $L$ and $\G$ the feasible extension sheaves form
a locally closed subset of $\P(\Ext^1(\O_L,\G))$.
\begin{align*}
\tag{ii}
0 \lra \E \lra \F \lra \O_Z \lra 0.
\end{align*}
Here $Z \subset \P^2$ is a zero-dimensional scheme of length $3$ not contained in a line
and $\E$ is a sheaf in $\M(5,-2)$ such that $\E(1)$ belongs to the stratum $X_3$ of $\M(5,3)$.
\begin{align*}
\tag{iii}
0 \lra \O_L(-1) \lra \F \lra \J_Z(1)^{\D} \lra 0.
\end{align*}
Here $L \subset \P^2$ is a line and $Z \subset \P^2$ is a zero-dimensional scheme
of length $3$ not contained in a line, contained in a quartic curve $C \subset \P^2$,
and $\J_Z \subset \O_C$ is its ideal sheaf.
For fixed $\J_Z$ and $L$ the feasible extension sheaves form a locally
closed subset of $\P(\Ext^1(\J_Z(1)^{\D},\O_L(1)))$.
\end{prop}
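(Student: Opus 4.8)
The plan is to extract each of the three extension presentations from the resolution
\[
0 \lra 4\O(-2) \stackrel{\f}{\lra} 3\O(-1) \oplus \O \lra \F \lra 0, \qquad
\f = \left[ \ba{c} \f_{11} \\ \f_{21} \ea \right],
\]
of a sheaf $\F$ giving a point of $X_{01}$, and conversely to recover such a resolution from each extension via the horseshoe lemma. Since $\F$ is in $X_{01}$ we have $\Ker(\f_{11}) \isom \O(-4)$, so \ref{3.3.5} tells us the maximal minors of $\f_{11}$ are linearly independent cubics sharing a single common linear factor $\ell$; write $L$ for the line $\{\ell = 0\}$ and write these minors as $\ell\eta_i$ with $\eta_1, \dots, \eta_4$ linearly independent conics. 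Applying the snake lemma to the comparison of $\f$ with $\f_{11}$ --- the map of two-term complexes given by the identity on $4\O(-2)$ and the projection $3\O(-1) \oplus \O \to 3\O(-1)$ --- produces the four-term exact sequence
\[
0 \lra \O(-4) \stackrel{\g}{\lra} \O \lra \F \lra \Coker(\f_{11}) \lra 0,
\]
in which $\g$ is the composite $\O(-4) \isom \Ker(\f_{11}) \hookrightarrow 4\O(-2) \stackrel{\f_{21}}{\lra} \O$. Because $\F$ is pure of dimension one it cannot contain $\O$, so $\g \neq 0$; hence $\g$ is multiplication by a quartic form whose zero locus is a quartic curve $C$ (one checks $\det(\f) = \ell \cdot (\text{equation of } C)$), $\Coker(\g) \isom \O_C$, and $\F$ is an extension of the multiplicity-one sheaf $\Coker(\f_{11})$, supported on $L$ with Hilbert polynomial $t+3$, by $\O_C$. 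This sequence, together with the dual sequence obtained from \cite{maican-duality}, lemma 3, is the starting point for all three cases.

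I would then divide $X_{01}$ into three (possibly overlapping) loci defined by cohomological conditions on $\F$ --- equivalently, by the structure of $\Coker(\f_{11})$ and of the induced maps out of $\Ker(\f_{11})$ --- and in each locus obtain the asserted extension by a further diagram chase. In the first locus $\F$ carries a quotient of the form $\O_L$; pulling this surjection back along $\O_C \hookrightarrow \F$ gives a subsheaf $\G$ with $0 \to \G \to \F \to \O_L \to 0$, and the resolution of $\G$ one reads off from the diagram is exactly the one characterising the exceptional divisor of $\M(4,0)$ in \cite{drezet-maican}; the condition $\H^1(\F) = 0$ is inherited. In the second locus $\F$ contains a semi-stable multiplicity-five subsheaf $\E$ with $\F/\E \isom \O_Z$ zero-dimensional of length $3$, and $Z$ is not contained in a line by the linear independence of the $\eta_i$ (invoking \cite{modules-alternatives}, 4.5 and 4.6 to recognise $Z$); one identifies $\E(1)$ from its resolution as a sheaf of the type described in \ref{2.1.4}, hence lying in $X_3 \subset \M(5,3)$. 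In the third locus $\F$ contains $\O_L(-1)$ as a subsheaf with quotient in $\M(4,1)$, which the same results of \cite{modules-alternatives} identify with $\J_Z(1)^{\D}$ for $Z$ of length $3$ not on a line inside the quartic $C$. Showing that these three loci exhaust $X_{01}$ is the crux: it amounts to a careful enumeration of the possibilities for the torsion and the line-restriction of $\Coker(\f_{11})$, compatible with the constraints $h^0(\F(-1)) = 0$, $\H^1(\F) = 0$, $h^0(\F \tensor \Om^1(1)) = 0$ that hold throughout $X_0$.

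For the converse directions, given an extension of one of the three kinds subject to the stated restrictions, I would feed the standard resolutions of the two outer terms into the horseshoe lemma, obtain a resolution of $\F$ carrying a spurious summand $\O(-3)$, and cancel it exactly as in the proofs of \ref{2.3.2} and \ref{3.1.5}: the relevant connecting morphism $\O(-3) \to \O(-3)$ is non-zero because the extension is non-split, equivalently because $\H^1(\F) = 0$. What remains is a presentation of the shape \ref{3.1.1}; verifying that the maximal minors of the resulting $\f_{11}$ are linearly independent cubics with a common linear factor, and hence (by \ref{3.3.5}) that $\Ker(\f_{11}) \isom \O(-4)$, places $\F$ in $X_{01}$. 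Where semi-stability is not part of the hypothesis one obtains it by the slope estimate used in \ref{3.1.2}, bounding $p(\F')$ for an arbitrary subsheaf of multiplicity at most $4$ via \cite{maican}, lemma 6.7. Finally, the statement that for fixed $L$ and $\G$, respectively for fixed $\J_Z$ and $L$, the feasible extension classes form a locally closed subset of the relevant projective space of extensions follows because the imposed conditions --- non-splitting, $\H^1(\F) = 0$, and non-vanishing of the cancellable morphism --- are each open or closed on the extension class. I expect the principal difficulty to lie in the steps of the second paragraph, namely in pinning down the three loci so that they visibly cover $X_{01}$ and in characterising the feasible extension classes; once the correct exact sequences are in place the diagram chases, the horseshoe cancellations, and the slope estimates are routine.
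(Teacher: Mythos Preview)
Your opening paragraph is right and matches the paper: the snake lemma applied to the comparison of $\f$ with $\f_{11}$ gives the extension $0 \to \O_C \to \F \to \Coker(\f_{11}) \to 0$ with $C$ a quartic, and this is the common starting point. Your horseshoe-and-cancel plan for the converses in cases (i) and (iii), with the cancellation of $\O(-3)$ forced by $\H^1(\F)=0$, is also what the paper does.

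The genuine gap is exactly where you flag it: you have no mechanism for showing the three loci exhaust $X_{01}$, and the ``careful enumeration of the possibilities for the torsion and the line-restriction of $\Coker(\f_{11})$'' you propose does not by itself lead to the trichotomy. The paper fills this with a step you are missing entirely. Write $\CC = \Coker(\f_{11})$; since $P_{\CC}(t) = t+3$, one can always choose a subsheaf $\CC' \subset \CC$ with $P_{\CC'}(t)=t+2$. Its preimage $\F' \subset \F$ then has $P_{\F'}(t)=5t$ and is easily seen to be semi-stable, so $\F'$ gives a point of $\M(5,0)$. Now the classification of $\M(5,0)$ from section~4 (obtained independently of this subsection) is invoked: because $\H^0(\F' \tensor \Om^1(1))=0$, only the resolutions of \ref{4.1.2} and \ref{4.1.3} are possible for $\F'$. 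The paper rules out \ref{4.1.2} by a diagram chase, and from \ref{4.1.3} deduces that $\f_{11}$ is equivalent to a matrix of the explicit shape
\[
\left[\ba{cccc} \ell_1 & \ell_2 & 0 & 0 \\ \star & \star & \xi_3 & 0 \\ \star & \star & \star & \xi_4 \ea\right],
\]
after which the trichotomy becomes purely algebraic: with $\z$ the upper $2\times 3$ block, case (i) occurs when the maximal minors of $\z$ share a linear factor, and cases (ii), (iii) occur when they do not, distinguished by whether a certain linear entry divides another. This detour through $\M(5,0)$ is the organizing idea of the proof; without it your enumeration has no anchor.

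Two smaller remarks. First, $\Coker(\f_{11})$ need not be supported on $L$ alone: it is in general $\O_L(d)$ plus a zero-dimensional piece, which is why the paper works with the subsheaf $\CC'$ rather than $\CC$ itself. Second, for the converses in (i) and (iii) no separate slope estimate is needed: once the horseshoe yields a resolution of the shape \ref{3.1.1} with $\f_{11}$ a semi-stable Kronecker module, semi-stability of $\F$ is already part of \ref{3.1.1}; the paper then checks via \ref{3.3.5} that the minor condition defining $X_{01}$ is a locally closed condition on the extension class. The proposition asserts no converse for case (ii).
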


\begin{proof}
Let $\F$ give a point in $X_{01}$. Recall resolution \ref{3.1.1}.
We have the isomorphism $\Ker(\f_{11}) \isom \O(-4)$ and we denote $\CC = \Coker(\f_{11})$.
We have $P_{\CC}(t)=t+3$, so this sheaf is the direct sum
of a zero-dimensional sheaf and $\O_L(d)$ for a line $L \subset \P^2$ and an integer $d$.
It is thus clear that $\CC$ has a subsheaf $\CC'$ with Hilbert polynomial $P_{\CC'}(t)=t+2$.

Applying the snake lemma to a diagram similar to the first diagram in the proof of \ref{3.3.2}
we obtain an extension
\[
0 \lra \O_C \lra \F \lra \CC \lra 0,
\]
where $C \subset \P^2$ is a quartic curve. Let $\F' \subset \F$ be the preimage of $\CC'$.
We have $P_{\F'}(t)=5t$ and it is easy to see that $\F'$ is semi-stable.
We now use the possible resolutions for sheaves in $\M(5,0)$ found in section 4, which we
obtain independently of any result in this subsection.
Taking into account that $\H^0(\F' \tensor \Om^1(1)) =0$ leaves only two possible resolutions,
the ones at \ref{4.1.2} and \ref{4.1.3}. The first resolution must fit into a commutative diagram
\[
\xymatrix
{
0 \ar[r] & 5\O(-2) \ar[r]^-{\psi} \ar[d]^-{\b} & 5\O(-1) \ar[r] \ar[d]^-{\a} & \F' \ar[r] \ar[d] & 0 \\
0 \ar[r] & 4\O(-2) \ar[r]^-{\f} & 3\O(-1) \oplus \O \ar[r] & \F \ar[r] & 0
}.
\]
Since $\a(1)$ is injective on global sections, we have one of the following two possibilities:
\[
\a \sim \left[
\ba{ccccc}
0 & 0 & 0 & 0 & 0 \\
1& 0 & 0 & 0 & 0 \\
0 & 1 & 0 & 0 & 0 \\
0 & 0 & X & Y & Z
\ea
\right] \qquad \text{or} \qquad \a \sim \left[
\ba{ccccc}
1& 0 & 0 & 0 & 0 \\
0 & 1 & 0 & 0 & 0 \\
0 & 0 & 1 & 0 & 0 \\
0 & 0 & 0 & X & Y
\ea
\right].
\]
In the first case $\Ker(\a)$ is isomorphic to $\Om^1$, so the latter is isomorphic
to a direct sum of copies of $\O(-2)$. This is absurd. 
In the second case we have $\Ker(\b) \isom \O(-2)$,
hence, without loss of generality, we may assume that $\b$ is the projection onto the first
four terms. From the commutativity of the diagram we get
\[
\psi = \left[
\ba{cc}
\f_{11} & \ba{c} \phantom{-}0 \\ \phantom{-}0 \\ \phantom{-}0 \ea \\
\ba{cccc}
\star & \star & \star & \star \\
\star & \star & \star & \star
\ea
& \ba{c} -Y \\ \phantom{-}X \ea
\ea
\right].
\]
This shows that $\F'$ maps surjectively onto the cokernel of $\f_{11}$. But this is impossible because,
by construction, the image of $\F'$ in $\CC$ is the proper subsheaf $\CC'$.
Thus far we have shown that resolution \ref{4.1.2} for $\F'$ is unfeasible. It remains to examine
resolution \ref{4.1.3}. This fits into a commutative diagram
\[
\xymatrix
{
0 \ar[r] & \O(-3) \oplus 2\O(-2) \ar[r]^-{\psi} \ar[d]^-{\b} & 2\O(-1) \oplus \O \ar[r] \ar[d]^-{\a}
& \F' \ar[r] \ar[d] & 0 \\
0 \ar[r] & 4\O(-2) \ar[r]^-{\f} & 3\O(-1) \oplus \O \ar[r] & \F \ar[r] & 0
}.
\]
Since $\a$ and $\a(1)$ are injective on global sections, we see that $\a$ and $\b$ are injective
and we may write
\[
\b = \left[
\ba{ccc}
-\ell_2 & 0 & 0 \\
\phantom{-}\ell_1 & 0 & 0 \\
\phantom{-}0 & 1 & 0 \\
\phantom{-}0 & 0 & 1
\ea
\right], \qquad \qquad \a = \left[
\ba{ccc}
0 & 0 & 0 \\
1 & 0 & 0 \\
0 & 1 & 0 \\
0 & 0 & 1
\ea
\right].
\]
From the commutativity of the diagram and the semi-stability of $\f_{11}$, we see that $\ell_1$ and $\ell_2$
are linearly independent one-forms and
\[
\f_{11}= \left[
\ba{cc}
\ba{cc} \ell_1 & \ell_2 \ea & \ba{cc} 0 & 0 \ea \\
\ba{cc} \star & \star \\ \star & \star \ea & \xi
\ea
\right].
\]
We recall that the greatest common divisor of the maximal minors of $\f_{11}$ is a linear form $g$.
Since $g$ divides both $\ell_1 \det(\xi)$ and $\ell_2 \det(\xi)$, we see that $g$ divides
$\det(\xi)$, hence $\xi$ is equivalent to a matrix having a zero-entry. Thus we may write
\[
\f_{11}= \left[
\ba{cccc}
\ell_1 & \ell_2 & 0 & 0 \\
\star & \star & \xi_3 & 0 \\
\star & \star & \star & \xi_4
\ea
\right] = \left[
\ba{cc}
\z & \ba{c} 0 \\ 0 \ea \\
\ba{ccc} \star & \star & \star \ea & \xi_4
\ea
\right].
\]
It is clear that $\z$ is semi-stable as a Kronecker $V$-module.
Assume that the maximal minors of $\z$ have a common linear factor, say $Z$.
We may then write
\[
\f = \left[
\ba{cccc}
X & Z & 0 & 0 \\
Y & 0 & Z & 0 \\
\star & \star & \star & S \\
\star & \star & \star & T
\ea
\right] = \left[
\ba{cc}
\f' & \ba{c} 0 \\ 0 \\ S \ea \\
\ba{ccc} \star & \star & \star \ea & T
\ea
\right].
\]
Notice that $g$ is a multiple of $Z$, $S$ is non-zero and does not divide $\det(\f')/Z$.
We have $\Coker(\z) \isom \O_L$, where $L \subset \P^2$ is the line with equation $Z = 0$.
We apply the snake lemma to the exact diagram from below
\begin{table}[ht]{Diagram for the snake lemma.}
\[
\xymatrix
{
& & 0 \ar[d] & 0 \ar[d] \\
& 0 \ar[r] & \O(-2) \ar[d] \ar[r]^-{\scriptsize \left[ \ba{c} S \\ T \ea \right]} & \O(-1) \oplus \O \ar[d] \\
& 0 \ar[r] & 4\O(-2) \ar[r]^-{\f} \ar[d] & 3\O(-1) \oplus \O \ar[r] \ar[d] & \F \ar[r] & 0 \\
0 \ar[r] & \O(-3) \ar[r] & 3\O(-2) \ar[r]^-{\z} \ar[d] & 2\O(-1) \ar[r] \ar[d] & \O_L \ar[r] & 0 \\
& & 0 & 0
}
\]
\end{table}
in order to obtain a non-split extension of the form
\[
0 \lra \G \lra \F \lra \O_L \lra 0,
\]
where $\G$ has resolution
\[
0 \lra \O(-3) \oplus \O(-2) \stackrel{\psi}{\lra} \O(-1) \oplus \O \lra \G \lra 0,
\]
with $\psi_{12}=S$ different from zero.
From 5.2.1 \cite{drezet-maican} we see that $\G$ is in the exceptional divisor of $\M(4,0)$.
Conversely, any $\G$ of $\M(4,0)$, which is in the exceptional divisor, i.e. satisfying the condition $h^0(\G)=1$,
occurs as the cokernel of a morphism $\psi$ as above with $\psi_{12} \neq 0$.
Assume now that $\F$ is an extension of $\O_L$ with a sheaf $\G$ as above, satisfying $\H^1(\F)=0$.
Choose an equation $Z = 0$ for $L$. We combine the resolution of $\G$ with the resolution
\[
0 \lra \O(-3) \lra 3\O(-2) \stackrel{\z}{\lra} 2\O(-1) \lra \O_L \lra 0
\]
and we obtain a resolution
\[
0 \lra \O(-3) \lra \O(-3) \oplus 4\O(-2) \lra 3\O(-1) \oplus \O \lra \F \lra 0.
\]
The morphism $\O(-3) \to \O(-3)$ in the above complex is non-zero
because, by hypothesis, $\H^1(\F)$ vanishes.
Thus we may cancel $\O(-3)$ to get resolution \ref{3.1.1} with
\[
\f = \left[
\ba{cc}
\z & \ba{c} 0 \\ 0 \ea \\
\ba{ccc} \star & \star & \star \\
\star & \star & \star \ea & \ba{c} \psi_{12} \\ \psi_{22} \ea
\ea
\right] = \left[
\ba{cc}
\f' & \ba{c} 0 \\ 0 \\ \psi_{12} \ea \\
\ba{ccc} \star & \star & \star \ea & \psi_{22}
\ea
\right].
\]
In view of \ref{3.3.5}, the condition that $\F$ be in $X_{01}$ is equivalent to saying that
$\det(\f')/Z$, $\psi_{12}X, \psi_{12}Y, \psi_{12}Z$ are linearly independent two-forms.
This defines an open subset inside the closed set of extension sheaves of $\O_L$ by $\G$
with vanishing first cohomology.

It remains to examine the case when the maximal minors of $\z$ have no common factor.
Then $g$ is a multiple of $\xi_4$. We have $\Ker(\z) \isom \O(-4)$.
According to 4.5 and 4.6 \cite{modules-alternatives},
the cokernel of $\z$ is isomorphic to the structure sheaf of a zero-dimensional
scheme $Z$ of length $3$ not contained in a line. Write as above
\[
\f = \left[
\ba{cc}
\z & \ba{c} 0 \\ 0 \ea \\
\ba{ccc} \star & \star & \star \\
\star & \star & \star \ea & \ba{c} S \\ T \ea
\ea
\right] 
\]
and note that the snake lemma gives an extension
\[
0 \lra \E \lra \F \lra \O_Z \lra 0,
\]
where $\E$ has a resolution
\[
0 \lra \O(-4) \oplus \O(-2) \stackrel{\psi}{\lra} \O(-1) \oplus \O \lra \E \lra 0
\]
in which $\psi_{12} =S$ and $\psi_{22}=T$.
We have $P_{\E}(t)=5t-2$. According to \ref{2.1.4}, $\E$ is in $\M(5,-2)$ precisely if $S$ does not
divide $T$. In that case $\E(1)$ gives a point in the stratum $X_3$ of $\M(5,3)$.
Finally, assume that $S$ divides $T$. We have a non-split extension of sheaves
\[
0 \lra \O_L(-1) \lra \F \lra \S \lra 0,
\]
where $L \subset \P^2$ is given by the equation $S=0$ and $\S$ has a resolution of the form
\[
0 \lra 3\O(-2) \stackrel{\psi}{\lra} 2\O(-1) \oplus \O \lra \S \lra 0,
\]
where $\psi_{11}=\z$.
According to 3.3.2 \cite{drezet-maican}, 
the subset of $\M(4,3)$ of sheaves of the form $\S^{\D}(1)$
is an open subset consisting of all sheaves
of the form $\J_Z(2)$, where $Z \subset \P^2$ is a zero-dimensional scheme of length $3$
not contained in a line, contained in a quartic curve $C \subset \P^2$ and $\J_Z \subset \O_C$
is its ideal sheaf. 
Assume we are given $\S$ as above, $L \subset \P^2$ a line with equation $S=0$
and $\F$ a non-split extension of $\S$ by $\O_L(-1)$.
We combine the above resolution for $\S$ with the standard resolution of $\O_L(-1)$
to get a resolution for $\F$ as in \ref{3.1.1}.
By \ref{3.3.5}, the condition that $\F$ be in $X_{01}$ is equivalent to
saying that $S$ divides $\det(\f')$ and $\det(\f')/S$ together with the
maximal minors of $\z$ form a linearly independent set in $S^2 V^*$.
These  conditions define a locally closed subset of $\P(\Ext^1(\S,\O_L(-1)))$.
\end{proof}

\noindent
From what was said above we can summarise:

\begin{prop}
\label{3.3.7}
$\{ X_0, X_1, X_2, X_3 \}$ represents a stratification
of $\M(5,1)$ by locally closed subvarieties of codimensions $0, 2, 3, 5$.
\end{prop}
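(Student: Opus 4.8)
The plan is to assemble this proposition from the structural results of subsections 3.1 and 3.2; it is essentially a bookkeeping statement. First I would check that the four sets form a partition of $\M(5,1)$. Given $\F$ giving a point in $\M(5,1)$: by Proposition \ref{3.1.1}, $h^1(\F)=0$ forces $h^0(\F(-1))=0$, so $X_0$ is precisely the locus $h^1(\F)=0$; by Proposition \ref{3.1.5}, $h^0(\F(-1))>0$ forces $h^0(\F(-1))=1$ and $h^1(\F)=2$, which is $X_3$; and in the remaining case $h^0(\F(-1))=0$ with $h^1(\F)\ge 1$, Proposition \ref{3.1.4} gives $h^1(\F)=1$ and Proposition \ref{3.1.2} gives $h^0(\F\tensor\Om^1(1))\in\{0,1\}$, placing $\F$ in $X_1$ or $X_2$ respectively. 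Since the defining cohomological conditions are mutually exclusive, $\M(5,1)$ is the disjoint union of $X_0$, $X_1$, $X_2$, $X_3$.

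Next I would establish local closedness using the semicontinuity of the functions $\F\mapsto h^0(\F(-1))$, $\F\mapsto h^1(\F)$, $\F\mapsto h^0(\F\tensor\Om^1(1))$. Thus $X_0$ is open, $X_3$ is closed by Proposition \ref{3.2.5}, and $X_1\cup X_2=\M(5,1)\setminus(X_0\cup X_3)$ is locally closed; inside it the condition $h^0(\F\tensor\Om^1(1))=0$ is open, so $X_1$ is open and $X_2$ is closed in $X_1\cup X_2$. Irreducibility of each stratum then follows from the isomorphisms $X_i\isom W_i/G_i$ of Propositions \ref{3.2.1}, \ref{3.2.2}, \ref{3.2.4}, \ref{3.2.5}: each quotient is either the universal quintic ($i=3$) or a nonempty open subset of a projective bundle over one of the irreducible varieties $\N(3,4,3)$, $\Grass(2,S^2V^*)$, $Y\times\P^2$, hence irreducible.

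The final step is the codimension count inside $\M(5,1)$, which has dimension $26$. The stratum $X_0$ is a nonempty open subset, hence of codimension $0$. From Proposition \ref{3.2.2}, $\dim X_1=16+\dim\Grass(2,S^2V^*)=16+8=24$, so $\operatorname{codim}X_1=2$. From Proposition \ref{3.2.3}, $\dim X_2=17+\dim(Y\times\P^2)=17+4+2=23$, so $\operatorname{codim}X_2=3$. From Proposition \ref{3.2.5}, $X_3$ is the universal quintic in $\P^2\times\P(S^5V^*)$, a divisor there, hence of dimension $2+20-1=21$, so $\operatorname{codim}X_3=5$.

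I do not expect a genuine obstacle, since every ingredient has already been proven; the one point needing a little care is confirming that $X_0$, $X_1$, $X_2$ are each nonempty, so that the "proper open subset of a bundle" descriptions really do give the stated dimensions — this follows from the explicit generic sheaves produced in Propositions \ref{3.3.1}, \ref{3.3.3}, \ref{3.3.4}. (One also tacitly uses that moduli of semi-stable Kronecker modules, being GIT quotients of affine spaces, are irreducible.)
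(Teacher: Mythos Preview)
Your proposal is essentially correct and matches the paper's approach: the paper gives no proof at all beyond the line ``From what was said above we can summarise,'' treating \ref{3.3.7} as a bookkeeping statement, and you have supplied the bookkeeping explicitly.

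One point of comparison: the paper evidently reads ``stratification'' as including the frontier condition $\overline{X_i}\supset X_{i+1}$, since the analogous Proposition \ref{4.3.3} is proved by establishing exactly such closure inclusions. For \ref{3.3.7} these inclusions were already recorded in Proposition \ref{3.3.4} (namely $X_2\subset\overline{X_1}$ and $X_3\subset\overline{X_2}$), together with $X_1\subset\overline{X_0}$ by density of the open stratum in the irreducible space $\M(5,1)$. Your write-up checks disjointness, local closedness, irreducibility, and the dimension count, but does not mention these closure relations; to align with the paper's usage of ``stratification'' you should add a one-line citation of \ref{3.3.4} for them. Otherwise your argument is more detailed than what the paper provides and is entirely sound.
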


%%%%%%%%%%%%%%%%%%%%%%%%%%%%%%%%%%%%%

%%%%%%%%%% section 4

\section{Euler characteristic zero}

%%%%%%%%%%%%%%%%%%% subsection 4.1

\subsection{Locally free resolutions for semi-stable sheaves}

\begin{prop}
\label{4.1.1}
Every sheaf $\F$ giving a point in $\M(5,0)$
and satisfying the condition $h^0(\F(-1)) > 0$ is of the
form $\O_C(1)$ for a quintic curve $C \subset \P^2$.
\end{prop}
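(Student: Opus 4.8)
The plan is to exploit the nonzero section coming from $h^0(\F(-1))\neq 0$ together with an Euler characteristic count. First I would fix a nonzero morphism $\O \to \F(-1)$ and consider its image $\G \subset \F(-1)$. Since $\F$ is semi-stable it is pure, hence so is $\F(-1)$, and a nonzero subsheaf of a pure one-dimensional sheaf is again pure of dimension $1$; thus $\G$ is a pure one-dimensional quotient of $\O$. As in the proof of 2.1.3 \cite{drezet-maican}, purity then forces $\G \isom \O_C$ for a curve $C \subset \P^2$: the kernel of $\O \to \G$ is an ideal sheaf whose saturation defines a plane curve $C$ of some degree $d$, and any discrepancy between that ideal and its saturation would produce a zero-dimensional subsheaf of $\F(-1)$, which is excluded. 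So we obtain an injective morphism $\O_C \to \F(-1)$.

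Next I would pin down $d$. On the one hand $\O_C$ is a subsheaf of $\F(-1)$, which has multiplicity $5$, so $d \le 5$. On the other hand, twisting preserves semi-stability, so $\F(-1)$ is semi-stable with Hilbert polynomial $5t-5$ and slope $p(\F(-1)) = -1$, and the subsheaf $\O_C$ must satisfy $p(\O_C) \le p(\F(-1))$. Since $P_{\O_C}(t) = dt - d(d-3)/2$, this inequality reads $(3-d)/2 \le -1$, i.e. $d \ge 5$. Hence $d = 5$: $C$ is a quintic curve, and $\O_C$ has Hilbert polynomial $5t-5$, the same as $\F(-1)$.

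Finally I would compare $\O_C$ and $\F(-1)$ through the cokernel $Q$ of the injection $\O_C \hookrightarrow \F(-1)$. Additivity of Hilbert polynomials gives $P_Q = P_{\F(-1)} - P_{\O_C} = 0$, which forces $Q = 0$. Therefore $\O_C \to \F(-1)$ is an isomorphism, so $\F \isom \O_C(1)$, as claimed.

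I do not expect a serious obstacle. The only steps requiring care are the identification of the image $\G$ with an honest structure sheaf $\O_C$ — this is exactly where purity of $\F$ is used, as in 2.1.3 \cite{drezet-maican} — and the slope bookkeeping that squeezes $d$ between $d\le 5$ and $d\ge 5$; the rest is a short numerical computation.
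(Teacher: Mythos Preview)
Your proof is correct and follows essentially the same route as the paper: produce an injective morphism $\O_C \hookrightarrow \F(-1)$ via 2.1.3 \cite{drezet-maican}, use semi-stability to force $\deg C = 5$, and compare Hilbert polynomials to conclude the map is an isomorphism. Your version is slightly more explicit in separating the bounds $d\le 5$ and $d\ge 5$, but the argument is the same.
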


\begin{proof}
Consider a non-zero morphism $\O \to \F(-1)$.
As in the proof of 2.1.3 \cite{drezet-maican}, it must factor through an injective
morphism $\O_C \to \F(-1)$. Here $C \subset \P^2$ is a curve;
its degree must be 5, otherwise $\O_C$ would destabilise $\F(-1)$.
As both $\O_C$ and $\F(-1)$ have the same Hilbert polynomial,
the injective morphism from above must be an isomorphism.

The converse follows from the general fact that the structure sheaf of a curve in $\P^2$ is stable.
\end{proof}

\begin{prop}
\label{4.1.2}
The sheaves $\F$ giving points in $\M(5,0)$ and satisfying the condition $h^1(\F)=0$
are precisely the sheaves with resolution
\[
0 \lra 5\O(-2) \stackrel{\f}{\lra} 5\O(-1) \lra \F \lra 0.
\]
Moreover, such a sheaf $\F$ is properly semi-stable if and only if $\f$ is equivalent to a
morphism of the form
\[
\left[
\begin{array}{cc}
\star & \psi \\
\star & 0
\end{array}
\right] \quad \text{for some} \quad \psi \colon m\O(-2) \lra m\O(-1), \quad 1 \le m \le 4.
\]
\end{prop}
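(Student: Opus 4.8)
The plan is to prove the first assertion by two opposite inclusions and then to deal with the ``moreover'' clause separately. For the forward inclusion, let $\F$ give a point in $\M(5,0)$ with $h^1(\F)=0$; since $P_\F(t)=5t$ vanishes at $t=0$, also $h^0(\F)=0$. I would first note that $h^0(\F(-1))=0$: otherwise \ref{4.1.1} forces $\F\isom\O_C(1)$ for a quintic curve $C$, and then the sequence $0\lra\O(-4)\lra\O(1)\lra\O_C(1)\lra 0$ gives $h^0(\O_C(1))=3$, so $h^1(\F)=h^1(\O_C(1))=3\neq 0$ since $\chi(\O_C(1))=0$, a contradiction. Hence $h^1(\F(-1))=-\chi(\F(-1))=5$. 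Feeding these values, together with $m:=h^0(\F\tensor\Om^1(1))$, into the Beilinson free monad (2.2.1) \cite{drezet-maican} exactly as in the proofs of \ref{2.1.4} and \ref{3.1.2}, the monad collapses to $0\lra 5\O(-2)\oplus m\O(-1)\stackrel{\f}{\lra}(m+5)\O(-1)\lra\F\lra 0$, with no $\O$-summand in the middle term and no third term because $h^0(\F)=h^1(\F)=0$ (and no $\O(-2)$-summand in the middle because $h^0(\F(-1))=0$). The block $m\O(-1)\to(m+5)\O(-1)$ of $\f$ is a matrix of scalars; after finitely many elementary cancellations it may be taken to be zero, and then injectivity of $\f$ forces this block to be cancelled entirely, which leaves the resolution $0\lra 5\O(-2)\lra 5\O(-1)\lra\F\lra 0$.

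For the reverse inclusion, suppose $\F=\Coker(\f)$ with $\f\colon 5\O(-2)\lra 5\O(-1)$ injective. Then $\F$ has a locally free resolution of length one, hence projective dimension one at every point of its support, hence no zero-dimensional torsion; and $P_\F(t)=5t$. The cohomology sequence of the resolution, with $\H^0(\O(-1))=\H^1(\O(-2))=\H^1(\O(-1))=\H^2(\O(-2))=0$, gives $h^0(\F)=0$ and $h^1(\F)=0$. Since $h^0(\F)=0$, every nonzero subsheaf $\F'\subset\F$ satisfies $h^0(\F')=0$, hence $\chi(\F')\le 0$ and $p(\F')\le 0=p(\F)$; so $\F$ is semi-stable and gives a point of $\M(5,0)$. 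This establishes the first assertion. Now suppose moreover that $\f$ is equivalent to a morphism of the block shape displayed in the statement, with $\psi\colon m\O(-2)\to m\O(-1)$ in the upper-right corner and $1\le m\le 4$. Expanding the determinant shows that $\det\f$ is, up to sign, $\det\psi$ times the determinant of the complementary lower-left block, so $\psi$ is injective; the block shape then produces a commutative square realising $\Coker(\psi)$ as a subsheaf of $\F$ (injectivity of the comparison map uses injectivity of the lower-left block), and $\Coker(\psi)$ has Hilbert polynomial $mt$, so it is a proper subsheaf of $\F$ of the same slope $0$. Hence $\F$, which we have just seen to be semi-stable, is properly semi-stable.

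Conversely, assume $\F$ is properly semi-stable. Choose a saturated proper subsheaf $\F'\subset\F$ with $p(\F')=0$, of some multiplicity $m$ with $1\le m\le 4$; any subsheaf of $\F'$ is a subsheaf of $\F$, so $\F'$ is automatically semi-stable. From $h^0(\F)=0$ we get $h^0(\F')=0$, hence $h^1(\F')=0$, and then the cohomology sequence of $0\lra\F'\lra\F\lra\F/\F'\lra 0$ gives $h^0(\F/\F')=0$ as well; as in the reverse inclusion, $\F/\F'$ is likewise semi-stable with vanishing $h^1$. Thus $\F'$ and $\F/\F'$ give points of $\M(m,0)$ and $\M(5-m,0)$ and, by \cite{lepotier} (multiplicity at most $3$) and \cite{drezet-maican} (multiplicity $4$), admit resolutions $0\lra m\O(-2)\lra m\O(-1)\lra\F'\lra 0$ and $0\lra(5-m)\O(-2)\lra(5-m)\O(-1)\lra\F/\F'\lra 0$. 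The horseshoe lemma produces from these a resolution of $\F$ whose morphism is block upper triangular with diagonal blocks the two morphisms above; since all its entries lie in the maximal ideal it is a minimal free resolution, hence equivalent to $\f$, and interchanging the two blocks of columns puts $\f$ into the stated form. I expect the only delicate points to be the collapse of the Beilinson monad to the length-one resolution in the forward inclusion — the bookkeeping around cancelling the redundant $\O(-1)$-summands — and the symmetric step just above, identifying the horseshoe resolution with the given one up to equivalence; both rest on uniqueness of the minimal free resolution.
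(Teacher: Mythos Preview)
Your proof is correct, and it differs from the paper's in two interesting places.

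For the forward direction of the first assertion, the paper applies the Beilinson monad to $\F(1)$ rather than to $\F$. To do so it first establishes $h^1(\F(1))=0$ and $h^1(\F\tensor\Om^1(2))=0$ by the restriction-to-a-line trick; with those vanishings the monad for $\F(1)$ collapses directly to $0\to 5\O(-1)\to 5\O\to\F(1)\to 0$ with no redundant summands to cancel. Your route --- Beilinson for $\F$ itself, after invoking \ref{4.1.1} to get $h^0(\F(-1))=0$ --- is equally valid but leaves you with the $m\O(-1)$ summands to eliminate. Your cancellation argument is fine (the constant block must have full rank $m$ by injectivity, then cancel), though the paper's twist sidesteps it entirely.

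For ``properly semi-stable $\Rightarrow$ special form'', the paper does not use the horseshoe lemma. Instead it lifts the inclusion $\F'\hookrightarrow\F$ to a map of resolutions
\[
\xymatrix{
0 \ar[r] & m\O(-2) \ar[r]^-{\psi} \ar[d]^{\b} & m\O(-1) \ar[r] \ar[d]^{\a} & \F' \ar[r] \ar[d] & 0 \\
0 \ar[r] & 5\O(-2) \ar[r]^-{\f} & 5\O(-1) \ar[r] & \F \ar[r] & 0
}
\]
and observes that $\a(1)$ is injective on global sections (it is a constant matrix, and composing with the injective $\H^0(5\O)\to\H^0(\F(1))$ gives the injective $\H^0(m\O)\to\H^0(\F'(1))\hookrightarrow\H^0(\F(1))$), hence $\a$ is a split monomorphism of bundles, and then $\b$ is injective by the snake lemma. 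The block form of $\f$ follows. This needs only the resolution of $\F'$, not of $\F/\F'$, and avoids appealing to uniqueness of the minimal resolution. Your horseshoe argument is also correct --- the one lifting obstruction lies in $\Ext^1((5-m)\O(-1),\F')\isom\H^1(\F'(1))^{5-m}$, which vanishes from the resolution of $\F'$ --- and the identification with $\f$ via minimality is legitimate (pass to graded modules over $\C[X,Y,Z]$). One small economy available to you: rather than citing \cite{lepotier} and \cite{drezet-maican} for the resolutions of $\F'$ and $\F/\F'$, you could simply note that the Beilinson argument you gave for multiplicity $5$ works verbatim for any multiplicity.
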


\begin{proof}
Assume that $\F$ gives a point in $\M(5,0)$ and its first cohomology vanishes.
For a suitable line $L \subset \P^2$ we have an exact sequence
\[
0 \lra \F \lra \F(1) \lra \F(1)|_{L} \lra 0.
\]
The associated long cohomology sequence shows that $\H^1(\F(1))$ vanishes, too.
The same argument applied to the exact sequence
\[
0 \lra \F(-1) \lra \F \tensor V \lra \F \tensor \Om^1(2) \lra 0
\]
shows that $\H^1(\F \tensor \Om^1(2))=0$.
The Beilinson free monad (2.2.1) \cite{drezet-maican} for $\F(1)$ gives the resolution
\[
0 \lra 5\O(-1) \lra  5\O \lra \F(1) \lra 0.
\]
Conversely, assume that $\F$ is the cokernel of a morphism $\f$ as in the proposition.
Trivially, $\F$ has no zero-dimensional torsion, because it has a locally free resolution
of length $1$. For any subsheaf $\F' \subset \F$ we have $\H^0(\F')=0$ because the
corresponding cohomology group for $\F$ vanishes. We get $\chi(\F') \le 0$,
hence $p(\F') \le 0 = p(\F)$ and we conclude that $\F$ is semi-stable.

To finish the proof we must show that for properly semi-stable sheaves $\F$ the morphism
$\f$ has the special form given in the proposition. Consider a proper subsheaf $\F' \subset \F$
which gives a point in $\M(m,0)$, $1 \le m \le 4$.
$\H^0(\F')$ vanishes, hence also $\H^1(\F')$ vanishes and, repeating the above steps with
$\F'$ instead of $\F$, we arrive at a resolution
\[
0 \lra m\O(-2) \stackrel{\psi}{\lra} m\O(-1) \lra \F' \lra 0.
\]
This fits into a commutative diagram
\[
\xymatrix
{
0 \ar[r] & m\O(-2) \ar[r]^-{\psi} \ar[d]^-{\b} & m\O(-1) \ar[r] \ar[d]^-{\a} & \F' \ar[r] \ar[d] & 0 \\
0 \ar[r] & 5\O(-2) \ar[r]^-{\f} & 5\O(-1) \ar[r] & \F \ar[r] & 0
}.
\]
Since $\a(1)$ is injective on global sections we see that $\a$, hence also $\b$, are injective.
Thus $\f$ has the required special form.
\end{proof}

\begin{prop}
\label{4.1.3}
The sheaves $\F$ giving points in $\M(5,0)$
and satisfying the cohomological conditions $h^0(\F(-1))=0$ and $h^1(\F)=1$
are precisely the sheaves with resolution
\[
0 \lra \O(-3) \oplus 2\O(-2) \stackrel{\f}{\lra} 2\O(-1) \oplus \O \lra \F \lra 0,
\]
where $\f_{12} \colon 2\O(-2) \to 2\O(-1)$ is an injective morphism.
\end{prop}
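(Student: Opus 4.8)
The plan is to mimic the proofs of \ref{2.1.4} and \ref{3.1.2}. For the direct implication let $\F$ give a point of $\M(5,0)$ with $h^0(\F(-1))=0$ and $h^1(\F)=1$; then $h^0(\F)=1$, and writing $m=h^0(\F\tensor\Om^1(1))$ one gets from Riemann--Roch $h^1(\F(-1))=5$, $h^1(\F\tensor\Om^1(1))=m+5$, while the second cohomology of $\F(-1)$ and of $\F\tensor\Om^1(1)$ vanishes. Hence the Beilinson free monad (2.2.1) \cite{drezet-maican} for $\F$ reads
\[
0 \lra 5\O(-2) \oplus m\O(-1) \lra (m+5)\O(-1) \oplus \O \lra \O \lra 0
\]
and gives the resolution
\[
0 \lra 5\O(-2) \oplus m\O(-1) \lra \Om^1 \oplus (m+2)\O(-1) \oplus \O \lra \F \lra 0.
\]
Combining with the standard resolution $0 \to \O(-3) \to 3\O(-2) \to \Om^1 \to 0$ yields
\[
0 \lra \O(-3) \oplus 5\O(-2) \oplus m\O(-1) \lra 3\O(-2) \oplus (m+2)\O(-1) \oplus \O \lra \F \lra 0.
\]

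Next I would carry out the cancellations of \ref{2.1.4}. As there, since $\F$ is one-dimensional it cannot map onto a sheaf of positive rank nor onto a quotient of the form $\O_L(d)$ or $\O_D(d)$ that would destabilise it; applied to the cokernel of the component of the morphism landing in $3\O(-2)$, this forces that block to have rank $3$, and $3\O(-2)$ is cancelled. One is left with
\[
0 \lra \O(-3) \oplus 2\O(-2) \oplus m\O(-1) \lra (m+2)\O(-1) \oplus \O \lra \F \lra 0,
\]
and, as in \ref{3.1.2}, injectivity of the morphism together with the semi-stability of $\F$ forces the scalar block $m\O(-1)\to(m+2)\O(-1)$ to be of maximal rank $m$ (otherwise the morphism would be equivalent to one with a column $(0,\ldots,0,\ell)^{\T}$ and $\F$ would have a destabilising subsheaf), so a further $m\O(-1)$ cancels and we reach the asserted resolution
\[
0 \lra \O(-3) \oplus 2\O(-2) \stackrel{\f}{\lra} 2\O(-1) \oplus \O \lra \F \lra 0.
\]
Tensoring this by $\Om^1(1)$ and using $\H^0(\Om^1(k))=0$ for $k\le1$ and $\H^1(\Om^1(k))=0$ for $k\le-1$ shows $m=h^0(\F\tensor\Om^1(1))=0$, consistently with the stratum $X_1$. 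Finally $\f_{12}$ must be injective, since if $\det\f_{12}=0$ then, just as in \ref{3.1.2}, $\F$ acquires a destabilising subsheaf supported on the conic $\{\det\f_{12}=0\}$ or on one of its line components.

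For the converse, assume $\F$ has such a resolution with $\f_{12}$ injective. Then $\F$ has projective dimension $1$ at every point, hence is pure of dimension one; twisting the resolution by $\O(-1)$ gives $h^0(\F(-1))=0$ at once, and its long exact cohomology sequence, using $\H^1(\P^2,\O(k))=0$ and $\H^2(\O(-3))\isom\C$, gives $h^0(\F)=1$, $h^1(\F)=1$, $h^2(\F)=0$. To prove semi-stability I would analyse the subsheaf generated by the unique global section of $\F$: a cofactor expansion of $\det\f$ along its last row, together with the fact that injectivity of $\f_{12}$ makes $\Ker(\O(-3)\oplus2\O(-2)\to2\O(-1))$ (generically) the line bundle $\O(-5)$ spanned by the vector of maximal minors, shows that this section factors as $\O\twoheadrightarrow\O_C\hookrightarrow\F$, where $C=\{\det\f=0\}$ is the quintic support of $\F$. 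Thus any subsheaf $\F'\subset\F$ with $h^0(\F')\neq0$ contains $\O_C$ and so has multiplicity $5$, while every proper subsheaf of multiplicity at most $4$ therefore has $h^0(\F')=0$, whence $\chi(\F')\le0$ and $p(\F')\le0=p(\F)$. (In the remaining, degenerate configurations this section spans only $\O_D$ for a curve $D\subset C$ of degree $3$ or $4$, which still has non-positive slope, and one concludes by the same slope estimate together with \cite{maican}, lemma 6.7, exactly as in \ref{2.1.4}.) Hence $\F$ is semi-stable and gives a point of $X_1\subset\M(5,0)$.

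I expect the main obstacle to be the converse: because $p(\F)=0$, the slope of a potential destabilising subsheaf has to be pinned down exactly, so one cannot avoid using the precise determinantal structure of the resolution together with the injectivity of $\f_{12}$ --- the purely cohomological estimates are not by themselves enough --- and the degenerate cases of the global-section subsheaf need separate attention. The direct implication, by contrast, is a routine repetition of the cancellation arguments already carried out in \ref{2.1.4} and \ref{3.1.2}.
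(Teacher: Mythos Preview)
Your forward implication follows the paper's route closely and is fine: the Beilinson monad, the cancellation of $3\O(-2)$ via the rank argument of \ref{2.1.4}, the reduction to $m=0$, and the observation that a non-injective $\f_{12}$ produces a destabilising sub- or quotient sheaf are exactly what the paper does. (One small remark: the paper's bound ``$m\le 1$'' comes from the fact that after the first cancellation the block $m\O(-1)\to(m+2)\O(-1)$ is zero --- it is the component $m\O(-1)\to\O$ that carries the information --- so your phrasing ``the scalar block \ldots of maximal rank $m$'' is slightly off, though the conclusion is the same.)

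Your converse, however, takes a genuinely different path from the paper and has a gap in the degenerate cases. The paper does \emph{not} analyse the image of the section. Instead it observes that any destabilising subsheaf $\E$ must have $h^0(\E)=1$, $h^1(\E)=0$ (so $\chi(\E)=1$), and that $\H^0(\E(-1))$ and $\H^0(\E\otimes\Om^1(1))$ vanish because the same groups vanish for $\F$. The Beilinson monad then gives $\E$ the resolution $0\to (m-1)\O(-2)\to (m-2)\O(-1)\oplus\O\to\E\to 0$, and placing this in a commutative diagram over the resolution of $\F$ forces $\a$ and $\b$ to be injective; one reads off directly that $\f_{12}$ then fails to be injective. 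This avoids any case analysis.

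Your argument is clean and correct when the section generates $\O_C$ with $C$ the full quintic, and your observation that then every proper subsheaf of multiplicity $\le 4$ has $h^0=0$ is exactly right. But the parenthetical treatment of the degenerate cases $d=3,4$ does not go through as written. The appeal to ``lemma~6.7, exactly as in \ref{2.1.4}'' is a non-sequitur: in \ref{2.1.4} the slope estimate rests on the embedding $\F\cong\J_Z(2)\subset\O_C(2)$ (established in \ref{2.3.5}), which is what makes lemma~6.7 of \cite{maican} applicable. No such embedding is available here. Concretely, when $d=3$ one has $\O_D\subset\F$ with $\chi(\O_D)=0$, and one must exclude multiplicity-$3$ and -$4$ subsheaves $\F'\supsetneq\O_D$ with $\chi(\F')>0$; this requires analysing subsheaves of $\F/\O_D\cong\Coker(\f_{12})$, not an estimate inside $\O_C$. (It can be done --- $\Coker(\f_{12})$ is pure of dimension one because $\f_{12}$ is an injective map of bundles of the same rank, and a short case analysis on $\det\f_{12}$ bounds its multiplicity-one subsheaves --- but that is real extra work, not a citation of \ref{2.1.4}.) The paper's diagram argument sidesteps all of this.
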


\begin{proof}
The Beilinson free monad (2.2.1) \cite{drezet-maican} for $\F$ reads as follows:
\[
0 \lra 5\O(-2) \oplus m\O(-1) \lra (m+5)\O(-1) \oplus \O \lra \O \lra 0.
\]
From this we obtain the exact sequences
\[
0 \lra 5\O(-2) \oplus m\O(-1) \lra \Om^1 \oplus (m+2)\O(-1) \oplus \O \lra \F \lra 0
\]
and
\[
0 \! \lra \O(-3) \oplus 5\O(-2) \oplus m\O(-1) \stackrel{\f}{\lra} 3\O(-2) \oplus (m+2)\O(-1) \oplus \O
\lra \F \lra \! 0,
\]
with $\f_{13}=0$, $\f_{23}=0$. As in the proof of \ref{2.1.4}, we see that $\rank(\f_{12})=3$, so we
may cancel $3\O(-2)$ to get the resolution
\[
0 \lra \O(-3) \oplus 2\O(-2) \oplus m\O(-1) \stackrel{\f}{\lra} (m+2)\O(-1) \oplus \O \lra \F
\lra 0,
\]
with $\f_{13}=0$.
By the injectivity of $\f$ we must have $m \le 1$. If $m=1$, then $\F$ has a subsheaf
of the form $\O_L$, for a line $L \subset \P^2$, contrary to semi-stability.
We conclude that $m=0$ and we obtain a resolution as in the proposition.
If $\f_{12}$ were not injective, then $\f_{12}$ would be equivalent to a morphism represented
by  a matrix with a zero-row or a zero-column.
Thus $\F$ would have a destabilising subsheaf of the form $\O_C$
or a destabilising quotient sheaf of the form $\O_C(-1)$ for a conic curve $C \subset \P^2$.

Conversely, we assume that $\F$ has a resolution as in the proposition and we need to show that there are
no destabilising subsheaves $\E$. Such a subsheaf must satisfy
$h^0(\E)=1$, $h^1(\E)=0$, $P_{\E}(t)=m t+1$, $1 \le t \le 4$.
Moreover, $\H^0(\E(-1))$ and $\H^0(\E \tensor \Om^1(1))$ vanish because the corresponding
cohomology groups for $\F$ vanish.
We can now write the Beilinson free monad for $\E$.
We get a resolution that fits into a commutative diagram
\[
\xymatrix
{
0 \ar[r] & (m-1)\O(-2) \ar[r]^-{\psi} \ar[d]^-{\b} & (m-2)\O(-1) \oplus \O \ar[r] \ar[d]^-{\a} & \E \ar[r]
\ar[d] & 0 \\
0 \ar[r] & \O(-3) \oplus 2\O(-2) \ar[r]^-{\f} & 2\O(-1) \oplus \O \ar[r] & \F \ar[r] & 0
}.
\]
Since $\a$ and $\a(1)$ are injective on global sections, we see that $\a$ is injective,
forcing $\b$ to be injective, too.
Thus $m=2$ or $m=3$. In both cases $\f_{12}$ fails to be injective,
contradicting our hypothesis.
\end{proof}

\begin{prop}
\label{4.1.4}
The sheaves $\F$ giving points in $\M(5,0)$ and
satisfying the cohomological conditions $h^0(\F(-1))=0$ and $h^1(\F)=2$
are precisely the sheaves with resolution
\[
0 \lra 2\O(-3) \oplus \O(-1) \stackrel{\f}{\lra} \O(-2) \oplus 2\O \lra \F \lra 0
\]
such that $\f_{11}$ has linearly independent entries and,
likewise, $\f_{22}$ has linearly independent entries.
\end{prop}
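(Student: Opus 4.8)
The plan is to follow the two–sided scheme of \ref{4.1.2} and \ref{4.1.3}: extract the resolution from the Beilinson monad of $\F$, then prove the converse by a slope estimate.

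\emph{The direct implication.} Let $\F$ give a point in $\M(5,0)$ with $h^0(\F(-1))=0$ and $h^1(\F)=2$; since $\chi(\F)=0$ this forces $h^0(\F)=2$. As in \ref{4.1.2} one first checks that $h^1(\F(1))=0$ and $h^1(\F\tensor\Om^1(2))=0$, and then writes the Beilinson free monad (2.2.1) \cite{drezet-maican} for $\F$ and passes to the associated resolution exactly as in \ref{4.1.3}; relative to \ref{4.1.3}, the two–dimensionality of $\H^0(\F)$ and $\H^1(\F)$ contributes extra trivial summands. Substituting $0\to\O(-3)\to 3\O(-2)\to\Om^1\to 0$ for the $\Om^1$–summands, cancelling line bundles by the rank argument of \ref{2.1.4} (and its variant in \ref{4.1.3}), and invoking that the one–dimensional sheaf $\F$ can neither surject onto a sheaf with support of dimension $\ge 2$ nor carry a destabilising subsheaf $\O_C$ or quotient $\O_C(-1)$, one is left with
\[
0 \lra 2\O(-3) \oplus \O(-1) \stackrel{\f}{\lra} \O(-2) \oplus 2\O \lra \F \lra 0 ;
\]
in particular $h^0(\F\tensor\Om^1(1))=1$, in agreement with the cohomological description of $X_2$, and $\f_{12}=0$ for degree reasons. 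If the entries of $\f_{11}$ were proportional, then $\Ker(\f_{11})\isom\O(-3)$ and the snake lemma below would exhibit a destabilising subsheaf of $\F$ of multiplicity $3$ and positive slope; if the entries of $\f_{22}$ were proportional, then $\Coker(\f_{22})$ would split off a summand $\O_L(1)$, again destabilising $\F$. Hence $\f_{11}$ and $\f_{22}$ have linearly independent entries.

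\emph{The converse.} Suppose $\F$ has such a resolution. Since $\F$ has projective dimension $1$ at each point of its support it has no zero–dimensional torsion; a direct computation from the resolution gives $P_{\F}(t)=5t$, and the long cohomology sequences yield $h^0(\F(-1))=0$ and $h^0(\F)=2$, hence $h^1(\F)=2$. It remains to show $\F$ is semi–stable. Applying the snake lemma to the evident diagram which presents the resolution as an extension of the two–term complex $[\,2\O(-3)\stackrel{\f_{11}}{\lra}\O(-2)\,]$ by the two–term complex $[\,\O(-1)\stackrel{\f_{22}}{\lra}2\O\,]$ — so that, by the linear independence of the entries, the relevant kernels are $0$ and $\O(-4)$ and the cokernels are $\C_z$ and $\I_x(1)$ — one obtains the exact sequence
\[
0 \lra \O(-4) \lra \I_x(1) \lra \F \lra \C_z \lra 0 .
\]
Letting $C\subset\P^2$ be the quintic curve cut out by the image of the composite $\O(-4)\to\I_x(1)\hookrightarrow\O(1)$, this gives a (necessarily non–split, as $\F$ is pure) extension $0\to\J_x(1)\to\F\to\C_z\to 0$, where $\J_x\subset\O_C$ is the ideal sheaf of the point $x$ on $C$. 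Now take a non–zero subsheaf $\F'\subset\F$ of multiplicity $r'\le4$; put $\K=\F'\cap\J_x(1)$, so that $\F'/\K\hookrightarrow\C_z$ and $\K$ is a non–zero pure subsheaf of $\O_C(1)$ of multiplicity $r'$. By \cite{maican}, lemma 6.7, there is $\A$ with $\K\subset\A\subset\O_C(1)$, $\A/\K$ of finite length and $\O_C(1)/\A\isom\O_S(1)$ for a curve $S\subset\P^2$ of degree $d=5-r'\ge1$. Exactly as in \ref{2.1.4} and \ref{3.1.2},
\[
p(\F') = -\frac{d}{2} + \frac{h^0(\F'/\K)-h^0(\A/\K)}{5-d} \;\le\; -\frac{d}{2}+\frac{1}{5-d} \;<\; 0 = p(\F)
\]
for $d\in\{1,2,3,4\}$, while a proper subsheaf of multiplicity $5$ has negative Euler characteristic. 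So $\F$ has no destabilising subsheaf; it is in fact stable.

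\emph{Where the difficulty lies.} The delicate point is the bookkeeping in the direct implication: determining exactly which line bundles may be cancelled from the Beilinson monad, checking that the surviving $\f$ has the displayed block shape with $\f_{12}=0$, and confirming that $h^0(\F\tensor\Om^1(1))$ is forced to be $1$, so that exactly one copy of $\O(-1)$ survives in the source. Once the extension $0\to\J_x(1)\to\F\to\C_z\to 0$ is in hand the converse is routine; the invariance of the statement under the duality $\F\mapsto\F^{\D}$ of \cite{maican-duality} provides a useful consistency check on the shape of the resolution.
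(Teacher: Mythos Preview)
Your converse is correct and coincides with the paper's argument: the paper also uses the extension $0\to\J_x(1)\to\F\to\C_z\to0$ (deferred there to \ref{4.3.2}) together with the slope estimate via \cite{maican}, lemma~6.7.

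The direct implication, however, has a real gap. In \ref{4.1.3} the passage from the monad to a resolution involving $\Om^1$ works because the right-hand map of the monad is a surjection $(m+5)\O(-1)\to\O$, and \emph{any} such surjection has kernel $\Om^1\oplus(m+2)\O(-1)$. When $h^1(\F)=2$ the right-hand map is $B_{11}\colon(m+5)\O(-1)\to 2\O$, and the kernel of an arbitrary surjection of this shape is \emph{not} automatically $2\Om^1\oplus(m-1)\O(-1)$; one must first show that $B_{11}$ is equivalent to two disjoint copies of the Euler map. This is exactly why the paper makes the detour through the dual sheaf $\G=\F^{\D}(1)\in\M(5,5)$: the resolution of $\G$ exhibits the transpose $\psi$ of $B_{11}$, and the analysis borrowed from \ref{3.1.3} and \ref{3.2.5} pins down both $m=1$ and the block form of $\psi$. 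In particular the elimination of $m=0$ is genuinely delicate --- it is the content of the $m=0$ case in the proof of \ref{3.1.3} --- and is not covered by ``cancelling line bundles by the rank argument of \ref{2.1.4}''. Your closing paragraph correctly identifies this as the hard point, but the body of the argument does not supply it.

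Two small slips in the linear-independence step: if $\f_{11}$ has proportional entries the quotient $\Coker(\f_{11})\isom\O_L(-2)$ already destabilises $\F$ (the subsheaf your snake lemma produces has multiplicity $4$, not $3$); and if $\f_{22}$ has proportional entries then $\Coker(\f_{22})\isom\O_L\oplus\O$, not $\O_L(1)\oplus\O$, and it is $\O_L$ that embeds in $\F$ and destabilises it.
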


\begin{proof}
Let $\F$ give a point in $\M(5,0)$ and satisfy the conditions from the proposition.
The Beilinson free monad for $\F$ reads
\[
0 \lra 5\O(-2) \oplus m\O(-1) \lra (m+5)\O(-1) \oplus 2\O \lra 2\O \lra 0.
\]
Dualising and tensoring with $\O(1)$ we get the following resolution for the sheaf
$\G = \F^{\D}(1)$, which gives a point in $\M(5,5)$:
\[
0 \lra 2\O(-2) \stackrel{\eta}{\lra} 2\O(-2) \oplus (m+5)\O(-1) \stackrel{\f}{\lra}
m\O(-1) \oplus 5\O \lra \G \lra 0,
\]
\[
\eta = \left[
\ba{c}
0 \\ \psi
\ea
\right].
\]
Here $\f_{12}=0$. As $\G$ has rank zero and maps surjectively onto $\CC=\Coker(\f_{11})$,
we see that $m \le 2$. If $m=2$, then $\f_{11}$ must be injective, otherwise
$\CC$ will have positive rank. We get $P_{\CC}(t)=2t$, hence $\CC$ destabilises $\G$.
The case $m=0$ can be eliminated as in the proof of \ref{3.1.3}.
Thus $m=1$. As in the proof of \ref{3.2.5}, we may assume that $\psi$ is represented by the matrix
\[
\left[
\ba{cccccc}
X & Y & Z & 0 & 0 & 0 \\
0 & 0 & 0 & X & Y & Z
\ea
\right]^{\T}.
\]
From the Beilinson monad for $\F$ we obtain the resolution
\[
0 \lra 5\O(-2) \oplus \O(-1) \lra 2\Om^1 \oplus 2\O \lra \F \lra 0,
\]
which, combined with the standard resolution for $\Om^1$, yields the exact sequence
\[
0 \lra 2\O(-3) \oplus 5\O(-2) \oplus \O(-1) \stackrel{\f}{\lra} 6\O(-2) \oplus 2\O \lra \F \lra 0.
\]
Note that $\F$ maps surjectively onto $\Coker(\f_{11},\f_{12})$,
so this sheaf is supported on a curve, forcing 
$\rank(\f_{12}) \ge 4$. If $\rank(\f_{12}) = 4$, then $\Coker(\f_{11},\f_{12})$
would have Hilbert polynomial $P(t)=2t-2$, so it would destabilise $\F$.
We deduce that $\rank(\f_{12})=5$,
so we may cancel $5\O(-2)$ to get a resolution as in the proposition.
If the entries of $\f_{11}$ were linearly dependent, then $\F$ would have a destabilising quotient sheaf
of the form $\O_L(-2)$ for a line $L \subset \P^2$.
If the entries of $\f_{22}$ were linearly dependent, then $\F$ would have a destabilising subsheaf of the
form $\O_L$.

Conversely, we assume that $\F$ has a resolution as in the proposition and we need to show that
there is no destabilising subsheaf. Let $\F' \subset \F$ be a non-zero subsheaf of multiplicity
at most 4. We shall use the extension
\[
0 \lra \J_x(1) \lra \F \lra \C_z \lra 0
\]
from \ref{4.3.2}. Denote by $\CC'$ the image of $\F'$ in $\C_z$ and put $\K= \F' \cap \J_x(1)$.
Let $\A$ and $\O_S$ be as in the proof of \ref{3.1.2}. Recall that $S$ is a curve of degree $d \le 4$.
We can estimate the slope of $\F'$ as in the proof of loc.cit. and we get
\[
p(\F') = -\frac{d}{2} + \frac{h^0(\CC') - h^0(\A/\K)}{5-d} \le -\frac{d}{2} + \frac{1}{5-d} < 0 = p(\F).
\]
We conclude that $\F$ is semi-stable.
\end{proof}

Let $X_i$, $i=0, 1, 2, 3$, be the subset of $\M(5,0)$ of stable-equivalence classes
of sheaves $\F$ as in \ref{4.1.2}, \ref{4.1.3}, \ref{4.1.4}, respectively \ref{4.1.1}.

\begin{prop}
\label{4.1.5}
The subsets $X_0, X_1, X_2, X_3$ are disjoint.
The subset of $\M(5,0)$ of stable-equivalence classes of properly semi-stable sheaves
is included in $X_0 \cup X_1$.
\end{prop}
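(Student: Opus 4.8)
The plan is to show that on the properly semi-stable locus of $\M(5,0)$ the number $h^1$ is controlled by the Jordan--H\"older factors, and then to read off both assertions formally. I would first establish a low-multiplicity lemma: if $\E$ gives a point in $\M(r,0)$ with $1\le r\le 4$, then $h^0(\E)=0$ for $r\le 2$; $h^0(\E)\le 1$ for $r=3$, with equality only when $\E\isom\O_C$ for a cubic curve $C$; and $h^0(\E)\le 1$ for $r=4$ when $\E$ is stable. The argument for $r\le 3$ is direct: a nonzero section of $\E$ has image a subsheaf $\O_Z\subset\E$ with $Z$ a plane curve of degree $e\le r$, and since $p(\O_Z)=\tfrac{1}{e}\bigl(1-\binom{e-1}{2}\bigr)>0$ for $e\le 2$, semi-stability of $\E$ forces $e\ge 3$; for $r=3$ this gives $\O_Z\isom\O_C$ of the same multiplicity and Euler characteristic as $\E$, whence $\E\isom\O_C$, and $h^0(\O_C)=1$ because plane curves are connected. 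For $r=4$ and $\E$ stable, $e=3$ is excluded by stability (it would give a destabilising subsheaf of slope $0$), so every section has image the structure sheaf of a quartic, of full multiplicity $4$; two independent sections with images $\O_{C_1},\O_{C_2}$ then either produce $\O_{C_1}\oplus\O_{C_2}\subset\E$ of multiplicity $8>4$ (when $C_1,C_2$ share no component) or force $C_1=C_2$ and hence proportional sections. Alternatively, the bound $h^0\le 1$ on $\M(4,0)$ is part of the classification in \cite{drezet-maican}, whose exceptional divisor is exactly the locus $h^0=1$.

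Next, for $\F$ properly semi-stable in $\M(5,0)$ I would fix a Jordan--H\"older filtration $0=\F_0\subset\cdots\subset\F_k=\F$ ($k\ge 2$) with stable quotients $G_i\in\M(r_i,0)$, $r_i\ge 1$, $\sum_i r_i=5$. Two indices with $r_i\ge 3$ would sum to at least $6$, so at most one $G_i$ has $r_i\ge 3$; by the lemma the remaining $G_i$ satisfy $h^0(G_i)=h^1(G_i)=0$ (using $\chi(G_i)=0$), and the exceptional one has $h^0(G_i)\le 1$. Running up the filtration with the cohomology sequences of $0\to\F_{i-1}\to\F_i\to G_i\to 0$, a step with $h^0(G_i)=h^1(G_i)=0$ induces an isomorphism $\H^0(\F_{i-1})\isom\H^0(\F_i)$, while the at most one exceptional step, whose preceding subsheaf has vanishing cohomology (it is an iterated extension of the vanishing factors), contributes exactly $h^0(G_i)$. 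Hence $h^0(\F)=\sum_i h^0(G_i)\le 1$, so $h^1(\F)=h^0(\F)\le 1$ because $\chi(\F)=0$; and since $\sum_i h^0(G_i)$ depends only on $\operatorname{gr}(\F)=\bigoplus_i G_i$, the numbers $h^0(\F)$ and $h^1(\F)$ are invariants of the stable-equivalence class of $\F$.

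I would then assemble the statement. For the containment: for properly semi-stable $\F$ one has $h^0(\F(-1))\le h^0(\F)=h^1(\F)\le 1$, the first inequality because multiplication by a general linear form embeds $\F(-1)$ in $\F$; if $h^0(\F(-1))=1$ then \ref{4.1.1} gives $\F\isom\O_C(1)$, which is stable (the structure sheaf of a plane curve is stable), contradicting proper semi-stability, so $h^0(\F(-1))=0$; then \ref{4.1.2} or \ref{4.1.3} supplies the resolution and places $[\F]$ in $X_0$ or $X_1$ according as $h^1(\F)$ is $0$ or $1$. For disjointness: a point of $X_3$ is $[\O_C(1)]$ with $\O_C(1)$ stable, hence a single isomorphism class, and $h^0(\O_C(1))\ge 3$ since a quintic lies on no line, so $h^1(\O_C(1))=h^0(\O_C(1))\ge 3$; thus it lies in none of $X_0,X_1,X_2$, each of which requires a representative with $h^1\in\{0,1,2\}$. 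A point of $X_2$ contains a sheaf with $h^1=2$, which by the previous paragraph must be stable, so that point is again a single isomorphism class with $h^1=2$ and lies in neither $X_0$ nor $X_1$. Finally a point in $X_0\cap X_1$ would contain sheaves with $h^1=0$ and with $h^1=1$, hence be properly semi-stable, contradicting the constancy of $h^1$ on such classes.

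The main obstacle is precisely that constancy: ruling out that $h^1$ jumps between $0$ and $1$ inside a single properly semi-stable stable-equivalence class. This is what forces the careful bookkeeping along the Jordan--H\"older filtration (using that all factors but one have vanishing cohomology) together with the sharp bound $h^0\le 1$ in $\M(4,0)$; once these are secured, the disjointness of the $X_i$ and the inclusion of the properly semi-stable locus in $X_0\cup X_1$ are purely formal.
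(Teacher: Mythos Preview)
Your argument is correct and follows essentially the same strategy as the paper: bound $h^0$ on properly semi-stable classes via the cohomology of Jordan--H\"older factors in $\M(r,0)$ for $r\le 4$, deduce that sheaves in $X_2$ and $X_3$ are stable, and then handle the remaining overlap $X_0\cap X_1$ by showing $h^0$ is determined by the associated graded.

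There is one small presentational difference worth noting. For $X_0\cap X_1=\emptyset$ the paper does not prove your equality $h^0(\F)=\sum_i h^0(G_i)$; instead it takes a properly semi-stable $\F$ with $h^1(\F)=0$, observes from the proof of \ref{4.1.2} that each Jordan--H\"older factor then admits a resolution $0\to m\O(-2)\to m\O(-1)\to\F'\to 0$ and hence has $h^0(\F')=0$, and concludes that any $\G$ in the same class has vanishing $h^0$. Your route is more self-contained (it does not need the resolution of \ref{4.1.2}) but requires the extra bookkeeping that at most one factor has multiplicity $\ge 3$ and that the connecting maps along the filtration vanish. Both lead to the same constancy of $h^1$ on a stable-equivalence class. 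One minor caution: your direct argument for $h^0\le 1$ on stable $\M(4,0)$ treats only the extremes ``no common component'' versus ``$C_1=C_2$''; the intermediate case of a proper common component needs a word (the sum $\O_{C_1}+\O_{C_2}$ still has multiplicity $>4$), though your citation of \cite{drezet-maican} already covers it.
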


\begin{proof}
Let $\F$ be a properly semi-stable sheaf in $\M(5,0)$. We have an exact sequence
\[
0 \lra \F' \lra \F \lra \F'' \lra 0,
\]
with $\F'$ giving a point in $\M(r,0)$, $\F''$ giving a point in $\M(s,0)$, $r+s=5$.
From the description of $\M(r,0)$, $1 \le r \le 4$, found in \cite{drezet-maican},
we have the relations
\[
h^0(\F') = 0 \quad \text{if $r=1, 2$}, \qquad h^0(\F') \le 1 \quad \text{if $r= 3, 4$}.
\]
In all possible situations we get $h^0(\F) \le 1$, hence the stable-equivalence class of $\F$
is in $X_0 \cup X_1$.
Thus all sheaves in $X_2$ and $X_3$ are stable, so $X_2$ is disjoint from the other $X_i$
and the same is true for $X_3$.
It remains to show that $X_0$ and $X_1$ are disjoint.
Let $\F$ be a properly semi-stable sheaf as in \ref{4.1.2} and let $\G$ be a sheaf in the same
class of stable-equivalence as $\F$.
Let $\F'$ be one of the terms of a Jordan-H\"older filtration of $\F$.
From the proof of \ref{4.1.2} it transpires that $\F'$ has resolution
\[
0 \lra m\O(-2) \lra m\O(-1) \lra \F \hspace{0.12pt} ' \lra 0,
\]
for some integer $1 \le m \le 4$.
Thus $\H^0(\F')=0$.
Any term of a Jordan-H\"older filtration of $\G$ is also a term of a Jordan-H\"older filtration
of $\F$, hence its group of global sections vanishes.
We deduce that $\H^0(\G)=0$. Thus $\F$ cannot give a point in $X_1$.
\end{proof}

\begin{prop}
\label{4.1.6}
There are no sheaves $\F$ giving points in $\M(5,0)$
and satisfying the cohomological conditions $h^0(\F(-1))=0$ and $h^1(\F) \ge 3$.
\end{prop}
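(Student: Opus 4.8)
The argument follows the pattern of \ref{2.1.6} and \ref{3.1.4}, deforming sheaves inside the moduli space.

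First I would observe that a sheaf $\F$ in $\M(5,0)$ with $h^0(\F(-1))=0$ automatically satisfies $h^0(\F(-2))=h^0(\F(-3))=0$: a non-zero section of $\F(-k)$ factors through an injection $\O_C\hookrightarrow\F(-k)$ for a curve $C\subset\P^2$ of degree $d\le 5$, and then $p(\O_C)=(3-d)/2\ge -1>-k=p(\F(-k))$ would contradict semi-stability. Hence the $q=0$ row of the Beilinson tableau (2.2.3) \cite{drezet-maican} for $\F(-1)$ vanishes, so the open subset of $\M(5,0)$ defined by $h^0(\F(-1))=0$ is parametrised by an open subset $M$ of a space of monads of the form
\[
0\lra a\O(-1)\stackrel{A}{\lra}b\O\stackrel{B}{\lra}c\O(1)\lra 0
\]
coming from the Beilinson free monad (2.2.1) \cite{drezet-maican} for $\F(-1)$. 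Let $\upsilon\colon M\to\M(5,0)$ send a monad to the isomorphism class of its cohomology, and set $\Phi\colon M\to\Hom(b\O,c\O(1))$, $\Phi(A,B)=B$.

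As at \ref{2.1.6}, $M$ is smooth and $\Phi$ has surjective differential at every point: this amounts to the surjectivity of $\a\mapsto B\circ\a$ on $\Hom(a\O(-1),b\O)$, which follows from $\Ext^1(a\O(-1),\Ker(B))=0$, hence from $\H^1(\F(1))=0$. The latter vanishing holds for every sheaf parametrised by $M$: by \ref{4.1.1} such an $\F$ is not of the form $\O_C(1)$, so neither is $\F^{\D}$ (since $\O_C(1)$ is its own dual), and therefore $\H^1(\F(1))^*\isom\H^0(\F^{\D}(-1))=0$ by Serre duality and \ref{4.1.1}. Since $h^1(\F)=h^0(\F)$ equals a fixed constant minus $\rank(B)$, writing $\rho$ for the largest value of $\rank(B)$ attained on $M$, the locus $\{h^1(\F)\ge 3\}$ is the $\Phi$-preimage of the matrices of rank $\le\rho-3$, while $\{h^1(\F)=2\}$ is the $\Phi$-preimage of those of rank exactly $\rho-2$. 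As every matrix of rank $\le\rho-3$ is a limit of matrices of rank $\rho-2$ and $\Phi$ is a submersion, the first locus is contained in the closure of the second and disjoint from it; and by \ref{4.1.4} the image of $\{h^1(\F)=2\}$ under $\upsilon$ is precisely the stratum $X_2$.

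It then remains to show that $\upsilon^{-1}(X_2)$ is closed in $M$, equivalently that the closure of $X_2$ in $\M(5,0)$ meets the open set $\{h^0(\F(-1))=0\}$ only along $X_2$: granting this, $\{h^1(\F)\ge 3\}\subset\overline{\upsilon^{-1}(X_2)}\setminus\upsilon^{-1}(X_2)=\emptyset$, which is the proposition. This is the analogue of the appeal to the closedness of $X_3\subset\M(5,3)$ made at \ref{2.1.6}, and I expect it to be the main obstacle. Concretely one must check that a proper specialisation $\F_0$ of a family of sheaves in $X_2$ either again lies in $X_2$ or has $h^0(\F_0(-1))>0$, hence is isomorphic to $\O_C(1)$ by \ref{4.1.1} and lies outside the parametrised locus. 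Upper semi-continuity of $h^1$ forces $h^1(\F_0)\ge 2$, and if moreover $h^0(\F_0(-1))=0$ and $h^1(\F_0)=2$ then $\F_0\in X_2$ by \ref{4.1.4}; so only the case $h^0(\F_0(-1))=0$, $h^1(\F_0)\ge 3$ needs to be ruled out. I would rule it out by following the limiting monad of $\F_0$ built from the resolution of \ref{4.1.4}, using that the conditions ``$\f_{11}$ and $\f_{22}$ have linearly independent entries'' are open and that their degeneration produces the destabilising subsheaf $\O_L$ or the destabilising quotient $\O_L(-2)$, so that no semi-stable $\F_0$ of this type occurs and $\overline{X_2}\subset X_2\cup X_3$.
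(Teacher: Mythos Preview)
Your monad-deformation template, including the vanishing $\H^1(\F(1))=0$ via duality and \ref{4.1.1}, is exactly the mechanism the paper uses at \ref{4.3.3} to show $X_2\subset\overline{X_1}$; so the setup is sound. The problem is the last step. In \ref{2.1.6} the argument closes because the relevant stratum (there $X_3\subset\M(5,3)$) is \emph{closed in the whole moduli space}, and this is established independently at \ref{2.2.6}. In \ref{3.1.4} the argument closes because $\{h^1=2\}$ had already been shown \emph{empty} at \ref{3.1.3}. Here neither device is available: $X_2$ is not closed in $\M(5,0)$ (its closure contains $X_3$), and you have no prior result saying $\{h^1=3\}$ is empty. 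What you actually need is that $X_2$ is closed in $\{h^0(\F(-1))=0\}$, i.e.\ $\overline{X_2}\cap\{h^0(\F(-1))=0\}=X_2$; but since your submersion argument already shows every monad with $h^1\ge 3$ is a limit of monads with $h^1=2$, this statement is \emph{equivalent} to the proposition you are trying to prove. So the final paragraph is circular.

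Your attempted escape---follow a family $\f_t$ of resolutions of type \ref{4.1.4} and argue the degenerate limit $\f_0$ produces a destabilising sub/quotient---does not close the gap either. If $\f_0$ is still injective then $\Coker(\f_0)$ has $h^1=2$ automatically from the shape of the resolution, so the interesting case is when $\f_0$ fails to be injective; but then the flat limit $\F_0$ is not $\Coker(\f_0)$, and knowing $\Coker(\f_0)$ is unstable says nothing about $\F_0$. The paper avoids this entirely by a direct argument: restrict to stable $\F$ (using \ref{4.1.5}), take a nonzero section $\O\to\F$, factor it through $\O_C\hookrightarrow\F$ with $\deg C\in\{4,5\}$, and in each case bound $h^0(\F)\le 2$ by analysing the quotient $\F/\O_C$ and applying the horseshoe lemma to suitable extensions (the case $h^0=3$ being shown to force $h^0(\F(-1))=1$). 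This is the missing ingredient; once you have it for $h^1=3$, your deformation argument would then handle $h^1\ge 4$ just as \ref{3.1.3} feeds \ref{3.1.4}.
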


\begin{proof}
In view of \ref{4.1.5}, we may restrict our attention to stable sheaves $\F$ in $\M(5,0)$.
Suppose that $\F$ satisfies $h^0(\F(-1))=0$ and $h^1(\F) \neq  0$.
Consider a non-zero morphism $\O \to \F$.
As in the proof of 2.1.3 \cite{drezet-maican}, this must factor through an injective morphism
$\O_C \to \F$, where $C \subset \P^2$ is a curve.
From the stability of $\F$ we see that $C$ can only have degree 4 or 5.

Assume that $C$ has degree 5. The quotient sheaf $\CC=\F/\O_C$ is supported on finitely many points and has
length $5$. Take a subsheaf $\CC' \subset \CC$ of length $4$, and let $\F'$ be its preimage in $\F$.
We get an exact sequence
\[
0 \lra \F' \lra \F \lra \C_x \lra 0,
\]
where $\C_x$ is the structure sheaf of a point.
Any destabilising subsheaf of $\F'$ would ruin the stability of $\F$, hence $\F'$ is in $\M(5,-1)$.
From subsection 3.1 we know that $h^0(\F') \le 2$, hence $h^0(\F) \le 2$ unless $h^0(\F')= 2$ and
the morphism $\F \to \C_x$ is surjective on global sections.
In this case we can apply the horseshoe lemma to the above extension, to the standard resolution
of $\C_x$ and to the resolution
\[
0 \lra \O(-4) \oplus \O(-1) \lra 2\O \lra \F' \lra 0.
\]
We obtain a resolution
\[
0 \lra \O(-2) \lra \O(-4) \oplus 3\O(-1) \stackrel{\f}{\lra} 3\O \lra \F \lra 0,
\]
which yields an exact sequence
\[
0 \lra \O(-4) \lra \Coker(\f_{12}) \lra \F \lra 0.
\]
We claim that the morphism $\O(-2) \to 3\O(-1)$ in the above resolution
is equivalent to the morphism represented by the matrix
\[
\left[
\ba{c}
X \\ Y \\ Z
\ea
\right].
\]
The argument uses the fact that $\F$ has no zero-dimensional torsion
and is analogous to the proof that the vector space $H$ at \ref{2.1.4} has dimension $3$.
We can now describe $\f_{12}$. We claim that $\f_{12}$ is equivalent
to the morphism represented by the matrix
\[
\left[
\ba{ccc}
-Y & \phantom{-}X & 0  \\
-Z & \phantom{-}0 & X \\
\phantom{-}0 & -Y & Z
\ea
\right].
\]
The argument, we recall from the proof of \ref{3.1.3}, uses the fact that the map
$3\O \to \F$ is injective on global sections and the fact that the only morphism
$\O_L(1) \to \F$ for any line $L \subset \P^2$ is the zero-morphism.
We deduce that $\Coker(\f_{12})$ is isomorphic to $\O(1)$.
We obtain $h^0(\F(-1)) =1$, contradicting our hypothesis.

Assume now that $C$ has degree 4.
The zero-dimensional torsion $\CC'$ of the quotient sheaf $\CC=\F/\O_C$
has length at most $1$, otherwise its preimage in $\F$ would violate stability.
Assume that $\CC'$ has length $1$. Let $\F'$ be its preimage in $\F$. We have an extension
\[
0 \lra \F' \lra \F \lra \O_L \lra 0.
\]
Here $L \subset \P^2$ is a line and it is easy to see that $\F'$ gives a point in $\M(4,-1)$.
From the description of $\M(4,1)$ found in \cite{drezet-maican} we know that $h^0(\F') \le 1$,
hence $h^0(\F) \le 2$.

Assume, finally, that $\CC$ has no zero-dimensional torsion.
Then $\CC \isom \O_L(1)$ for a line $L \subset \P^2$.
We have $h^0(\F) \le 2$ unless the morphism $\F \to \O_L(1)$ is surjective on global sections.
In that case we can apply the horseshoe lemma to the extension
\[
0 \lra \O_C \lra \F \lra \O_L(1) \lra 0,
\]
to the standard resolution of $\O_C$ and, fixing an equation for $L$, say $X=0$, to the resolution
\[
0 \lra \O(-2) \stackrel{\eta}{\lra} 3\O(-1) \stackrel{\xi}{\lra} 2\O \lra \O_L(1) \lra 0.
\]
\[
\eta= \left[
\begin{array}{c}
X \\ Y \\ Z
\end{array}
\right], \quad  \quad \xi= \left[
\begin{array}{rcc}
-Y & X & 0 \\
-Z & 0 & X
\end{array}
\right].
\]
We obtain the exact sequence
\[
0 \lra \O(-2) \lra \O(-4) \oplus 3\O(-1) \lra 3\O \lra \F \lra 0.
\]
We saw above that this leads to the relation $h^0(\F(-1))=1$, which is contrary to our
hypothesis.
\end{proof}

%%%%%%%%%%%%%%%%%%%%% subsection 4.2

\subsection{Description of the strata as quotients}

In subsection 4.1 we found that the moduli space $\M(5,0)$ can be decomposed into four
strata:
\begin{enumerate}
\item[$-$] an open stratum $X_0$ given by the condition $h^1(\F)=0$;
\item[$-$] a locally closed stratum $X_1$ of codimension $1$ given by the conditions \\
$h^0(\F(-1))=0$, $h^1(\F)=1$;
\item[$-$] a locally closed stratum $X_2$ of codimension $4$ given by the conditions \\
$h^0(\F(-1))=0$, $h^1(\F)=2$;
\item[$-$] the closed stratum $X_3$ given by the condition $h^0(\F(-1))>0$, consisting of sheaves
of the form $\O_C(1)$, where $C \subset \P^2$ is a quintic curve. $X_3$ is isomorphic to
$\P(S^5V^*)$.
\end{enumerate}
In the sequel $X_i$ will be equipped with the canonical reduced structure induced from $\M(5,0)$.
Let $W_0$, $W_1$, $W_2$ be the sets of morphisms $\f$ from \ref{4.1.2}, \ref{4.1.3}, respectively \ref{4.1.4}.
Each sheaf $\F$ giving a point in $X_i$, $i=0, 1, 2$, is the cokernel of a morphism $\f \in W_i$.
Let $\W_i$ be the ambient vector spaces of homomorphisms of sheaves containing $W_i$,
e.g. $\W_0 = \Hom(5\O(-2),5\O(-1))$.
Let $G_i$ be the natural groups of automorphisms acting by conjugation on $\W_i$.
In this subsection we shall prove that there exist a good quotient $W_0/\!/G_0$,
a categorical quotient of $W_1$ by $G_1$ and a geometric quotient $W_2/G_2$.
We shall prove that each quotient is isomorphic to the corresponding subvariety $X_i$.
We shall give concrete descriptions of $W_0/\!/G_0$ and $W_2/G_2$.

\begin{prop}
\label{4.2.1}
There exists a good quotient $W_0/\!/G_0$ and it is a proper open
subset inside $\N(3,5,5)$. Moreover, $W_0/\!/G_0$ is isomorphic to $X_0$.
In particular, $\M(5,0)$ and $\N(3,5,5)$ are birational.
\end{prop}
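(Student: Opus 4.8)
The plan is to realise $X_0$ as the quotient $W_0/\!/G_0$ by means of the classical geometric invariant theory applied to a well-chosen polarisation, exactly as in \ref{2.2.1} and \ref{2.2.4}, and then to match this quotient with a suitable open locus inside $\N(3,5,5)$. First I would fix a polarisation $\L = (\l_1,\m_1)$ for the action of $G_0 = (\Aut(5\O(-2)) \times \Aut(5\O(-1)))/\C^*$ on $\W_0 = \Hom(5\O(-2),5\O(-1))$; since $\A_0$ and $\B_0$ are each a sum of five copies of a single line bundle, the group $G_0$ is reductive, $G_0 \isom (\GL(5,\C) \times \GL(5,\C))/\C^*$, and the action is literally the conjugation action on $5 \times 5$ matrices of linear forms, i.e. on Kronecker $V$-modules $\C^5 \tensor V \to \C^5$. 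There is only one sensible polarisation here (up to scaling), and King's criterion \cite{king} identifies $\W_0^{ss}(\L)$ with the semi-stable Kronecker modules. So by \cite{king} (or by the general construction in \cite{drezet-trautmann}), a good and projective quotient $\W_0^{ss}(\L)/\!/G_0$ exists and is precisely the moduli space $\N(3,5,5)$.

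Next I would locate $W_0$ inside $\W_0^{ss}(\L)$. By \ref{4.1.2}, $W_0$ consists of the \emph{injective} morphisms $\f \in \W_0$, and a morphism $\f$ fails to be semi-stable as a Kronecker module exactly when it is equivalent to a block form $\left[\begin{smallmatrix} \star & \psi \\ \star & 0 \end{smallmatrix}\right]$ with $\psi \colon m\O(-2) \to m\O(-1)$; but by the last part of \ref{4.1.2} such $\f$ are precisely the properly semi-stable sheaves of $\M(5,0)$ lying in $X_0$, so they are \emph{not} excluded as points of the moduli space — they are identified in it. Thus one must be a little careful: $W_0$ is the open invariant subset of injective morphisms inside $\W_0^{ss}(\L)$, which is open (injectivity of a morphism of sheaves is an open condition on $\W_0$) and $G_0$-invariant. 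Hence $W_0/\!/G_0$ exists as an open subset of the projective variety $\N(3,5,5)$. It is a \emph{proper} open subset: one can exhibit a semi-stable Kronecker module $\f$ that is not injective — for instance a module of the block form above, or more concretely the matrix
\[
\left[
\ba{ccccc}
X & 0 & 0 & 0 & 0 \\
Y & X & 0 & 0 & 0 \\
Z & Y & X & 0 & 0 \\
0 & Z & Y & X & 0 \\
0 & 0 & Z & Y & X
\ea
\right]_{{}_{\vphantom{I}}}\!,
\]
which has determinant $X^5 \neq 0$ hence is injective — so instead take a genuinely non-injective semi-stable example, e.g.\ any $\left[\begin{smallmatrix} \psi & 0 \\ \star & \chi\end{smallmatrix}\right]$ with $\psi$ a $2\times 2$ semi-stable Kronecker module whose determinant vanishes identically, padded to remain semi-stable. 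Finally, the morphism $W_0 \to X_0$ sending $\f$ to the stable-equivalence class of $\Coker(\f)$ is surjective and its fibres are the $G_0$-orbits (two morphisms with the same length-one resolution differ by an automorphism of the resolution), so it factors through a bijective morphism $W_0/\!/G_0 \to X_0$; since $X_0$ is smooth (it is open in $\M(5,0)$, which is smooth at stable points, and the properly semi-stable locus inside $X_0$ is taken care of as in \ref{4.1.5}), Zariski's Main Theorem forces this bijective morphism to be an isomorphism. The birationality statement then follows because $W_0/\!/G_0$ is a dense open subset of $\N(3,5,5)$ (the latter being irreducible by \cite{drezet-exceptionnels}) and is isomorphic to the dense open $X_0 \subset \M(5,0)$.

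The main obstacle I anticipate is the interface between the \emph{good} quotient (as opposed to a geometric quotient) and the identification with $X_0$: one must check that the good quotient $\W_0^{ss}(\L)/\!/G_0$ restricted to the open set $W_0$ really is a good quotient of $W_0$ (this needs $W_0$ to be saturated with respect to the quotient map, i.e.\ no semi-stable orbit in $W_0$ has a strictly semi-stable limit outside $W_0$ in its orbit closure), and that the identification of $S$-equivalence classes of Kronecker modules matches $S$-equivalence of sheaves in $\M(5,0)$ — precisely the content of \ref{4.1.5}, which guarantees the properly semi-stable sheaves all sit in $X_0 \cup X_1$ with $h^0 \le 1$, so that within $X_0$ the two notions of $S$-equivalence agree. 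The verification that the naturality of the resolution in \ref{4.1.2} gives a morphism $W_0/\!/G_0 \to X_0$ and, conversely (via the Beilinson monad as in the proof of \ref{4.1.2}), a morphism in the other direction, is routine but must be spelled out to conclude the isomorphism rather than merely a bijection; here the smoothness of $X_0$ and Zariski's Main Theorem do the work, exactly as in \ref{2.2.1}.
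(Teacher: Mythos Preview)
Your overall strategy matches the paper's, but there are two genuine gaps in the execution.

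First, the sentence ``its fibres are the $G_0$-orbits'' is false over properly semi-stable points of $X_0$. If $\F_1$ and $\F_2$ are $S$-equivalent but non-isomorphic sheaves in $X_0$ (and such exist, by \ref{4.1.2}), their resolutions $\f_1,\f_2\in W_0$ lie in \emph{distinct} $G_0$-orbits mapping to the same point of $X_0$. So the fibre is a union of orbits, and you must argue that the good quotient collapses this union to a single point. The paper does this explicitly: given a Jordan--H\"older filtration $0\to\F'\to\F\to\F''\to 0$ with $\F'\in\M(r,0)$, $\F''\in\M(s,0)$, the horseshoe lemma puts $\f$ in the orbit of a block upper-triangular matrix $\left[\begin{smallmatrix}\f'' & 0\\ \star & \f'\end{smallmatrix}\right]$, and the closed orbit $\f''\oplus\f'$ lies in the closure of every orbit in the fibre. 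That is what forces injectivity of $W_0/\!/G_0\to X_0$, and it simultaneously shows $W_0$ is saturated for the quotient map (the obstacle you flag but do not resolve).

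Second, you cannot invoke Zariski's Main Theorem via smoothness of $X_0$: the introduction states that $\M(5,\chi)$ is smooth only at stable points, and $X_0$ contains properly semi-stable classes by \ref{4.1.5}. The paper bypasses this by the method of 3.1.6 \cite{drezet-maican}: the Beilinson tableau for any $\F$ in $X_0$ degenerates to $5\O(-2)\stackrel{\f_1}{\to}5\O(-1)$ with $\Coker(\f_1)\isom\F$, and this construction is natural in families, giving a local inverse to $W_0/\!/G_0\to X_0$ directly. You gesture at this in your final paragraph, but it is not optional polish---it replaces the ZMT step entirely.
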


\begin{proof}
Let $\W_0^{ss} \subset \W_0$ denote the subset of morphisms that are semi-stable for the
action of $G_0$. This group is reductive, so by the classical geometric invariant theory
there is a good quotient $\W_0^{ss}/\!/G_0$, which is nothing but the Kronecker
moduli space $\N(3,5,5)$. 
According to King's criterion of semi-stability \cite{king}, a morphism $\f \in \W_0$
is semi-stable if and only if it is not in the $G_0$-orbit of a morphism of the form
\[
\left[
\begin{array}{cc}
\star & \psi \\
\star & 0
\end{array}
\right] \quad \text{for some} \quad \psi \colon (m+1)\O(-2) \lra m\O(-1), \quad 0 \le m \le 4.
\]
It is now clear that $W_0$ is the subset of injective morphisms inside $\W_0^{ss}$,
so it is open and $G_0$-invariant. In point of fact, it is easy to check that $W_0$ is the preimage
in $\W_0^{ss}$ of a proper open subset inside $\W_0^{ss}/\!/G_0$.
This subset is the good quotient of $W_0$ by $G_0$.

We shall now prove the injectivity of the canonical map $W_0/\!/G_0 \to X_0$.
Consider the map $\upsilon \colon W_0 \to X_0$ sending $\f$ to the stable-equivalence class of its
cokernel. Consider a properly semi-stable sheaf $\F = \Coker(\f)$, $\f \in W_0$, giving
a point $[\F]$ in $X_0$. For simplicity of notations we assume that $\F$ has a Jordan-H\"older
filtration of length $2$, i.e. there is an extension
\[
0 \lra \F' \lra \F \lra \F'' \lra 0
\]
of stable sheaves $\F' \in \M(r,0)$ and $\F'' \in \M(s,0)$. From the proof of \ref{4.1.2} we see that
there are resolutions
\[
0 \lra r\O(-2) \stackrel{\f'}{\lra} r\O(-1) \lra \F' \lra 0, 
\]
\[
0 \lra s\O(-2) \stackrel{\f''}{\lra} s\O(-1) \lra \F'' \lra 0.
\]
Using the horseshoe lemma we see that $\f$ is in the orbit of a morphism represented by a matrix
of the form
\[
\left[
\ba{cc}
\f'' & 0 \\
\star & \f'
\ea
\right].
\]
It is clear that $\f'' \oplus \f'$ is in the closure of the orbit of $\f$.
Thus $\upsilon^{-1}([\F])$ is a union of orbits, each containing $\f'' \oplus \f'$ in its closure.
It follows that the preimage of $[\F]$ in $W_0/\!/G_0$ is a point.
Thus far we have proved that the canonical map $W_0/\!/G_0 \to X_0$ is bijective.
To show that it is an isomophism we use the method of 3.1.6 \cite{drezet-maican}.
We must produce resolution \ref{4.1.2} starting from the Beilinson spectral sequence for $\F$.
Diagram (2.2.3) \cite{drezet-maican} for $\F$ reads
\[
\xymatrix
{
5\O(-2) \ar[r]^-{\f_1} & 5\O(-1) & 0 \\
0 & 0 & 0
}.
\]
From the exact sequence (2.2.5) \cite{drezet-maican} we deduce that $\f_1$ is injective
and its cokernel is isomorphic to $\F$.
\end{proof}

\begin{prop}
\label{4.2.2}
There exists a categorical quotient of $W_1$ modulo $G_1$, which is isomorphic to $X_1$.
\end{prop}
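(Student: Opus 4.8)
\textbf{Proof proposal for Proposition \ref{4.2.2}.}

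The plan is to imitate the strategy used for $X_0$ at \ref{4.2.1}, but now dealing with the non-reductive group $G_1$, so that one can only expect a categorical quotient rather than a good one. First I would fix a polarisation $\L=(\l_1,\l_2,\m_1,\m_2)$ for the action of $G_1$ on $\W_1 = \Hom(\O(-3)\oplus 2\O(-2),2\O(-1)\oplus\O)$, and use King's criterion of semi-stability \cite{king} together with the results of \cite{drezet-trautmann} to identify, for a suitable choice of the $\l$'s and $\m$'s, the open subset $\W_1^{ss}(\L)$ and a good projective quotient $\W_1^{ss}(\L)/\!/G_1$ containing a geometric quotient $\W_1^{s}(\L)/G_1$. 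The key point is that $W_1$, the set of injective morphisms with $\f_{12}$ injective, should be an open $G_1$-invariant subset of $\W_1^{ss}(\L)$ (after checking that injectivity of $\f_{12}$ together with injectivity of $\f$ rules out exactly the unstable configurations). Restricting the good quotient to this open locus produces a categorical quotient of $W_1$ modulo $G_1$; note one should not expect it to be geometric, since generic sheaves in $X_1$ are strictly semi-stable — indeed by \ref{4.1.5} the properly semi-stable sheaves of $\M(5,0)$ lie in $X_0\cup X_1$ — and the non-closed orbits of $G_1$ will be collapsed, so a categorical quotient is the natural object.

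Next I would construct the canonical morphism $W_1/G_1 \to X_1$ sending $\f$ to the stable-equivalence class of $\Coker(\f)$ and argue, exactly as at \ref{4.2.1}, that its fibres over strictly semi-stable points are unions of orbits all sharing $\f''\oplus\f'$ in their closure; here $\F'\in\M(r,0)$ and $\F''\in\M(s,0)$ with $r+s=5$, with the Jordan–H\"older factors admitting resolutions of the shapes forced by \ref{4.1.2}, \ref{4.1.3}, and the description of $\M(r,0)$ in \cite{drezet-maican}. Using the horseshoe lemma on the extension $0\to\F'\to\F\to\F''\to 0$ one obtains a block-triangular representative of $\f$, from which $\f''\oplus\f'$ is visibly in the orbit closure, so the preimage of $[\F]$ in the categorical quotient is a single point. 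This gives bijectivity of $W_1/G_1\to X_1$ on points.

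To upgrade bijectivity to an isomorphism I would use the method of 3.1.6 \cite{drezet-maican}: recover resolution \ref{4.1.3} in a natural way from the relative Beilinson spectral sequence of a flat family of sheaves giving points in $X_1$, so that the inverse map is algebraic locally on $X_1$. Concretely, for $\F$ in $X_1$ the Beilinson tableau (2.2.3) \cite{drezet-maican} reads
\[
\xymatrix
{
5\O(-2) \ar[r]^-{\f_1} & 5\O(-1) \ar[r]^-{\f_2} & \O \\
0 & 0 & \O
},
\]
and one shows, as at \ref{2.2.4} and in the proof of \ref{4.1.3}, that $\f_2$ is surjective, $\Ker(\f_2)\isom\Om^1$, $\rank$ of the induced map is maximal, hence $\Ker(\f_1)\isom\O(-3)\oplus 2\O(-2)$ and $\Coker(\f_4)$ together with the exact sequence (2.2.5) \cite{drezet-maican} yields resolution \ref{4.1.3}, with $\f_{12}$ automatically injective by semi-stability. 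Carrying this through for a family produces a local inverse, and since the source is normal (a categorical quotient of a smooth variety by a linear algebraic group) one concludes via Zariski's Main Theorem that $W_1/G_1\to X_1$ is an isomorphism. The main obstacle I anticipate is the bookkeeping needed to verify that $W_1$ is precisely the injective locus inside $\W_1^{ss}(\L)$ for an admissible polarisation — i.e. pinning down which non-injective semi-stable morphisms occur and confirming $W_1$ is open there — and, relatedly, justifying that the categorical (not merely good) quotient restricts correctly and that the fibre analysis over strictly semi-stable points is exhaustive when the Jordan–H\"older length exceeds two.
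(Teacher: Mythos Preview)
Your overall shape is right --- construct a morphism $W_1 \to X_1$, show its fibres collapse correctly on properly semi-stable points, and recover resolution \ref{4.1.3} from Beilinson data to produce a local inverse --- but two points diverge from the paper and one of them is a genuine error.

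First, the GIT detour through $\W_1^{ss}(\L)/\!/G_1$ is unnecessary. The paper does not invoke \cite{drezet-trautmann} here at all: it shows directly that the map $\upsilon\colon W_1\to X_1$, $\f\mapsto[\Coker(\f)]$, is a categorical quotient, by checking (as you outline) that fibres over strictly semi-stable points are unions of orbits with common closure point, and then using the 3.1.6 \cite{drezet-maican} mechanism to build local inverses. Your proposed verification that $W_1$ is open and saturated in $\W_1^{ss}(\L)$ for some admissible polarisation is extra work that may not even succeed cleanly, and is simply bypassed.

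Second, and more seriously, your Beilinson reconstruction for $\F$ contains an error. With the tableau you wrote, $\f_2\colon 5\O(-1)\to\O$ has kernel of rank $4$, not $\Om^1$ (which has rank $2$); the analogy with \ref{2.2.4} and \ref{3.2.2} breaks because there $\f_2$ had domain $3\O(-1)$. Consequently the exact sequence (2.2.5) would read $0\to\Ker(\f_1)\to\O\to\F\to\cdots$, and there is no way to extract $\O(-3)\oplus 2\O(-2)$ as a rank-$3$ kernel from a rank-$5$ map into a rank-$4$ bundle in the manner you describe. The paper avoids this by passing to the dual $\G=\F^\D(1)\in\M(5,5)$, whose tableau has the nontrivial maps $\f_3,\f_4$ in the \emph{bottom} row; one then analyses $\f_3$ and $\f_4$ explicitly (showing $\f_3$ has the canonical $(X,Y,Z,0,0)^{\T}$ form and $\f_4$ has a specific block shape with an injective $2\times 2$ block $\psi$), obtains a short resolution $0\to 2\O(-1)\to 2\O\oplus\O(1)\to\Coker(\f_4)\to 0$, and combines with $0\to\O(-2)\to\Coker(\f_4)\to\G\to 0$ to get the dual of \ref{4.1.3}. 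If you want to work directly with $\F$, you would need to follow the monad manipulations in the proof of \ref{4.1.3} (cancelling $3\O(-2)$ after establishing $\rank(\f_{12})=3$) and argue these are natural in families --- feasible, but not what you wrote.
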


\begin{proof}
Let $\upsilon \colon W_1 \to X_1$ be the canonical map sending a morphism $\f$ 
to the stable-equivalence class of its cokernel.
As in the proof of \ref{4.2.1}, one can check that the preimage of an arbitrary point in $X_1$
under $\upsilon$ is a union of $G_1$-orbits whose closures have non-empty intersection.
This shows that $\upsilon$ is bijective.
To show that $\upsilon$ is a categorical quotient map we proceed as at 3.1.6 \cite{drezet-maican}.
Given $\F$ in $X_1$, we need to produce resolution \ref{4.1.3} starting from the Beilinson spectral
sequence.
We shall work, instead, with the dual sheaf $\G = \F^{\D}(1)$, which gives a point in $\M(5,5)$.
Diagram (2.2.3) \cite{drezet-maican} for $\G$ takes the form
\[
\xymatrix
{
\O(-2) & 0 & 0 \\
\O(-2) \ar[r]^-{\f_3} & 5\O(-1) \ar[r]^-{\f_4} & 5\O
}.
\]
The exact sequence (2.2.5) \cite{drezet-maican} reads
\[
0 \lra \O(-2) \lra \Coker(\f_4) \lra \G \lra 0.
\]
Repeating the arguments from the proof of \ref{3.2.4} it is easy to see that we may write
\[
\f_3 = \left[
\begin{array}{c}
X \\ Y \\ Z \\ 0 \\ 0
\end{array}
\right] \qquad \text{and} \qquad \f_4 = \left[
\begin{array}{ccccc}
-Y & \phantom{-}X & 0 & \star & \star \\
-Z & \phantom{-}0 & X & \star & \star \\
\phantom{-}0 & -Z & Y & \star & \star \\
\phantom{-}0 & \phantom{-}0 & 0 & \psi_{11} & \psi_{12} \\
\phantom{-}0 & \phantom{-}0 & 0 & \psi_{21} & \psi_{22}
\end{array}
\right].
\]
If the morphism $\psi \colon 2\O(-1) \to 2\O$ represented by the matrix $(\psi_{ij})_{1 \le i,j \le 2}$ were not injective,
then $\psi$ would be equivalent to a morphism represented by a matrix with a zero-row or a zero-column.
From the snake lemma it would follow that $\Coker(\f_4)$ has a subsheaf $\S$
with Hilbert polynomial $P(t)=3t+4$ or $2t+3$.
This sheaf would map injectively to $\G$
because $\S \cap \O(-2)= \{ 0 \}$. The semi-stability of $\G$ would be violated.
We deduce that $\psi$ is injective and we obtain the extension
\[
0 \lra \O(1) \lra \Coker(\f_4) \lra \Coker(\psi) \lra 0,
\]
which yields the resolution
\[
0 \lra 2\O(-1) \lra 2\O \oplus \O(1) \lra \Coker(\f_4) \lra 0.
\]
Combining with the resolution of $\G$ from above we obtain the exact sequence
\[
0 \lra \O(-2) \oplus 2\O(-1) \lra 2\O \oplus \O(1) \lra \G \lra 0.
\]
By duality, this corresponds to resolution \ref{4.1.3} for $\F$.
\end{proof}

\begin{prop}
\label{4.2.3}
There exists a geometric quotient $W_2/G_2$ and it is a proper open subset
inside a fibre bundle over $\P^2 \times \P^2$ with fibre $\P^{18}$.
\end{prop}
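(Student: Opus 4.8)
The plan is to construct $W_2/G_2$ by the method of successive quotients modulo subgroups used at \ref{2.2.2}, \ref{2.2.5} and, most closely, \ref{3.2.3}. Write $\A_2 = 2\O(-3) \oplus \O(-1)$ and $\B_2 = \O(-2) \oplus 2\O$; since $\Hom(\O(-1),\O(-2)) = 0$ the block $\f_{12}$ vanishes automatically, so a morphism $\f \in \W_2$ consists of a row $\f_{11}$ of two one-forms, a column $\f_{22}$ of two one-forms, and a block $\f_{21} \in \Hom(2\O(-3), 2\O)$. The elements of $G_2$ are pairs $(g,h)$ of lower triangular block automorphisms; I would single out the unipotent radical $G_2'$ (diagonal blocks equal to the identity), the reductive part $\,{G_2}_{\text{red}}$ (block-diagonal automorphisms), so that $G_2 = G_2' \rtimes {G_2}_{\text{red}}$, and the central one-parameter subgroup $S \subset {G_2}_{\text{red}}$ whose action scales the summand $\O(-1) \subset \A_2$ against the summand $\O(-2) \subset \B_2$. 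The base $\P^2 \times \P^2$ of the desired bundle records the pair $(x,z)$, where $x$ is the common zero of the two entries of $\f_{11}$ and $z$ the common zero of the two entries of $\f_{22}$; this is the pair of points appearing in the extension $0 \lra \J_x(1) \lra \F \lra \C_z \lra 0$ of \ref{4.3.2}.

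Let $W_2' \subset \W_2$ be the set of morphisms satisfying the conditions of \ref{4.1.4} with injectivity dropped. Then $W_2'$ is the total space of the trivial vector bundle over $U = U_1 \times U_2$ with fibre $\Hom(2\O(-3), 2\O)$, where $U_1 \subset \Hom(2\O(-3), \O(-2))$ and $U_2 \subset \Hom(\O(-1), 2\O)$ are the open sets of morphisms with linearly independent entries. For $u = (\f_{11}, \f_{22}) \in U$ put
\[
\Sigma_u = \{\, v\f_{11} - \f_{22}w \ \mid \ v \in \Hom(\O(-2), 2\O),\ w \in \Hom(2\O(-3), \O(-1)) \,\} \ \subset \ \Hom(2\O(-3), 2\O).
\]
Using only the linear independence of the entries of $\f_{11}$ and of $\f_{22}$ one checks that the kernel of $(v,w) \mapsto v\f_{11} - \f_{22}w$ is the three-dimensional space $\{\, (\f_{22}a,\, a\f_{11}) \mid a \in \Hom(\O(-2), \O(-1)) \,\}$, so $\dim \Sigma_u = 21$ for every $u \in U$. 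Hence the $\Sigma_u$ fit together into a rank $21$ subbundle $\Sigma \subset W_2'$, which is $G_2'$-invariant, and the quotient bundle $E' = W_2'/\Sigma$, of rank $19$, is a geometric quotient $W_2'/G_2'$ (the fibres of $W_2' \lra E'$ being exactly the $G_2'$-orbits); moreover the canonical ${G_2}_{\text{red}}$-action on $U$ is $E'$-linearised and $W_2' \lra E'$ is ${G_2}_{\text{red}}$-equivariant. An injective $\f \in W_2'$ must have $\f_{21} \notin \Sigma_u$ --- otherwise $\f$ would be equivalent to a morphism whose $(2,1)$-block is zero, which has $\O(-4)$ in its kernel --- so $W_2$ maps into $E' \setminus \sigma'$, where $\sigma'$ denotes the zero section.

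The group $S$ acts by scaling on the fibres of $E'$ and trivially on $U$, so $E' \setminus \sigma' \lra \P(E')$ is a geometric quotient modulo $S$. Finally ${G_2}_{\text{red}}/S \isom \GL(2,\C) \times \GL(2,\C)$ acts on $U = U_1 \times U_2$ factor by factor, with $U_i/\GL(2,\C) \isom \Grass(2, V^*) \isom \P^2$, so $U$ is a principal $({G_2}_{\text{red}}/S)$-bundle over $\P^2 \times \P^2$. Twisting the ${G_2}_{\text{red}}$-action on $E'$ by the character $\chi(g,h) = h_1 g_2^{-1}$ --- the scalar by which $h$ acts on $\O(-2)$ divided by the scalar by which $g$ acts on $\O(-1)$, which is trivial on $S$ and on the homotheties --- the resulting linearised bundle $E'_{\chi}$ has $S$ acting trivially, hence is $({G_2}_{\text{red}}/S)$-linearised, and since the isotropy in ${G_2}_{\text{red}}/S$ of any point of $U$ is trivial, \cite{huybrechts}, lemma 4.2.15, lets us descend it to a rank $19$ vector bundle $E$ over $\P^2 \times \P^2$; then $\P(E') \lra \P(E)$ is a geometric quotient modulo ${G_2}_{\text{red}}/S$. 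The composite
\[
W_2 \lra E' \setminus \sigma' \lra \P(E') \lra \P(E)
\]
is therefore a geometric quotient modulo $G_2$, realising $W_2/G_2$ as an open subset of the fibre bundle $\P(E) \to \P^2 \times \P^2$ with fibre $\P^{18}$. This open subset is proper, as shown by the non-injective morphism
\[
\left[ \ba{ccc} X & Y & 0 \\ 0 & Z^3 & X \\ -Z^3 & 0 & Y \ea \right],
\]
which lies in $W_2' \setminus \Sigma$ and over the diagonal point $x = z$ of $\P^2 \times \P^2$.

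The main obstacle I expect is twofold. First one must verify that $\Sigma$ is genuinely a subbundle, that is, that $\dim \Sigma_u$ does not jump anywhere on $U$; the most degenerate case is the diagonal $x = z$, and this is precisely the kind of point behind the referee's remark in the acknowledgements about gaps in the fibre-bundle constructions, so it should be argued with care. Second, one has to check that each arrow in the tower is a geometric quotient even though $G_2$ does not act freely: the stabiliser of a general $\f$ is three-dimensional, coming from $\Hom(\B_2,\A_2) \isom V^*$ (the morphisms $\O(-2) \to \O(-1)$) and equal to the kernel computed above, in accordance with $\dim W_2 - (\dim G_2 - 3) = 52 - 30 = 22 = \dim X_2$; this extra stabiliser lies inside $G_2'$ and is automatically absorbed on passing to $E' = W_2'/\Sigma$, but the bookkeeping must be spelled out. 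The descent via \cite{huybrechts}, lemma 4.2.15, and the verification that $\chi$ has the required properties, are then routine.
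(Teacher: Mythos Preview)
Your proposal is correct and follows essentially the same route as the paper's own proof: the same locally closed set $W_2'$, the same subbundle $\Sigma$, the same reductive/unipotent decomposition of $G_2$, the same descent via a character trivial on $S$ and on homotheties (indeed your $\chi = h_1 g_2^{-1}$ agrees with the paper's $\det(g)\det(h)^{-1}$ once one evaluates the latter on the stabiliser), and the same conclusion that $W_2/G_2$ sits properly inside a $\P^{18}$-bundle over $\P^2 \times \P^2$. You supply more detail than the paper on the rank of $\Sigma$ (the three-dimensional kernel $\{(\f_{22}a, a\f_{11})\}$) and on the stabiliser bookkeeping, and your non-injective witness matrix differs from the paper's but is equally valid (determinant $XYZ^3 - XYZ^3 = 0$, and $Z^3 \notin (X,Y)$ shows it avoids $\Sigma$).
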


\begin{proof}
The construction of $W_2/G_2$ is analogous to the construction of the geometric quotient
$W_1/G_1$ from \ref{2.2.2}.
Let $W_2' \subset \W_2$ be the locally closed subset given by the conditions
$\f_{12}=0$, $\f_{11}$ has linearly independent entries, $\f_{22}$ has linearly independent
entries. The pairs of morphisms $(\f_{11}, \f_{22})$ form an open subset
\[
U \subset \Hom(2\O(-3),\O(-2)) \times \Hom(\O(-1),2\O).
\]
The reductive subgroup ${G_2}_{\text{red}}$ of $G_2$ acts on $U$ with kernel $S$
and $U/({G_2}_{\text{red}}/S)$ is isomorphic to $\P^2 \times \P^2$.
Note that $W_2'$ is the trivial bundle over $U$ with fibre $\Hom(2\O(-3), 2\O)$.
The subset $\Sigma \subset W_2'$ given by the condition
\[
\f_{21} = \f_{22} u + v \f_{11}, \quad u \in \Hom(2\O(-3), \O(-1)), \quad v \in \Hom(\O(-2),2\O),
\]
is a subbundle. The quotient bundle $Q'$ has rank $19$ and descends to a vector bundle
$Q$ on $U/({G_2}_{\text{red}}/S)$ as at \ref{2.2.2}. Then $\P(Q)$ is the geometric quotient
$(W_2' \setminus \Sigma)/G_2$.

$W_2$ is the open invariant subset of injective morphism inside $W_2' \setminus \Sigma$.
It is a proper subset as, for instance, the morphism represented by the matrix
\[
\left[
\ba{ccc}
X\phantom{^3} & Y & \phantom{-}0 \\
Z^3 & 0 & \phantom{-}Y \\
0\phantom{^3} & Z^3 & -X
\ea
\right]
\]
is in $W_2 \setminus \Sigma$ but is not injective. We conclude that $W_2/G_2$
exists and is a proper open subset inside $\P(Q)$.
\end{proof}

\begin{prop}
\label{4.2.4}
The geometric quotient $W_2/G_2$ is isomorphic to $X_2$.
\end{prop}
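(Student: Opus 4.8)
The plan is to follow the pattern already established in the proofs of \ref{2.2.3}, \ref{2.2.4}, \ref{3.2.4} and \ref{4.2.2}: namely, to show that the canonical bijective morphism $W_2/G_2 \to X_2$ is an isomorphism by constructing an inverse, or at least local sections, via the relative Beilinson spectral sequence. First I would observe, exactly as in \ref{4.2.1}, that the map $\upsilon \colon W_2 \to X_2$ sending $\f$ to the stable-equivalence class of $\Coker(\f)$ is surjective with fibres equal to $G_2$-orbits, so it factors through a bijective morphism $W_2/G_2 \to X_2$. Since by \ref{4.1.5} every sheaf in $X_2$ is stable, $X_2$ is smooth, and one could try to invoke Zariski's Main Theorem as in \ref{2.2.1}; however, to be safe and to match the method used elsewhere, I would instead produce resolution \ref{4.1.4} in a natural (hence relative) manner from the Beilinson tableau of $\F$, which shows that the bijection is an isomorphism locally, hence globally.

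The key computation is therefore: given $\F$ in $X_2$, recover resolution \ref{4.1.4} from its Beilinson spectral sequence. It is cleaner to work with the dual sheaf $\G = \F^{\D}(1)$, which gives a point in $\M(5,5)$; this is the device already used at \ref{3.2.4} and \ref{4.2.2}. The relevant Beilinson diagram (2.2.3) of \cite{drezet-maican} for $\G$ will have the shape
\[
\xymatrix
{
2\O(-2) \ar[r]^-{\f_1} & \O(-1) & 0 \\
\O(-2) \ar[r]^-{\f_3} & 6\O(-1) \ar[r]^-{\f_4} & 5\O
},
\]
reflecting $h^0(\G \tensor \Om^1(1)) = 1$ (equivalently $h^1(\F \tensor \Om^1(1)) = 1$). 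Since $\G$ is semi-stable and maps onto $\Coker(\f_1)$, and $\f_1$ cannot be zero nor have $\Coker(\f_1) \isom \O_L(-1)$, we get $\Coker(\f_1) \isom \C_x$ for a point $x$ and $\Ker(\f_1) \isom \O(-3)$; the sequence (2.2.5) of \cite{drezet-maican} then reads $0 \to \O(-3) \to \Coker(\f_4) \to \G \to \C_x \to 0$. As in the proof of \ref{3.1.3}/\ref{3.2.5}, using that $\Coker(\f_4)$ has no zero-dimensional torsion, I would show $\f_3$ is equivalent to $[\,X\ Y\ Z\ 0\ 0\ 0\,]^{\T}$ and then, using that $4\O$ wait --- that $5\O$ is injective on global sections into $\Coker(\f_4)$ together with the vanishing of $\Hom(\O_L(1),\Coker(\f_4))$, pin down $\f_4$ up to a single one-form $\ell$ on the last row. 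Ruling out $\ell = 0$ by a destabilisation argument (a direct summand of too-large slope mapping injectively into $\G$), one obtains an extension $0 \to \O(1) \to \Coker(\f_4) \to \O_L \to 0$, hence a resolution $0 \to \O(-1) \to \O \oplus \O(1) \to \Coker(\f_4) \to 0$. Lifting $\f_5$ through $\Ext^1(\O(-3),\O(-1)) = 0$ gives $0 \to \O(-3) \oplus \O(-1) \to \O \oplus \O(1) \to \Coker(\f_5) \to 0$, and then the horseshoe lemma applied to the extension of $\C_x$ by $\Coker(\f_5)$, together with the standard resolution of $\C_x$ twisted appropriately and the vanishing of $\H^1(\G)$ (allowing cancellation of an $\O(-3)$), yields the dual of resolution \ref{4.1.4}. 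Dualising back gives \ref{4.1.4} for $\F$, with the conditions on $\f_{11}$ and $\f_{22}$ coming out automatically from the shape of the monad maps (linear independence of entries is equivalent to the destabilisation-avoidance already built into \ref{4.1.4}).

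The main obstacle, as in the analogous propositions, is the careful case analysis needed to normalise $\f_3$ and $\f_4$ — in particular, showing that the matrix representing $\f_3$ has three linearly independent entries on its column, and that the residual $2\times 2$ block $\psi = (\psi_{ij})$ in $\f_4$ is injective. These are exactly the arguments imported from \ref{2.1.4} and \ref{3.1.3}: "no zero-dimensional torsion" forces the first, and "$5\O \to \Coker(\f_4)$ injective on global sections plus no nonzero map from $\O_L(1)$" forces the second. Once the monad is normalised, the passage to resolution \ref{4.1.4} is the same algebra as in \ref{2.1.4} and \ref{4.1.4} and requires no new idea. Because this construction is carried out functorially in the relative setting (the Beilinson spectral sequence, the cancellations, and the horseshoe liftings all work in families over the base of a flat family of sheaves in $X_2$), it produces a morphism $X_2 \to W_2/G_2$ inverse to the canonical one, completing the proof.
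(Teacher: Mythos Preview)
Your overall strategy is exactly the one the paper uses, but your Beilinson tableau for $\G=\F^{\D}(1)$ is wrong, and this error propagates through every subsequent step. For $\F$ in $X_2\subset\M(5,0)$ we have $h^1(\F)=2$, hence by duality $h^0(\G(-1))=2$ (and since $\chi(\G(-1))=0$ also $h^1(\G(-1))=2$). The bottom-left entry of the diagram is therefore $2\O(-2)$, not $\O(-2)$; the correct tableau is
\[
\xymatrix
{
2\O(-2) \ar[r]^-{\f_1} & \O(-1) & 0 \\
2\O(-2) \ar[r]^-{\f_3} & 6\O(-1) \ar[r]^-{\f_4} & 5\O
}.
\]
What you wrote is the tableau from the proof of \ref{3.2.4} (the stratum $X_2$ of $\M(5,1)$), and indeed your extension $0\to\O(1)\to\Coker(\f_4)\to\O_L\to 0$, your resolution $0\to\O(-1)\to\O\oplus\O(1)\to\Coker(\f_4)\to 0$, and your final horseshoe step are a verbatim reproduction of that proof. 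They lead to the dual of resolution \ref{3.1.2}(ii), not of \ref{4.1.4}.

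With the correct tableau, $\f_3\colon 2\O(-2)\to 6\O(-1)$ is a $6\times 2$ matrix, and the normalisation step is harder: one must show, using the arguments of \ref{3.1.3} and \ref{3.2.5}, that $\f_3$ is equivalent to the block form $[\,X\ Y\ Z\ 0\ 0\ 0\,;\,0\ 0\ 0\ X\ Y\ Z\,]^{\T}$. Then $\f_4$ acquires a block structure $\left[\begin{smallmatrix}\xi & 0\\ \star & \psi\end{smallmatrix}\right]$ with $\xi$ of size $2\times 3$ and $\psi$ of size $3\times 3$ (equivalent to the Euler-type matrix), and one must additionally rule out a zero row of $\f_4$ by a destabilisation argument. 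The snake lemma then yields $0\to\O(1)\to\Coker(\f_4)\to\O_L(1)\to 0$, hence $0\to\O\to 2\O(1)\to\Coker(\f_4)\to 0$; lifting $\f_5$ and applying the horseshoe with the resolution of $\C_x$ gives, after cancelling $\O(-3)$, the resolution $0\to 2\O(-2)\oplus\O\to\O(-1)\oplus 2\O(1)\to\G\to 0$, which is the dual of \ref{4.1.4}. Your single-column version of $\f_3$ simply cannot produce the two copies of $\O(-3)$ needed in the target resolution.
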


\begin{proof}
The canonical morphism $W_2/G_2 \to X_2$ is easily seen to be injective,
there being no properly semi-stable sheaves in $X_2$, cf. \ref{4.1.5}.
To show that it is an isomorphism we must construct resolution \ref{4.1.4} starting from the
Beilinson spectral sequence of a sheaf $\F$ in $X_2$.
We prefer to work, instead, with the dual sheaf $\G = \F^{\D}(1)$, which gives a point in $\M(5,5)$.
Diagram (2.2.3) \cite{drezet-maican} for $\G$ takes the form
\[
\xymatrix
{
2\O(-2) \ar[r]^-{\f_1} & \O(-1) & 0 \\
2\O(-2) \ar[r]^-{\f_3} & 6\O(-1) \ar[r]^-{\f_4} & 5\O
}.
\]
As in the proof of \ref{3.2.4}, we see that $\Coker(\f_1)$ is the structure sheaf of a point $x \in \P^2$
and $\Ker(\f_1) \isom \O(-3)$. The exact sequence (2.2.5) \cite{drezet-maican} reads
\[
0 \lra \O(-3) \stackrel{\f_5}{\lra} \Coker(\f_4) \lra \G \lra \C_x \lra 0.
\]
We see from this that $\Coker(\f_4)$ has no zero-dimensional torsion.
The exact sequence (2.2.4) \cite{drezet-maican} reads
\[
0 \lra 2\O(-2) \stackrel{\f_3}{\lra} 6\O(-1) \stackrel{\f_4}{\lra} 5\O \lra \Coker(\f_4) \lra 0.
\]
We claim that $\f_3$ is equivalent to the morphism represented by the matrix
\[
\left[
\ba{cccccc}
X & Y & Z & 0 & 0 & 0 \\
0 & 0 & 0 & X & Y & Z
\ea
\right]^{\T}.
\]
Firstly, we show that any matrix representing a morphism equivalent to $\f_3$
has three linearly independent entries on each column.
For this we use the fact that the only morphism
from the structure sheaf of a point to $\Coker(\f_4)$ is the zero-morphism
and we argue as in the proof that the vector space $H$ from \ref{2.1.4} has dimension $3$.
Thus $\f_3$ has one of the four canonical forms given in the proof of \ref{3.2.5}.
Three of these can be eliminated as in the proof of \ref{3.1.3}. The argument, we recall,
uses the fact that the map $5\O \to \Coker(\f_4)$ is injective on global sections
as well as the fact that the only morphism $\O_L(1) \to \Coker(\f_4)$
for any line $L \subset \P^2$ is the zero-morphism. Indeed, such a morphism must factor through
$\f_5$ because the composed morphism $\O_L(1) \to \Coker(\f_4) \to \G$ is zero.
This follows from the fact that both $\O_L(1)$ and $\G$ are semi-stable and $p(\O_L(1)) > p(\G)$.

Next we describe $\f_4$. Its matrix cannot be equivalent to a matrix having a zero-row.
Indeed, if this were the case, then $\Coker(\f_4)$ would be isomorphic to $\O \oplus \CC$, where
$\CC$ is a torsion sheaf with resolution
\[
0 \lra 2\O(-2) \lra 6\O(-1) \lra 4\O \lra \CC \lra 0.
\]
We have $P_{\CC}(t)=2t+4$ and $\CC$ maps injectively to $\G$
because $\CC \cap \O(-3)= \{ 0 \}$.
The semi-stability of $\G$ is violated.
We conclude that $\f_4$ has the form
\[
\left[
\ba{cc}
\xi & 0 \\
\star & \psi
\ea
\right],
\]
where $\xi$ is a morphism as in the proof of \ref{4.1.6} and $\psi$ is equivalent to the
morphism $\f_{12}$ also from {4.1.6}. We have exact sequences
\[
0 \lra \O(-2) \lra 3\O(-1) \stackrel{\xi}{\lra} 2\O \lra \O_L(1) \lra 0,
\]
\[
0 \lra \O(-2) \lra 3\O(-1) \stackrel{\psi}{\lra} 3\O \lra \O(1) \lra 0.
\]
Recall that the greatest common divisor of the maximal minors of $\xi$ is a linear form.
The line $L \subset \P^2$ is the zero-locus of this form.
From the snake lemma we obtain an extension
\[
0 \lra \O(1) \lra \Coker(\f_4) \lra \O_L(1) \lra 0,
\]
hence a resolution
\[
0 \lra \O \lra 2\O(1) \lra \Coker(\f_4) \lra 0.
\]
Note that $\f_5$ lifts to a morphism $\O(-3) \to 2\O(1)$, so we arrive at the exact sequence
\[
0 \lra \O(-3) \oplus \O \lra 2\O(1) \lra \G \lra \C_x \lra 0.
\]
From the horseshoe lemma we obtain a resolution
\[
0 \lra \O(-3) \lra \O(-3) \oplus 2\O(-2) \oplus \O \lra \O(-1) \oplus 2\O(1) \lra \G \lra 0.
\]
$\H^1(\G)$ vanishes, hence $\O(-3)$ can be cancelled to yield the dual of resolution \ref{4.1.4}.
\end{proof}

%%%%%%%%%%%%% subsection 4.3

\subsection{Geometric description of the strata}

Let $X_0^s$ denote the subset of $X_0$ of isomorphism classes of stable sheaves.
Given $\f \in W_0$, we write its domain $\O(-2) \oplus 4\O(-2)$ and denote by $\f_{12}$
the restriction of $\f$ to the second component. Let $Y_0$ be the open subset of $X_0$
of stable-equivalence classes of sheaves $\F$ that occur as cokernels
\[
0 \lra \O(-2) \oplus 4\O(-2) \stackrel{\f}{\lra} 5\O(-1) \lra \F \lra 0
\]
in which the maximal minors of $\f_{12}$ have no common factor.

\begin{prop}
\label{4.3.1}
The sheaves in $Y_0$ have the form $\J_Z(3)$, where $Z \subset \P^2$
is a zero-dimensional scheme of length $10$ not contained in a cubic curve, contained in a quintic
curve $C$, and $\J_Z \subset \O_C$ is its ideal sheaf.

The generic sheaves in $X_0^s$ have the form $\O_C(3)(-P_1-\ldots -P_{10})$,
where $C \subset \P^2$ is a smooth quintic curve and $P_i$, $1 \le i \le 10$, are distinct points on $C$
not contained in a cubic curve.
\end{prop}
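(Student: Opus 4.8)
\emph{Plan.} The plan is to repeat the snake-lemma argument of \ref{2.3.4}(i) and \ref{3.3.1}, with the numerical data adjusted. Let $\F$ give a point in $Y_0$, so that it has a resolution
\[
0 \lra \O(-2) \oplus 4\O(-2) \stackrel{\f}{\lra} 5\O(-1) \lra \F \lra 0
\]
in which the maximal minors of $\f_{12} \colon 4\O(-2) \to 5\O(-1)$ have no common factor. According to 4.5 and 4.6 \cite{modules-alternatives}, $\Coker(\f_{12}) \isom \I_Z(3)$ for a zero-dimensional scheme $Z \subset \P^2$; comparing Hilbert polynomials shows $Z$ has length $10$, and the exactness of $0 \to 4\O(-2) \to 5\O(-1) \to \I_Z(3) \to 0$ yields $\H^0(\I_Z(3))=0$, i.e., $Z$ lies on no cubic curve. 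The remaining column $\f_{11} \colon \O(-2) \to 5\O(-1)$ induces a morphism $\O(-2) \to \I_Z(3)$ which is nonzero---otherwise $\f$ would be equivalent to $\bigl[\,0\ \big|\ \f_{12}\,\bigr]$ and hence non-injective---and therefore injective, since $\I_Z(3)$ is torsion free of rank one; thus $\F$ is the cokernel of this injection $\O(-2) \hookrightarrow \I_Z(3)$. The composite $\O(-2) \to \I_Z(3) \subset \O(3)$ is a nonzero quintic form vanishing on $Z$; writing $C$ for the quintic curve it cuts out and applying the snake lemma to the map from this extension to $0 \to \O(-2) \to \O(3) \to \O_C(3) \to 0$ gives $0 \to \F \to \O_C(3) \to \O_Z \to 0$, that is $\F \isom \J_Z(3)$ with $\J_Z \subset \O_C$ the ideal sheaf of $Z$ and $Z \subset C$.

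For the converse one reverses these steps. Given $Z \subset C$ as above, combine the resolution $0 \to 4\O(-2) \to 5\O(-1) \to \I_Z(3) \to 0$ of 4.5 and 4.6 \cite{modules-alternatives} with the sequence $0 \to \O(-2) \to \I_Z(3) \to \J_Z(3) \to 0$ determined by the quintic defining $C$; the inclusion $\O(-2) \to \I_Z(3)$ lifts to $5\O(-1)$ because $\Ext^1(\O(-2),\O(-2))=0$, producing an injective $\f=\bigl[\,\f_{11}\ \big|\ \f_{12}\,\bigr] \colon 5\O(-2) \to 5\O(-1)$ with $\Coker(\f) \isom \J_Z(3)$ whose block $\f_{12}$ has maximal minors without common factor. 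By \ref{4.1.2} the sheaf $\J_Z(3)$ is semi-stable and $h^1(\J_Z(3))=0$, so it lies in $X_0$, hence in $Y_0$. This identifies $Y_0$ with the incidence locus of pairs $(Z,C)$ of that type.

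For the generic description, $Y_0$ is a nonempty open subset of the irreducible variety $X_0$, hence dense, and it meets the stable locus $X_0^s$ in a dense open set, since the line bundles produced just below are stable. Inside $Y_0$ the conditions that $C$ be smooth and that $Z$ consist of $10$ distinct points are open and jointly satisfiable---a smooth quintic together with $10$ of its points in general position lies on no cubic---hence they hold on a dense open subset. For such $(Z,C)$ we have $Z=P_1+\cdots+P_{10}$ as a reduced effective divisor on the smooth curve $C$, so $\J_Z \isom \O_C(-P_1-\cdots-P_{10})$ and $\F \isom \O_C(3)(-P_1-\cdots-P_{10})$, a line bundle on $C$; it is stable because $C$ is integral, it has multiplicity $5$, and it satisfies $\chi=\deg-g+1=(15-10)-6+1=0$ (the genus of a smooth plane quintic being $6$), so it indeed gives a point of $X_0^s$. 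The step demanding the most care is the invocation of 4.5 and 4.6 \cite{modules-alternatives} for Kronecker modules of type $4\O(-2)\to 5\O(-1)$: one must check that ``the maximal minors of $\f_{12}$ have no common factor'' is precisely the condition making $\Coker(\f_{12})$ a twisted ideal sheaf $\I_Z(3)$ of a length-$10$ subscheme---which simultaneously forces $Z$ off every cubic---while everything else is the routine snake-lemma bookkeeping and the openness/irreducibility of the parameter spaces.
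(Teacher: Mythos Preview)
Your proof is correct and follows essentially the same approach as the paper: both invoke 4.5 and 4.6 of \cite{modules-alternatives} to identify $\Coker(\f_{12})\isom\I_Z(3)$ with $Z$ of length $10$ not on a cubic, then argue as at \ref{2.3.4}(i) (via the snake lemma) that $\F\isom\J_Z(3)$, and deduce the generic description from stability of line bundles on smooth curves. Your version simply spells out in full the details that the paper compresses into the cross-reference ``as at \ref{2.3.4}(i)''.
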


\begin{proof}
Consider the sheaf $\F =\Coker(\f)$, where the maximal minors of $\f_{12}$ have no common factor.
According to 4.5 and 4.6 \cite{modules-alternatives}, $\Coker(\f_{12}) \isom \I_Z(3)$,
where $Z \subset \P^2$ is a zero-dimensional scheme of length $10$, not contained in a cubic curve.
Conversely, any $\I_Z(3)$ is the cokernel of some morphism $\f_{12} \colon 4\O(-2) \to 5\O(-1)$
whose maximal minors have no common factor. It now follows, as at \ref{2.3.4}(i),
that $\F \isom \J_Z(3)$.

The claim about generic stable sheaves follows from the fact that any line bundle
on a smooth curve is stable.
\end{proof}

\begin{prop}
\label{4.3.2}
The sheaves $\F$ in $X_2$ are precisely the non-split extension sheaves of the form
\[
0 \lra \J_x(1) \lra \F \lra \C_z \lra 0,
\]
where $\J_x \subset \O_C$ is the ideal sheaf of a point $x$ on a quintic curve $C \subset \P^2$
and $\C_z$ is the structure sheaf of a point $z \in C$.
When $x=z$ we exclude the possibility $\F \isom \O_C(1)$.

The generic sheaf in $X_2$ has the form $\O_C(1)(P-Q)$, where $C \subset \P^2$ is a smooth
quintic curve and $P, Q$ are distinct points on $C$. In particular, the closure of $X_2$ contains $X_3$.
\end{prop}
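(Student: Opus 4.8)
We treat the displayed equivalence by the two obvious inclusions, and then read off the statement about the generic sheaf and the closure.

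\emph{From $X_2$ to extensions.} Given $\F$ giving a point in $X_2$, I would start from the resolution of \ref{4.1.4}. Since $\Hom(\O(-1),\O(-2))=0$ we have $\f_{12}=0$; since $\f_{11}$ has linearly independent entries, $\Coker(\f_{11})\isom\C_x$ with $\Ker(\f_{11})\isom\O(-4)$, where $x$ is the point cut out by those entries; and since $\f_{22}$ has linearly independent entries, $\Coker(\f_{22})\isom\I_z(1)$ where $z$ is the point cut out by them. Applying the snake lemma exactly as in the proof of \ref{2.3.2} — middle row the resolution \ref{4.1.4}, upper row $0\to\O(-1)\stackrel{\f_{22}}{\to}2\O\to\I_z(1)\to 0$ mapped in by inclusion of the last summand, bottom row $0\to\O(-4)\to 2\O(-3)\stackrel{\f_{11}}{\to}\O(-2)\to\C_x\to 0$ obtained by the matching projections — gives an exact sequence $0\to\O(-4)\to\I_z(1)\to\F\to\C_x\to 0$ (the connecting map is injective because the relevant kernels vanish). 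The map $\O(-4)\to\I_z(1)\subset\O(1)$ is multiplication by a quintic form whose zero locus $C$ contains $z$, and the image of $\I_z(1)$ in $\F$ is then $\J_z(1)$ with $\J_z\subset\O_C$ the ideal of $z$ on $C$; one also checks $\det(\f)$ vanishes at $x$, so $x\in C$ too. Thus $0\to\J_z(1)\to\F\to\C_x\to 0$, non-split because $\F$ is pure and $\C_x$ would otherwise embed into it. Renaming the two points yields an extension of the required shape, and $\F\not\isom\O_C(1)$ because $X_2\cap X_3=\varnothing$ by \ref{4.1.5}.

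\emph{From extensions back to $X_2$.} Conversely, given such a non-split extension $0\to\J_x(1)\to\F\to\C_z\to 0$ (with $\F\not\isom\O_C(1)$ when $x=z$), I would verify the cohomological conditions characterising $X_2$ in \ref{4.1.4}. First, $\F$ has no zero-dimensional torsion: such a subsheaf would meet the pure sheaf $\J_x(1)$ trivially, hence map injectively to $\C_z$, and a splitting of the extension would result. Second, $\F$ is semi-stable: for a non-zero subsheaf $\F'\subset\F$ of multiplicity at most $4$, taking $\CC'$ its image in $\C_z$, $\K=\F'\cap\J_x(1)$, and $\A\subset\O_C(1)$ with $\O_C(1)/\A\isom\O_S(1)$, $\deg S=d$, from \cite{maican}, lemma 6.7, I would reproduce the slope estimate of the converse part of \ref{4.1.4}, namely $p(\F')\le -d/2+1/(5-d)<0=p(\F)$, where $d\ge 1$ because $\F'$ has multiplicity $<5$. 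Hence $\F$ gives a point in $\M(5,0)$. Next, if $h^0(\F(-1))>0$ then $\F\isom\O_{C'}(1)$ by \ref{4.1.1}; but $\J_x(1)$ is pure with support $C$ and embeds in $\O_{C'}(1)$, forcing $C'=C$ and $\F\isom\O_C(1)$, which is impossible (excluded when $x=z$, and not occurring when $x\ne z$); so $h^0(\F(-1))=0$. Finally, from $0\to\J_x(1)\to\O_C(1)\to\C_x\to 0$ with $h^0(\O_C(1))=h^1(\O_C(1))=3$ one gets $h^0(\J_x(1))=2$, $h^1(\J_x(1))=3$; feeding this and $h^0(\C_z)=1$ into the long exact cohomology sequence of the extension gives $h^1(\F)\ge 2$, and by \ref{4.1.6} a sheaf in $\M(5,0)$ with $h^0(\F(-1))=0$ has $h^1\le 2$. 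Hence $h^1(\F)=2$ and $\F$ gives a point in $X_2$.

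\emph{Generic sheaf and closure.} By \ref{4.1.4} the general $\f$ has $\det(\f)$ a general quintic, so $C$ is smooth and $x\ne z$ are smooth points of it; then $\J_x(1)\isom\O_C(1)(-x)$ is invertible, and by purity $\F$ is a torsion-free rank-one sheaf on the smooth curve $C$, hence a line bundle containing $\O_C(1)(-x)$ with quotient $\C_z$, so $\F\isom\O_C(1)(-x+z)=\O_C(1)(z-x)$; writing $P=z$, $Q=x$ gives the asserted form. For $X_3\subset\overline{X}_2$, I would fix a smooth quintic $C$ and a point $P\in C$, take points $Q_t\in C$ with $Q_t\ne P$, $Q_t\to P$, note $\O_C(1)(P-Q_t)$ lies in $X_2$ and degenerates to $\O_C(1)$ in the projective space $\M(5,0)$, and conclude since smooth quintics are dense in $X_3\isom\P(S^5V^*)$.

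\emph{Main obstacle.} The subtle part is the converse direction. Because there is no ``value $1$'' hypothesis (in contrast with \ref{2.3.2}, \ref{3.3.4}), one cannot lift $\C_z$ to a global section of $\F$ and assemble the resolution \ref{4.1.4} directly via the horseshoe lemma; instead the cohomological conditions must be checked by hand — $h^0(\F(-1))=0$ by ruling out $\F\isom\O_C(1)$, and $h^1(\F)=2$ by trapping $h^1(\F)$ between the lower bound from the extension sequence and the vanishing statement \ref{4.1.6}.
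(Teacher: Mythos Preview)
Your forward direction and the generic sheaf / closure arguments match the paper. For the converse you take a genuinely different route: rather than assembling resolution \ref{4.1.4} by the horseshoe lemma (as the paper does, using $\Ext^1(\C_z,\I_x(1))=0$ for $x\ne z$ to force the map $\O(-4)\to\O(-4)$ to be non-zero, and for $x=z$ splitting on $h^0(\F)\in\{2,3\}$), you verify the cohomological conditions defining $X_2$ directly---semi-stability via the slope estimate, then $h^0(\F(-1))=0$ and $h^1(\F)=2$ trapped between the long exact sequence and \ref{4.1.6}. This is a clean strategy, but it has one real gap.

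The unjustified step is your parenthetical ``not occurring when $x\ne z$''. Concretely you need: if $x\ne z$ then no non-split extension $0\to\J_x(1)\to\F\to\C_z\to 0$ has $\F\isom\O_C(1)$. Since any surjection $\O_C(1)\to\C_z$ has kernel exactly $\J_z(1)$, this amounts to showing $\J_x\not\isom\J_z$ for distinct $x,z\in C$. That is true but not automatic for arbitrary (possibly singular or reducible) quintics. For $x$ a singular point of $C$ one can argue that $\J_x$ fails to be locally free at $x$ while $\J_z$ is locally free there (since $z\ne x$); for $x$ smooth one identifies $\Hom(\J_x,\O_C)\isom H^0(\O_C(x))$ and uses Riemann--Roch on $C$ with $\omega_C=\O_C(2)$ to get $h^0(\O_C(x))=-4+h^0(\J_x(2))=-4+5=1$, so the only injection $\J_x\hookrightarrow\O_C$ up to scalar is the standard one, with cokernel $\C_x$. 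Without supplying some such argument, your proof only places $\F$ in $X_2\cup X_3$, and the appeal to \ref{4.1.6} (which presupposes $h^0(\F(-1))=0$) is circular. The paper's resolution-building sidesteps this entirely: once resolution \ref{4.1.4} is in hand for $x\ne z$, membership in $X_2$ (hence $\F\not\isom\O_C(1)$) is immediate.
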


\begin{proof}
To get the extension from the proposition we apply the snake lemma to a diagram similar to the
diagram from the proof of \ref{2.3.2}. Here
\[
\f = \left[
\ba{ccc}
u_1 & u_2 & 0 \\
\star & \star & v_1 \\
\star & \star & v_2
\ea
\right],
\]
$C$ is given by the equation $\det(\f)=0$,
$x$ is the point given by the equations  $v_1=0, v_2=0$ and
$z$ is the point given by the equations $u_1=0, u_2=0$.
To prove the converse we combine the resolutions
\[
0 \lra \O(-4) \lra \I_x(1) \lra \J_x(1) \lra 0
\]
and
\[
0 \lra \O(-4) \lra 2\O(-3) \lra \O(-2) \lra \C_z \lra 0
\]
into the resolution
\[
0 \lra \O(-4) \lra \O(-4) \oplus 2\O(-3) \lra \O(-2) \oplus \I_x(1) \lra \F \lra 0.
\]
If $x \neq z$, then $\Ext^1(\C_x, \I_x(1)) = 0$ and the arguments from the proof of \ref{2.3.2}
show that the map $\O(-4) \to \O(-4)$ in the above complex is non-zero.
Canceling $\O(-4)$ we get the exact sequence
\[
0 \lra 2\O(-3) \lra \O(-2) \oplus \I_x(1) \lra \F \lra 0
\]
from which we immediately obtain resolution 4.1.4.
A priori we have two possibilities: $h^0(\F)=2$ or $3$.
In the first case the map $\O(-4) \to \O(-4)$ is non-zero and we are done.
In the second case we can combine the resolutions
\[
0 \lra \O(-4) \oplus \O(-1) \lra 2\O \lra \J_x(1) \lra 0
\]
and
\[
0 \lra \O(-2) \lra 2\O(-1) \lra \O \lra \C_z \lra 0
\]
into the resolution
\[
0 \lra \O(-2) \lra \O(-4) \oplus 3\O(-1) \lra 3\O \lra \F \lra 0.
\]
We saw in the proof of \ref{4.1.6} how this resolution leads to the conclusion
that $\F$ be isomorphic to $\O_C(1)$ for a quintic curve $C \subset \P^2$.
This possibility is excluded by hypothesis.

If $C$ is a smooth quintic curve and $P$ converges to $Q$, then $\O_C(1)(P-Q)$ represents a point in
$X_2$ converging to the point in $X_3$ represented by $\O_C(1)$. This shows that
$X_3 \subset \overline{X}_2$.
\end{proof}

\begin{prop}
\label{4.3.3}
$\{ X_0, X_1, X_2, X_3 \}$ represents a stratification of $\M(5,0)$ by locally closed
irreducible subvarieties of codimensions $0, 1, 4, 6$.
\end{prop}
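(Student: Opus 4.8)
The plan is to assemble the statements proved in subsections 4.1 and 4.2; there is little genuinely new to do. \textbf{First}, I would check that $\{X_0,X_1,X_2,X_3\}$ covers $\M(5,0)$ and that the pieces are pairwise disjoint. Any $\F$ giving a point of $\M(5,0)$ has either $h^0(\F(-1))>0$, in which case $\F\isom\O_C(1)$ for a quintic curve $C\subset\P^2$ by \ref{4.1.1} and the point lies in $X_3$, or $h^0(\F(-1))=0$, in which case \ref{4.1.6} --- after reducing to stable sheaves via \ref{4.1.5} --- forces $h^1(\F)\in\{0,1,2\}$, and \ref{4.1.2}, \ref{4.1.3}, \ref{4.1.4} put the point in $X_0$, $X_1$, or $X_2$ according to whether $h^1(\F)$ equals $0$, $1$, or $2$. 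Disjointness is exactly \ref{4.1.5}, which moreover guarantees that the stratum attached to an $S$-equivalence class is well defined: the classes of properly semi-stable sheaves all lie in $X_0\cup X_1$, and every Jordan--H\"older factor entering them has vanishing space of global sections, so the value $h^0(\F)$ (hence the stratum) does not depend on the chosen representative.

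\textbf{Next}, I would show each $X_i$ is locally closed, taking on each the reduced structure induced from $\M(5,0)$. Using \ref{4.1.1} together with the identity $h^0(\O_C(1)(-1))=h^0(\O_C)=1$, the set $X_3$ equals $\{\,[\F]\mid h^0(\F(-1))\ge 1\,\}$, which is closed by semicontinuity. On the open complement $\{\,h^0(\F(-1))=0\,\}$ the function $h^1$ takes only the values $0,1,2$, so $X_0=\{h^1=0\}$ is open, $X_2=\{h^1\ge 2\}$ is closed, and $X_1=\{h^1\le 1\}\cap\{h^1\ge 1\}$ is locally closed; hence all four are locally closed subvarieties of $\M(5,0)$.

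\textbf{Then} I would treat irreducibility and codimension, using $\dim\M(5,0)=26$. Since $\M(5,0)$ is irreducible and $X_0=\{h^1=0\}$ is a non-empty open subset, $X_0$ is irreducible of codimension $0$. For $X_1$: by \ref{4.2.2} it is a categorical quotient of $W_1$, and $W_1$ is an open subset of the vector space $\W_1=\Hom(\O(-3)\oplus 2\O(-2),\,2\O(-1)\oplus\O)$, hence irreducible; a routine count gives $\dim\W_1=46$ and $\dim G_1=21$, and since the fibre of $W_1\to X_1$ over a generic (stable) point is a single $G_1$-orbit with trivial stabilizer, $\dim X_1=46-21=25$, so $X_1$ has codimension $1$. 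For $X_2$: by \ref{4.2.3}--\ref{4.2.4} it is isomorphic to $W_2/G_2$ with $W_2$ open in $\W_2$, hence irreducible, and \ref{4.2.3} realises $W_2/G_2$ as an open subset of a fibre bundle over $\P^2\times\P^2$ with fibre $\P^{18}$, giving $\dim X_2=2+2+18=22$ and codimension $4$. Finally $X_3\isom\P(S^5V^*)$ is irreducible of dimension $20$, so its codimension is $6$.

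\textbf{The main obstacle} is modest: the proposition is essentially a synthesis of earlier results, and the only place asking for care is the dimension count for $X_1$, which --- unlike $X_0$, $X_2$, $X_3$ --- was not handed to us as an explicit fibre bundle. There one must know that a generic $\f\in W_1$ has trivial stabilizer in $G_1$ (because $\Coker(\f)$ is then stable and the resolution of \ref{4.1.3} is minimal), and one should keep track of the $S$-equivalence subtlety noted above so that the strata are genuinely well-defined subsets of $\M(5,0)$.
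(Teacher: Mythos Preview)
Your argument correctly establishes disjointness, covering, local closedness, irreducibility, and the codimensions; but you have missed the point of the proposition as the paper uses the word ``stratification''. Throughout the paper (compare \ref{2.3.6} and \ref{3.3.7} and the propositions leading up to them), a stratification is required to satisfy the frontier condition: the closure of each stratum is a union of strata, i.e.\ $X_{i+1}\subset\overline{X_i}$. Your proposal never mentions this, and in fact identifies the ``main obstacle'' as the dimension count for $X_1$, which is routine. The real content of the paper's proof of \ref{4.3.3} is precisely the missing inclusion $X_2\subset\overline{X_1}$: the inclusion $X_3\subset\overline{X_2}$ was shown at \ref{4.3.2}, and $X_0$ is dense since $\M(5,0)$ is irreducible, so only $X_2\subset\overline{X_1}$ remains.

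The paper's method for this inclusion is the monad-deformation argument already used at \ref{2.1.6} and \ref{3.1.4}: on the open set $X=\M(5,0)\setminus X_3=\{h^0(\F(-1))=0\}$ one parametrises sheaves by an open set $M$ of monads $0\to 10\,\O(-1)\xrightarrow{A}15\,\O\xrightarrow{B}5\,\O(1)\to 0$, and shows that the map $\Phi(A,B)=B$ has surjective differential at every point of $M$ (this uses $h^1(\F(1))=0$, which on $X$ follows from self-duality of $X$ under $\F\mapsto\F^{\D}$ and Serre duality). Since $h^1(\F)$ is read off from $\operatorname{rank}\H^0(B)$, surjectivity of $\text{d}\Phi$ shows that the locus $\{h^1\ge 2\}$ in $M$ lies in the closure of $\{h^1=1\}$, hence $X_2\subset\overline{X_1}$. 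You should add this step; without it the proposition, in the paper's sense, is not proved.
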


\begin{proof}
We saw above that $X_3$ lies in $\overline{X}_2$
and we know that $X_0$ is dense in $\M(5,0)$.
Thus we only need to show that $X_2$ is included in the closure of $X_1$.
For this we shall apply the method of
3.2.3 \cite{drezet-maican}.
Consider the open subset $X = \M(5,0) \setminus X_3$ of stable-equivalence classes
of sheaves satisfying the condition $h^0(\F(-1))=0$.
Using the Beilinson monad for $\F(-1)$ we see that
$X$ is parametrised by an open subset $M$ inside the space of monads of the form
\[
0 \lra 10 \O(-1) \stackrel{A}{\lra} 15 \O \stackrel{B}{\lra} 5\O(1) \lra 0.
\]
The automorphism of $\M(5,0)$ taking the stable-equivalence class of a sheaf $\F$ to the stable-equivalence class
of the dual sheaf $\F^{\D}$ leaves $X$ invariant.
Thus, in view of Serre duality, he have $h^1(\F(1))=h^0(\F^{\D}(-1))=0$ for all $\F$ in $X$.
This allows us to deduce that the map $\Phi$ defined by $\Phi(A,B)=B$ has surjective differential at every point
in $M$. As at 3.2.3 \cite{drezet-maican}, this leads to the conclusion that $X_2$ is included in the closure
of $X_1$ in $X$, hence $X_2$ is included in the closure of $X_1$ in $\M(5,0)$.
\end{proof}

\end{document}